\documentclass{article}

\usepackage{algorithm} 
\usepackage{algorithmic} 
\usepackage{amsmath} 
\usepackage{amssymb} 
\usepackage{amsthm} 
\usepackage{enumerate} 
\usepackage[utf8]{inputenc}
\usepackage{mathrsfs} 
\usepackage{mathtools} 
\usepackage{pgfplots}
\usepackage{tikz}
\usepackage{xcolor} 

\title{Structure-Preserving Linear Quadratic Gaussian Balanced Truncation for Port-Hamiltonian Descriptor Systems}

\theoremstyle{plain} 
\newtheorem{theorem}{Theorem}
\newtheorem{corollary}[theorem]{Corollary}
\newtheorem{definition}[theorem]{Definition} 
\newtheorem{example}[theorem]{Example} 
\newtheorem{lemma}[theorem]{Lemma}
\newtheorem{proposition}[theorem]{Proposition}
\newtheorem{remark}[theorem]{Remark}

\newcommand{\Ac}{A_{\mathrm{c}}}
\newcommand{\Atilde}{\widetilde{A}}
\newcommand{\Bc}{B_{\mathrm{c}}}
\newcommand{\Btilde}{\widetilde{B}}
\newcommand{\BwideHat}{\widehat{B}}
\newcommand{\calM}{\mathcal{M}}
\newcommand{\calW}{\mathcal{W}}
\newcommand{\CC}{\mathbb{C}}
\newcommand{\Cc}{C_{\mathrm{c}}}
\newcommand{\colspan}{\mathrm{im}}
\newcommand{\Ctilde}{\widetilde{C}}
\newcommand{\diag}{\mathrm{diag}}
\newcommand{\Ec}{E_{\mathrm{c}}}
\newcommand{\Etilde}{\widetilde{E}}
\newcommand{\GL}[1]{\mathrm{GL}(#1)}
\newcommand{\Jc}{\widetilde{J}_{\mathrm{c}}}
\newcommand{\Jf}{\widetilde{J}_{\mathrm{f}}}
\newcommand{\Jhat}{\widehat{J}}
\newcommand{\JwideHat}{\widehat{J}}
\newcommand{\Ker}{\mathrm{ker}}
\newcommand{\Ltilde}{\widetilde{\mathcal{L}}}
\newcommand{\matlab}{MATLAB\textsuperscript{\textregistered}}
\newcommand{\NwideHat}{\widehat{N}}
\newcommand{\Pc}{\mathcal{P}_{\mathrm{c}}}
\newcommand{\Pchat}{\widehat{\mathcal{P}}_{\mathrm{c}}}
\newcommand{\Pctilde}{\widetilde{\mathcal{P}}_{\mathrm{c}}}
\newcommand{\Pf}{\mathcal{P}_{\mathrm{f}}}
\newcommand{\Pfhat}{\widehat{\mathcal{P}}_{\mathrm{f}}}
\newcommand{\Pftilde}{\widetilde{\mathcal{P}}_{\mathrm{f}}}
\newcommand{\PwideHat}{\widehat{P}}
\newcommand{\Qtilde}{\widetilde{Q}}
\newcommand{\QwideHat}{\widehat{Q}}
\newcommand{\rank}{\mathrm{rank}}
\newcommand{\redA}{A_{\mathrm{r}}}
\newcommand{\redB}{B_{\mathrm{r}}}
\newcommand{\redC}{C_{\mathrm{r}}}
\newcommand{\redE}{E_{\mathrm{r}}}
\newcommand{\redG}{G_{\mathrm{r}}}
\newcommand{\redGamma}{\Gamma_{\mathrm{r}}}
\newcommand{\redJ}{J_{\mathrm{r}}}
\newcommand{\redM}{M_{\mathrm{r}}}
\newcommand{\redN}{N_{\mathrm{r}}}
\newcommand{\redPc}{\mathcal{P}_{\mathrm{cr}}}
\newcommand{\redPf}{\mathcal{P}_{\mathrm{fr}}}
\newcommand{\redQ}{Q_{\mathrm{r}}}
\newcommand{\redR}{R_{\mathrm{r}}}
\newcommand{\redTheta}{\Theta_{\mathrm{r}}}

\newcommand{\Rhat}{\widehat{R}}
\newcommand{\RR}{\mathbb{R}}
\newcommand{\Rtilde}{\widetilde{R}}
\newcommand{\RwideHat}{\widehat{R}}
\newcommand{\Sc}{S_{\mathrm{c}}}
\newcommand{\scrA}{\mathscr{A}}
\newcommand{\scrE}{\mathscr{E}}
\newcommand{\Sf}{S_{\mathrm{f}}}
\newcommand{\spectrum}{\sigma}
\newcommand{\SwideHat}{\widehat{S}}
\newcommand{\Tc}{T_{\mathrm{c}}}
\newcommand{\Tf}{T_{\mathrm{f}}}
\newcommand{\trunc}{K_{\ell}}
\newcommand{\uc}{u_{\mathrm{c}}}
\newcommand{\uF}{u_{\mathrm{F}}}
\newcommand{\Vsystilde}{\widetilde{\mathcal{V}}_{\mathrm{sys}}}
\newcommand{\Xhat}{\widehat{X}}
\newcommand{\Xmax}{X_{\mathrm{max}}}
\newcommand{\Xtilde}{\widetilde{X}}
\newcommand{\xwideHat}{\widehat{x}}
\newcommand{\yc}{y_{\mathrm{c}}}

\newcommand{\APc}{A_{\Pc}}
\newcommand{\APctilde}{\Atilde_{\Pc}}

\newcommand{\APftilde}{\Atilde_{\Pf}}

\newcommand{\CPctilde}{\Ctilde_{\Pc}}

\newcommand{\DPctilde}{\widetilde{D}}
\newcommand{\LPc}{L_{\Pc}}
\newcommand{\LPf}{L_{\Pf}}

\allowdisplaybreaks 

\begin{document}

\maketitle

 \centerline{\scshape Tobias Breiten} 
 \medskip
 {\footnotesize
  \centerline{Institute of Mathematics}
    \centerline{Technische Universit\"at Berlin} 
    \centerline{Stra\ss e des 17. Juni 136, 10623 Berlin, Germany}
    \centerline{tobias.breiten@tu-berlin.de}
 } 

 \medskip

 \medskip
  \centerline{\scshape Philipp Schulze}
 \medskip
 {\footnotesize
  \centerline{Institute of Mathematics}
    \centerline{Technische Universit\"at Berlin}
    \centerline{Stra\ss e des 17. Juni 136, 10623 Berlin, Germany}
    \centerline{pschulze@math.tu-berlin.de}
 } 
 
\bigskip

\begin{abstract}
    We present a new balancing-based structure-preserving model reduction technique for linear port-Hamiltonian descriptor systems. The proposed method relies on a modification of a set of two dual generalized algebraic Riccati equations that arise in the context of linear quadratic Gaussian balanced truncation for differential algebraic systems. We derive an a priori error bound with respect to a right coprime factorization of the underlying transfer function thereby allowing for an estimate with respect to the gap metric. We further theoretically and numerically analyze the influence of the Hamiltonian and a change thereof, respectively. With regard to this change of the Hamiltonian, we provide a novel procedure that is based on a recently introduced Kalman--Yakubovich--Popov inequality for descriptor systems. Numerical examples demonstrate how the quality of reduced-order models can significantly be improved by first computing an extremal solution to this inequality.
\end{abstract}

{\em Keywords: port-Hamiltonian systems, model order reduction, LQG control design, error bounds, descriptor systems}

\section{Introduction}

Modeling of complex physical systems often leads to large-scale systems of differential-algebraic equations (DAEs).
Among others, the algebraic constraints may arise from coupling conditions within a network, conservation laws, or constitutive relations, see for instance \cite{KunM06} for an overview of DAE systems.
Performing real-time control or optimization based on large scale DAE systems is often very expensive or even unfeasible which motivates model order reduction (MOR) techniques which aim for replacing the original full-order model (FOM) by a low-dimensional surrogate model.
For a general overview over model reduction techniques, we refer to \cite{Ant05,AntBG20,BenOCW17,HesRS16,QuaMN16,QuaR14} and to \cite{BenS17} for a survey about MOR methods for DAE systems.

A surrogate model obtained via model order reduction is in general required to allow a fast evaluation while still achieving an accurate approximation of the input-output map.
When considering descriptor systems, i.e., control systems with algebraic constraints, it is usually also important to preserve the algebraic constraints in the reduced-order model, since they might be otherwise violated and this may lead to unphysical behavior.
Besides, the FOM often has further important properties like stability and passivity and in this case it is desirable to preserve these properties as well.
In this context, structure-preserving model order reduction for port-Hamiltonian (pH) systems is a frequently used approach to ensure a stable and passive reduced-order model, see e.g.~\cite{BreMS21,BreU21,ChaBG16,Fuj08,GifWPL14,GugPBS12,PolS12} for the case without algebraic constraints and \cite{BeaGM19,EggKLMM18,HauMM19} for the DAE case.
The Port-Hamiltonian structure is particularly useful for model reduction since it ensures passivity and under certain assumptions for the Hamiltonian also stability, see for instance \cite[sec.~8]{SchJ14}.
Another important property is that the port-Hamiltonian structure is invariant under power-preserving interconnection, which makes the pH framework especially well-suited for network modeling, multiphysics problems, and control, see \cite{SchJ14} for a general overview over the topic.

The aforementioned literature on structure-preserving pH DAE model reduction is mainly of interpolatory nature and relies on adjusting the projection matrices by (implicitly) performing the effort-constraint reduction framework from \cite{PolS12}. 
In doing so, the methods not only give up some of the available degrees of freedom but also lack an a priori error bound. 
In this contribution, we therefore rather focus on a linear quadratic Gaussian (LQG) balancing and truncation technique which is inspired by the methods from \cite{BreMS21,MoeRS11}. In particular, based on the method from \cite{BreMS21} and several ideas and proofs from \cite{MoeRS11}, we suggest a new set of generalized algebraic Riccati equations  \eqref{eq:modifiedGAREs} for which we establish theoretical existence results as stated in Theorem \ref{thm:solutionsOfNonsymmetricGAREs} and Theorem \ref{thm:bal_real}. One of our main results is concerned with the construction of a reduced-order port-Hamiltonian descriptor system, see Theorem \ref{thm:rom_is_ph}. We further show that the error (in terms of coprime factors) for such reduced models can be bounded by means of truncated singular values arising in a specifically balanced realization, see Theorem \ref{thm:err_bnd}. 
Finally, in Theorem \ref{thm:optimalpHRealizationOfWCF} we provide a novel technique that allows to exchange a given system Hamiltonian by a different one thereby extending available results for ordinary differential equations (ODEs) to descriptor systems. 
Let us emphasize that for the latter technique, we do not make any explicit assumptions on the index of the system.

The structure of the paper is as follows. In section 2, we collect the necessary linear algebraic and control theoretic background on matrix pencils, descriptor systems, port-Hamiltonian systems, and descriptor Kalman--Yakubovich--Popov linear matrix inequalities (KYP LMIs). 
Section 3 presents the solution theory for a set of modified generalized algebraic Riccati equations. 
Subsequently, in section 4, we adapt the LQG balancing and truncation framework from \cite{MoeRS11} to linear port-Hamiltonian descriptor systems and particularly derive an a priori error bound in terms of right coprime factorizations of the involved transfer functions. 
We further propose a novel method to exchange a given system Hamiltonian by means of solving a KYP LMI for descriptor systems. 
In particular, we provide a theoretical analysis of optimizing the a priori error bound by using an extremal solution of this KYP LMI. 
Numerical experiments for two differential-algebraic port-Hamiltonian systems are shown in section 5.
Finally, we give a brief summary and outlook in section~\ref{sec:conclusion}.
\\

\noindent \textbf{Notation}\\[-2ex]

\noindent We denote the open right and the open left half-plane by $\CC_+$ and $\CC_-$, respectively. 
Furthermore, the sets of invertible and orthogonal $n\times n$ matrices are denoted by $\GL{n}$ and $\mathrm{O}(n)$, respectively.
Besides, the conjugate transpose of a matrix $A$ is denoted with $A^*$, its Moore--Penrose inverse by $A^+$, its spectrum by $\spectrum(A)$, its kernel by $\Ker(A)$, and its column span or image by $\colspan(A)$.

\section{Preliminaries}

This section collects several well-known results about matrix pencils, linear descriptor and port-Hamiltonian systems that are relevant for the subsequent theoretical and numerical results. 

\subsection{Matrix Pencils}\label{subsec:matrixPencils}

The matrix pencil $(E,A)\in\RR^{n\times n}\times \RR^{n\times n}$ is called \emph{regular} if there is an $s\in\CC$ with $\det(sE-A)\neq 0$.
We call $\lambda\in\CC$ a \emph{finite eigenvalue} of such a regular matrix pencil if there exists a vector $v\in \CC^n\setminus\lbrace 0\rbrace$ such that
\begin{equation*}
    \lambda Ev = Av.
\end{equation*}
Furthermore, we call two matrix pencils $(E_1,A_1),(E_2,A_2)\in\RR^{n\times n}\times \RR^{n\times n}$ \emph{strictly equivalent} if there exist $P,\widetilde{P}\in \GL{n}$ with
\begin{equation*}
    E_2 = P^{-1}E_1\widetilde{P} \quad \text{and} \quad A_2 = P^{-1}A_1\widetilde{P}.
\end{equation*}
Especially, it is straightforward to show that strict equivalence is an equivalence relation and that strictly equivalent matrix pencils have the same finite eigenvalues.

Every regular pencil $(E,A)$ is strictly equivalent to a pencil in Weierstraß canonical form, i.e., there exist $P,\widetilde{P}\in \GL{n}$ such that
\begin{equation}
    \label{eq:WCF}
    P^{-1}E\widetilde{P} = 
    \begin{bmatrix}
        I & 0\\
        0 & N
    \end{bmatrix}
    ,\quad P^{-1}A\widetilde{P} = 
    \begin{bmatrix}
        \widetilde{J}   & 0\\
        0           & I
    \end{bmatrix}
    ,
\end{equation}
where $\widetilde{J}$ and $N$ are in (real) Jordan canonical form and $N$ is nilpotent, see for instance \cite[Ch.~XII,\;\S~2]{Gan59}.
In particular, the eigenvalues of $\widetilde{J}$ are the finite eigenvalues of the pencil $(E,A)$, whereas the second block row corresponds to infinite eigenvalues.
The regular pair $(E,A)$ is called \emph{impulse-free} if $N=0$ holds.
A consequence of the Weierstraß canonical form is that the regular pencil $(E^\top,A^\top)$ is strictly equivalent to $(E,A)$, similarly as the transpose of a matrix is similar to the matrix itself.

\begin{definition}
    A subspace $\mathcal{V}\subset \RR^n$ is called \emph{deflating subspace} of the matrix pencil $(E,A)\in\RR^{n\times n}\times \RR^{n\times n}$ if for a full rank matrix $V\in\RR^{n\times k}$ with $\colspan(V)=\mathcal{V}$ there exist $\ell\leq k$, $U\in\RR^{n\times\ell}$, and $\widehat{E},\widehat{A}\in\RR^{\ell\times k}$ with $EV = U\widehat{E}$ and  $AV = U\widehat{A}$.
    If additionally $(E,A)$ is regular and if there exists $\Jhat\in \RR^{k\times k}$ with $AV = EV\Jhat$, where $\Jhat$ is in Jordan canonical form and consists of all Jordan blocks of $(E,A)$ corresponding to the finite eigenvalues in $\CC_-$, then we call $\mathcal{V}$ the \emph{stable deflating subspace} of the matrix pencil $(E,A)$.
\end{definition}

\subsection{Linear Descriptor Systems}

We introduce some basic definitions for linear time-invariant systems of the form
\begin{equation}\label{eq:dae}
    \begin{aligned} 
        E\dot{x}    &= Ax+ Bu,\quad x(0)=0,\\ 
        y           &= Cx,
    \end{aligned}
\end{equation}
with $(E,A,B,C)\in \RR^{n\times n}\times \RR^{n\times n}\times \RR^{n \times m}\times \RR^{p \times n}$.

\begin{definition}
    \label{def:controllabilityNotionsForDescriptorSystems}
    Let $(E,A,B,C)\in \RR^{n\times n}\times \RR^{n\times n}\times \RR^{n \times m}\times \RR^{p \times n}$ be given with regular $(E,A)$ and $r \vcentcolon= \rank(E)$.
    \begin{enumerate}[(i)]
        \item We call the triple $(E,A,B)$ \emph{impulse controllable}, if
            \begin{equation*}
                \rank\left(
                \begin{bmatrix}
                    E & AS & B
                \end{bmatrix}
                \right) = n,
            \end{equation*}
            where $S\in\RR^{n\times (n-r)}$ satisfies $\colspan(S) = \Ker(E)$.
        \item We call the triple $(E,A,B)$ \emph{strongly stabilizable} if it is  impulse controllable and satisfies
            \begin{equation}
                \label{eq:behavioralStabilizability}
                \rank\left(
                \begin{bmatrix}
                    \lambda E-A & B
                \end{bmatrix}
                \right) = n\quad \text{for all }\lambda\in\overline{\CC_+}.
            \end{equation}
        \item We call the triple $(E,A,B)$ \emph{strongly anti-stabilizable} if it is  impulse controllable and satisfies
            \begin{equation*}
                \rank\left(
                \begin{bmatrix}
                    \lambda E-A & B
                \end{bmatrix}
                \right) = n\quad \text{for all }\lambda\in\overline{\CC_-}.
            \end{equation*}
        \item We call the triple $(E,A,B)$ \emph{strongly controllable} if it is strongly stabilizable and strongly anti-stabilizable.
        \item We call the triple $(E, A, C)$ \emph{impulse observable} if $(E^\top, A^\top, C^\top)$ is impulse controllable.
        \item We call the triple $(E, A, C)$ \emph{strongly detectable} if $(E^\top, A^\top, C^\top)$ is strongly stabilizable.
    \end{enumerate}
\end{definition}

\begin{definition}
    We call the descriptor system given by $(E,A,B,C)\in \RR^{n\times n}\times \RR^{n\times n}\times \RR^{n \times m}\times \RR^{p \times n}$ semi-explicit if $E$ has the form
    \begin{equation*}
        E = 
        \begin{bmatrix}
            I_r & 0\\
            0   & 0
        \end{bmatrix}
    \end{equation*}
    with $r = \rank(E)$.
\end{definition}

\subsection{Port-Hamiltonian Descriptor Systems}\label{subsec:pHDAEs}

We consider port-Hamiltonian descriptor systems of the form
\begin{equation}
    \label{eq:pHEQ}
    \begin{aligned} 
        E\dot{x}    &= \underbrace{(J-R)Q}_{:=A}x+ Bu,\quad x(0)=0,\\ 
        y           &= \underbrace{B^\top Q}_{:=C} x ,
    \end{aligned}
\end{equation}
where $E,J,R,Q\in \RR^{n\times n}$, $B\in \RR^{n \times m}$ and $E^\top Q=Q^\top E\succeq 0$, $J^\top = -J$, $R=R^\top \succeq 0$, $r \vcentcolon= \rank(E)$, together with a Hamiltonian function $\mathcal{H}(x) \vcentcolon= \frac12x^\top E^\top Qx$.

One important property of this port-Hamiltonian structure is that it is closed under equivalence transformations. In fact for $S,T\in\GL{n}$ by using the coordinate transformation $\widetilde{x} = Tx$ and by multiplying the first equation of \eqref{eq:pHEQ} from the left by $S$, we obtain the equivalent system
\begin{equation}
    \label{eq:pHEQ_transformed}
    \begin{aligned} 
        \Etilde\dot{\widetilde{x}}    &= (\widetilde{J}-\Rtilde)\Qtilde\widetilde{x}+\Btilde u,\quad \widetilde{x}(0)=0,\\ 
        y                           &= \Btilde^\top \Qtilde \widetilde{x},
    \end{aligned}
\end{equation}
with $\Etilde=SET^{-1}$, $\widetilde{J}=SJS^\top$, $\Rtilde=SRS^\top$, $\Qtilde = S^{-\top}QT^{-1}$, $\Atilde \vcentcolon= (\widetilde{J}-\Rtilde)\Qtilde = SAT^{-1}$, and $\Btilde = SB$.
Especially, 
\begin{equation}
    \label{eq:transformedETQ}
    \Etilde^\top\Qtilde = T^{-\top}E^\top S^\top S^{-\top}QT^{-1} = T^{-\top}E^\top QT^{-1}
\end{equation}
is symmetric and positive semidefinite.
A particularly important class of equivalence transformations is given by 
\begin{equation}
    \label{eq:EquivalenceTransformationToSemiExplicit}
    S = 
    \begin{bmatrix}
        \Sigma_1^{-\frac12} & 0\\
        0                   & S_{22}
    \end{bmatrix}
    U^\top = 
    \begin{bmatrix}
        \Sigma_1^{-\frac12}U_1^\top\\
        S_{22}U_2^\top
    \end{bmatrix}
    ,\quad 
    T = 
    \begin{bmatrix}
        \Sigma_1^{\frac12} & 0\\
        0                   & T_{22}
    \end{bmatrix}
    V^\top = 
    \begin{bmatrix}
        \Sigma_1^{\frac12}V_1^\top\\
        T_{22}V_2^\top
    \end{bmatrix}
    ,
\end{equation}
where $S_{22},T_{22}\in\GL{n-r}$ and $\Sigma_1\in\GL{r}$, $U = [U_1\;\;U_2],V = [V_1\;\;V_2]\in\mathrm{O}(n)$ are obtained from the singular value decomposition of $E$ via
\begin{equation}
    \label{eq:svdOfE}
    E = 
    \begin{bmatrix}
        U_1 & U_2
    \end{bmatrix}
    \begin{bmatrix}
        \Sigma_1    & 0\\
        0           & 0
    \end{bmatrix}
    \begin{bmatrix}
        V_1^\top\\
        V_2^\top
    \end{bmatrix}
    .
\end{equation}
Especially, each equivalence transformation based on transformation matrices of the form \eqref{eq:EquivalenceTransformationToSemiExplicit} yields a semi-explicit port-Hamiltonian system.
 
\begin{remark}\label{rem:pH-structure}
    Note that for $Q=I_n$ in  \eqref{eq:pHEQ}, we obtain the special form
    \begin{equation}
        \label{eq:pH}
        \begin{aligned} 
            E\dot{x}    &= {(J-R)}x+ Bu,\quad x(0)=0,\\ 
            y           &={B^\top} x.
        \end{aligned}
    \end{equation} 
    While such a formulation with $Q=I_n$ is sometimes used in the literature, see for instance \cite{AltMU21,MehV21}, we focus on the more general formulation \eqref{eq:pHEQ}. 
    This is mainly due to the fact that \eqref{eq:pH} is closed under congruence but not under general equivalence transformations. However, the latter transformations are of particular interest for the balancing procedure considered in section~\ref{subsec:pHLQGBT}.
\end{remark}
 
\begin{remark}
    Note that the formulation \eqref{eq:pHEQ} particularly includes port-Hamiltonian systems of the form
    \begin{equation}
        \label{eq:pHWithFeedthrough}
        \begin{aligned}
            \dot{\xwideHat}   &= (\JwideHat-\RwideHat)\QwideHat\xwideHat+(\BwideHat-\PwideHat)u, \quad \xwideHat(0)=0, \\
            y                 &= (\BwideHat+\PwideHat)^\top \QwideHat\xwideHat+(\SwideHat+\NwideHat)u.
        \end{aligned}
    \end{equation}
    with $\QwideHat=\QwideHat^\top\succ 0$ and
    \begin{equation*}
        \begin{bmatrix}
            \JwideHat       & \BwideHat\\
            -\BwideHat^\top & \NwideHat
        \end{bmatrix}
        = -
         \begin{bmatrix}
            \JwideHat       & \BwideHat\\
            -\BwideHat^\top & \NwideHat
        \end{bmatrix}
        ^\top,\quad
        \begin{bmatrix}
            \RwideHat       & \PwideHat\\
            \PwideHat^\top  & \SwideHat
        \end{bmatrix}
        =
        \begin{bmatrix}
            \RwideHat       & \PwideHat\\
            \PwideHat^\top  & \SwideHat
        \end{bmatrix}
        ^\top \succeq 0.
    \end{equation*}
    This class of port-Hamiltonian ODE systems with feedthrough term has, for instance, been considered in \cite{BeaMV19}.
    By introducing the new state variables $x_1\vcentcolon= \xwideHat$, $x_2\vcentcolon= y$, and $x_3 \vcentcolon= u$, straightforward calculations reveal that \eqref{eq:pHWithFeedthrough} is equivalent to a port-Hamiltonian descriptor system of the form \eqref{eq:pHEQ} with
    \begin{align*}
        x &= 
        \begin{bmatrix}
            x_1\\
            x_2\\
            x_3
        \end{bmatrix}
        ,\quad E =
        \begin{bmatrix}
            I & 0 & 0\\
            0 & 0 & 0\\
            0 & 0 & 0
        \end{bmatrix}
        ,\quad J =
        \begin{bmatrix}
            \JwideHat       & \BwideHat     & 0\\
            -\BwideHat^\top & -\NwideHat    & I\\
            0               & -I            & 0
        \end{bmatrix}
        ,\quad R =
        \begin{bmatrix}
            \RwideHat       & \PwideHat & 0\\
            \PwideHat^\top  & \SwideHat & 0\\
            0               & 0         & 0
        \end{bmatrix}
        ,\\
        Q &=
        \begin{bmatrix}
            \QwideHat   & 0 & 0\\
            0           & 0 & I\\
            0           & I & 0
        \end{bmatrix}
        ,\quad B =
        \begin{bmatrix}
            0\\
            0\\
            I
        \end{bmatrix}
        .
    \end{align*}
\end{remark}

For the framework presented in the subsequent sections, we usually consider pH descriptor systems which are strongly stabilizable and detectable.
We emphasize that these assumptions are not very restrictive since every regular pH descriptor system may be reduced to a minimal one which is especially strongly stabilizable and detectable as well as port-Hamiltonian, see sections~4 and 5 in \cite{CheGH23} for the details.
A characterization of strongly stabilizable and detectable pH descriptor systems is provided in the following.

\begin{proposition}
    \label{thm:characterizationOfStronglyStabilizableAndDetectablePHSystems}
	The port-Hamiltonian system \eqref{eq:pHEQ} is regular and strongly stabilizable and detectable if and only if the following conditions are satisfied:
	\begin{enumerate}[(i)]
		\item \label{itm:invertibilityOfQ}$Q$ is invertible,
		\item \label{itm:regularityCondition}$\rank([E\;\; J\;\; R])=n$,
		\item \label{itm:behavioralStabilizabilityCondition}$\rank([i\omega E-JQ\;\; R\;\; B])=n$ for all $\omega\in\RR$, and
		\item \label{itm:impulseControllabilityCondition}$\rank([E\;\; JQV_2\;\; R\;\; B])=n$ with $V_2$ as in \eqref{eq:svdOfE}.
	\end{enumerate}
\end{proposition}

\begin{proof}
	``$\Rightarrow$'': Let the pH system \eqref{eq:pHEQ} be regular and strongly stabilizable and detectable.
	The strong detectability implies that the only vector $v\in\RR^n$ satisfying
	\begin{equation*}
		0 = v^\top A^\top = v^\top Q^\top (J-R)^\top\quad\text{and}\quad0=v^\top C^\top = v^\top Q^\top B
	\end{equation*}
	is the zero vector.
	Therefore, the kernel of $Q$ may only consist of the zero vector, i.e., \eqref{itm:invertibilityOfQ} is satisfied.
	
	Next, we show that the regularity of $(E,A)$ implies \eqref{itm:regularityCondition}.
	If \eqref{itm:regularityCondition} is violated, then there exists $v\in\RR^n\setminus\lbrace 0\rbrace$ with $v^\top E = v^\top J = v^\top R=0$ and, therefore, $(E,(J-R)Q)$ is singular.
	By contraposition, we conclude that the regularity of $(E,A)$ implies \eqref{itm:regularityCondition}.
	Using very similar arguments, one may show that strong stabilizability of $(E,A,B)$ implies the rank conditions \eqref{itm:behavioralStabilizabilityCondition} and \eqref{itm:impulseControllabilityCondition}.
	Therefore, a regular and strongly stabilizable and detectable pH system of the form \eqref{eq:pHEQ} needs to satisfy the conditions \eqref{itm:invertibilityOfQ}--\eqref{itm:impulseControllabilityCondition}.
	
	``$\Leftarrow$'': In the following, let the conditions \eqref{itm:invertibilityOfQ}--\eqref{itm:impulseControllabilityCondition} be satisfied.
	First, we show via contraposition that this implies the regularity of $(E,A)$.
	If $(E,A)$ is singular, then for each $\lambda\in\CC$ there exists $v\in\CC^n\setminus\lbrace 0\rbrace$ with
	\begin{equation*}
		(\lambda E-(J-R)Q)v=0,
	\end{equation*}
	which in turn implies
	\begin{equation}
		\label{eq:regularityConditionProof}
		\lambda v^*Q^\top Ev-v^*Q^\top JQv+v^*Q^\top RQv=0.
	\end{equation}
	Due to $E^\top Q=Q^\top E\succeq 0$, $R=R^\top\succeq 0$, and $J=-J^\top$, we infer that $v^*Q^\top Ev$ and $v^*Q^\top RQv$ are real and non-negative and that $v^*Q^\top JQv$ is purely imaginary.
	Since \eqref{eq:regularityConditionProof} has to also hold for $\lambda\in\CC_+$, we conclude that there exists a non-zero vector $v\in\Ker(E^\top Q)\cap\Ker(RQ)$ satisfying $v^*Q^\top JQv=0$.
	Moreover, this vector satisfies
	\begin{align*}
		Q^\top JQv  &= -(\lambda E^\top Q-Q^\top (J-R)Q)v = -(\lambda Q^\top E-Q^\top (J-R)Q)v\\
                    &= -Q^\top (\lambda E-(J-R)Q)v = 0,
	\end{align*}
	i.e., $v\in \Ker(E^\top Q)\cap\Ker(RQ)\cap \Ker(Q^\top JQ)$.
	Since $v$ is non-zero, using the invertibility of $Q$, this implies
	\begin{align*}
		\rank(
		\begin{bmatrix}
			E & J & R
		\end{bmatrix}				
		) &= \rank\Bigg(
		\begin{bmatrix}
			E^\top\\
			J^\top\\
			R^\top
		\end{bmatrix}
		\Bigg) = \rank\Bigg(
		\begin{bmatrix}
			E^\top\\
			-J\\
			R
		\end{bmatrix}
		\Bigg)\\
        &= \rank\Bigg(
		\begin{bmatrix}
			I_n & & \\
			& -Q^\top & \\
			& & I_n
		\end{bmatrix}
		\begin{bmatrix}
			E^\top\\
			-J\\
			R
		\end{bmatrix}
		Q\Bigg)\\
        &= \rank\Bigg(
		\begin{bmatrix}
			E^\top Q\\
			Q^\top JQ\\
			RQ
		\end{bmatrix}
		\Bigg) <n.
	\end{align*}
	Thus, we have shown that the singularity of $(E,A)$ implies that \eqref{itm:regularityCondition} is violated or, equivalently, that \eqref{itm:regularityCondition} implies the regularity of $(E,A)$.
	
	Next, we show via contraposition that $(E,A,B)$ is impulse controllable.
	If $(E,A,B)$ is not impulse controllable, then there exists $v\in\RR^n\setminus\lbrace 0\rbrace$ with
	\begin{equation*}
		E^\top v = 0,\quad V_2^\top Q^\top (J^\top-R^\top)v=0,\quad B^\top v=0.
	\end{equation*}
	This implies that there exists $\alpha\in\RR^{n-r}\setminus\lbrace 0\rbrace$ with 
    \begin{equation}
        \label{eq:proofOfImpulseControllability_-1}
        V_2^\top Q^\top (J^\top-R^\top)U_2\alpha=0,\quad B^\top U_2\alpha=0.
    \end{equation}
    Exploiting in addition the equality
    \begin{equation}
        \label{eq:proofOfImpulseControllability_0}
        \begin{aligned}
            QV_2    &= (U_1U_1^\top+U_2U_2^\top)QV_2 = EE^+QV_2+U_2U_2^\top QV_2 \\
                    &= (E^+)^\top E^\top QV_2+U_2U_2^\top QV_2 = (E^+)^\top Q^\top EV_2+U_2U_2^\top QV_2 = U_2U_2^\top QV_2,
        \end{aligned}
    \end{equation}
    we obtain
    \begin{equation}
        \label{eq:proofOfImpulseControllability_1}
        0 = V_2^\top Q^\top U_2U_2^\top(J^\top-R^\top)U_2\alpha.
    \end{equation}
    Moreover, the relation
    \begin{equation*}
        U_1^\top QV_2 = \Sigma_1^{-1}V_1^\top V_1\Sigma_1 U_1^\top QV_2 = \Sigma_1^{-1}V_1^\top E^\top QV_2 = \Sigma_1^{-1}V_1^\top Q^\top EV_2 = 0
    \end{equation*}
    implies
    \begin{align*}
        \det(U^\top QV) &= \det\Big(
        \begin{bmatrix}
            U_1^\top QV_1 & U_1^\top QV_2\\
            U_2^\top QV_1 & U_2^\top QV_2
        \end{bmatrix}
        \Big) = \det\Big(
        \begin{bmatrix}
            U_1^\top QV_1 & 0\\
            U_2^\top QV_1 & U_2^\top QV_2
        \end{bmatrix}
        \Big)\\
        &= \det(U_1^\top QV_1)\det(U_2^\top QV_2).
    \end{align*}
    Therefore, $U_2^\top QV_2$ is invertible due to the invertiblity of $Q$.
    Hence, \eqref{eq:proofOfImpulseControllability_1} implies 
    \begin{equation}
        \label{eq:proofOfImpulseControllability_2}
        U_2^\top(J^\top-R^\top)U_2\alpha = 0.
    \end{equation}
    Multiplying this equation from the left by $\alpha^\top$ yields $\alpha^\top U_2^\top RU_2\alpha=0$ and thus $RU_2\alpha=0$.
    Combining this with \eqref{eq:proofOfImpulseControllability_2} and the second equation in \eqref{eq:proofOfImpulseControllability_-1} yields $\alpha\in \Ker(B^\top U_2)\cap\Ker(RU_2)\cap\Ker(U_2^\top JU_2)$.
    We conclude that $v=U_2\alpha$ satisfies 
    \begin{equation*}
        E^\top v = 0,\quad B^\top v=0,\quad Rv=0,\quad U_2^\top Jv=0,
    \end{equation*}
    which implies together with \eqref{eq:proofOfImpulseControllability_0} the inequality
    \begin{align*}
        &\rank(
        \begin{bmatrix}
            E & JQV_2 & R & B
        \end{bmatrix}
        ) = \rank(
        \begin{bmatrix}
            E & JU_2U_2^\top QV_2 & R & B
        \end{bmatrix}
        )\\
        &= \rank\Bigg(
        \begin{bmatrix}
            E & JU_2 & R & B
        \end{bmatrix}
        \begin{bmatrix}
            I_n & & & \\
            & U_2^\top QV_2 & & \\
            & & I_n & \\ 
            & & & I_m
        \end{bmatrix}
        \Bigg)\\
        &= \rank(
        \begin{bmatrix}
            E & JU_2 & R & B
        \end{bmatrix}
        )<n,
    \end{align*}
    i.e., the rank condition \eqref{itm:impulseControllabilityCondition} is not satisfied.
    Thus, by contraposition we conclude that $(E,A,B)$ is impulse controllable.

    We proceed to show the impulse observability of $(E,A,C)$.
    If $(E,A,C)$ is not impulse observable, then there exists $w\in\RR^n\setminus\lbrace 0\rbrace$ with
    \begin{equation*}
        Ew=0,\quad U_2^\top(J-R)Qw=0,\quad B^\top Qw=0.
    \end{equation*}
    The first equation implies $E^\top Qw=Q^\top Ew=0$ and, using \eqref{eq:proofOfImpulseControllability_0} and the invertibility of $U_2^\top QV_2$, the second equation implies 
    \begin{equation*}
        V_2^\top Q^\top (J-R)Qw = V_2^\top Q^\top U_2U_2^\top (J-R)Qw = 0.
    \end{equation*}
    Then, introducing $v\vcentcolon= Qw$, the proof of the impulse observability of $(E,A,C)$ follows the same lines as the proof of the impulse controllability of $(E,A,B)$.

    Next, we show by contradiction that \eqref{eq:behavioralStabilizability} is satisfied.
    If \eqref{eq:behavioralStabilizability} was not satisfied, there would exist $\lambda\in\overline{\CC_+}$ and $v\in\CC^n\setminus\lbrace 0\rbrace$ with
    \begin{equation}
        \label{eq:proofOfImpulseControllability_3}
        (\bar{\lambda} E^\top-Q^\top(J^\top-R^\top))v=0,\quad B^\top v=0.
    \end{equation}
    The first equation implies
    \begin{equation}
        \label{eq:proofOfImpulseControllability_4}
        \bar{\lambda} v^*Q^{-\top} E^\top v-v^*J^\top v+v^*R^\top v=0.
    \end{equation}
    Since $Q^{-\top} E^\top = Q^{-\top}(E^\top Q)Q^{-1}$ and $R$ are symmetric and positive semi-definite, $v^*Q^{-\top} E^\top v$ and $v^*R^\top v$ are real and non-negative.
    Moreover, $v^*J^\top v$ is purely imaginary due to the skew-symmetry of $J$.
    Therefore, due to $\lambda\in\overline{\CC_+}$, the real part of \eqref{eq:proofOfImpulseControllability_4} yields $Rv=0$.
    In the case where the real part of $\lambda$ is positive, we obtain in addition $Q^{-\top} E^\top v = 0$, i.e., $v\in\Ker(R)\cap\Ker(E^\top)$.
    Combining this with \eqref{eq:proofOfImpulseControllability_3} yields $v\in\Ker(R)\cap\Ker(E^\top)\cap\Ker(Q^\top J)$, which contradicts the regularity of $(E,(J-R)Q)$.
    Therefore, we must have $\bar\lambda=i\omega$ for some $\omega\in\RR$.
    Hence, $Rv=0$ and \eqref{eq:proofOfImpulseControllability_3} imply $v\in\Ker(i\omega E^\top-Q^\top J^\top)$ and thus
    \begin{equation*}
        \rank(
        \begin{bmatrix}
           i\omega E-JQ & R & B 
        \end{bmatrix}
        ) <n,
    \end{equation*}
    which contradicts condition \eqref{itm:behavioralStabilizabilityCondition}.
    Consequently, we have shown that $(E,A,B)$ must satisfy \eqref{eq:behavioralStabilizability} and is thus strongly stabilizable.

    We conclude the proof by showing
    \begin{equation}
        \label{eq:behavioralDetectability}
        \rank([\lambda E^\top-A^\top\;\; C^\top])=n\quad\text{for all }\lambda\in\overline{\CC_+}
    \end{equation}
    in order to show that $(E,A,C)$ is strongly detectable.
    If this rank condition was not satisfied, there would exist $\lambda\in\overline{\CC_+}$ and $w\in\CC^n\setminus\lbrace 0\rbrace$ with
    \begin{equation}
        \label{eq:proofOfImpulseControllability_5}
        (\bar\lambda E-(J-R)Q)w=0,\quad B^\top Qw=0.
    \end{equation}
    The first equation implies
    \begin{align*}
        (\bar\lambda E^\top-Q^\top(J-R))Qw  &= (\bar\lambda E^\top Q-Q^\top(J-R) Q)w\\
                                        &= (\bar\lambda Q^\top E-Q^\top(J-R)Q)w = Q^\top(\bar\lambda E-(J-R)Q)w\\
                                        &= 0.
    \end{align*}
    Then, introducing $v\vcentcolon= Qw$, the proof of \eqref{eq:behavioralDetectability} follows the same lines as the proof of \eqref{eq:behavioralStabilizability}.
\end{proof}

\subsection{A Generalized KYP LMI}\label{subsec:generalizedKYPLMI}

In this subsection we summarize those results from \cite[ch.~3]{Voi15} which are relevant for this work, especially for the considerations in section~\ref{subsec:optimalQ}.
To this end, we consider a general regular and impulse-controllable control system given by $(E,A,B,C)\in\RR^{n\times n}\times \RR^{n\times n}\times \RR^{n\times m}\times \RR^{p\times n}$ and introduce the associated \emph{system space} via
\begin{equation*}
    \mathcal{V}_{\mathrm{sys}} \vcentcolon= \left\lbrace 
    \begin{bmatrix}
        x\\
        u
    \end{bmatrix}
    \in\RR^{n+m}\mid Ax+Bu\in\colspan(E)
    \right\rbrace.
\end{equation*}
Furthermore, for general matrices $\widehat{A},\widehat{B}\in\RR^{(n+m)\times (n+m)}$ we write $\widehat{A}\succeq_{\mathcal{V}_{\mathrm{sys}}}\widehat{B}$ if $\widehat{A}$ and $\widehat{B}$ satisfy
\begin{equation}
    \label{eq:matrixIneqOnASubspace}
    v^\top \widehat{A}v \geq v^\top \widehat{B} v\quad \text{for all }v\in\mathcal{V}_{\mathrm{sys}}.
\end{equation}
Analogously, we use the notation $\widehat{A}=_{\mathcal{V}_{\mathrm{sys}}}\widehat{B}$ for the case when equality holds in \eqref{eq:matrixIneqOnASubspace}.

In \cite{Voi15}, the author investigated solutions of the generalized KYP LMI
\begin{equation}
    \label{eq:generalizedKYPLMI}
    \begin{bmatrix}
        W_{11}-A^\top X-X^\top A    & W_{12}-X^\top B\\
        W_{12}^\top-B^\top X        & W_{22}
    \end{bmatrix}
    \succeq_{\mathcal{V}_{\mathrm{sys}}} 0,
    \quad E^\top X = X^\top E.
\end{equation}
with weighting matrices $W_{11}=W_{11}^\top\in\RR^{n\times n}$, $W_{12}\in\RR^{n\times m}$, and $W_{22}=W_{22}^\top\in\RR^{m\times m}$.
Actually, in \cite{Voi15} a different sign is used for $A$ and $B$, but for this work it is more reasonable to use the signs as in \eqref{eq:generalizedKYPLMI}.
The following observations demonstrate the relevance of \eqref{eq:generalizedKYPLMI} in the context of port-Hamiltonian descriptor systems.
When we consider the special case of a port-Hamiltonian control system as in \eqref{eq:pHEQ}, we note that $Q$ is a solution of \eqref{eq:generalizedKYPLMI} with weighting matrices $W_{11}=0$, $W_{12} = C^\top$, and $W_{22}=0$, i.e., $Q$ solves
\begin{equation}
    \label{eq:generalizedKYP_II}
    W(X) = 
    \begin{bmatrix}
        -A^\top X-X^\top A & C^\top-X^\top B\\
        C-B^\top X & 0
    \end{bmatrix}
    \succeq_{\mathcal{V}_{\mathrm{sys}}} 0,
    \quad E^\top X = X^\top E.
\end{equation}
In fact, $Q$ solves the matrix inequality in \eqref{eq:generalizedKYP_II} even on the whole space $\RR^{n+m}$, i.e., it is a solution of
\begin{equation}
    \label{eq:generalizedKYP_II_unrestricted}
    W(X) = 
    \begin{bmatrix}
        -A^\top X-X^\top A & C^\top-X^\top B\\
        C-B^\top X & 0
    \end{bmatrix}
    \succeq 0,
    \quad E^\top X = X^\top E.
\end{equation}
This follows from the defining properties of a port-Hamiltonian system, see the beginning of section~\ref{subsec:pHDAEs}.

On the other hand, if there exists $X\in \GL{n}$ which satisfies \eqref{eq:generalizedKYP_II_unrestricted} and $E^\top X\succeq 0$, then the system \eqref{eq:pHEQ} can be equivalently written as
\begin{equation}
    \label{eq:pHEX}
    \begin{aligned}
        E\dot{x}    &= \underbrace{(J_X-R_X)X}_{=A}x+ Bu,\quad x(0)=0,\\ 
        y           &= \underbrace{B^\top X}_{=C} x ,
    \end{aligned}
\end{equation}
with $J_X \vcentcolon= \frac12(AX^{-1}-X^{-\top}A^\top) = -J_X^\top$ and $R_X \vcentcolon= -\frac12(AX^{-1}+X^{-\top}A^\top) = R_X^\top\succeq 0$.
The resulting system has the same port-Hamiltonian structure as \eqref{eq:pHEQ}, but with the Hamiltonian $\mathcal{H}_X(x) \vcentcolon= \frac12x^\top E^\top Xx$.
Furthermore, we note that in the more general case that there exists a possibly non-invertible solution $X$ to \eqref{eq:generalizedKYP_II}, the explicit construction of a port-Hamiltonian formulation as in \eqref{eq:pHEX} using $X^{-1}$ is not possible, but the Hamiltonian $\mathcal{H}_X$ still satisfies the dissipation inequality
\begin{align*}
    \frac{\mathrm{d}\mathcal{H}_X}{\mathrm{d}t} &= x^\top E^\top X\dot{x} = x^\top X^\top E\dot{x} = x^\top X^\top\left(Ax+Bu\right) = x^\top X^\top Ax+x^\top X^\top Bu\\ 
    &= \frac12x^\top\left(X^\top A+A^\top X\right)x+x^\top C^\top u \leq y^\top u.
\end{align*}
Here, we implicitly assumed that there exists an input $u$ which admits a solution $x$ to \eqref{eq:pHEQ} and this is in particular the case if $(E,A)$ is regular, cf.~\cite[Thm.~2.12]{KunM06}.
Moreover, the dissipation inequality with $E^\top X\succeq 0$ implies passivity and, thus, since $Q$ is a solution of \eqref{eq:generalizedKYP_II} with $E^\top Q\succeq 0$, every regular port-Hamiltonian system of the form \eqref{eq:pHEQ} is passive.

In \cite{Voi15}, the author shows that the existence of extremal solutions of \eqref{eq:generalizedKYPLMI} is closely related to solutions $(Y,K,L)\in\RR^{n\times n}\times \RR^{q\times n}\times \RR^{q\times m}$ of the Lur'e equation
\begin{equation}
     \label{eq:Lure}
     \begin{bmatrix}
         W_{11}-A^\top Y-Y^\top A   & W_{12}-Y^\top B\\
         W_{12}^\top-B^\top Y       & W_{22}
     \end{bmatrix}
     =_{\mathcal{V}_{\mathrm{sys}}}
     \begin{bmatrix}
         K^\top\\
         L^\top
     \end{bmatrix}
     \begin{bmatrix}
         K & L
     \end{bmatrix}
     ,\quad E^\top Y=Y^\top E,
\end{equation}
where $(Y,K,L)$ have to additionally satisfy
\begin{equation*}
    \rank_{\RR(s)}\left(
    \begin{bmatrix}
        -sE-A   & -B\\
        K       & L
    \end{bmatrix}
    \right) = n+q.
\end{equation*}
Here, the block matrix in the last equality is regarded as a matrix with entries in $\RR(s)$, which is the field of rational functions with coefficients in $\RR$.

It immediately follows that $Y$ solves \eqref{eq:generalizedKYPLMI} if $(Y,K,L)$ is a solution of the Lur'e equation \eqref{eq:Lure}.
We call a solution of \eqref{eq:Lure} \emph{stabilizing} if 
\begin{equation*}
     \rank\left(
     \begin{bmatrix}
         -\lambda E-A    & -B\\
         K               & L
     \end{bmatrix}
     \right) = n+q \quad \text{for all } \lambda\in\CC_+.
\end{equation*}

The following theorem, which is a special case of Theorem~3.5.3 from \cite{Voi15}, establishes the existence of stabilizing solutions of the Lur'e equation \eqref{eq:Lure}.
\begin{theorem}
     \label{thm:existenceOfStabilizingSolutionsOfTheLureEquation}
     Assume we have given $(E,A,B)\in\RR^{n\times n}\times \RR^{n\times n}\times \RR^{n\times m}$ with regular pencil $(E,A)$ and impulse-controllable triple $(E,A,B)$ as well as the matrices $W_{11}=W_{11}^\top\in\RR^{n\times n}$, $W_{12}\in\RR^{n\times m}$, and $W_{22}=W_{22}^\top\in\RR^{m\times m}$.
     Furthermore, let the associated KYP LMI \eqref{eq:generalizedKYPLMI} be solvable.
     If $(E,A,B)$ is strongly anti-stabilizable, then there exists a stabilizing solution to the Lur'e equation \eqref{eq:Lure}.
\end{theorem}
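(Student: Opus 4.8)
Since the statement is presented as a special case of \cite[Thm.~3.5.3]{Voi15}, the shortest route is to check that the hypotheses of that theorem hold in the present setting and to read off its conclusion; the one nontrivial point is to match \emph{strong anti-stabilizability} of $(E,A,B)$ with the half-plane in which the extremal Lur'e solution is produced --- this matching is forced by the sign of $s$ in the pencil $\bigl[\begin{smallmatrix}-sE-A & -B\\ K & L\end{smallmatrix}\bigr]$, which is precisely why ``anti-stabilizable'' yields a solution that is ``stabilizing'' in the sense attached to \eqref{eq:Lure}. I sketch a self-contained version of the same argument.

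First I would reduce to a semi-explicit system. Using that $(E,A)$ is regular, apply an equivalence transformation $(E,A,B)\mapsto(SET^{-1},SAT^{-1},SB)$ --- for instance one of the form \eqref{eq:EquivalenceTransformationToSemiExplicit} built from \eqref{eq:svdOfE}, followed by a block transformation realizing the Weierstra{\ss} splitting --- so that the finite and infinite dynamics decouple. One then checks that regularity, impulse controllability, and strong anti-stabilizability are invariant (using that strictly equivalent pencils share their finite eigenvalues and that $[\,\lambda E-A\ \ B\,]\mapsto S\,[\,\lambda E-A\ \ B\,]\,\diag(T^{-1},I_m)$), and that --- after the congruence $W\mapsto\diag(T^{-\top},I_m)\,W\,\diag(T^{-1},I_m)$ of the weighting matrices --- the KYP LMI \eqref{eq:generalizedKYPLMI}, the Lur'e equation \eqref{eq:Lure}, and the normal-rank and stabilizing-rank conditions attached to it transform covariantly. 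Hence solvability of the LMI, and the property of being a (stabilizing) Lur'e triple, pass between the original and the transformed data, and it suffices to treat the semi-explicit case.

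Next I would eliminate the algebraic variables. Splitting $x=(x_1,x_2)$ along the semi-explicit block structure, impulse controllability permits resolving the ``$x_2$-rows'' of \eqref{eq:generalizedKYPLMI}/\eqref{eq:Lure} --- equivalently, a feedback/output-injection change of variables that trivializes the infinite part --- leaving an honest ODE system $(\redA,\redB)$ with reduced weighting matrices, a reduced ODE KYP LMI that is again solvable, and a reduced ODE Lur'e equation whose stabilizing solutions lift back to stabilizing solutions of \eqref{eq:Lure}; the residual rank $q$ is untouched by this step, and strong anti-stabilizability of $(E,A,B)$ descends to anti-stabilizability of $(\redA,\redB)$. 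Now the classical Lur'e/KYP machinery applies: feasibility of the reduced LMI gives the associated even matrix pencil
\[
\lambda\begin{bmatrix} 0 & I & 0\\ -I & 0 & 0\\ 0 & 0 & 0\end{bmatrix}
-\begin{bmatrix} 0 & -\redA & -\redB\\ -\redA^\top & W_{\mathrm{r},11} & W_{\mathrm{r},12}\\ -\redB^\top & W_{\mathrm{r},12}^\top & W_{\mathrm{r},22}\end{bmatrix}
\]
the right Kronecker structure, and the deflating subspace associated with its eigenvalues in $\overline{\CC_-}$ (with a suitable half of any imaginary-axis ones) yields $Y$; positive semidefiniteness of the residual $\bigl[\begin{smallmatrix} W_{\mathrm{r},11}-\redA^\top Y-Y^\top\redA & W_{\mathrm{r},12}-Y^\top\redB\\ \ast & W_{\mathrm{r},22}\end{smallmatrix}\bigr]$ follows from feasibility, and a full-rank factorization of it defines $(K,L)$ with normal rank $n+q$. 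Anti-stabilizability of $(\redA,\redB)$ is exactly what forces this deflating subspace to have the correct dimension and keeps the zeros of $\bigl[\begin{smallmatrix}-\lambda I-\redA & -\redB\\ K & L\end{smallmatrix}\bigr]$ out of $\CC_+$, i.e.\ makes $(Y,K,L)$ stabilizing; lifting back through the two reductions completes the construction.

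The main obstacle is the boundary analysis --- eigenvalues of the even pencil on the imaginary axis and at infinity. Using the even Kronecker canonical form (as developed in \cite{Voi15}) one must show that precisely the right number of purely imaginary eigenvalues is assigned to $Y$ so that the $\RR(s)$-rank identity in \eqref{eq:Lure} holds, and that re-inflating the infinite Weierstra{\ss} blocks from the first reduction reintroduces no rank drop in $\CC_+$ --- here impulse controllability guarantees that the infinite part contributes only a constant-in-$s$, full-column-rank block to $\bigl[\begin{smallmatrix}-sE-A & -B\\ K & L\end{smallmatrix}\bigr]$. Keeping this eigenvalue bookkeeping consistent with ``stabilizing'' rather than ``anti-stabilizing'' --- the reason the hypothesis is phrased with anti-stabilizability --- is the crux of the argument.
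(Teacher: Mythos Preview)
Your opening paragraph is essentially the paper's entire proof: the paper simply cites \cite[Thm.~3.5.3]{Voi15} and observes that, because the present KYP LMI \eqref{eq:generalizedKYPLMI} uses the opposite sign convention for $A$ and $B$ compared to \cite{Voi15}, the hypothesis there reads ``$(E,-A,-B)$ strongly stabilizable,'' which by Definition~\ref{def:controllabilityNotionsForDescriptorSystems} is exactly ``$(E,A,B)$ strongly anti-stabilizable.'' The paper stops after that two-line sign check; your subsequent self-contained sketch via semi-explicit reduction, elimination of algebraic variables, and even-pencil deflating subspaces is extra material not present in the paper's proof (it is closer in spirit to how \cite{Voi15} itself argues), so while plausible as an outline it goes well beyond what is needed here.
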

\begin{proof}
    Using our notation, Theorem~3.5.3 from \cite{Voi15} states the result for the case that $(E,-A,-B)$ is strongly stabilizable.
    From Definition~\ref{def:controllabilityNotionsForDescriptorSystems} it follows that strong stabilizability of $(E,-A,-B)$ is equivalent to strong anti-stabilizability of $(E,A,B)$, which concludes the proof.
\end{proof}

Based on the concept of stabilizing solutions of the Lur'e equation, we can make use of Theorem~3.5.4 from \cite{Voi15} to obtain a maximal solution of \eqref{eq:generalizedKYPLMI}.

\begin{theorem}
     \label{thm:extremalSolutionOfKYPLMI}
     Let $(E,A,B)\in\RR^{n\times n}\times \RR^{n\times n}\times \RR^{n\times m}$ with regular pencil $(E,A)$ and impulse-controllable triple $(E,A,B)$ as well as the matrices $W_{11}=W_{11}^\top\in\RR^{n\times n}$, $W_{12}\in\RR^{n\times m}$, and $W_{22}=W_{22}^\top\in\RR^{m\times m}$ be given.
     Furthermore, assume that the associated KYP LMI \eqref{eq:generalizedKYPLMI} is solvable.
     If there exists additionally a stabilizing solution $(Y,K,L)$ of the Lur'e equation \eqref{eq:Lure}, then $E^\top Y\succeq E^\top X$ holds for all solutions $X$ of \eqref{eq:generalizedKYPLMI}.
\end{theorem}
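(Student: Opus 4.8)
The plan is to reduce the statement to a comparison between an arbitrary LMI solution $X$ and the stabilizing Lur'e solution $Y$, exploiting that $Y$ itself solves the LMI (which, as noted in the excerpt, is immediate once we know $(Y,K,L)$ solves the Lur'e equation \eqref{eq:Lure}). Concretely, I would set $Z \vcentcolon= Y - X$ and aim to show $E^\top Z \succeq 0$. The natural object to study is the difference of the two LMI expressions. Subtracting the LMI for $X$ from the Lur'e equation for $Y$, and using that the $W_{ij}$ blocks cancel, one gets for every $v = \begin{bsmallmatrix} x \\ u \end{bsmallmatrix} \in \mathcal{V}_{\mathrm{sys}}$ an identity/inequality of the form
\begin{equation*}
    v^\top
    \begin{bmatrix}
        -A^\top Z - Z^\top A & -Z^\top B\\
        -B^\top Z & 0
    \end{bmatrix}
    v
    = v^\top \begin{bmatrix} K^\top \\ L^\top\end{bmatrix}\begin{bmatrix} K & L\end{bmatrix} v - v^\top W(X) v \;\; ( \leq \text{ something} ),
\end{equation*}
where the first term is $\|Kx + Lu\|^2 \geq 0$ and $v^\top W(X) v \geq 0$ since $X$ solves the LMI. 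This does not immediately give a sign, so the real mechanism must be a Lyapunov-type argument along trajectories.

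The key step is to feed in trajectories of the pencil $(E, A)$ and use the stabilizing property of $(Y,K,L)$. For $\lambda \in \CC_+$ and an eigenvector direction, or more robustly for a genuine time-domain trajectory $x(\cdot)$ of $E\dot x = Ax + Bu$ with $x(0)$ an arbitrary consistent initial value (not the zero one in \eqref{eq:dae}), I would integrate the quadratic form $\tfrac{\mathrm d}{\mathrm dt}\, x^\top E^\top Z x = x^\top E^\top Z \dot x + \dot x^\top E^\top Z x = x^\top(A^\top Z + Z^\top A) x + 2 x^\top Z^\top B u$ over $[0,T]$. Using the difference identity above this equals $\int_0^T \big( v^\top W(X) v - \|Kx + Lu\|^2 \big)\,\mathrm dt \geq -\int_0^T \|Kx+Lu\|^2\,\mathrm dt$ after dropping the nonnegative $W(X)$-term, giving
\begin{equation*}
    x(T)^\top E^\top Z\, x(T) \geq x(0)^\top E^\top Z\, x(0) - \int_0^T \|Kx(t) + Lu(t)\|^2\,\mathrm dt.
\end{equation*}
Now I would choose the control $u$ so that $x(t) \to 0$ and $\int_0^\infty \|Kx+Lu\|^2 < \infty$: this is precisely where strong anti-stabilizability of $(E,A,B)$ (Theorem~\ref{thm:existenceOfStabilizingSolutionsOfTheLureEquation}) and the stabilizing rank condition on $(Y,K,L)$ enter — the latter guarantees the "output" $Kx+Lu$ detects the unstable modes so that steering the Lur'e output to $L^2$ forces $x \to 0$. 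Taking $T \to \infty$ then yields $0 \geq x(0)^\top E^\top Z\, x(0) - \int_0^\infty \|Kx+Lu\|^2$; optimizing over such $u$ and using that the infimum of the integral cost with terminal state $0$ is controlled by the stabilizing solution, one concludes $x(0)^\top E^\top Z\, x(0) \leq 0$ for all consistent $x(0)$. Since $E^\top Z = E^\top(Y-X)$ is symmetric (both $E^\top Y$ and $E^\top X$ are) and consistent initial values span $\colspan$ of the relevant projector, a short argument upgrades this to $E^\top Y \preceq E^\top X$ — wait, sign: we actually want $E^\top Y \succeq E^\top X$, so I would instead run the argument with the roles arranged so that the stabilizing solution gives the \emph{maximal} available "storage", i.e. use that along any trajectory $x(0)^\top E^\top Y x(0) \geq$ the cost-to-go while $x(0)^\top E^\top X x(0)$ is dominated by it; the standard dissipativity characterization of extremal storage functions gives the stated inequality in the correct direction.

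The main obstacle is the descriptor (DAE) subtlety: one cannot freely prescribe $x(0)$, the trajectory lives on a constrained manifold, and "$x(t) \to 0$" must be interpreted modulo the infinite-eigenvalue (algebraic) part of the pencil. I expect the cleanest route is not to redo this analysis but to invoke it as Theorem~3.5.4 of \cite{Voi15} directly, checking only that our hypotheses (regularity of $(E,A)$, impulse-controllability and — via Theorem~\ref{thm:existenceOfStabilizingSolutionsOfTheLureEquation} — the existence of a stabilizing Lur'e solution) match those of that theorem after the sign convention change on $(A,B)$ noted in the excerpt. So the proof I would actually write is short: observe that $(Y,K,L)$ stabilizing implies $Y$ solves \eqref{eq:generalizedKYPLMI}; then quote \cite[Thm.~3.5.4]{Voi15} with the translated signs to conclude $E^\top Y \succeq E^\top X$ for every LMI solution $X$. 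If a self-contained argument is wanted, the trajectory/Lyapunov sketch above is the backbone, with the delicate point being the behavior of the algebraic variables, handled by passing to Weierstraß form \eqref{eq:WCF} and treating the finite and infinite blocks separately.
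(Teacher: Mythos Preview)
The paper does not prove this theorem at all: it is stated as a direct quotation of \cite[Thm.~3.5.4]{Voi15}, with no argument beyond the preceding sentence announcing that ``we can make use of Theorem~3.5.4 from \cite{Voi15}''. Your final recommendation---to simply invoke that result after matching the sign conventions on $(A,B)$---is therefore exactly what the paper does.

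Your trajectory/Lyapunov sketch is extra material not present in the paper. It captures the right intuition (storage-function comparison along controlled DAE trajectories, with the stabilizing Lur'e solution playing the role of the available storage), but as you yourself flag, the sign bookkeeping is muddled midway through, and the descriptor subtleties (consistent initial values, handling of the algebraic part via Weierstra{\ss} form) are only gestured at. If you were to keep that sketch in a write-up, you would need to commit to one direction of the dissipation inequality and carry the signs cleanly; as it stands the ``wait, sign'' paragraph would not survive a referee. Since the paper treats the theorem as a black-box citation, the safest and shortest proof is precisely the one you propose at the end.
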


\section{Modified Generalized Algebraic Riccati Equations}

In \cite{MoeRS11} the authors proposed an LQG balanced truncation approach for unstructured linear time-invariant descriptor systems based on generalized algebraic Riccati equations (GAREs).
For the case that there is no direct feedthrough term, these GAREs read
\begin{subequations}
    \label{eq:originalGAREs}
    \begin{align}
        \label{eq:originalControlGARE}
        A^\top\Pc+\Pc^\top A-\Pc^\top BB^\top\Pc+C^\top C &= 0, \quad E^\top\Pc-\Pc^\top E = 0,\\
        \label{eq:originalFilterGARE}
        A\Pf^\top+\Pf A^\top-\Pf C^\top C\Pf^\top+BB^\top &= 0, \quad E\Pf^\top-\Pf E^\top = 0.
    \end{align}
\end{subequations}
We emphasize that these GAREs are dual to each other in the sense that $\Pf$ is a solution of \eqref{eq:originalFilterGARE} if and only if $\Pf^\top$ is a solution of \eqref{eq:originalControlGARE} associated with the dual system given by $(E^\top,A^\top,C^\top,B^\top)$.
This observation allows to investigate existence and uniqueness of solutions for one of the two equations and to transfer these results to the other equation by duality arguments, see for instance \cite[sec.~3]{MoeRS11}.

To derive a structure-preserving model reduction method for pH systems, we extend the idea presented in \cite{BreMS21} to the DAE case by adding $2R$ to the constant term of \eqref{eq:originalFilterGARE} and we consider the resulting modified GAREs
\begin{subequations}
    \label{eq:modifiedGAREs}
    \begin{align}
        \label{eq:modifiedControlGARE}
        A^\top\Pc+\Pc^\top A-\Pc^\top BB^\top\Pc+C^\top C &= 0, \quad E^\top\Pc-\Pc^\top E = 0,\\
        \label{eq:modifiedFilterGARE}
        A\Pf^\top+\Pf A^\top-\Pf C^\top C\Pf^\top+BB^\top+2R &= 0, \quad E\Pf^\top-\Pf E^\top = 0,
    \end{align}
\end{subequations}
associated to the port-Hamiltonian system \eqref{eq:pHEQ}.

\begin{remark}\label{rem:alter_mod_gare}
  Comparing \eqref{eq:originalGAREs} and \eqref{eq:modifiedGAREs}, we see that only the generalized filter Riccati equation has been changed while the generalized control Riccati equation is left unaltered. 
  Let us emphasize that the following results can similarly be obtained by keeping the filter Riccati equation \eqref{eq:originalFilterGARE} and instead changing the constant term in \eqref{eq:originalControlGARE} from $C^\top C$ to $C^\top C + 2Q^\top RQ$, see \cite[Remark 3]{BreMS21} for a similar discussion in the ODE case.
\end{remark}

We call a solution $\Pc$ of \eqref{eq:modifiedControlGARE} \emph{stabilizing} if the pencil $(E,A-BB^\top\Pc)$ is regular, impulse-free, and all its finite eigenvalues lie in $\CC_-$.
Analogously, we call a solution $\Pf$ of \eqref{eq:modifiedFilterGARE} stabilizing if the pencil $(E,A-\Pf C^\top C)$ is regular, impulse-free, and all its finite eigenvalues lie in $\CC_-$.

\begin{remark}
    \label{rem:relationToMoeRS}
    When ignoring the $2R$ term in \eqref{eq:modifiedFilterGARE}, the GAREs in \eqref{eq:modifiedGAREs} are a special case of those considered in \cite{MoeRS11} by setting the feedthrough term to zero.
    On the other hand, even when including the $2R$ term, the modified filter equation \eqref{eq:modifiedFilterGARE} can be considered as an unmodified one by using
    \begin{equation}
        \label{eq:Bhat}
        \widehat{B} \vcentcolon= 
        \begin{bmatrix}
            B & \sqrt{2}R^{\frac12}
        \end{bmatrix}
        ,
    \end{equation}
    as input matrix, where we make use of the positive semidefiniteness of $R$.
    Moreover, if the triple $(E,A,B)$ is strongly stabilizable, then also the triple $(E,A,\widehat{B})$ is strongly stabilizable.
    This observation allows us to directly transfer some of the results from \cite{MoeRS11} about the existence and uniqueness of solutions of the unmodified GAREs without the $2R$ term to the modified equations in \eqref{eq:modifiedGAREs}.
    Since this transfer of the results usually follows the same arguments, we only carry this out explicitly when deriving Corollary~\ref{cor:semidefinitenessFilter} from Lemma~\ref{lem:semidefinitenessControl}.
    For the statements of Lemma \ref{lem:reductionToARE}, we avoid a detailed proof and instead refer to the respective result from the literature corresponding to the unmodified case without the $2R$ term.
\end{remark}

The following lemma is a special case of Lemma 3{.}4 from \cite{MoeRS11} with $D=0$.

\begin{lemma}
    \label{lem:semidefinitenessControl}
    Consider the system \eqref{eq:pHEQ} with regular $(E,A)$, strongly stabilizable $(E,A,B)$, and strongly detectable $(E,A,C)$.
    Then, a solution $\Pc$ of \eqref{eq:modifiedControlGARE} is stabilizing if and only if $E^\top \Pc$ is positive semidefinite.
\end{lemma}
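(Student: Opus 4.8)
The plan is to reduce the problem to the known symmetric-pencil theory by exploiting the structure of $(E,A)$ and the stabilizing property of $\Pc$. First I would transform the pencil $(E,A-BB^\top\Pc)$ to Weierstraß canonical form; since $\Pc$ is assumed stabilizing, this pencil is regular, impulse-free with all finite eigenvalues in $\CC_-$. Using the semi-explicit reduction based on transformation matrices of the form \eqref{eq:EquivalenceTransformationToSemiExplicit}, I would pass to an equivalent system in which $E$ is block-diagonal with an identity block and a zero block, and the closed-loop matrix is block-upper-triangular with a Hurwitz $(1,1)$ block and an invertible $(2,2)$ block. The symmetry constraint $E^\top\Pc = \Pc^\top E$ then forces $\Pc$ to have a compatible block structure, so $E^\top\Pc$ is governed entirely by the $(1,1)$ block, call it $P_{11}$, and the claim $E^\top\Pc\succeq 0$ becomes $P_{11}\succeq 0$.

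Next I would restrict \eqref{eq:modifiedControlGARE} to the $(1,1)$ block. After the reduction, the $(1,1)$ block of the equation becomes a standard (ODE) algebraic Riccati equation of the form $\scrA^\top P_{11} + P_{11}\scrA - P_{11}\scrB\scrB^\top P_{11} + \scrC^\top\scrC = 0$, where $\scrA - \scrB\scrB^\top P_{11}$ is Hurwitz (this is exactly the statement that $\Pc$ is stabilizing, transported to the reduced system). One direction is then classical Lyapunov theory: writing the Riccati equation as $(\scrA-\scrB\scrB^\top P_{11})^\top P_{11} + P_{11}(\scrA-\scrB\scrB^\top P_{11}) = -\scrC^\top\scrC - P_{11}\scrB\scrB^\top P_{11} \preceq 0$ with a Hurwitz coefficient matrix yields $P_{11}\succeq 0$, hence $E^\top\Pc\succeq 0$. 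For the converse, I would assume $E^\top\Pc\succeq 0$, i.e. $P_{11}\succeq 0$, and argue that the corresponding solution of the reduced ARE must then be the stabilizing one: a positive semidefinite solution of an ARE with $(\scrA,\scrB)$ stabilizable and $(\scrA,\scrC)$ detectable — both of which follow from strong stabilizability of $(E,A,B)$ and strong detectability of $(E,A,C)$ after reduction — is necessarily the maximal solution, and for such an ARE the maximal solution is the unique stabilizing one; this gives regularity, impulse-freeness, and the $\CC_-$ spectral condition, so $\Pc$ is stabilizing.

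The main obstacle I anticipate is bookkeeping around the reduction: verifying that the equivalence transformation to semi-explicit form does not destroy the symmetry constraint $E^\top\Pc = \Pc^\top E$ in the wrong way, that the $(2,2)$ and off-diagonal blocks of $\Pc$ are pinned down (or shown irrelevant) by the GARE together with the invertibility of the $(2,2)$ block of the closed-loop matrix, and that strong stabilizability/detectability of $(E,A,B)$ and $(E,A,C)$ transfer to genuine stabilizability/detectability of the reduced pair $(\scrA,\scrB)$, $(\scrA,\scrC)$. Once these structural facts are in place, both implications follow from standard results on the symmetric ARE, and I would simply cite Lemma~3.4 of \cite{MoeRS11} (with $D = 0$), of which this lemma is the announced special case, for the detailed execution.
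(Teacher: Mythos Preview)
Your proposal is correct and aligns with the paper's treatment: the paper gives no proof at all beyond the sentence preceding the lemma, which states that it is a special case of Lemma~3.4 from \cite{MoeRS11} with $D=0$. Your closing remark that you would ``simply cite Lemma~3.4 of \cite{MoeRS11} (with $D = 0$)'' is therefore exactly what the paper does; the reduction-to-ODE sketch you outline is additional detail the paper does not provide (and, as you note, is essentially the content of the cited lemma).
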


\begin{corollary}
    \label{cor:semidefinitenessFilter}
    Consider the system \eqref{eq:pHEQ} with regular $(E,A)$, strongly stabilizable $(E,A,B)$, and strongly detectable $(E,A,C)$.
    Then, a solution $\Pf$ of \eqref{eq:modifiedFilterGARE} is stabilizing if and only if $E\Pf^\top$ is positive semidefinite.
\end{corollary}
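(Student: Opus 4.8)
The plan is to derive the statement from Lemma~\ref{lem:semidefinitenessControl} by combining the reformulation of \eqref{eq:modifiedFilterGARE} indicated in Remark~\ref{rem:relationToMoeRS} with the duality between the two Riccati equations. Since $R=R^\top\succeq 0$, the matrix $\widehat{B}$ from \eqref{eq:Bhat} satisfies $\widehat{B}\widehat{B}^\top = BB^\top+2R$, so that a matrix $\Pf$ solves the modified filter equation \eqref{eq:modifiedFilterGARE} associated with $(E,A,B,C)$ if and only if it solves the unmodified filter equation \eqref{eq:originalFilterGARE} associated with the data $(E,A,\widehat{B},C)$. By the duality recalled after \eqref{eq:originalGAREs}, this in turn is equivalent to $\Pf^\top$ solving the unmodified control equation \eqref{eq:originalControlGARE} associated with the dual data $(E^\top,A^\top,C^\top,\widehat{B}^\top)$, i.e.
\[
 A\Pf^\top+\Pf A^\top-\Pf C^\top C\Pf^\top+\widehat{B}\widehat{B}^\top = 0,\qquad E\Pf^\top-\Pf E^\top = 0 .
\]

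I would then apply \cite[Lemma~3.4]{MoeRS11} with $D=0$ (of which Lemma~\ref{lem:semidefinitenessControl} is the port-Hamiltonian special case, and which is stated for general, unstructured descriptor systems) to the dual data $(E^\top,A^\top,C^\top,\widehat{B}^\top)$. The required hypotheses hold: $(E^\top,A^\top)$ is regular because $(E,A)$ is; $(E^\top,A^\top,C^\top)$ is strongly stabilizable because $(E,A,C)$ is strongly detectable by assumption; and $(E^\top,A^\top,\widehat{B}^\top)$ is strongly detectable because $(E,A,\widehat{B})$ is strongly stabilizable, which follows from strong stabilizability of $(E,A,B)$ as noted in Remark~\ref{rem:relationToMoeRS}. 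The lemma then gives that $\Pf^\top$ is a stabilizing solution of the above control equation if and only if $E\Pf^\top\succeq 0$.

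It remains to check that $\Pf^\top$ being a stabilizing solution of the dual control equation is the same as $\Pf$ being a stabilizing solution of \eqref{eq:modifiedFilterGARE}. By definition the former means that $(E^\top,A^\top-C^\top C\Pf^\top)$ is regular, impulse-free, and has all finite eigenvalues in $\CC_-$; since $(A^\top-C^\top C\Pf^\top)^\top = A-\Pf C^\top C$ and a regular pencil is strictly equivalent to its transpose — hence the two pencils share regularity, impulse-freeness, and the finite eigenvalues, as recalled after \eqref{eq:WCF} — this is equivalent to the same three properties for $(E,A-\Pf C^\top C)$, which is precisely the definition of $\Pf$ being a stabilizing solution of \eqref{eq:modifiedFilterGARE}. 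Chaining the equivalences proves the corollary. I expect the main obstacle to be purely organizational, namely keeping careful track of how the control-theoretic hypotheses translate under transposition and under augmenting the input matrix to $\widehat{B}$, and making sure the pencil entering the definition of ``stabilizing'' for the filter equation matches, up to transposition, the one entering the definition for the dual control equation.
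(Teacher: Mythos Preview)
Your proposal is correct and follows essentially the same route as the paper's proof: rewrite \eqref{eq:modifiedFilterGARE} via $\widehat{B}$, pass to the dual system $(E^\top,A^\top,C^\top,\widehat{B}^\top)$, and invoke the control-side result. Your additional care in citing the general \cite[Lemma~3.4]{MoeRS11} rather than Lemma~\ref{lem:semidefinitenessControl} (whose statement is tied to the pH system \eqref{eq:pHEQ}, though its content is not) and in spelling out why the stabilizing property is preserved under transposition is well placed.
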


\begin{proof}
    First, we note that \eqref{eq:modifiedFilterGARE} can be written as 
    \begin{equation}
        \label{eq:modifiedFilterGAREAsUnmodifiedOne}
        A\Pf^\top+\Pf A^\top-\Pf C^\top C\Pf^\top+\widehat{B}\widehat{B}^\top = 0, \quad E\Pf^\top-\Pf E^\top = 0
    \end{equation}
    with $\widehat{B}$ as defined in \eqref{eq:Bhat}.
    By duality we infer that $\Pf$ is a stabilizing solution of \eqref{eq:modifiedFilterGAREAsUnmodifiedOne} if and only if $\Pf^\top$ is a stabilizing solution of \eqref{eq:modifiedControlGARE} associated with the system given by $(E^\top,A^\top,C^\top,\widehat{B}^\top)$.
    The claim then follows from Lemma~\ref{lem:semidefinitenessControl}, since strong stabilizability of $(E,A,B)$ implies strong detectability of $(E^\top,A^\top,\widehat{B}^\top)$, cf.~Remark~\ref{rem:relationToMoeRS}.
\end{proof}

The following lemma establishes a relation between the solutions of the GAREs \eqref{eq:modifiedGAREs} of equivalent systems.
It is a straightforward extension of Lemma~3.5 from \cite{MoeRS11} and follows from simple calculations which are omitted here.

\begin{lemma}
    \label{lem:transformedGramians}
    Consider the equivalent port-Hamiltonian realizations \eqref{eq:pHEQ} and \eqref{eq:pHEQ_transformed} as well as the respective GAREs of the form \eqref{eq:modifiedGAREs}.
    Then, $\Pc$ and $\Pf$ are stabilizing solutions of the GAREs associated to \eqref{eq:pHEQ} if and only if $S^{-\top}\Pc T^{-1}$ and $S\Pf T^\top$ are stabilizing solutions of the GAREs associated to \eqref{eq:pHEQ_transformed}.
\end{lemma}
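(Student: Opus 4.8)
The plan is to verify the claim by direct substitution into the two defining equations of the modified GAREs, exploiting the explicit relations between the system matrices of \eqref{eq:pHEQ} and \eqref{eq:pHEQ_transformed} recorded just above \eqref{eq:transformedETQ}. Recall $\Etilde = SET^{-1}$, $\Atilde = SAT^{-1}$, $\Btilde = SB$, and $\Ctilde = \Btilde^\top\Qtilde = B^\top S^\top S^{-\top}QT^{-1} = B^\top QT^{-1} = CT^{-1}$, while $\Rtilde = SRS^\top$. So from the perspective of the unstructured data $(E,A,B,C)$ and the extra term $R$, an equivalence transformation acts exactly as $E\mapsto SET^{-1}$, $A\mapsto SAT^{-1}$, $B\mapsto SB$, $C\mapsto CT^{-1}$, $R\mapsto SRS^\top$. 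I would first treat the control GARE \eqref{eq:modifiedControlGARE}: substituting $\Pctilde \vcentcolon= S^{-\top}\Pc T^{-1}$ together with the tilded matrices, every factor of $S$ and $S^{-\top}$ (respectively $T^{-1}$ and its transpose) cancels in pairs, e.g. $\Atilde^\top\Pctilde = T^{-\top}A^\top S^\top S^{-\top}\Pc T^{-1} = T^{-\top}(A^\top\Pc)T^{-1}$, and likewise $\Ctilde^\top\Ctilde = T^{-\top}C^\top C T^{-1}$ and $\Pctilde^\top\Btilde\Btilde^\top\Pctilde = T^{-\top}\Pc^\top B B^\top\Pc T^{-1}$; hence the transformed control GARE equals $T^{-\top}$ times the original control GARE times $T^{-1}$, and analogously $\Etilde^\top\Pctilde - \Pctilde^\top\Etilde = T^{-\top}(E^\top\Pc - \Pc^\top E)T^{-1}$. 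Since $T\in\GL{n}$, one equation holds iff the other does.

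For the filter GARE \eqref{eq:modifiedFilterGARE} the same bookkeeping applies with $\Pftilde \vcentcolon= S\Pf T^\top$: each term picks up an outer $S$ on the left and $S^\top$ on the right, the key new point being that the added term transforms correctly, $2\Rtilde = 2SRS^\top$, which is precisely why the substitution $\Pf\mapsto S\Pf T^\top$ is the right one (in the unmodified case of \cite{MoeRS11} the term $BB^\top$ already transforms as $SBB^\top S^\top$, and $2R$ behaves identically). Thus the transformed filter GARE equals $S$ times the original filter GARE times $S^\top$, and $\Etilde\Pftilde^\top - \Pftilde\Etilde^\top = S(E\Pf^\top - \Pf E^\top)S^\top$. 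Alternatively, one can invoke the duality observation from Remark \ref{rem:relationToMoeRS} / the discussion preceding \eqref{eq:modifiedGAREs}: using $\BwideHat = [B\ \sqrt{2}R^{1/2}]$ the modified filter GARE is an unmodified one for the data $(E,A,\BwideHat,C)$, and $S\BwideHat\BwideHat^\top S^\top = SBB^\top S^\top + 2SRS^\top$, so the ODE-case computation carries over verbatim.

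It remains to check that the stabilizing property is preserved. This is immediate from strict equivalence of pencils: for the control equation, $(\Etilde,\Atilde - \Btilde\Btilde^\top\Pctilde) = (SET^{-1},\, S(A - BB^\top\Pc)T^{-1})$, so this pencil is strictly equivalent to $(E,A-BB^\top\Pc)$ and therefore has the same regularity, index (in particular impulse-freeness), and finite eigenvalues; the analogous identity $(\Etilde,\Atilde - \Pftilde\Ctilde^\top\Ctilde) = (SET^{-1},\, S(A - \Pf C^\top C)T^{-1})$ handles the filter equation. I do not anticipate a genuine obstacle here — the statement is essentially a change-of-variables identity — so the only thing requiring a little care is tracking which transpose goes with $S$ versus $T$ in the two (mutually dual) equations, and confirming that the congruence structure $E^\top X - X^\top E = 0$ is respected, which it is since congruence by $T^{-1}$ (resp. $S^\top$) preserves skew-symmetry of that expression.
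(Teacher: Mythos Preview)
Your proposal is correct and matches the paper's approach: the paper itself omits the proof, stating only that the lemma ``is a straightforward extension of Lemma~3.5 from \cite{MoeRS11} and follows from simple calculations,'' and your direct substitution argument supplies precisely those calculations, including the verification that the extra $2R$ term transforms as $2\Rtilde = 2SRS^\top$ and that the stabilizing property is preserved by strict equivalence of the relevant pencils.
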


An important ingredient for investigating the existence of solutions to the modified filter GARE \eqref{eq:modifiedFilterGARE} is the matrix pencil
\begin{equation}
    \label{eq:evenPencil}
    H_{\Pf} \vcentcolon= \Bigg(\underbrace{
    \begin{bmatrix}
        0  & -E^\top\\
        E  & 0
    \end{bmatrix}
    }_{=\vcentcolon \mathcal{E}}, \underbrace{
    \begin{bmatrix}
        -C^\top C   & -A^\top\\
        -A          & BB^\top+2R
    \end{bmatrix}
    }_{=\vcentcolon \mathcal{A}}\Bigg).
\end{equation}
Similarly as in \cite{MoeRS11} we observe that there is a relation between deflating subspaces of $H_{\Pf}$ and \eqref{eq:modifiedFilterGARE} in the sense that \eqref{eq:modifiedFilterGARE} is equivalent to
\begin{align*}
    \mathcal{A}
    \begin{bmatrix}
        \Pf^\top\\
        -I_n
    \end{bmatrix}
    = 
    \begin{bmatrix}
        I_n\\
        \Pf
    \end{bmatrix}
    \left(A^\top-C^\top C\Pf^\top\right), \quad
    \mathcal{E}
    \begin{bmatrix}
        \Pf^\top\\
        -I_n
    \end{bmatrix}
    = 
    \begin{bmatrix}
        I_n\\
        \Pf
    \end{bmatrix}
    E^\top.
\end{align*}

\begin{lemma}
    \label{lem:reductionToARE}
    Let $(E,A,B,C)$ be a semi-explicit realization of \eqref{eq:pHEQ} and let $(E,A)$ be regular, $(E,A,B)$ be strongly stabilizable, and $(E,A,C)$ be strongly detectable.
    Let $V_{11}, V_{21}\in\RR^{r\times r}$ and $V_{12}, V_{22}\in\RR^{(n-r)\times r}$ be such that
    \begin{equation*}
        \mathcal{V} \vcentcolon=
        \colspan\left(
        \begin{bmatrix}
            V_{11}\\
            V_{12}\\
            V_{21}\\
            V_{22}
        \end{bmatrix}
        \right)
    \end{equation*}
    is the stable deflating subspace of the pencil $H_{\Pf}$ defined in \eqref{eq:evenPencil}.
    Then, the following assertions hold.
    \begin{enumerate}[(i)]
        \item The matrix $V_{21}$ is invertible.
        \item The matrix $\Pf\in\RR^{n\times n}$ is a stabilizing solution of \eqref{eq:modifiedFilterGARE} if and only if it is given by
            \begin{equation*}
                \Pf = 
                \begin{bmatrix}
                    V_{11} V_{21}^{-1}               & 0\\
                    (V_{12}-\mathcal{P}_0^\top V_{22})V_{21}^{-1} & \mathcal{P}_0^\top
                \end{bmatrix}
                ^\top,
            \end{equation*}
            where $\mathcal{P}_0\in\RR^{(n-r)\times(n-r)}$ is a solution of the algebraic Riccati equation
            \begin{equation*}
                A_{22}\mathcal{P}_0^\top+\mathcal{P}_0A_{22}^\top+B_2B_2^\top+2R_{22}-\mathcal{P}_0C_2^\top C_2\mathcal{P}_0^\top=0,
            \end{equation*}
            where we used the block partition
            \begin{equation*}
                A = 
                \begin{bmatrix}
                    A_{11} & A_{12}\\
                    A_{21} & A_{22}
                \end{bmatrix}
                ,\quad R = 
                \begin{bmatrix}
                    R_{11} & R_{12}\\
                    R_{21} & R_{22}
                \end{bmatrix}
                ,\quad B = 
                \begin{bmatrix}
                    B_1\\
                    B_2
                \end{bmatrix}
                ,\quad C = 
                \begin{bmatrix}
                    C_1 & C_2
                \end{bmatrix}
            \end{equation*}
            with $A_{22},R_{22}\in\RR^{(n-r)\times(n-r)}$, $B_2\in\RR^{(n-r)\times m}$, $C_2\in\RR^{m\times(n-r)}$ and the sizes of the other blocks are accordingly.
    \end{enumerate}
\end{lemma}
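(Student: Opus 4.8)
### Proof plan for Lemma~\ref{lem:reductionToARE}

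The plan is to exploit the semi-explicit structure of $(E,A,B,C)$ in combination with the known correspondence between the stable deflating subspace of the even pencil $H_{\Pf}$ and the stabilizing solution of the generalized filter Riccati equation \eqref{eq:modifiedFilterGARE}, following the blueprint of the analogous result in \cite{MoeRS11} for the unmodified case. The key observation (already noted in Remark~\ref{rem:relationToMoeRS} and Corollary~\ref{cor:semidefinitenessFilter}) is that \eqref{eq:modifiedFilterGARE} is nothing but an unmodified filter GARE with input matrix $\BwideHat = [\,B \;\; \sqrt{2}R^{1/2}\,]$, so that the even pencil $H_{\Pf}$ takes exactly the form treated in \cite{MoeRS11} with $BB^\top$ replaced by $\BwideHat\BwideHat^\top = BB^\top + 2R$. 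Consequently the structural reduction carried out there — splitting off the algebraic part and reducing to a standard (ODE) algebraic Riccati equation — applies verbatim, and it suffices to identify the resulting reduced ARE. The intended route is therefore: (i) record the deflating-subspace characterization of stabilizing solutions via the two identities displayed just before the lemma; (ii) use the semi-explicit form $E = \begin{bmatrix} I_r & 0 \\ 0 & 0\end{bmatrix}$ to perform the block elimination; (iii) read off the invertibility of $V_{21}$ and the explicit formula for $\Pf$; (iv) verify that the resulting ARE for the algebraic block is the one stated.

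For part (i), I would first argue that $V_{21}$ is invertible. Write the generalized eigenvalue relation $\mathcal{A}\begin{bmatrix} V_{11} \\ V_{12} \\ V_{21} \\ V_{22}\end{bmatrix} = \mathcal{E}\begin{bmatrix} V_{11} \\ V_{12} \\ V_{21} \\ V_{22}\end{bmatrix}\Jhat$ with $\Jhat$ carrying the finite $\CC_-$-eigenvalues of $H_{\Pf}$, and use the semi-explicit block structure of $\mathcal{E}$ and $\mathcal{A}$. Suppose $V_{21}w = 0$ for some $w$; tracking this vector through the block rows of the eigenvalue relation — exactly as in the proof of the corresponding lemma in \cite{MoeRS11} — and invoking strong stabilizability of $(E,A,B)$ (hence of $(E,A,\BwideHat)$) together with strong detectability of $(E,A,C)$ to rule out uncontrollable/unobservable or impulsive modes forces $w = 0$. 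Here one uses that the stable deflating subspace has dimension $n$ and contains no vectors associated with infinite eigenvalues precisely because the pencil $(E, A - \Pf C^\top C)$ is required to be impulse-free in the definition of a stabilizing solution.

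Given invertibility of $V_{21}$, normalize the deflating-subspace basis by post-multiplying with $V_{21}^{-1}$, so that the third block becomes $I_r$. Plugging the normalized basis into the two identities $\mathcal{A}\begin{bmatrix} \Pf^\top \\ -I_n\end{bmatrix} = \begin{bmatrix} I_n \\ \Pf\end{bmatrix}(A^\top - C^\top C\Pf^\top)$ and $\mathcal{E}\begin{bmatrix} \Pf^\top \\ -I_n\end{bmatrix} = \begin{bmatrix} I_n \\ \Pf\end{bmatrix}E^\top$, and matching blocks according to the partition $A = \begin{bmatrix} A_{11} & A_{12} \\ A_{21} & A_{22}\end{bmatrix}$, $B = \begin{bmatrix} B_1 \\ B_2\end{bmatrix}$, $C = \begin{bmatrix} C_1 & C_2\end{bmatrix}$, $R = \begin{bmatrix} R_{11} & R_{12} \\ R_{21} & R_{22}\end{bmatrix}$, the equation $E\Pf^\top = \Pf E^\top$ with $E = \diag(I_r,0)$ forces $\Pf$ to have the lower-left/upper-right structure indicated (the $(1,1)$ block is $V_{11}V_{21}^{-1}$, the $(2,2)$ block is some $\mathcal{P}_0^\top$, and the off-diagonal blocks are determined), and substituting this ansatz into the second block equation of \eqref{eq:modifiedFilterGARE} collapses — after eliminating the first $r$ coordinates via the algebraic relations — to the stated ARE $A_{22}\mathcal{P}_0^\top + \mathcal{P}_0 A_{22}^\top + B_2 B_2^\top + 2R_{22} - \mathcal{P}_0 C_2^\top C_2 \mathcal{P}_0^\top = 0$. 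Conversely, any solution $\mathcal{P}_0$ of this ARE yields, via the displayed formula, a matrix $\Pf$ satisfying \eqref{eq:modifiedFilterGARE}, and stabilizing-ness transfers because the finite spectrum of $(E, A - \Pf C^\top C)$ coincides with that of $A_{22} - \mathcal{P}_0 C_2^\top C_2$ on the algebraic block together with the eigenvalues $\Jhat$ inherited from the deflating subspace.

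The main obstacle, and the only place genuine work is needed, is step (i): proving $V_{21} \in \GL{r}$. This is where one must carefully exploit the hypotheses — regularity of $(E,A)$, strong stabilizability of $(E,A,B)$, strong detectability of $(E,A,C)$ — to exclude degenerate deflating directions, and where the argument is not purely formal bookkeeping. Once invertibility is in hand, everything else is the routine block computation already carried out for the unmodified GAREs in \cite{MoeRS11}, so in line with Remark~\ref{rem:relationToMoeRS} I would cite that reference for the detailed calculation rather than reproduce it, noting only that the sole change is the replacement of $B$ by $\BwideHat = [\,B \;\; \sqrt 2 R^{1/2}\,]$, equivalently $BB^\top \leadsto BB^\top + 2R$ and $B_2 B_2^\top \leadsto B_2 B_2^\top + 2R_{22}$, throughout.
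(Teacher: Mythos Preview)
Your proposal is correct and takes essentially the same approach as the paper: reduce to the unmodified filter GARE via the extended input matrix $\widehat{B} = [\,B \;\; \sqrt{2}R^{1/2}\,]$ and invoke Lemma~3.10 of \cite{MoeRS11} (together with duality, as in Corollary~\ref{cor:semidefinitenessFilter}). The paper's proof is in fact even terser than your outline --- it is a one-line citation of \cite{MoeRS11} plus the $\widehat{B}$ substitution --- so your more detailed sketch of the block elimination and the invertibility argument for $V_{21}$ simply unpacks what that reference contains.
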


\begin{proof}
    The claim follows directly from Lemma~3.10 from \cite{MoeRS11}, by duality and by using the extended input matrix $\widehat{B}$ as in the proof of Corollary~\ref{cor:semidefinitenessFilter}.
\end{proof}

\begin{theorem}
    \label{thm:solutionsOfNonsymmetricGAREs}
    Consider the system \eqref{eq:pHEQ} with regular pair $(E,A)$, strongly stabilizable triple $(E,A,B)$, and strongly detectable triple $(E,A,C)$.
    Then, the following assertions are true.
    \begin{enumerate}[(i)]
        \item There exist stabilizing solutions of the GAREs \eqref{eq:modifiedGAREs}.
        \item Any two stabilizing solutions $\mathcal{P}_{\mathrm{c},1},\mathcal{P}_{\mathrm{c},2}$ of \eqref{eq:modifiedControlGARE} and any two stabilizing solutions $\mathcal{P}_{\mathrm{f},1},\mathcal{P}_{\mathrm{f},2}$ of \eqref{eq:modifiedFilterGARE} satisfy
            \begin{equation*}
                E^\top\mathcal{P}_{\mathrm{c},1} = E^\top\mathcal{P}_{\mathrm{c},2} \quad\text{and}\quad E\mathcal{P}_{\mathrm{f},1}^\top = E\mathcal{P}_{\mathrm{f},2}^\top,
            \end{equation*}
            respectively.
        \item A particular stabilizing solution of \eqref{eq:modifiedFilterGARE} is given by $Q^{-\top}$.
        \item There exist stabilizing solutions $\Pc$ and $\Pf$ of \eqref{eq:modifiedGAREs} such that $I_n+\Pf\Pc^{\top}$ is invertible.
    \end{enumerate}
\end{theorem}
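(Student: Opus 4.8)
The plan is to handle the four assertions in turn, leaning on the machinery already assembled. For (i), I would invoke Lemma~\ref{lem:reductionToARE} together with its dual counterpart: since $(E,A,B)$ is strongly stabilizable and $(E,A,C)$ is strongly detectable, the even pencils $H_{\Pf}$ and its analogue $H_{\Pc}$ for the control equation possess stable deflating subspaces of the right dimension (this is exactly the content of \cite{MoeRS11} transported via the extended input matrix $\BwideHat$ of \eqref{eq:Bhat}), and the explicit formulas in Lemma~\ref{lem:reductionToARE}(ii) then produce stabilizing solutions $\Pf$ and, dually, $\Pc$. Here one must first reduce to a semi-explicit realization via an equivalence transformation of the form \eqref{eq:EquivalenceTransformationToSemiExplicit}, apply the lemma, and transform back using Lemma~\ref{lem:transformedGramians}; the port-Hamiltonian structure is preserved throughout, so $R$ transforms correctly and the $2R$ term stays in place.

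For (ii), I would argue by a now-standard uniqueness-of-stabilizing-solution argument. Given two stabilizing solutions $\mathcal{P}_{\mathrm{c},1},\mathcal{P}_{\mathrm{c},2}$ of \eqref{eq:modifiedControlGARE}, subtracting the two Riccati identities and completing the square shows that $Z \vcentcolon= E^\top(\mathcal{P}_{\mathrm{c},1}-\mathcal{P}_{\mathrm{c},2})$ satisfies a generalized Sylvester/Lyapunov relation of the form $(A-BB^\top\mathcal{P}_{\mathrm{c},1})^\top \mathcal{P}_{\mathrm{c},2} + \ldots$, whose only solution compatible with both closed-loop pencils $(E,A-BB^\top\mathcal{P}_{\mathrm{c},i})$ being regular, impulse-free and Hurwitz is $Z=0$; equivalently, one can reduce to the semi-explicit case where the $(1,1)$-blocks are genuine ARE solutions (unique by classical theory) and the remaining blocks are pinned down by the algebraic part. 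The statement for $\Pf$ follows by duality, using $E\mathcal{P}_{\mathrm{f},i}^\top = (E^\top \mathcal{P}_{\mathrm{f},i}^{\top\,\top})$ reasoning applied to $(E^\top,A^\top,C^\top,\BwideHat^\top)$. For (iii), one simply substitutes $\Pf^\top = Q^{-1}$ into \eqref{eq:modifiedFilterGARE}: using $A = (J-R)Q$ and $C = B^\top Q$, the left-hand side becomes $(J-R) + (J-R)^\top + 2R - Q^{-1}Q^\top B B^\top Q Q^{-1} + BB^\top = (J-R)+(-J-R)+2R = 0$ (the $BB^\top$ terms cancel since $Q^\top Q^{-\top}=I$), and $E Q^{-1} - Q^{-\top}E^\top = Q^{-\top}(Q^\top E^\top - E^\top Q)Q^{-1}\cdot(\text{sign})$ — more carefully, $E\Pf^\top - \Pf E^\top = EQ^{-1} - Q^{-\top}E^\top$, which vanishes precisely because $E^\top Q = Q^\top E$. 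Stabilizing-ness of this solution follows from $E Q^{-1} \succeq 0$ (since $E^\top Q \succeq 0$ and $Q$ invertible, $EQ^{-1} = Q^{-\top}(Q^\top E)Q^{-1} = Q^{-\top}(E^\top Q)^\top Q^{-1}\succeq 0$... one checks the transpose bookkeeping) together with Corollary~\ref{cor:semidefinitenessFilter}.

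Assertion (iv) is the delicate one and where I expect the main obstacle. The issue is that $\Pc$ is only determined up to its action captured by $E^\top\Pc$ (by (ii)); its ``infinite part'' is free, and similarly for $\Pf$. I would exploit this freedom: parametrize the family of stabilizing solutions — in a semi-explicit realization, $\Pc$ has a fixed $(1,1)$-block $X_{\mathrm{c}}$ but free blocks coupling to the algebraic variables, and likewise for $\Pf$ — and show that $\det(I_n + \Pf\Pc^\top)$, viewed as a polynomial in these free parameters, is not identically zero, hence nonzero for a generic (indeed almost every) choice. To see it is not identically zero one can pick a convenient representative: e.g. in the semi-explicit coordinates choose the solutions whose algebraic blocks vanish, so that $\Pf\Pc^\top$ becomes block-triangular and $I_n+\Pf\Pc^\top$ has determinant $\det(I_r + X_{\mathrm{f}}X_{\mathrm{c}}^\top)\cdot\det(I_{n-r}+\mathcal{P}_{0,\mathrm{f}}\mathcal{P}_{0,\mathrm{c}}^\top)$, and both factors are positive since $X_{\mathrm{f}}, X_{\mathrm{c}}, \mathcal{P}_{0,\mathrm{f}},\mathcal{P}_{0,\mathrm{c}}$ are symmetric positive semidefinite (so $I + (\text{psd})(\text{psd})$ is invertible, its eigenvalues lying in $[1,\infty)$). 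The care needed is in setting up the parametrization cleanly and in transferring ``invertibility for one representative'' back through the equivalence transformation of Lemma~\ref{lem:transformedGramians}, noting that $I_n + \Pf\Pc^\top$ transforms by conjugation $S(\cdot)S^{-1}$ under \eqref{eq:pHEQ_transformed}, so invertibility is coordinate-independent.
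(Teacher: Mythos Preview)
Your overall strategy matches the paper's closely: reduce to a semi-explicit realization, use Lemma~\ref{lem:reductionToARE} (and its dual) to parametrize stabilizing solutions, and exploit the resulting block-triangular structure of $I_n+\Pf\Pc^\top$. Parts (ii) and (iii) are fine; the paper simply cites \cite{KawTK99} for (ii) and does the same substitution for (iii).

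The genuine gap is in (iv) (and since your (i) relies on the same existence step, it is affected too). In semi-explicit coordinates, Lemma~\ref{lem:reductionToARE} tells you that every stabilizing solution is built from a solution $\mathcal{P}_{0\mathrm{c}}$, $\mathcal{P}_{0\mathrm{f}}$ of the helper Riccati equations \eqref{eq:helperAREs}; these are \emph{not} free parameters you can vary polynomially or set to zero. Your determinant factorization $\det(I_r+X_{\mathrm{f}}X_{\mathrm{c}})\cdot\det(I_{n-r}+\mathcal{P}_{0\mathrm{f}}\mathcal{P}_{0\mathrm{c}}^\top)$ is correct and automatic from the block structure (no ``algebraic blocks vanish'' choice is needed), and the first factor is handled via Lemma~\ref{lem:semidefinitenessControl}/Corollary~\ref{cor:semidefinitenessFilter}. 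But your assertion that $\mathcal{P}_{0\mathrm{c}},\mathcal{P}_{0\mathrm{f}}$ are symmetric positive semidefinite is unjustified: the helper equations \eqref{eq:helperAREs} are \emph{nonsymmetric} AREs in $\Atilde_{22}$, and without further hypotheses on $(\Atilde_{22},\Btilde_2)$ and $(\Atilde_{22},\Ctilde_2)$ there is no reason for symmetric PSD solutions to exist.

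The paper closes this gap by exercising the remaining freedom in the semi-explicit transformation \eqref{eq:EquivalenceTransformationToSemiExplicit}: choosing $S_{22}=-V_2^\top A^\top U_2$, $T_{22}=I_{n-r}$ forces $\Atilde_{22}=-(U_2^\top AV_2)^\top(U_2^\top AV_2)$ to be symmetric with real nonpositive eigenvalues. Impulse controllability and observability then give $\rank([\Atilde_{22}\;\Btilde_2])=\rank([\Atilde_{22}^\top\;\Ctilde_2^\top])=n-r$, so the only possibly unstable eigenvalue $\lambda=0$ is handled and $(\Atilde_{22},\Btilde_2)$ is stabilizable, $(\Atilde_{22},\Ctilde_2)$ detectable. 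Now classical ARE theory (e.g.\ \cite{Won68}) yields symmetric PSD solutions $\mathcal{P}_{0\mathrm{c}},\mathcal{P}_{0\mathrm{f}}$, and your $I+(\text{psd})(\text{psd})$ argument finishes. You should insert exactly this step; the genericity detour is both unnecessary and ill-posed.
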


\begin{proof}
    \begin{enumerate}[(i)]
        \item This statement immediately follows from (iv).
        \item The statement for the control GARE \eqref{eq:modifiedControlGARE} is a special case of Lemma~2 from \cite{KawTK99}.
            The statement for the modified filter GARE \eqref{eq:modifiedFilterGARE} follows then by duality using similar arguments as in the proof of Corollary~\ref{cor:semidefinitenessFilter}.
        \item First, we note that the invertibility of $Q$ follows from Proposition~\ref{thm:characterizationOfStronglyStabilizableAndDetectablePHSystems}.
            The fact that $Q^{-\top}$ solves the first matrix equation in \eqref{eq:modifiedFilterGARE} can be comprehended by the calculation
            \begin{align*}
                    &AQ^{-1}+Q^{-\top}A^\top-Q^{-\top}C^\top CQ^{-1}+BB^\top+2R \\
                    &= J-R+J^\top-R^\top-B B^\top+BB^\top+2R = 0.
            \end{align*}
            For the second matrix equality in \eqref{eq:modifiedFilterGARE}, we note that the symmetry of $E^\top Q$ implies the symmetry of $Q^{-\top}E^\top$ which immediately yields that $Q^{-\top}$ satisfies the second matrix equality in \eqref{eq:modifiedFilterGARE}.
            Analogously, the positive semidefiniteness of $E^\top Q$ implies the positive semidefiniteness of $Q^{-\top}E^\top=EQ^{-1}$ and, thus, we infer by Corollary~\ref{cor:semidefinitenessFilter} that $Q^{-\top}$ is a stabilizing solution of \eqref{eq:modifiedFilterGARE}.
        \item The proof follows along the lines of the proof of Theorem~3.2 from \cite{MoeRS11} with only slight adaptations to the port-Hamiltonian setting.
            By Lemma~\ref{lem:transformedGramians}, $\Pc$ and $\Pf$ are stabilizing solutions of the modified GAREs \eqref{eq:modifiedGAREs} associated with $(E,A,B,C)$ if and only if $\Pctilde \vcentcolon= S^{-\top}\Pc T^{-1}$ and $\Pftilde \vcentcolon= S\Pf T^\top$ are stabilizing solutions of the modified GAREs \eqref{eq:modifiedGAREs} associated with the transformed system \eqref{eq:pHEQ_transformed}.
            Especially, we have
            \begin{equation*}
                I_n+\Pf\Pc^{\top} = S^{-1}\left(I_n+\Pftilde\Pctilde^\top\right)S
            \end{equation*}
            and, consequently, it is sufficient to investigate the invertibility of $I_n+\Pftilde\Pctilde^\top$, where $S$ and $T$ are chosen such that the transformed system \eqref{eq:pHEQ_transformed} is in semi-explicit form.
            Furthermore, we can choose without loss of generality $S$ and $T$ such that the lower right block $\Atilde_{22}$ of $\Atilde$ is symmetric and negative semidefinite, for instance, by choosing $S_{22}=-V_2^\top A^\top U_2$ and $T_{22}=I_{n-r}$ in \eqref{eq:EquivalenceTransformationToSemiExplicit}.
            
            Let $V_{11\mathrm{c}},V_{21\mathrm{c}},V_{11\mathrm{f}},V_{21\mathrm{f}}\in\RR^{r\times r}$ and $V_{12\mathrm{c}},V_{22\mathrm{c}},V_{12\mathrm{f}},V_{22\mathrm{f}}\in\RR^{(n-r)\times r}$ be such that
            \begin{equation*}
                \mathcal{V}_{\mathrm{c}} \vcentcolon=
                \colspan\left(
                \begin{bmatrix}
                    V_{11\mathrm{c}}\\
                    V_{12\mathrm{c}}\\
                    V_{21\mathrm{c}}\\
                    V_{22\mathrm{c}}
                \end{bmatrix}
                \right)\quad\text{and}\quad
                \mathcal{V}_{\mathrm{f}} \vcentcolon=
                \colspan\left(
                \begin{bmatrix}
                    V_{11\mathrm{f}}\\
                    V_{12\mathrm{f}}\\
                    V_{21\mathrm{f}}\\
                    V_{22\mathrm{f}}
                \end{bmatrix}
                \right)
            \end{equation*}
            are the stable deflating subspaces of the pencils
            \begin{align*}
                &H_{\Pctilde} \vcentcolon= \left(
                \begin{bmatrix}
                    0               & -\Etilde\\
                    \Etilde^\top  & 0
                \end{bmatrix}
                ,
                \begin{bmatrix}
                    -\Btilde\Btilde^\top & -\Atilde\\
                    -\Atilde^\top          & \Ctilde^\top \Ctilde
                \end{bmatrix}
                \right)\\
                \text{and}\quad &H_{\Pftilde} \vcentcolon= \left(
                \begin{bmatrix}
                    0           & -\Etilde^\top\\
                    \Etilde   & 0
                \end{bmatrix}
                ,
                \begin{bmatrix}
                    -\Ctilde^\top \Ctilde   & -\Atilde^\top\\
                    -\Atilde^\top             & \Btilde\Btilde^\top+2\Rtilde
                \end{bmatrix}
                \right),
            \end{align*}
            respectively.
            Then, by Lemma~\ref{lem:reductionToARE} and by Lemma~3.10 from \cite{MoeRS11}, $\Pctilde$ and $\Pftilde$ are stabilizing solutions of the GAREs \eqref{eq:modifiedGAREs} associated with $(\Etilde,\Atilde,\Btilde,\Ctilde)$ if and only if they are of the form
            \begin{equation*}
                \Pctilde = 
                \begin{bmatrix}
                    V_{11\mathrm{c}} V_{21\mathrm{c}}^{-1}               & 0\\
                    (V_{12\mathrm{c}}-\mathcal{P}_{0\mathrm{c}}V_{22\mathrm{c}})V_{21\mathrm{c}}^{-1} & \mathcal{P}_{0\mathrm{c}}
                \end{bmatrix}
                ,\;
                \Pftilde = 
                \begin{bmatrix}
                    V_{11\mathrm{f}} V_{21\mathrm{f}}^{-1}               & 0\\
                    (V_{12\mathrm{f}}-\mathcal{P}_{0\mathrm{f}}^\top V_{22\mathrm{f}})V_{21\mathrm{f}}^{-1} & \mathcal{P}_{0\mathrm{f}}^\top
                \end{bmatrix}
                ^\top,
            \end{equation*}
            where $\mathcal{P}_{0\mathrm{c}}$ and $\mathcal{P}_{0\mathrm{f}}$ solve 
            \begin{equation}
                \label{eq:helperAREs}
                \begin{aligned}
                    \Atilde_{22}^\top\mathcal{P}_{0\mathrm{c}}+\mathcal{P}_{0\mathrm{c}}^\top\Atilde_{22}+\Ctilde_2^\top\Ctilde_2-\mathcal{P}_{0\mathrm{c}}^\top\Btilde_2 \Btilde_2^\top\mathcal{P}_{0\mathrm{c}}=0,\\
                    \Atilde_{22}\mathcal{P}_{0\mathrm{f}}^\top+\mathcal{P}_{0\mathrm{f}}\Atilde_{22}^\top+\Btilde_2\Btilde_2^\top+2\Rtilde_{22}-\mathcal{P}_{0\mathrm{f}}\Ctilde_2^\top \Ctilde_2\mathcal{P}_{0\mathrm{f}}^\top=0,
                \end{aligned}
            \end{equation}
            respectively.
            To assess the invertibility of
            \begin{equation*}
                I_n+\Pftilde\Pctilde^\top = 
                \begin{bmatrix}
                    I_r+\left(V_{11\mathrm{c}} V_{21\mathrm{c}}^{-1}V_{11\mathrm{f}} V_{21\mathrm{f}}^{-1}\right)^\top & *\\
                    0 & I_{n-r}+\mathcal{P}_{0\mathrm{f}}\mathcal{P}_{0\mathrm{c}}^\top
                \end{bmatrix}
                ,
            \end{equation*}
            we show that there exist solutions of \eqref{eq:helperAREs} such that the diagonal blocks of $I_n+\Pftilde\Pctilde^\top$ are invertible.
            First, by Lemma~\ref{lem:semidefinitenessControl} and by Corollary~\ref{cor:semidefinitenessFilter}, we infer that $V_{11\mathrm{c}} V_{21\mathrm{c}}^{-1}$ and $V_{11\mathrm{f}} V_{21\mathrm{f}}^{-1}$ are symmetric and positive semidefinite.
            Consequently, all eigenvalues of their product are nonnegative and, thus, $I_r+\left(V_{11\mathrm{c}} V_{21\mathrm{c}}^{-1}V_{11\mathrm{f}} V_{21\mathrm{f}}^{-1}\right)^\top$ is invertible.
            Furthermore, since the triple $(\Etilde,\Atilde,\Btilde)$ is impulse controllable, we have
            due to the semi-explicit structure
            \begin{equation*}
                \rank\left(
                \begin{bmatrix}
                    I_r & 0 & \Atilde_{12} & \Btilde_1\\
                    0   & 0 & \Atilde_{22} & \Btilde_2
                \end{bmatrix}
                \right) = n.
            \end{equation*}
            Hence, we obtain $\rank([\Atilde_{22}\;\; \Btilde_2])=n-r$ and with similar arguments using the impulse observability of $(\Etilde,\Atilde,\Ctilde)$ we get $\rank([\Atilde_{22}^\top\;\; \Ctilde_2^\top])=n-r$.
            Furthermore, since $\Atilde_{22}$ has by construction only real and non-negative eigenvalues, we infer that the pair $(\Atilde_{22},\Btilde_2)$ is stabilizable and the pair $(\Atilde_{22},\Ctilde_2)$ is detectable.
            Finally, using well-known results for standard algebraic Riccati equations, see for instance \cite[Thm.~4.1]{Won68}, and using the extended input matrix $[\Btilde_2\;\; \sqrt{2}\Rtilde_{22}^{\frac12}]$, it follows that the modified algebraic Riccati equations \eqref{eq:helperAREs} have symmetric and positive semidefinite solutions $\mathcal{P}_{0\mathrm{c}}$ and $\mathcal{P}_{0\mathrm{f}}$. 
            Thus, there exist solutions of the modified GAREs \eqref{eq:modifiedGAREs} associated with $(\Etilde,\Atilde,\Btilde,\Ctilde)$ satisfying $I_n+\Pftilde\Pctilde^\top\in\GL{n}$.
    \end{enumerate}
\end{proof}

\begin{remark}\label{rem:I_Q_Pc_inv_not_needed}
  In what follows, the solution $\mathcal{P}_{\mathrm{f}}=Q^{-\top}$ will be of particular relevance for a model reduction approach preserving the pH structure.
  While it is not clear whether there exists a solution $\Pc$ such that $I_n+Q^{-\top} \Pc^\top$ is invertible, we only require the existence of a general pair $(\Pc,\Pf)$ of stabilizing solutions to \eqref{eq:modifiedGAREs} with $I_n+\Pf \Pc^\top\in\GL{n}$. 
  This existence result is first of all essential for proving the error bound in Theorem~\ref{thm:err_bnd}, but it is not part of a computational procedure, cf.~the discussion in \cite[Remark 2]{MoeRS11}.
\end{remark}

A consequence of Theorem~\ref{thm:solutionsOfNonsymmetricGAREs} is that the stabilizing solutions of the modified GAREs \eqref{eq:modifiedGAREs} may be used for constructing a stabilizing controller, which is itself passive.
This statement is specified in the following theorem and extends a similar assertion in the ODE case, cf.~Theorem~1 in \cite{BreMS21}.

\begin{theorem}
  Consider the system \eqref{eq:pHEQ} with regular pair $(E,A)$, strongly stabilizable triple $(E,A,B)$, and strongly detectable triple $(E,A,C)$. 
  If $\Pc$ is a stabilizing solution of \eqref{eq:modifiedControlGARE}, then the controller defined by 
  \begin{align*}
     \Ec=E, \ \ \Ac=A-BB^\top \Pc-Q^{-\top} C^\top C, \ \ \Bc = Q^{-\top}C^\top , \ \ \Cc = B^\top \Pc
  \end{align*}
   is regular and passive.
   Moreover, the associated closed-loop system obtained by $u=-\yc$ and $\uc=y$ is asymptotically stable, regular, and impulse-free.
\end{theorem}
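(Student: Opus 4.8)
The statement claims two things: (a) the controller $(\Ec,\Ac,\Bc,\Cc)$ is regular and passive, and (b) the closed-loop system arising from the interconnection $u = -\yc$, $\uc = y$ is asymptotically stable, regular, and impulse-free. The plan is to first exhibit a port-Hamiltonian realization of the controller, which simultaneously yields regularity (via $(E,A)$ regular plus a perturbation argument) and passivity (by construction of the pH structure). Then I would analyze the closed-loop pencil directly, using the stabilizing property of $\Pc$ together with the solution $Q^{-\top}$ of the filter equation.

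\emph{Step 1: pH structure of the controller.} Set $\Xhat \vcentcolon= \Pc$. Since $\Pc$ solves \eqref{eq:modifiedControlGARE}, we have $E^\top \Pc = \Pc^\top E$, and by Lemma~\ref{lem:semidefinitenessControl} (using the hypotheses of strong stabilizability and detectability) $E^\top \Pc \succeq 0$. The first step is to verify that $\Ac = A - BB^\top\Pc - Q^{-\top}C^\top C$ can be written as $(J_{\mathrm c}-R_{\mathrm c})\Pc$ for a skew-symmetric $J_{\mathrm c}$ and a positive semidefinite $R_{\mathrm c}$, and that $\Bc = Q^{-\top}C^\top$ equals $\Pc^\top{}^{-1}\cdot(\text{something})$ — but here one must be careful because $\Pc$ need not be invertible. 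Instead I would check directly that $(\Ec,\Ac,\Bc,\Cc)$ satisfies the KYP inequality \eqref{eq:generalizedKYP_II_unrestricted} with $X = \Pc$: the relevant block is $-\Ac^\top\Pc - \Pc^\top\Ac$, which, after substituting the definition of $\Ac$ and using the control Riccati equation \eqref{eq:modifiedControlGARE} to replace $A^\top\Pc + \Pc^\top A$ by $\Pc^\top BB^\top\Pc - C^\top C$, should simplify to $\Pc^\top BB^\top\Pc + C^\top C Q^{-1}\Pc + \Pc^\top Q^{-\top}C^\top C \succeq 0$ — and the off-diagonal block $\Cc^\top - \Pc^\top\Bc = \Pc^\top B - \Pc^\top Q^{-\top}C^\top$ must vanish, which holds precisely because $C = B^\top Q$, i.e. $Q^{-\top}C^\top = B$. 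Wait — this identity $Q^{-\top}C^\top = B$ is exactly what makes $\Bc = Q^{-\top}C^\top = B$; so the off-diagonal block is $\Pc^\top B - \Pc^\top B = 0$ and the $(1,1)$ block simplifies further. With $E^\top\Pc \succeq 0$ in hand, the dissipation-inequality computation in section~\ref{subsec:generalizedKYPLMI} then yields passivity of the controller.

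\emph{Step 2: regularity of the controller pencil.} The pencil $(\Ec,\Ac) = (E, A - BB^\top\Pc - Q^{-\top}C^\top C)$. Using $Q^{-\top}C^\top = B$ and $C = B^\top Q$, we get $Q^{-\top}C^\top C = B B^\top Q$, so $\Ac = A - BB^\top\Pc - BB^\top Q$. Since $\Pc$ is stabilizing, $(E, A - BB^\top\Pc)$ is regular, impulse-free, with all finite eigenvalues in $\CC_-$. I would argue that subtracting the further term $BB^\top Q$ still leaves a regular pencil: one approach is to note that $Q^{-\top}$ is itself a stabilizing solution of \eqref{eq:modifiedFilterGARE} (Theorem~\ref{thm:solutionsOfNonsymmetricGAREs}(iii)), so $(E, A - Q^{-\top}C^\top C) = (E, A - BB^\top Q)$ is also regular, impulse-free, Hurwitz; then $\Ac$ is a "sum-type" perturbation and one shows regularity by a Weierstraß-form argument on the common infinite-eigenvalue structure. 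Alternatively — and more cleanly — the passivity established in Step 1 together with $E^\top\Pc \succeq 0$ and the dissipation inequality shows the controller has no finite eigenvalues in $\overline{\CC_+}$ except possibly on the imaginary axis, and regularity can be deduced from the strong stabilizability/detectability hypotheses carried over through the Riccati structure.

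\emph{Step 3: closed-loop analysis.} Writing the interconnection $u = -\yc = -\Cc x_{\mathrm c}$, $\uc = y = Cx$, the closed-loop descriptor system has state $(x, x_{\mathrm c})$, mass matrix $\mathrm{diag}(E,E)$, and system matrix
\begin{equation*}
    \scrA_{\mathrm{cl}} =
    \begin{bmatrix}
        A & -B\Cc\\
        \Bc C & \Ac
    \end{bmatrix}
    =
    \begin{bmatrix}
        A & -BB^\top\Pc\\
        Q^{-\top}C^\top C & A - BB^\top\Pc - Q^{-\top}C^\top C
    \end{bmatrix}.
\end{equation*}
I would perform the standard LQG change of variables — replacing the second block row $x_{\mathrm c}$ by the error coordinate $e \vcentcolon= x - x_{\mathrm c}$ — under which the closed-loop pencil block-triangularizes into $(E, A - BB^\top\Pc)$ on one diagonal block and $(E, A - Q^{-\top}C^\top C)$ on the other. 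Both of these are regular, impulse-free, with finite spectrum in $\CC_-$: the first because $\Pc$ is a stabilizing solution of the control GARE, the second because $Q^{-\top}$ is a stabilizing solution of the filter GARE by Theorem~\ref{thm:solutionsOfNonsymmetricGAREs}(iii). Regularity, impulse-freeness, and asymptotic stability of the closed loop then follow from the block-triangular structure. The transformation matrix $\begin{bsmallmatrix} I & 0\\ I & -I\end{bsmallmatrix}$ is constant and invertible, so it commutes with $\mathrm{diag}(E,E)$ and preserves all these pencil properties.

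\emph{Main obstacle.} The delicate point is Step 2: establishing regularity of $(\Ec,\Ac)$ without invertibility of $\Pc$. The block-triangularization in Step 3 handles the closed-loop regularity, but the controller pencil itself is not obviously regular just from $\Pc$ being stabilizing, because we are adding the extra term $-Q^{-\top}C^\top C$. I expect the resolution is that, rewriting $\Ac = (A - BB^\top\Pc) - BB^\top Q$ and exploiting that $\Pc$ and $Q^{-\top}$ share the \emph{same} product with $E$ on the infinite-eigenvalue part (by the uniqueness statement Theorem~\ref{thm:solutionsOfNonsymmetricGAREs}(ii), $E\Pf^\top$ is unique, so $EQ^{-1} = E\mathcal P_{\mathrm f}^\top$ for every stabilizing $\Pf$), one can pass to a semi-explicit realization where the algebraic part of $\Ac$ is governed by an invertible lower-right block inherited from the index structure, giving regularity. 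That index/semi-explicit bookkeeping — showing the $(2,2)$ block of $\Ac$ in Weierstraß coordinates is invertible — is the step that needs the most care.
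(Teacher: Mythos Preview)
Your Steps~1 and 3 are essentially the paper's argument. In Step~1 you correctly take $X=\Pc$ in the KYP inequality for the controller; note, however, that in your $(1,1)$-block computation you dropped a $C^\top C$ term, so the correct expression is
\[
-\Ac^\top\Pc-\Pc^\top\Ac \;=\; \Pc^\top BB^\top\Pc \;+\; C^\top C \;+\; C^\top CQ^{-1}\Pc \;+\; \Pc^\top Q^{-\top}C^\top C \;=\; (C+B^\top\Pc)^\top(C+B^\top\Pc),
\]
which is what makes positive semidefiniteness evident. In Step~3 the change of variables $e=x-x_{\mathrm c}$ is exactly what the paper does; you should be aware that block-triangularization with both diagonal blocks impulse-free does not \emph{immediately} give impulse-freeness of the coupled pencil because of the off-diagonal term---the paper carries out two further explicit equivalence transformations on the Weierstra{\ss} forms of the diagonal blocks to decouple the algebraic parts, which is straightforward since both nilpotent blocks are zero.

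Step~2 is where there is a genuine gap. Neither of your two approaches works as stated. The ``sum-type perturbation'' idea fails because knowing that $(E,A-BB^\top\Pc)$ and $(E,A-Q^{-\top}C^\top C)$ are each regular gives no control over $(E,\Ac)$. The alternative---inferring regularity from passivity or eigenvalue location---is circular: if $(\Ec,\Ac)$ were singular then $\det(s\Ec-\Ac)\equiv 0$ and there are no ``finite eigenvalues'' to locate in the first place. Your final suggestion of checking invertibility of a $(2,2)$ block in semi-explicit coordinates is not available either, since $(E,A)$ is not assumed impulse-free.

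The paper's argument for regularity is quite different and uses precisely the computation you already did in Step~1. Suppose $(\Ec,\Ac)$ is singular; then for every $\lambda>0$ there exists $v\neq 0$ with $(\lambda E-\Ac)v=0$. From $\lambda v^\top\Pc^\top Ev=\tfrac12 v^\top(\Pc^\top\Ac+\Ac^\top\Pc)v\le 0$ and $\Pc^\top E\succeq 0$ one gets both sides equal to zero, hence $(C+B^\top\Pc)v=0$. But then, using $Q^{-\top}C^\top C=BC$, one has $\Ac=A-B(C+B^\top\Pc)$, so $(\lambda E-A)v=(\lambda E-\Ac)v-B(C+B^\top\Pc)v=0$. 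Thus $\lambda>0$ would be a finite eigenvalue of the dissipative Hamiltonian pencil $(E,(J-R)Q)$, contradicting \cite[Cor.~4.11]{MehMW18}. This is the missing idea.
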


\begin{proof}
    We start by proving the regularity of the pencil $(\Ec,\Ac)$ which we show by contradiction.
    The singularity of $(\Ec,\Ac)$ implies that for any $\lambda\in\RR_{>0}$ there exists $v\in\RR^n\setminus\lbrace 0\rbrace$ with $(\lambda E-\Ac)v =(\lambda \Ec-\Ac)v = 0$.
    This in turn implies
    \begin{equation}
        \label{eq:proofController}
        \lambda v^\top \Pc^\top Ev = v^\top\Pc^\top\Ac v = \frac12v^\top(\Pc^\top\Ac+\Ac^\top\Pc)v.
    \end{equation}
    Furthermore, since $\Pc$ is a solution of \eqref{eq:modifiedControlGARE}, we obtain
    \begin{equation}
        \label{eq:proofControllerTwo}
        \begin{aligned}
            &A_c^\top \Pc + \Pc^\top A_c\\
            &=(A-BB^\top\Pc-Q^{-\top} C^\top C)^\top \Pc + \Pc^\top (A-BB^\top\Pc-Q^{-\top} C^\top C) \\
            &= A^\top \Pc + \Pc^\top A - \Pc^\top BB^\top \Pc - \Pc^\top BB^\top \Pc - C^\top C Q^{-1} \Pc - \Pc^\top Q^{-\top} C^\top C \\
            &=-C^\top C -\Pc^\top BB^\top \Pc - C^\top C Q^{-1}\Pc - \Pc^\top Q^{-\top} C^\top C \\
            &= - (C^\top +\Pc^\top B)(C^\top +\Pc^\top B)^\top \preceq 0.
        \end{aligned}
    \end{equation}
    Together with \eqref{eq:proofController} and the positive semidefiniteness of $\Pc^\top E$, cf.~Lemma~\ref{lem:semidefinitenessControl}, this implies $v^\top \Pc^\top Ev = 0$ and $(C +B^\top \Pc)v=0$.
    Then, using $(\lambda \Ec-\Ac)v = 0$ we obtain
    \begin{align*}
        (\lambda E-A)v &= \left(\lambda E-A+B(C+B^\top \Pc )\right)v = (\lambda E-(A-BB^\top\Pc-BC))v\\
        &= (\lambda E-(A-BB^\top\Pc-Q^{-\top}C^\top C))v = (\lambda \Ec-\Ac)v = 0,
    \end{align*}
    i.e., $\lambda>0$ is a generalized eigenvalue of $(E,A) = (E,(J-R)Q)$.
    This is however a contradiction to the dissipative Hamiltonian structure of $(E,A)$, cf.~\cite[Cor.~4.11]{MehMW18}, and thus $(\Ec,\Ac)$ must be regular.

    Next, we show that the controller is passive.
    As outlined in section~\ref{subsec:generalizedKYPLMI} it is sufficient to show that the KYP LMI \eqref{eq:generalizedKYP_II_unrestricted} associated to the system given by $(\Ec, \Ac, \Bc, \Cc)$ has a solution $X$ satisfying $\Ec^\top X\succeq 0$.
    In the following, we show that $X=\Pc$ provides such a solution of \eqref{eq:generalizedKYP_II_unrestricted} associated to $(\Ec, \Ac, \Bc, \Cc)$. 
    First, we obtain that 
    \begin{align*}
        B_c^\top \Pc = (Q^{-\top} C^\top )^\top \Pc = C Q^{-1} \Pc = B^\top \Pc= C_{\mathrm{c}}.
    \end{align*}
    Furthermore, since $\Pc$ is a stabilizing solution of \eqref{eq:modifiedControlGARE}, Lemma~\ref{lem:semidefinitenessControl} implies $E^{\top}\Pc=\Pc^\top E \succeq 0$ and together with \eqref{eq:proofControllerTwo} this yields the passivity of the controller.
 
 With regard to the closed-loop system
 \begin{align*}
    \underbrace{
     \begin{bmatrix} 
        E & 0 \\ 
        0 & E_{\mathrm{c}} 
    \end{bmatrix} 
    }_{=\vcentcolon \scrE}
    \begin{bmatrix} 
        \dot{x} \\ 
        \dot{x}_{\mathrm{c}} 
    \end{bmatrix} 
    = \underbrace{
    \begin{bmatrix} 
        A                   & -BB^\top \Pc \\ 
        Q^{-\top}C^\top C   & A-BB^\top \Pc - Q^{-\top} C^\top C 
    \end{bmatrix} 
    }_{=\vcentcolon \scrA}
    \begin{bmatrix} 
        x \\ x_{\mathrm{c}} 
    \end{bmatrix} 
 \end{align*}
 note that a state space transformation with $S_1=T_1=\begin{bmatrix} I_n & 0 \\ -I_n & I_n \end{bmatrix}$ leads to the strictly equivalent matrix pencil
 \begin{align*}
     (S_1\scrE T_1^{-1},S_1\scrA T_1^{-1}) = \left( 
     \begin{bmatrix} 
        E & 0 \\ 
        0 & E
    \end{bmatrix}
    , 
    \begin{bmatrix} 
        A-BB^\top \Pc & -BB^\top \Pc \\ 
        0 & A-Q^{-\top} C^\top C 
    \end{bmatrix} 
    \right).
 \end{align*}
 The regularity of the controller then follows from the fact that $\Pc$ and $Q^{-\top}$ are stabilizing solutions of \eqref{eq:modifiedControlGARE} and \eqref{eq:modifiedFilterGARE}, respectively.
 Furthermore, this fact implies that all finite eigenvalues of $(\Ec,\Ac)$ are in the open left half-plane and due to the regularity of $(\Ec,\Ac)$ this is equivalent to asymptotic stability of the controller, see for instance \cite[Prop.~2.1]{DuLM13}.
Lastly, this fact also implies that the pencils $(E,A-BB^\top \Pc)$ and $(E,A-Q^{-\top} C^\top C )$ are impulse-free, i.e., there exist $\Sc,\Tc,\Sf,\Tf\in\GL{n}$ such that
\begin{align*}
    \left(\Sc E\Tc^{-1},\Sc(A-BB^\top \Pc)\Tc^{-1}\right) &= 
    \left(
    \begin{bmatrix}
        I_r & 0\\
        0   & 0
    \end{bmatrix}
    ,
    \begin{bmatrix}
        \Jc & 0\\
        0   & I_{n-r}
    \end{bmatrix}
    \right),\\ 
    \left(\Sf E\Tf^{-1},\Sf(A-Q^{-\top} C^\top C)\Tf^{-1}\right) &= 
    \left(
    \begin{bmatrix}
        I_r & 0\\
        0   & 0
    \end{bmatrix}
    ,
    \begin{bmatrix}
        \Jf & 0\\
        0   & I_{n-r}
    \end{bmatrix}
    \right),
\end{align*}
where $\Jc$ and $\Jf$ are in Jordan canonical form.
Thus, performing another state space transformation to the closed-loop system with $S_2=\begin{bmatrix} \Sc & 0 \\ 0 & \Sf \end{bmatrix}$ and $T_2=\begin{bmatrix} \Tc & 0 \\ 0 & \Tf \end{bmatrix}$ shows that $(\scrE,\scrA)$ is strictly equivalent to
\begin{equation*}
    \left( 
     \begin{bmatrix} 
        I_r &   &       & \\ 
            & 0 &       & \\
            &   & I_r   & \\
            &   &       & 0
    \end{bmatrix}
    , 
    \begin{bmatrix} 
        \Jc & 0         & \widetilde{K}_{11}    & \widetilde{K}_{12}\\
        0   & I_{n-r}   & \widetilde{K}_{21}    & \widetilde{K}_{22}\\
        0   & 0         & \Jf               & 0\\
        0   & 0         & 0                 & I_{n-r}
    \end{bmatrix} 
    \right).
 \end{equation*}
 with $\begin{bmatrix} \widetilde{K}_{11} & \widetilde{K}_{12}\\ \widetilde{K}_{21} & \widetilde{K}_{22} \end{bmatrix}\vcentcolon= -\Sc BB^\top \Pc\Tf^{-1}$.
 Another state space transformation with 
 \begin{equation*}
     S_3 = 
     \begin{bmatrix}
         I_r    & 0         & 0     & -\widetilde{K}_{12}\\
         0      & 0         & I_r   & 0\\
         0      & I_{n-r}   & 0     & -\widetilde{K}_{22}\\
         0      & 0         & 0     & I_{n-r}
     \end{bmatrix}
     ,\quad
     T_3^{-1} = 
     \begin{bmatrix}
         I_r    & 0                 & 0         & 0\\
         0      & -\widetilde{K}_{21}   & I_{n-r}   & 0\\
         0      & I_r               & 0         & 0\\
         0      & 0                 & 0         & I_{n-r}
     \end{bmatrix}
 \end{equation*}
 shows that $(\scrE,\scrA)$ is strictly equivalent to
 \begin{equation*}
    \left( 
     \begin{bmatrix} 
        I_r &       &   & \\ 
            & I_r   &   & \\
            &       & 0 & \\
            &       &   & 0
    \end{bmatrix}
    , 
    \begin{bmatrix} 
        \Jc & \widetilde{K}_{11}    & 0         & 0\\
        0   & \Jf               & 0         & 0\\
        0   & 0                 & I_{n-r}   & 0\\
        0   & 0                 & 0         & I_{n-r}
    \end{bmatrix} 
    \right).
 \end{equation*}
 Finally, performing a similarity transformation, which transforms $\begin{bmatrix} \Jc & \widetilde{K}_{11}\\ 0 & \Jf \end{bmatrix}$ to Jordan canonical form, leads to the Weierstraß canonical form of $(\scrE,\scrA)$ and in particular reveals that $(\scrE,\scrA)$ is impulse-free.
\end{proof}

\section{Structure-Preserving LQG Balanced Truncation}

The goal of this section is to derive a structure-preserving port-Hamiltonian model reduction scheme based on balancing and truncation of the modified GAREs \eqref{eq:modifiedGAREs}, thereby extending the results from \cite{BreMS21} to the DAE case. 
As is common for LQG balancing based methods, we further establish an a priori $\mathcal{H}_\infty$ error bound for suitably defined right coprime factorizations. 
Since this error bound implicitly depends on the chosen Hamiltonian, we conclude this section with a theoretical discussion on how to ideally choose the Hamiltonian in order to minimize this bound. 

\subsection{LQG Balancing and Structure-Preserving Truncation}\label{subsec:pHLQGBT}

As a first step towards a balancing procedure, we show that the solutions $\Pf$ and $\Pc$ of \eqref{eq:modifiedGAREs} provide characteristic values that are invariant under specific equivalence transformations.

\begin{proposition}
    \label{prop:LQGcharacteristicValues}
    Consider the port-Hamiltonian system \eqref{eq:pHEQ} and let $(E,A,B,C)$ be a semi-explicit realization with regular pair $(E,A)$, strongly stabilizable triple $(E,A,B)$, and strongly detectable triple $(E,A,C)$.
    Moreover, let $\Pc$ and $\Pf$ be stabilizing solutions of \eqref{eq:modifiedControlGARE} and \eqref{eq:modifiedFilterGARE}, respectively.
    Then, the eigenvalues of the matrix
    \begin{equation}
        \label{eq:characteristicLQGMatrix}
        \begin{bmatrix}
            I_r & 0
        \end{bmatrix}
        \Pf E^\top \Pc^\top E
        \begin{bmatrix}
            I_r\\
            0
        \end{bmatrix}
    \end{equation}
    are invariants of the system in the sense that they are independent of the particular semi-explicit realization.
\end{proposition}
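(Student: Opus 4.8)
The plan is to reduce the statement to a similarity assertion for a product of two symmetric $r\times r$ matrices. Throughout, write $\Pi\vcentcolon=\begin{bmatrix}I_r\\0\end{bmatrix}\in\RR^{n\times r}$, so that for any semi-explicit realization one has $E=\Pi\Pi^\top=E^\top$ and $\Pi^\top\Pi=I_r$.

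First I would exploit the symmetry constraints in \eqref{eq:modifiedGAREs}. For a semi-explicit realization, $E\Pf^\top=\Pf E^\top$ forces the lower-left block of $\Pf$ to vanish and its leading $r\times r$ block $P_{\mathrm f}\vcentcolon=\Pi^\top\Pf\Pi$ to be symmetric, and likewise $E^\top\Pc=\Pc^\top E$ forces the upper-right block of $\Pc$ to vanish and $P_{\mathrm c}\vcentcolon=\Pi^\top\Pc\Pi$ to be symmetric; in particular $E\Pf^\top=\begin{bmatrix}P_{\mathrm f}&0\\0&0\end{bmatrix}$ and $E^\top\Pc=\begin{bmatrix}P_{\mathrm c}&0\\0&0\end{bmatrix}$. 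A direct computation using $E=\Pi\Pi^\top$ then yields
\[
\begin{bmatrix}I_r&0\end{bmatrix}\Pf E^\top\Pc^\top E\begin{bmatrix}I_r\\0\end{bmatrix}=(\Pi^\top\Pf\Pi)(\Pi^\top\Pc^\top\Pi)=P_{\mathrm f}P_{\mathrm c}.
\]
Since $P_{\mathrm f}$ and $P_{\mathrm c}$ are the nontrivial blocks of $E\Pf^\top$ and $E^\top\Pc$, Theorem~\ref{thm:solutionsOfNonsymmetricGAREs}(ii) shows that they do not depend on the chosen stabilizing solutions, which already settles the ambiguity in $\Pc$ and $\Pf$ for a fixed realization.

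Next I would compare two semi-explicit realizations of the system. They are equivalent as in \eqref{eq:pHEQ_transformed}, and since $\rank$ is invariant under equivalence they are related by some $S,T\in\GL{n}$ with $SET^{-1}=E$, the same semi-explicit matrix on both sides. Writing this identity out blockwise forces
\[
S=\begin{bmatrix}S_{11}&S_{12}\\0&S_{22}\end{bmatrix},\qquad T^{-1}=\begin{bmatrix}S_{11}^{-1}&0\\T_{21}&T_{22}\end{bmatrix},\qquad S_{11}\in\GL{r}.
\]
By Lemma~\ref{lem:transformedGramians}, $\Pftilde=S\Pf T^\top$ and $\Pctilde=S^{-\top}\Pc T^{-1}$ are stabilizing solutions for the transformed realization, and cancelling $T^{-1}T$ and $S^\top S^{-\top}$ gives $E\Pftilde^\top=SE\Pf^\top S^\top$ and $E^\top\Pctilde=T^{-\top}E^\top\Pc\,T^{-1}$. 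Substituting the block forms of $S$ and $T^{-1}$ together with the expressions for $E\Pf^\top$ and $E^\top\Pc$ from the first step, the leading $r\times r$ blocks transform as $\widetilde{P}_{\mathrm f}=S_{11}P_{\mathrm f}S_{11}^\top$ and $\widetilde{P}_{\mathrm c}=S_{11}^{-\top}P_{\mathrm c}S_{11}^{-1}$.

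Combining the two steps, the characteristic matrix of the transformed realization is
\[
\widetilde{P}_{\mathrm f}\widetilde{P}_{\mathrm c}=S_{11}P_{\mathrm f}S_{11}^\top S_{11}^{-\top}P_{\mathrm c}S_{11}^{-1}=S_{11}\bigl(P_{\mathrm f}P_{\mathrm c}\bigr)S_{11}^{-1},
\]
which is similar to $P_{\mathrm f}P_{\mathrm c}$ and hence has the same eigenvalues; together with the independence of the stabilizing solutions noted above, this proves the claim. I expect the only genuinely non-routine point to be the derivation of the block-triangular form of $(S,T)$ from $SET^{-1}=E$ and keeping the transposes straight; the remaining manipulations are routine $2\times2$ block computations.
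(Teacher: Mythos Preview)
Your proof is correct and follows essentially the same approach as the paper's: derive the block-triangular structure of $S$ and $T^{-1}$ from the constraint $SET^{-1}=E$, use Lemma~\ref{lem:transformedGramians} for the transformation law of the Gramians, and conclude that the characteristic matrix is conjugated by $S_{11}$. Your version is slightly more streamlined in that you first reduce the characteristic matrix to the product $P_{\mathrm f}P_{\mathrm c}$ of the leading blocks and then track their individual transformations, and you also make explicit (via Theorem~\ref{thm:solutionsOfNonsymmetricGAREs}(ii)) the independence from the particular choice of stabilizing solutions, a point the paper leaves implicit.
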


\begin{proof}
    The proof is similar to the proof of Theorem~4.3(iii) from \cite{MoeRS11}.
    We consider an equivalence transformation defined by matrices $S,T\in\GL{n}$ with the constraint 
    \begin{equation}
        \label{eq:preservingSemiExplicitness}
        \underbrace{
        \begin{bmatrix}
            S_{11} & S_{12}\\
            S_{21} & S_{22}
        \end{bmatrix}
        }_{\vcentcolon=S}\underbrace{
        \begin{bmatrix}
            I_r & 0\\
            0   & 0
        \end{bmatrix}
        }_{=E}\underbrace{
        \begin{bmatrix}
            \widehat{T}_{11} & \widehat{T}_{12}\\
            \widehat{T}_{21} & \widehat{T}_{22}
        \end{bmatrix}
        }_{\vcentcolon=T^{-1}} = 
        \begin{bmatrix}
            I_r & 0\\
            0   & 0
        \end{bmatrix}
        ,
    \end{equation}
    i.e., we only consider state space transformations which preserve the semi-explicit structure of $E$.
    From \eqref{eq:preservingSemiExplicitness} we find that such a transformation preserves the semi-explicit structure if and only if $S_{11}$ and $\widehat{T}_{11}$ are invertible with $S_{11}=\widehat{T}_{11}^{-1}$ and $S_{21}$ and $\widehat{T}_{12}$ are zero.
    Another consequence of the semi-explicit structure is that $\Pf E^\top$ and $\Pc^\top E$ have the block structures
    \begin{equation*}
        \Pf E^\top = 
        \begin{bmatrix}
            \mathcal{P}_{\mathrm{f}11}  & 0\\
            0                           & 0
        \end{bmatrix}
        \quad\text{and}\quad \Pc^\top E = 
        \begin{bmatrix}
            \mathcal{P}_{\mathrm{c}11}  & 0\\
            0                           & 0
        \end{bmatrix}
    \end{equation*}
    where the matrices $\mathcal{P}_{\mathrm{f}11},\mathcal{P}_{\mathrm{c}11}\in\RR^{r\times r}$ are symmetric and positive semidefinite according to Lemma~\ref{lem:semidefinitenessControl} and Corollary~\ref{cor:semidefinitenessFilter}.
            
    To prove the claim, we consider the transformed system \eqref{eq:pHEQ_transformed} with $S,T\in\GL{n}$ satisfying \eqref{eq:preservingSemiExplicitness}.
    By Lemma~\ref{lem:transformedGramians}, stabilizing solutions of the GAREs \eqref{eq:modifiedGAREs} associated with the transformed system are given by $\Pctilde = S^{-\top}\Pc T^{-1}$ and $\Pftilde = S\Pf T^\top$, respectively.
    Consequently, we obtain that the matrix
    \begin{align*}
        &  
        \begin{bmatrix}
            I_r & 0
        \end{bmatrix}
        \Pftilde\Etilde^\top \Pctilde^\top\Etilde
        \begin{bmatrix}
            I_r\\
            0
        \end{bmatrix}
        = 
        \begin{bmatrix}
            I_r & 0
        \end{bmatrix}
        S\Pf T^\top T^{-\top}E^\top S^\top T^{-\top}\Pc^\top S^{-1}SET^{-1}
        \begin{bmatrix}
            I_r\\
            0
        \end{bmatrix}
        \\
        &= 
        \begin{bmatrix}
            I_r & 0
        \end{bmatrix}
        S\Pf E^\top S^\top T^{-\top}\Pc^\top ET^{-1}
        \begin{bmatrix}
            I_r\\
            0
        \end{bmatrix}
        \\
        &= 
        \begin{bmatrix}
            I_r & 0
        \end{bmatrix}
        \begin{bmatrix}
            S_{11}  & S_{12}\\
            0       & S_{22}
        \end{bmatrix}
        \begin{bmatrix}
            \mathcal{P}_{\mathrm{f}11}  & 0\\
            0                           & 0
        \end{bmatrix}
        \begin{bmatrix}
            I_r & *\\
            *   & *
        \end{bmatrix}
        \begin{bmatrix}
            \mathcal{P}_{\mathrm{c}11}  & 0\\
            0                           & 0
        \end{bmatrix}
        \begin{bmatrix}
            \widehat{T}_{11} & 0\\
            \widehat{T}_{21} & \widehat{T}_{22}
        \end{bmatrix}
        \begin{bmatrix}
            I_r\\
            0
        \end{bmatrix}
        \\
        &= S_{11}
        \begin{bmatrix}
            I_r & 0
        \end{bmatrix}
        \begin{bmatrix}
            \mathcal{P}_{\mathrm{f}11}  & 0\\
            0                           & 0
        \end{bmatrix}
        \begin{bmatrix}
            \mathcal{P}_{\mathrm{c}11}  & 0\\
            0                           & 0
        \end{bmatrix}
        \begin{bmatrix}
            I_r\\
            0
        \end{bmatrix}
        \widehat{T}_{11} = S_{11}
        \begin{bmatrix}
            I_r & 0
        \end{bmatrix}
        \Pf E^\top\Pc^\top E
        \begin{bmatrix}
            I_r\\
            0
        \end{bmatrix}
        S_{11}^{-1}
    \end{align*}
    is similar to the matrix from \eqref{eq:characteristicLQGMatrix} and, thus, has the same eigenvalues.
\end{proof}

\begin{theorem}\label{thm:bal_real}
    Consider the system \eqref{eq:pHEQ} with regular pair $(E,A)$, strongly stabilizable triple $(E,A,B)$, and strongly detectable triple $(E,A,C)$.
    Then, the following assertions are true.
    \begin{enumerate}
        \item[(i)] There exist $S,T\in\GL{n}$ such that the transformed system \eqref{eq:pHEQ_transformed} is semi-explicit and such that all stabilizing solutions $\Pctilde,\Pftilde$ of \eqref{eq:modifiedGAREs} associated to the transformed system \eqref{eq:pHEQ_transformed} satisfy
            \begin{equation*}
                \Pftilde\Etilde^\top = 
                \begin{bmatrix}
                    \Sigma  & 0         & 0 & 0 & 0\\
                    0       & \Sigma_2  & 0 & 0 & 0\\
                    0       & 0         & 0 & 0 & 0\\
                    0       & 0         & 0 & 0 & 0\\
                    0       & 0         & 0 & 0 & 0
                \end{bmatrix}
                ,\quad \Etilde^\top\Pctilde = 
                \begin{bmatrix}
                    \Sigma  & 0 & 0         & 0 & 0\\
                    0       & 0 & 0         & 0 & 0\\
                    0       & 0 & \Sigma_3  & 0 & 0\\
                    0       & 0 & 0         & 0 & 0\\
                    0       & 0 & 0         & 0 & 0
                \end{bmatrix}
            \end{equation*}
            with positive definite, diagonal matrices $\Sigma=\diag(\sigma_1,\ldots,\sigma_k)$, $\Sigma_2$, $\Sigma_3$ and $\sigma_1\ge \sigma_2 \ge \ldots \ge \sigma_k$.
        \item[(ii)] The $\Qtilde$ matrix of the transformed system from (i) has the block structure
            \begin{equation*}
                \Qtilde =
                \begin{bmatrix}
                    \Qtilde_{11}  & 0\\
                    \Qtilde_{21}  & \Qtilde_{22}
                \end{bmatrix}
            \end{equation*}
            and, in particular, $\Qtilde_{11}\in\RR^{r\times r}$ is a diagonal matrix.
    \end{enumerate}
\end{theorem}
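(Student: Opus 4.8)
The plan is to prove part~(i) along the lines of the construction of an LQG-balanced realization in \cite[Thm.~4.3]{MoeRS11}, reducing it to a simultaneous structured diagonalization of two symmetric positive semidefinite matrices, and to obtain part~(ii) from the explicit stabilizing solution $\Pf=Q^{-\top}$ provided by Theorem~\ref{thm:solutionsOfNonsymmetricGAREs}(iii). By an equivalence transformation of the form \eqref{eq:EquivalenceTransformationToSemiExplicit}, which by Lemma~\ref{lem:transformedGramians} only replaces one pair of stabilizing solutions of \eqref{eq:modifiedGAREs} by another, I may assume without loss of generality that the realization is already semi-explicit, i.e.\ $E=\diag(I_r,0)$. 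In this case, exactly as recorded in the proof of Proposition~\ref{prop:LQGcharacteristicValues}, the symmetry constraints in \eqref{eq:modifiedGAREs} together with Lemma~\ref{lem:semidefinitenessControl} and Corollary~\ref{cor:semidefinitenessFilter} force $\Pf E^\top=\diag(\mathcal{P}_{\mathrm{f}11},0)$ and $E^\top\Pc=\diag(\mathcal{P}_{\mathrm{c}11},0)$ with $\mathcal{P}_{\mathrm{f}11},\mathcal{P}_{\mathrm{c}11}\in\RR^{r\times r}$ symmetric positive semidefinite.

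Next I would restrict to equivalence transformations preserving the semi-explicit structure, which by \eqref{eq:preservingSemiExplicitness} are precisely those with $S=\begin{bmatrix}S_{11}&S_{12}\\0&S_{22}\end{bmatrix}$ and $T^{-1}=\begin{bmatrix}S_{11}^{-1}&0\\\widehat{T}_{21}&\widehat{T}_{22}\end{bmatrix}$. For such $S,T$ the same block computation as in the proof of Proposition~\ref{prop:LQGcharacteristicValues} gives $\Pftilde\Etilde^\top=\diag(S_{11}\mathcal{P}_{\mathrm{f}11}S_{11}^\top,0)$ and $\Etilde^\top\Pctilde=\diag(S_{11}^{-\top}\mathcal{P}_{\mathrm{c}11}S_{11}^{-1},0)$, so part~(i) reduces to the purely linear-algebraic claim that for symmetric positive semidefinite $\mathcal{P}_{\mathrm{f}11},\mathcal{P}_{\mathrm{c}11}\in\RR^{r\times r}$ there exists $S_{11}\in\GL{r}$ with $S_{11}\mathcal{P}_{\mathrm{f}11}S_{11}^\top=\diag(\Sigma,\Sigma_2,0,0)$ and $S_{11}^{-\top}\mathcal{P}_{\mathrm{c}11}S_{11}^{-1}=\diag(\Sigma,0,\Sigma_3,0)$ for positive definite diagonal $\Sigma=\diag(\sigma_1,\dots,\sigma_k)$, $\Sigma_2$, $\Sigma_3$ with $\sigma_1\ge\dots\ge\sigma_k$; the trailing $(n-r)\times(n-r)$ block (block~5 in the statement) is then automatically zero. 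Moreover it suffices to realize this for one choice of stabilizing solutions, since by Theorem~\ref{thm:solutionsOfNonsymmetricGAREs}(ii) the products $\Etilde^\top\Pctilde$ and $\Etilde\Pftilde^\top$ coincide for all stabilizing solutions of the transformed system.

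The hard part is this simultaneous diagonalization claim; it is a classical fact and is the linear-algebraic core of \cite[Thm.~4.3]{MoeRS11}, which I would invoke rather than redo in full. In outline, one normalizes $\mathcal{P}_{\mathrm{f}11}$ by a congruence to $\diag(I_p,0)$ with $p=\rank\mathcal{P}_{\mathrm{f}11}$; the transformations that keep this form are block upper triangular with orthogonal $(1,1)$-block, and this remaining freedom together with a singular value decomposition of the appropriate cross-block of the resulting $\mathcal{P}_{\mathrm{c}11}$ exposes the common positive definite part $\Sigma$; the directions carrying only $\mathcal{P}_{\mathrm{f}11}$ (respectively only $\mathcal{P}_{\mathrm{c}11}$) form blocks~2 and~3, the remaining directions form block~4, and a final permutation orders the diagonal of $\Sigma$ non-increasingly. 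This is the verbatim descriptor analogue of the construction underlying \cite{BreMS21,MoeRS11}.

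For part~(ii), let $Q\in\GL{n}$. By Theorem~\ref{thm:solutionsOfNonsymmetricGAREs}(iii), $Q^{-\top}$ is a stabilizing solution of \eqref{eq:modifiedFilterGARE} for the original system, so by Lemma~\ref{lem:transformedGramians} the matrix $SQ^{-\top}T^\top=\Qtilde^{-\top}$ is a stabilizing solution of the transformed filter GARE, where $\Qtilde=S^{-\top}QT^{-1}$ and $S,T$ are the transformation matrices constructed in part~(i). Applying part~(i) to this particular solution yields $\Qtilde^{-\top}\Etilde^\top=\diag(\Sigma,\Sigma_2,0,0,0)$, and since $\Etilde^\top=\diag(I_r,0)$ this says precisely that the first $r$ columns of $\Qtilde^{-\top}$ equal $\begin{bmatrix}D\\0\end{bmatrix}$ with $D\vcentcolon=\diag(\Sigma,\Sigma_2,0,0)\in\RR^{r\times r}$ diagonal. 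Since $\Qtilde^{-\top}$ is invertible, $D$ must have full rank, hence $D\in\GL{r}$. Therefore $\Qtilde^{-\top}$ is block upper triangular with invertible diagonal $(1,1)$-block $D$, so $\Qtilde^{-1}=(\Qtilde^{-\top})^\top$ is block lower triangular, whence $\Qtilde$ is block lower triangular as well, with $(1,1)$-block equal to the diagonal matrix $D^{-1}$. This is precisely the asserted structure of $\Qtilde$.
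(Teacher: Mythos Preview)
Your argument is correct and, for part~(i), follows essentially the same strategy as the paper: reduce to a semi-explicit realization, observe the block structure of $\Pf E^\top$ and $E^\top\Pc$, and invoke the simultaneous diagonalization of two symmetric positive semidefinite matrices (the paper cites \cite[Thm.~3.22]{ZhoDG96} directly, you go through \cite[Thm.~4.3]{MoeRS11}, which amounts to the same thing). Your explicit remark that Theorem~\ref{thm:solutionsOfNonsymmetricGAREs}(ii) makes it sufficient to balance one pair of stabilizing solutions is a point the paper leaves implicit.

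For part~(ii) your route is a genuine streamlining of the paper's. The paper tracks $Q$ through the two successive transformations: it first records the block lower-triangular structure of $\widehat{Q}$ in the intermediate semi-explicit realization, then identifies $\mathcal{P}_{\mathrm{f}11}=\widehat{Q}_{11}^{-1}$ via uniqueness, and finally pushes $\widehat{Q}$ through the balancing map \eqref{eq:balancingTransformation} to see that $\Qtilde_{11}$ is diagonal. You bypass the intermediate realization entirely: since $\Qtilde^{-\top}$ is itself a stabilizing solution of the transformed filter GARE, part~(i) forces $\Qtilde^{-\top}\Etilde^\top=\diag(\Sigma,\Sigma_2,0,0,0)$, and invertibility of $\Qtilde^{-\top}$ then forces the third and fourth blocks to be empty and gives the block-triangular shape of $\Qtilde$ directly by transposing and inverting. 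This is shorter and also makes the vanishing of blocks~3 and~4 a consequence rather than a separate observation; the paper's computation, on the other hand, makes the explicit formula $\Qtilde_{11}=(T_1\mathcal{P}_{\mathrm{f}11}T_1^\top)^{-1}$ visible, which is convenient for the later algorithmic considerations in Appendix~\ref{sec:algorithm}.
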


\begin{proof}
    \begin{enumerate}
        \item[(i)] The proof follows along the lines of the proof of Theorem~4.3 from \cite{MoeRS11}.
            As outlined in section~\ref{subsec:pHDAEs}, there is an equivalence transformation which transforms the original port-Hamiltonian system \eqref{eq:pHEQ} into a semi-explicit port-Hamiltonian system.
            This intermediate system is denoted with $(\widehat{E},\widehat{A},\widehat{B},\widehat{C})$.
            By Lemmata~\ref{lem:semidefinitenessControl} and \ref{cor:semidefinitenessFilter}, any stabilizing solutions $\Pchat$, $\Pfhat$ of \eqref{eq:modifiedGAREs} associated with the semi-explicit system $(\widehat{E},\widehat{A},\widehat{B},\widehat{C})$ satisfy  $\widehat{E}\Pfhat^\top=\Pfhat\widehat{E}^\top\succeq 0$ and $\widehat{E}^\top\Pchat = \Pchat^\top \widehat{E}\succeq 0$, respectively.
            Combining this with the semi-explicit form of $\widehat{E}$ we obtain 
            \begin{equation*}
                \Pfhat = 
                \begin{bmatrix}
                    \mathcal{P}_{\mathrm{f}11}   & \mathcal{P}_{\mathrm{f}12}\\
                    0                                       & \mathcal{P}_{\mathrm{f}22}
                \end{bmatrix}
                ,\quad \Pchat = 
                \begin{bmatrix}
                    \mathcal{P}_{\mathrm{c}11} & 0\\
                    \mathcal{P}_{\mathrm{c}21} & \mathcal{P}_{\mathrm{c}22}
                \end{bmatrix}
                ,
            \end{equation*}
            where $\mathcal{P}_{\mathrm{f}11}$ and $\mathcal{P}_{\mathrm{c}11}$ are symmetric and positive semidefinite.
            Then, by Theorem~3.22 from \cite{ZhoDG96}, there exists $T_1\in\GL{r}$ with 
            \begin{equation}
                \label{eq:simultaneousDiagonalization}
                T_1 \mathcal{P}_{\mathrm{f}11}T_1^\top = 
                \begin{bmatrix}
                    \Sigma  & 0         & 0 & 0\\
                    0       & \Sigma_2  & 0 & 0\\
                    0       & 0         & 0 & 0\\
                    0       & 0         & 0 & 0
                \end{bmatrix}
                ,\quad T_1^{-\top}\mathcal{P}_{\mathrm{c}11}T_1^{-1} =
                \begin{bmatrix}
                    \Sigma  & 0 & 0         & 0\\
                    0       & 0 & 0         & 0\\
                    0       & 0 & \Sigma_3  & 0\\
                    0       & 0 & 0         & 0
                \end{bmatrix}
                ,
            \end{equation}
            where $\Sigma$, $\Sigma_2$, and $\Sigma_3$ are positive definite, diagonal matrices.
            Especially, $T_1$ can be chosen such that the diagonal entries of $\Sigma$ are in descending order, cf.~the proof of Theorem~3.22 in \cite{ZhoDG96}.
            Finally, from Lemma~\ref{lem:transformedGramians} we infer that using the transformation matrices
            \begin{equation}
                \label{eq:balancingTransformation}
                \widehat{S} = \widehat{T} = 
                \begin{bmatrix}
                    T_1 & 0\\
                    0   & I_{n-r}
                \end{bmatrix}
            \end{equation}
            and applying a corresponding state space transformation to the system given by $(\widehat{E},\widehat{A},\widehat{B},\widehat{C})$, the resulting transformed system given by
            \begin{equation*}
                (\Etilde,\Atilde,\Btilde,\Ctilde) = (\widehat{S}\widehat{E}\widehat{T}^{-1},\widehat{S}\widehat{A}\widehat{T}^{-1},\widehat{S}\widehat{B},\widehat{C}\widehat{T}^{-1})
            \end{equation*}
            satisfies the properties of the claim.
        \item[(ii)] In order to prove the claim, we investigate the state space transformations used in the proof of (i) one after another.
            First, we consider the SVD of $E$ as in \eqref{eq:svdOfE} and the transformation matrices from \eqref{eq:EquivalenceTransformationToSemiExplicit} leading to the semi-explicit port-Hamiltonian realization $(\widehat{E},\widehat{A},\widehat{B},\widehat{C})$.
            We introduce the column splittings $U=[U_1\;\;U_2]$ and $V=[V_1\;\;V_2]$ with $U_1,V_1\in\RR^{n\times r}$ and $U_2,V_2\in\RR^{n\times (n-r)}$ as well as the splitting of $\widehat{Q}=S^{-\top}QT^{-1}$ given by
            \begin{equation*}
                \widehat{Q} = 
                \begin{bmatrix}
                    \widehat{Q}_{11} & \widehat{Q}_{12}\\
                    \widehat{Q}_{21} & \widehat{Q}_{22}
                \end{bmatrix}
            \end{equation*}
            with $\widehat{Q}_{11}\in\RR^{r\times r}$ and the other block sizes are accordingly.
            From the semi-explicit form of $\widehat{E}$ and the symmetry of $\widehat{E}^\top \widehat{Q}$, we immediately obtain that $\widehat{Q}_{11}$ is symmetric and that $\widehat{Q}_{12}=0$ holds.
            Furthermore, by Theorem~\ref{thm:solutionsOfNonsymmetricGAREs} $\widehat{Q}^{-\top}$ is a stabilizing solution of the modified filter GARE \eqref{eq:modifiedFilterGARE} associated with the system $(\widehat{E},\widehat{A},\widehat{B},\widehat{C})$.
            In addition, due to the semi-explicit form of $\widehat{E}$ and due to Theorem~\ref{thm:solutionsOfNonsymmetricGAREs}, the upper left $r\times r$ blocks of all stabilizing solutions of \eqref{eq:modifiedFilterGARE} associated with the system $(\widehat{E},\widehat{A},\widehat{B},\widehat{C})$ coincide.
            This block is called $\mathcal{P}_{\mathrm{f}11}$ in the proof of (i) and in particular coincides with $\widehat{Q}_{11}^{-\top}=\widehat{Q}_{11}^{-1}$, which follows from the fact that $\widehat{Q}$ is block triangular.
            An immediate consequence of this is that $\mathcal{P}_{\mathrm{f}11}$ is invertible and, thus, the third and fourth block rows and columns in the right-hand side of the first equation in \eqref{eq:simultaneousDiagonalization} have zero size.

            Based on $\widehat{Q}$, the second state space transformation leads to $\Qtilde = \widehat{S}^{-\top}\widehat{Q}\widehat{T}^{-1}$, where $\widehat{S}$ and $\widehat{T}$ are as specified in \eqref{eq:balancingTransformation}.
            As a consequence, we obtain
            \begin{align*}
                \Qtilde &= \widehat{S}^{-\top}\widehat{Q}\widehat{T}^{-1} =
                \begin{bmatrix}
                    T_1^{-\top} & 0\\
                    0           & I_{n-r}
                \end{bmatrix}
                \begin{bmatrix}
                    \widehat{Q}_{11} & 0\\
                    \widehat{Q}_{21} & \widehat{Q}_{22}
                \end{bmatrix}
                \begin{bmatrix}
                    T_1^{-1} & 0\\
                    0   & I_{n-r}
                \end{bmatrix}
                \\
                &=
                \begin{bmatrix}
                    T_1^{-\top}\widehat{Q}_{11}T_1^{-1} & 0\\
                    \widehat{Q}_{21}T_1^{-1}            & \widehat{Q}_{22}
                \end{bmatrix}
                =
                \begin{bmatrix}
                    T_1^{-\top}\mathcal{P}_{\mathrm{f}11}^{-1}T_1^{-1}   & 0\\
                    \widehat{Q}_{21}T_1^{-1}                                            & \widehat{Q}_{22}
                \end{bmatrix}
                \\
                &=
                \begin{bmatrix}
                    \left(T_1\mathcal{P}_{\mathrm{f}11}T_1^\top\right)^{-1}  & 0\\
                    \widehat{Q}_{21}T_1^{-1}                                                & \widehat{Q}_{22}
                \end{bmatrix}
                .
            \end{align*}
            The claim then follows from \eqref{eq:simultaneousDiagonalization}.
    \end{enumerate}
\end{proof}

The diagonal matrix $\Sigma$ in Theorem~\ref{thm:bal_real} contains the square roots of the non-zero eigenvalues of the matrix given in \eqref{eq:characteristicLQGMatrix} on the diagonal and as shown in Proposition~\ref{prop:LQGcharacteristicValues} these are invariants of the system \eqref{eq:pHEQ}.
Moreover, these invariants and the balanced realization from Theorem~\ref{thm:bal_real} play an important role for the error bound provided in section~\ref{subsec:errorBound}.

Let us emphasize that the particular structure of $\Qtilde$ in the balanced realization in Theorem \ref{thm:bal_real} is essential for showing that a reduced model obtained by truncation has a port-Hamiltonian realization \eqref{eq:pHEQ}. In particular, we have the following result.
 
\begin{theorem}\label{thm:rom_is_ph}
  Let $(\Etilde,\Atilde,\Btilde,\Ctilde)$ be given with $\Pctilde, \Pftilde$ as in Theorem \ref{thm:bal_real}.
  Then, for $\ell\in\lbrace 1,\ldots,r\rbrace$ and $
    \trunc =
    \begin{bmatrix}
        I_\ell  & 0 & 0\\
        0       & 0 & I_{n-r}
    \end{bmatrix}\in\RR^{(\ell+n-r)\times n}$
the reduced-order model $(\redE,\redA,\redB,\redC)=(\trunc \Etilde \trunc^\top,\trunc \Atilde \trunc^\top,\trunc \Btilde,\Ctilde \trunc^\top)$ is port-Hamiltonian.
\end{theorem}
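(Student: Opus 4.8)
The plan is to verify directly that $(\redE,\redA,\redB,\redC)$ has the port-Hamiltonian form \eqref{eq:pHEQ}, with the compressed matrix $\redQ\vcentcolon=\trunc\Qtilde\trunc^\top$ playing the role of the $Q$--matrix and the reduced Hamiltonian $\tfrac12 x^\top\redE^\top\redQ\,x$. First I would fix the block partition of $\RR^n$ into the $\ell$ retained differential states, the $r-\ell$ truncated differential states, and the $n-r$ algebraic states, so that $\trunc$ selects the first and third blocks, $\Etilde=\diag(I_\ell,I_{r-\ell},0)$, and hence $\redE=\diag(I_\ell,0)$. By Theorem~\ref{thm:bal_real}(ii), $\Qtilde=\begin{bmatrix}\Qtilde_{11}&0\\ \Qtilde_{21}&\Qtilde_{22}\end{bmatrix}$ with $\Qtilde_{11}\in\RR^{r\times r}$ diagonal; refining to the fine partition, this means that the block row of $\Qtilde$ indexed by the truncated states has the shape $[\,0\ \ \ast\ \ 0\,]$, which is exactly the identity $(I_n-\trunc^\top\trunc)\,\Qtilde\,\trunc^\top=0$. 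Since $\Qtilde$ is invertible, so are $\Qtilde_{11}$ and $\Qtilde_{22}$; writing $Q_a$ for the leading $\ell\times\ell$ block of $\Qtilde_{11}$, the compression $\redQ=\begin{bmatrix}Q_a&0\\ \ast&\Qtilde_{22}\end{bmatrix}$ is block lower triangular with invertible diagonal blocks and hence invertible.

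The main step will be to deduce from this structural identity the two relations
\begin{equation*}
    \redA^\top\redQ=\trunc\,\Atilde^\top\Qtilde\,\trunc^\top,\qquad \redC=\redB^\top\redQ.
\end{equation*}
Both follow by inserting $\trunc^\top\trunc$ and simplifying: $\redA^\top\redQ=\trunc\Atilde^\top\trunc^\top\trunc\Qtilde\trunc^\top=\trunc\Atilde^\top\big(\trunc^\top\trunc\,\Qtilde\trunc^\top\big)=\trunc\Atilde^\top\Qtilde\trunc^\top$, and likewise $\redC=\Ctilde\trunc^\top=\Btilde^\top\Qtilde\trunc^\top=\Btilde^\top\trunc^\top\trunc\Qtilde\trunc^\top=\redB^\top\redQ$ using $\Ctilde=\Btilde^\top\Qtilde$; each time one replaces $\trunc^\top\trunc\,\Qtilde\trunc^\top$ by $\Qtilde\trunc^\top$. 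The hard part is really just this cancellation of the truncated block row of $\Qtilde$ against $\trunc^\top$ — which is precisely where the diagonal structure of $\Qtilde_{11}$ established in Theorem~\ref{thm:bal_real} enters. Everything else is routine bookkeeping.

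Granting this, I would conclude as follows. The port-Hamiltonian structure of $(\Etilde,\Atilde,\Btilde,\Ctilde)$, i.e.\ $\Atilde=(\widetilde{J}-\Rtilde)\Qtilde$ with $\widetilde{J}^\top=-\widetilde{J}$ and $\Rtilde=\Rtilde^\top\succeq0$, yields $\Atilde^\top\Qtilde+\Qtilde^\top\Atilde=-2\Qtilde^\top\Rtilde\Qtilde\preceq0$, hence $\redA^\top\redQ+\redQ^\top\redA=\trunc\big(\Atilde^\top\Qtilde+\Qtilde^\top\Atilde\big)\trunc^\top\preceq0$ by congruence. Moreover $\redE^\top\redQ=\diag(Q_a,0)=\redQ^\top\redE$ is symmetric (as $Q_a$ is diagonal) and positive semidefinite, since $\Etilde^\top\Qtilde=\diag(\Qtilde_{11},0)\succeq0$ forces $\Qtilde_{11}\succeq0$ and thus $Q_a\succeq0$. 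Together with $\redC=\redB^\top\redQ$, this makes $\redQ$ an invertible solution of the KYP inequality \eqref{eq:generalizedKYP_II_unrestricted} associated with $(\redE,\redA,\redB,\redC)$ satisfying $\redE^\top\redQ\succeq0$, so the construction recalled around \eqref{eq:pHEX} rewrites the reduced system in the port-Hamiltonian form \eqref{eq:pHEQ} with $\redJ\vcentcolon=\tfrac12(\redA\redQ^{-1}-\redQ^{-\top}\redA^\top)$, $\redR\vcentcolon=-\tfrac12(\redA\redQ^{-1}+\redQ^{-\top}\redA^\top)$, and Hamiltonian $\tfrac12 x^\top\redE^\top\redQ\,x$; the semidefiniteness $\redR\succeq0$ comes from $\redA\redQ^{-1}+\redQ^{-\top}\redA^\top=\redQ^{-\top}\big(\redA^\top\redQ+\redQ^\top\redA\big)\redQ^{-1}\preceq0$.
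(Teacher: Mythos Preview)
Your proof is correct and rests on the same structural fact as the paper's, namely that the block-triangular shape of $\Qtilde$ from Theorem~\ref{thm:bal_real}(ii) yields $(I_n-\trunc^\top\trunc)\Qtilde\trunc^\top=0$, i.e.\ $\trunc^\top\trunc\,\Qtilde\trunc^\top=\Qtilde\trunc^\top$. The packaging differs, however. The paper carries out the block computation explicitly and shows directly that
\[
\redA=\trunc(\widetilde{J}-\Rtilde)\Qtilde\trunc^\top=(\trunc\widetilde{J}\trunc^\top-\trunc\Rtilde\trunc^\top)(\trunc\Qtilde\trunc^\top)=(\redJ-\redR)\redQ,
\]
so $\redJ$ and $\redR$ arise as honest compressions of $\widetilde{J}$ and $\Rtilde$. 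You instead verify the KYP inequality \eqref{eq:generalizedKYP_II_unrestricted} for the reduced system with solution $\redQ$ and then invoke the abstract reconstruction \eqref{eq:pHEX} to produce $\redJ=\tfrac12(\redA\redQ^{-1}-\redQ^{-\top}\redA^\top)$ and $\redR=-\tfrac12(\redA\redQ^{-1}+\redQ^{-\top}\redA^\top)$. The two definitions of $\redJ,\redR$ coincide (since $\redA\redQ^{-1}=\trunc(\widetilde{J}-\Rtilde)\trunc^\top$ by the paper's identity), so nothing is lost; but the paper's direct route has the small practical advantage that it immediately identifies $\redJ=\trunc\widetilde{J}\trunc^\top$ and $\redR=\trunc\Rtilde\trunc^\top$, which is exactly what Algorithm~\ref{alg:MOR} computes via $\redJ=S_\ell^\top J S_\ell$ and $\redR=S_\ell^\top R S_\ell$. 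Your KYP detour is arguably cleaner conceptually, since it isolates the single cancellation identity and avoids writing out $3\times3$ block products.
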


\begin{proof}
    With slight abuse of notation (w.r.t.~$\Qtilde$) from Theorem \ref{thm:bal_real}, let us consider the partitioning 
    \begin{equation}\label{eq:aux_part}
        \Qtilde =
        \begin{bmatrix}
            \Qtilde_{11}  & 0                 & 0\\
            0               & \Qtilde_{22}    & 0\\
            \Qtilde_{31}  & \Qtilde_{32}    & \Qtilde_{33}
        \end{bmatrix}
        ,\quad 
        \widetilde{J} =
        \begin{bmatrix}
            \widetilde{J}_{11}  & \widetilde{J}_{12}    & \widetilde{J}_{13}\\
            \widetilde{J}_{21}  & \widetilde{J}_{22}    & \widetilde{J}_{23}\\
            \widetilde{J}_{31}  & \widetilde{J}_{32}    & \widetilde{J}_{33}
        \end{bmatrix}
        ,\quad 
        \Rtilde =
        \begin{bmatrix}
            \Rtilde_{11}  & \Rtilde_{12}    & \Rtilde_{13}\\
            \Rtilde_{21}  & \Rtilde_{22}    & \Rtilde_{23}\\
            \Rtilde_{31}  & \Rtilde_{32}    & \Rtilde_{33}
        \end{bmatrix}
        ,
    \end{equation}
    where $\Qtilde_{11},\widetilde{J}_{11},\Rtilde_{11}\in \RR^{\ell\times\ell}$, $\Qtilde_{33},\widetilde{J}_{33},\Rtilde_{33}\in \RR^{(n-r)\times(n-r)}$, and the sizes of the other blocks are accordingly. 
    Then, straightforward calculations yield
    \begin{align*}
    \redA &= \trunc (\widetilde{J}-\Rtilde)\Qtilde \trunc^\top \\
    & =  \begin{bmatrix}
        (\widetilde{J}_{11}-\Rtilde_{11})\Qtilde_{11}+(\widetilde{J}_{13}-\Rtilde_{13})\Qtilde_{31} &  (\widetilde{J}_{13}-\Rtilde_{13})\Qtilde_{33}\\
        (\widetilde{J}_{31}-\Rtilde_{31})\Qtilde_{11}+(\widetilde{J}_{33}-\Rtilde_{33})\Qtilde_{31} &  (\widetilde{J}_{33}-\Rtilde_{33})\Qtilde_{33}
    \end{bmatrix} \\
    &= 
    \Big(
    \underbrace{
    \begin{bmatrix}
        \widetilde{J}_{11} & \widetilde{J}_{13}\\
        \widetilde{J}_{31} & \widetilde{J}_{33}
    \end{bmatrix}
    }_{=\redJ}-\underbrace{
    \begin{bmatrix}
        \Rtilde_{11} & \Rtilde_{13}\\
        \Rtilde_{31} & \Rtilde_{33}
    \end{bmatrix}
    }_{=\redR}
    \Big)
    \underbrace{
    \begin{bmatrix}
        \Qtilde_{11} & 0\\
        \Qtilde_{31} & \Qtilde_{33}
    \end{bmatrix}
    }_{=\redQ}.
    \end{align*}
    Note that $\redJ=\trunc \widetilde{J} \trunc^\top$ and $\redR=\trunc \Rtilde \trunc^\top$ implying $\redJ=-\redJ^\top$ as well as $\redR=\redR^\top \succeq 0$. Since $\Etilde$ is in semi-explicit form, we further obtain that 
    \begin{align*}
        \redE^\top \redQ=\trunc \Etilde \trunc^\top \redQ = 
        \begin{bmatrix}
            I_\ell  & 0 \\ 
            0       & 0 
        \end{bmatrix} 
        \begin{bmatrix}
        \Qtilde_{11} & 0\\
        \Qtilde_{31} & \Qtilde_{33}
        \end{bmatrix} = \begin{bmatrix} \Qtilde_{11}^\top & \Qtilde_{31}^\top \\ 0 & \Qtilde_{33}^\top  \end{bmatrix} 
        \begin{bmatrix} 
            I_\ell & 0 \\ 
            0 & 0
        \end{bmatrix} 
        =\redQ^\top \redE.
    \end{align*}
    Furthermore, it holds that 
    \begin{align*}
        \redE^\top \redQ &= \begin{bmatrix} \Qtilde_{11} & 0 \\ 0 & 0\end{bmatrix} = 
        \begin{bmatrix} 
            I_\ell  & 0 \\ 
            0       & 0 \\ 
            0       & I_{n-r} 
        \end{bmatrix}
        ^\top \begin{bmatrix} \Qtilde_{11} & 0 & 0 \\ 0 & \Qtilde_{22} & 0 \\ 0 & 0 & 0 \end{bmatrix} 
        \begin{bmatrix} 
            I_\ell  & 0 \\    
            0       & 0 \\ 
            0       & I_{n-r} 
        \end{bmatrix}  
        = \trunc \Etilde^\top \Qtilde \trunc^\top \\
        &\succeq 0 .
    \end{align*}
    Finally, since $(\Etilde,\Atilde,\Btilde,\Ctilde)$ is obtained by an equivalance transformation as in \eqref{eq:pHEQ_transformed} with a partitioning of $\Btilde$ and $\Ctilde$ according to \eqref{eq:aux_part} we find that
    \begin{align*}
        \redC &= \Ctilde\trunc^\top = \Btilde^\top \Qtilde\trunc^\top = 
        \begin{bmatrix} 
            \Btilde_1^\top & \Btilde_2^\top & \Btilde_3^\top
        \end{bmatrix} 
        \begin{bmatrix} \Qtilde_{11} & 0 \\ 0 & 0 \\ \Qtilde_{31} & \Qtilde_{33} \end{bmatrix} \\
        &= 
        \begin{bmatrix} 
            \Btilde_1^\top \Qtilde_{11} + \Btilde_3^\top \Qtilde_{31} & \Btilde_3^\top \Qtilde_{33} 
        \end{bmatrix}
        \\
        &= \begin{bmatrix} 
            \Btilde_1^\top & \Btilde_3^\top 
        \end{bmatrix} 
        \begin{bmatrix} \Qtilde_{11} & 0 \\ \Qtilde_{31} & \Qtilde_{33} \end{bmatrix} = \redB^\top \redQ,
    \end{align*}
    which shows the assertion.
\end{proof}

\begin{remark}
  Note that the previous result is a consequence of the particular balancing w.r.t.~the modified Riccati equations \eqref{eq:modifiedGAREs}. 
  Indeed, since $\Pf=Q^{-\top}$ is a stabilizing solution of the modified filter GARE \eqref{eq:modifiedFilterGARE}, we implicitly performed an \emph{effort constraint} balanced truncation which preserves the port-Hamiltonian structure. 
  For a more detailed discussion, we refer to the ODE case \cite[sec.~3.1]{BreMS21}.
\end{remark}

Theorem~\ref{thm:rom_is_ph} provides a structure-preserving model reduction method for port-Hamiltonian DAE systems of the form \eqref{eq:pHEQ} by truncating a balanced realization.
Similarly as in \cite{MoeRS11}, the balanced system never needs to be explicitly computed and the balancing and truncation step can be combined.
The computational steps for obtaining a pH reduced-order model as in Theorem~\ref{thm:rom_is_ph} are summarized in Algorithm~\ref{alg:MOR}.
The calculations showing that this algorithm indeed yields a ROM as in Theorem~\ref{thm:rom_is_ph} are provided in appendix~\ref{sec:algorithm}.

\begin{algorithm}[H]
    \caption{Structure-preserving LQG-BT for port-Hamiltonian DAEs}
    \label{alg:MOR}
    \begin{algorithmic}[1]
        \REQUIRE $E, J, R, Q\in \RR^{n\times n}, B\in \RR^{n\times m}$ as in \eqref{eq:pHEQ} satisfying the assumptions of Theorem~\ref{thm:bal_real}
        \ENSURE Port-Hamiltonian ROM $(\redE,\;\redA=(\redJ-\redR)\redQ,\;\redB,\;\redC=\redB^\top \redQ)$ \\[0.1cm]
        \STATE Compute an SVD of $E$ as in \eqref{eq:svdOfE}.
        \STATE Compute full-column-rank matrices $\LPc,\LPf$ such that $E^\top \Pc = \LPc \LPc^\top$ and $E\Pf^\top = \LPf\LPf^\top$, where $\Pc$ and $\Pf = Q^{-\top}$ are stabilizing solutions of \eqref{eq:modifiedGAREs}.
        \STATE Compute a singular value decomposition 
        \begin{equation*}
            \LPc^\top E^+\LPf = 
            \begin{bmatrix}
                \widetilde{U}_1 & \widetilde{U}_2
            \end{bmatrix}
            \begin{bmatrix}
                \widetilde{\Sigma}_1    & 0\\
                0                   & 0
            \end{bmatrix}
            \begin{bmatrix}
                \widetilde{V}_1^\top\\
                \widetilde{V}_2^\top
            \end{bmatrix}
        \end{equation*}
        with $\widetilde{\Sigma}_1\succ 0$ and determine a suitable dimension $\ell$ for the truncation step based on the singular value decay.
        \STATE Assemble the projection matrices 
        \begin{align*}
            S_\ell &\vcentcolon= [(E^+)^\top \LPc\widetilde{U}_\ell\widetilde{\Sigma}_\ell^{-\frac12}\;\;U_2] \in\RR^{n\times(\ell+n-r)},\\
            T_\ell &\vcentcolon= [E^+\LPf\widetilde{V}_\ell\widetilde{\Sigma}_\ell^{-\frac12}\;\;V_2]\in\RR^{n\times(\ell+n-r)},\\
            W_\ell &\vcentcolon= [EE^+\LPf\widetilde{V}_\ell\widetilde{\Sigma}_\ell^{-\frac12}\;\;U_2]\in\RR^{n\times(\ell+n-r)},
        \end{align*}
        where $\widetilde{U}_\ell$ and $\widetilde{V}_\ell$ contain the first $\ell$ columns of $\widetilde{U}_1$ and $\widetilde{V}_1$, respectively, and $\widetilde{\Sigma}_\ell$ the first $\ell$ columns and rows of $\widetilde{\Sigma}_1$.
        \STATE Obtain the ROM matrices via $\redE = S_\ell^\top ET_\ell$, $\redJ = S_\ell^\top ES_\ell$, $\redR = S_\ell^\top RS_\ell$, $\redQ = W_\ell^\top QT_\ell$, and $\redB = S_\ell^\top R$.
    \end{algorithmic}
\end{algorithm}

\subsection{A Priori Error Bound for Right Coprime Factors}\label{subsec:errorBound}

In this subsection, we establish a classical a priori $\mathcal{H}_\infty$ bound for the error between the normalized right coprime factorizations of the FOM transfer function $G$ and the ROM transfer function $\redG$. 
As is well-known, see, e.g., \cite[sec.~2 and sec.~4]{MoeRS11}, this in particular allows to bound the so-called \emph{directed gap} between $G$ and $\redG$, offering a way to measure the quality of a reduced system even if the involved systems are unstable. For a more detailed discussion on these concepts, we refer to, e.g., \cite{Liuetal97,Mey90,MoeRS11}.

We start by adapting Lemma~3.13 from \cite{MoeRS11} to our setting, to provide a relation between certain solutions of generalized Lyapunov inequalities and an $\mathcal{H}_\infty$ error bound between the original system and the truncated one.

\begin{lemma}\label{lem:gen_lyap_ineq}
    Consider a semi-explicit descriptor system given by $(E,A,B,C)\in\RR^{n\times n}\times \RR^{n\times n}\times \RR^{n\times m}\times \RR^{p\times n}$ with $r=\rank(E)$ and regular and impulse-free pair $(E,A)$ whose finite eigenvalues lie in the open left half-plane.
    Let $P_1,P_2\in \RR^{n\times n}$ fulfill the following generalized Lyapunov inequalities 
 \begin{equation}\label{eq:lyap_ineq_errbnd}
 \begin{aligned}
  AP_1^\top + P_1A^\top + BB^\top &\preceq 0, \quad EP_1^\top-P_1E^\top = 0, \\
  A^\top P_2 + P_2^\top A + C^\top C &\preceq 0, \quad E^\top P_2-P_2^\top E = 0.
 \end{aligned}
 \end{equation}
Furthermore, assume that for some diagonal matrices $\Gamma=\mathrm{diag}(\gamma_1,\dots,\gamma_r)\in \RR^{r \times r}$ and $\Theta=\mathrm{diag}(\theta_1,\dots,\theta_r)\in \RR^{r \times r}$ with positive elements $\gamma_1,\dots,\gamma_r$ and non-negative elements $\theta_1,\dots,\theta_r$ such that 
\begin{align*}
 \gamma_1 \theta_1 \ge \gamma_2 \theta_2 \ge \cdots \ge \gamma_{\ell} \theta_{\ell} \ge \gamma_{\ell +1 } \theta_{\ell+1} \ge \dots \ge \gamma_r \theta_r\ge 0
\end{align*}
there holds 
\begin{align*}
 P_1=\begin{bmatrix} \Gamma & \ast \\ 0 & \ast \end{bmatrix}, \quad 
 P_2=\begin{bmatrix} \Theta & 0 \\ \ast & \ast \end{bmatrix}.
\end{align*}
    Then, for $\ell\in\lbrace 1,\ldots,r\rbrace$,
    \begin{equation}
        \label{eq:truncationMat}
        \trunc = 
        \begin{bmatrix} 
            I_{\ell}    & 0 & 0 \\ 
            0           & 0 & I_{n-r} 
        \end{bmatrix}
        \in\RR^{(n+\ell-r)\times n},
    \end{equation}
    and the matrices $\widetilde{E}=\trunc E \trunc^\top$, $\widetilde{A}=\trunc A \trunc^\top$, $\widetilde{B}=\trunc B$, $\widetilde{C}=C\trunc^\top$, $\widetilde{P}_1=\trunc P_1\trunc^\top$, $\widetilde{P}_2=\trunc P_2 \trunc^\top$, there holds 
\begin{equation}\label{eq:lyap_ineq_errbnd_sim}
\begin{aligned}
 \widetilde{A} \widetilde{P}_1^\top + \widetilde{P}_1 \widetilde{A}^\top + \widetilde{B}\widetilde{B}^\top &\preceq 0, \quad  \widetilde{E}\widetilde{P}_1^\top - \widetilde{P}_1\widetilde{E}^\top = 0, \\
 \widetilde{A}^\top \widetilde{P}_2+ \widetilde{P}_2^\top \widetilde{A} + \widetilde{C}^\top\widetilde{C} &\preceq 0, \quad  \widetilde{E}^\top \widetilde{P}_2 - \widetilde{P}_2^\top\widetilde{E}= 0.
\end{aligned}
\end{equation}
Moreover, the transfer functions $G(s) = C(sE-A)^{-1}B$ and $\widetilde{G}(s) = \widetilde{C}(s\widetilde{E}-\widetilde{A})^{-1}\widetilde{B}$ satisfy
\begin{equation}
    \label{eq:errorBound}
 \| G-\widetilde{G}\|_{\mathcal{H}_\infty} \le 2 \sum_{k=\ell+1}^r \sqrt{\gamma_k \theta_k}.
\end{equation}

\end{lemma}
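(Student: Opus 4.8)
The plan is to follow the structure of the proof of \cite[Lem.~3.13]{MoeRS11}, splitting the argument into an algebraic part---the inheritance of the structured generalized Lyapunov inequalities \eqref{eq:lyap_ineq_errbnd_sim} under the truncation $\trunc$---and an analytic part, namely the $\mathcal{H}_\infty$ bound \eqref{eq:errorBound} itself.

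For \eqref{eq:lyap_ineq_errbnd_sim} I would partition $E,A,B,C,P_1,P_2$ conformally with the splitting $\RR^n=\RR^{\ell}\oplus\RR^{r-\ell}\oplus\RR^{n-r}$ induced by $\trunc$. The constraints $EP_1^\top=P_1E^\top$ and $E^\top P_2=P_2^\top E$ together with the semi-explicit form of $E$ force the $(n-r,r)$-block of $P_1$ and the $(r,n-r)$-block of $P_2$ to vanish, while by hypothesis the leading $r\times r$ blocks of $P_1$ and $P_2$ are the \emph{diagonal} matrices $\Gamma$ and $\Theta$. Writing $\trunc^\top\trunc=\diag(I_{\ell},0,I_{n-r})$ and $\Pi\vcentcolon=\diag(0,I_{r-\ell},0)$, one has
\begin{align*}
 \widetilde{A}\widetilde{P}_1^\top+\widetilde{P}_1\widetilde{A}^\top+\widetilde{B}\widetilde{B}^\top
 = \trunc\bigl(AP_1^\top+P_1A^\top+BB^\top\bigr)\trunc^\top-\trunc A\,\Pi P_1^\top\trunc^\top-\trunc P_1\Pi A^\top\trunc^\top ,
\end{align*}
and both correction terms vanish, since $\Pi P_1^\top\trunc^\top=0$: the only possibly nonzero rows of $\Pi P_1^\top$ carry indices in $\lbrace\ell+1,\dots,r\rbrace$ and are annihilated by $\trunc$, while the columns retained by $\trunc^\top$ select only off-diagonal entries of the diagonal $\Gamma$ and entries of the vanishing $(n-r,r)$-block of $P_1$. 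Hence $\widetilde{A}\widetilde{P}_1^\top+\widetilde{P}_1\widetilde{A}^\top+\widetilde{B}\widetilde{B}^\top$ is a principal submatrix of the negative semidefinite matrix $AP_1^\top+P_1A^\top+BB^\top$ and therefore negative semidefinite; the identity $\widetilde{E}\widetilde{P}_1^\top=\widetilde{P}_1\widetilde{E}^\top$ follows from the same cancellation, and the two statements for $P_2$ are obtained dually by passing to $(E^\top,A^\top,C^\top)$. The same computation shows that $\widetilde{P}_1,\widetilde{P}_2$ retain the prescribed diagonal-block structure with $\Gamma,\Theta$ replaced by their leading $\ell\times\ell$ blocks---a closure property I reuse below. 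Finally, impulse-freeness of $(\widetilde{E},\widetilde{A})$ is immediate because $\trunc$ leaves the $(n-r)\times(n-r)$ block of $A$ untouched, and regularity together with the $\CC_-$-location of the finite spectrum follows as in \cite[Lem.~3.13]{MoeRS11}, using $\Gamma\succ0$ and \eqref{eq:lyap_ineq_errbnd_sim}.

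For the bound \eqref{eq:errorBound} I would reduce to a one-step estimate and chain it by the triangle inequality. Since $(E,A)$ is impulse-free and semi-explicit, the lower-right block $A_{22}$ is invertible, and eliminating the algebraic variable gives an ODE realization $(\hat A,\hat B,\hat C,\hat D)$ of $G$; because $\trunc$ leaves $A_{22}$, $B_2$, $C_2$ untouched, the analogous elimination for the truncated system produces an ODE realization with the \emph{same} feedthrough $\hat D$, so that $G-\widetilde{G}$ is strictly proper. A Schur-complement congruence---conjugating \eqref{eq:lyap_ineq_errbnd} by block-triangular factors built from $A_{12}A_{22}^{-1}$ and using $EP_1^\top=P_1E^\top$---turns the generalized Lyapunov inequalities into the ordinary ones $\hat A\Gamma+\Gamma\hat A^\top+\hat B\hat B^\top\preceq0$ and $\hat A^\top\Theta+\Theta\hat A+\hat C^\top\hat C\preceq0$, with $\Gamma,\Theta$ still diagonal and ordered by the products $\gamma_k\theta_k$. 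The claim is thereby reduced to the classical error bound for (inequality-)balanced truncation of ODE systems, cf.\ \cite{MoeRS11,ZhoDG96}: removing the dynamic states $r,r-1,\dots,\ell+1$ one at a time, the truncated system at each step again satisfies the hypotheses by the closure property above, so it suffices to show that removing the last dynamic state perturbs the transfer function by at most $2\sqrt{\gamma_j\theta_j}$ in the $\mathcal{H}_\infty$-norm, and summing the $r-\ell$ such contributions yields exactly \eqref{eq:errorBound}.

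The step I expect to be the main obstacle is this one-step estimate. I would prove it via the bounded real lemma: form the error system $\bigl(\diag(A^{(j)},A^{(j-1)}),\,[B^{(j)};\,B^{(j-1)}],\,[C^{(j)},\,-C^{(j-1)}]\bigr)$, which has zero feedthrough because the two copies of $\hat D$ cancel, and exhibit a positive semidefinite solution of the associated bounded-real Lyapunov inequality for $\tfrac{1}{2\sqrt{\gamma_j\theta_j}}\bigl(G^{(j)}-G^{(j-1)}\bigr)$, assembled blockwise from $\gamma_j,\theta_j$ and the two Gramian blocks in the spirit of Glover's construction. Two features demand more care than in standard balanced truncation: the ``Gramians'' $\Gamma$ and $\Theta$ are diagonal but not equal, so the relevant quantity is the geometric mean $\sqrt{\gamma_j\theta_j}$ rather than a single Hankel singular value; and only the inequalities in \eqref{eq:lyap_ineq_errbnd} (not equations) are available, so the certificate must be verified directly rather than read off from an exact balanced realization. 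If one prefers to bypass the reduction to ODEs, the same certificate can be constructed directly for the descriptor error system using the generalized bounded real lemma, with the condition $X\succeq0$ replaced by $\mathcal{E}^\top X=X^\top\mathcal{E}\succeq0$; the remaining bookkeeping is then inherited from \cite{MoeRS11}.
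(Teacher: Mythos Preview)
Your proposal is correct and follows essentially the same route as the paper: compress the Lyapunov inequalities by $\trunc$ using the block structure of $P_1,P_2$, reduce to an ODE realization via the Schur complement with $A_{22}^{-1}$ (the feedthrough $\hat D=-C_2A_{22}^{-1}B_2$ cancels in the error), push the generalized Lyapunov inequalities down to ordinary ones for $M=A_{11}-A_{12}A_{22}^{-1}A_{21}$ with the same diagonal $\Gamma,\Theta$, and then invoke the ODE error bound. The paper does not spell out the one-step bounded-real-lemma certificate you sketch; it simply cites \cite[Thm.~III.3]{DamB14} and \cite[Thm.~8]{BreMS21} for this last step, stressing that those proofs need only $\Gamma\succ0$ (not $\Theta\succ0$).

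One caveat: your parenthetical claim that ``the $\CC_-$-location of the finite spectrum'' of $(\widetilde E,\widetilde A)$ follows from $\Gamma\succ0$ and \eqref{eq:lyap_ineq_errbnd_sim} is too strong. The paper explicitly notes right after this lemma that the error bound yields $\widetilde G\in\mathcal{H}_\infty^{p\times m}$ but does \emph{not} in general force all finite eigenvalues of $(\widetilde E,\widetilde A)$ into $\CC_-$, because the truncated realization need not be minimal; a separate Lemma~\ref{lem:lyapIneqs} is devoted to sufficient conditions for that. This does not affect your argument for \eqref{eq:errorBound}, but you should drop that side claim and, if you carry out the one-step iteration yourself, make sure your bounded-real certificate does not tacitly assume Hurwitz intermediate truncations.
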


\begin{proof}
 The proof follows along the lines of those of \cite[Lemma 3.13]{MoeRS11}. For exposition, we provide the main arguments. Pre- and postmultiplication of the first inequality in \eqref{eq:lyap_ineq_errbnd} with $\trunc$ and $\trunc^\top$, respectively, yields 
 \begin{align*}
  \trunc AP_1^\top \trunc^\top + \trunc P_1A^\top \trunc^\top + \trunc BB^\top \trunc^\top &\preceq 0.
 \end{align*}
The first inequality in \eqref{eq:lyap_ineq_errbnd_sim} is then obtained by utilizing that $\trunc^\top \trunc P_1^\top \trunc^\top = P_1^\top \trunc^\top$ which follows from the partitioning of $P_1$.
The rest of \eqref{eq:lyap_ineq_errbnd_sim} follows by similar arguments.

For the error bound \eqref{eq:errorBound}, we interpret the difference $G-\widetilde{G}$ as a transfer function of an ODE system. Note that since $(E,A,B,C)$ is semi-explicit and impulse-free, assuming a partitioning 
\begin{align*}
 E=\begin{bmatrix} I_r & 0 \\ 0 & 0 \end{bmatrix},\quad  A=\begin{bmatrix} A_{11} & A_{12} \\ A_{21} & A_{22} \end{bmatrix},\quad B=\begin{bmatrix} B_1 \\ B_2 \end{bmatrix}, \quad C=\begin{bmatrix} C_1 & C_2 \end{bmatrix}
\end{align*}
it follows that $A_{22}$ is invertible, see for instance \cite[Prop.~2.1]{Ria08}.
This allows for the following calculations 
\begin{align*}
    &G(s)=C (sE-A)^{-1}B = \begin{bmatrix} C_1 & C_2 \end{bmatrix} 
    \begin{bmatrix} 
        sI_r-A_{11} & -A_{12} \\ 
        -A_{21}     & -A_{22} 
    \end{bmatrix}
    ^{-1} \begin{bmatrix} B_1 \\ B_2\end{bmatrix} \\
    &= 
    \begin{bmatrix} 
        C_1^\top\\
        C_2^\top
    \end{bmatrix} 
    ^\top
    \begin{bmatrix} 
        (sI_r-M)^{-1}                       & -(sI_r-M)^{-1}A_{12} A_{22}^{-1} \\
        -A_{22}^{-1} A_{21} (sI_r-M)^{-1}   & A_{22}^{-1}A_{21} (sI_r-M)^{-1} A_{12} A_{22}^{-1}-A_{22}^{-1} 
    \end{bmatrix} 
    \begin{bmatrix} B_1 \\ B_2\end{bmatrix} \\
    &=(C_1-C_2A_{22}^{-1} A_{21}) (sI_r-M)^{-1} (B_1-A_{12} A_{22}^{-1}B_2) - C_2 A_{22}^{-1}B_2
\end{align*}
where $M\vcentcolon=A_{11}-A_{12}A_{22}^{-1}A_{21}$. 
Similarly, we obtain the following partitioning for $(\widetilde{E},\widetilde{A},\widetilde{B},\widetilde{C})$:
\begin{align*}
 \widetilde{E}&=\trunc\begin{bmatrix} I_r & 0 \\ 0 & 0 \end{bmatrix}\trunc^\top = \begin{bmatrix} I_{\ell} & 0 \\ 0 & 0 \end{bmatrix},\quad  \widetilde{A}=\trunc\begin{bmatrix} A_{11} & A_{12} \\ A_{21} & A_{22} \end{bmatrix}\trunc^\top = \begin{bmatrix} \widetilde{A}_{11} & \widetilde{A}_{12} \\  \widetilde{A}_{21} & \widetilde{A}_{22} \end{bmatrix}, \\ 
 &\quad \widetilde{B}=\trunc\begin{bmatrix} B_1 \\ B_2 \end{bmatrix}=\begin{bmatrix} \widetilde{B}_1 \\ \widetilde{B}_2 \end{bmatrix}, \quad \widetilde{C}= \begin{bmatrix} C_1 & C_2 \end{bmatrix}\trunc^\top = \begin{bmatrix} \widetilde{C}_1 & \widetilde{C}_2 \end{bmatrix} .
\end{align*}
Using $\widehat{S}\vcentcolon=\begin{bmatrix} I_{\ell} & 0\end{bmatrix}\in\RR^{\ell,r}$ this yields 
\begin{align*}
 \widetilde{A}_{11} &= \widehat{S}A_{11} \widehat{S}^\top, \ \  \widetilde{A}_{12} = \widehat{S} A_{12},\ \  \widetilde{A}_{21} = A_{21} \widehat{S}^\top , \ \  \widetilde{B}_1  = \widehat{S} B_1 , \ \  \widetilde{C}_1 = C_1 \widehat{S}^\top, \\
 \widetilde{A}_{22} &= A_{22}, \ \ \widetilde{B}_2 =B_2, \ \ \widetilde{C}_2=C_2.
\end{align*}
Since $\widetilde{E}$ is in semi-explicit form and $\widetilde{A}_{22}$ invertible, $(\widetilde{E},\widetilde{A})$ is regular, see e.g.~\cite[Prop.~2.1]{Ria08}, and thus the associated transfer function $\widetilde{G}$ is well-defined and given by
\begin{align*}
 \widetilde{G}(s) &= \widetilde{C} (s\widetilde{E}-\widetilde{A})^{-1} \widetilde{B} \\
 &=(\widetilde{C}_1-\widetilde{C}_2\widetilde{A}_{22}^{-1} \widetilde{A}_{21}) (sI_\ell-\widetilde{M})^{-1} (\widetilde{B}_1-\widetilde{A}_{12} \widetilde{A}_{22}^{-1}\widetilde{B}_2) - \widetilde{C}_2 \widetilde{A}_{22}^{-1}\widetilde{B}_2 \\
 &= (C_1-C_2A_{22}^{-1} A_{21})\widehat{S}^\top (sI_\ell-\widehat{S}M\widehat{S}^\top)^{-1} \widehat{S} (B_1-A_{12} A_{22}^{-1}B_2) - C_2 A_{22}^{-1}B_2.
\end{align*}
with $\widetilde{M} \vcentcolon= \widetilde{A}_{11}-\widetilde{A}_{12}\widetilde{A}_{22}^{-1}\widetilde{A}_{21}$.
Finally, note that we can pre- and postmultiply the inequalities in \eqref{eq:lyap_ineq_errbnd} by the matrices 
\begin{align*}
    \begin{bmatrix} 
        I_r & -A_{12}A_{22}^{-1} 
    \end{bmatrix}
    , \quad 
    \begin{bmatrix} 
        I_r \\ 
        -A_{22}^{-\top } A_{12}^\top 
    \end{bmatrix}
    , \quad 
    \begin{bmatrix} 
        I_r & -A_{21}^\top A_{22}^{-\top }  
    \end{bmatrix} 
    , \quad
    \begin{bmatrix} 
        I_r \\  
        -A_{22}^{-1}A_{21} 
    \end{bmatrix}
\end{align*}
 in order to obtain 
 \begin{align*}
  &M\Gamma + \Gamma M^\top+ (B_1-A_{12} A_{22}^{-1}B_2)(B_1-A_{12} A_{22}^{-1}B_2)^\top \preceq 0, \\
  &M^\top \Theta + \Theta M + (C_1-C_2A_{22}^{-1}A_{21} )^\top (C_1-C_2A_{22}^{-1}A_{21} ) \preceq 0.
 \end{align*}
The result now follows with arguments used for ODE systems as in the proofs of \cite[Theorem III.3]{DamB14} and \cite[Theorem~8]{BreMS21}. In particular, let us emphasize that the latter proof does not require invertibility of $\Theta$ but only of $\Gamma.$
\end{proof}

While the error bound provided in Lemma~\ref{lem:gen_lyap_ineq} implies $\widetilde{G}\in\mathcal{H}_\infty^{p\times m}$, it does in general not guarantee that all finite eigenvalues of the truncated pencil $(\widetilde{E},\widetilde{A})$ are in $\CC_-$, since $(\widetilde{E},\widetilde{A},\widetilde{B},\widetilde{C})$ is not necessarily a minimal realization.
The following Lemma provides sufficient conditions under which all finite eigenvalues of the truncated pencil are in $\CC_-$.

\begin{lemma}
    \label{lem:lyapIneqs}
    Let the regular, impulse-free, and semi-explicit matrix pencil $(E,A)\in\RR^{n\times n}\times\RR^{n\times n}$ with $r\vcentcolon=\rank(E)$ and all finite eigenvalues in $\CC_-$ be given with the partitioning
    \begin{equation}
        \label{eq:EAthreeBlocks}
        E = 
        \begin{bmatrix}
            I_\ell  & 0             & 0\\
            0       & I_{r-\ell}    & 0\\
            0       & 0             & 0
        \end{bmatrix}
        ,\quad A = 
        \begin{bmatrix}
            A_{11} & A_{12} & A_{13}\\
            A_{21} & A_{22} & A_{23}\\
            A_{31} & A_{32} & A_{33}
        \end{bmatrix}
        ,
    \end{equation}
    $\ell\in\lbrace 1,\ldots,r\rbrace$, $A_{11}\in\RR^{\ell\times \ell}$, $A_{22}\in\RR^{(r-\ell)\times(r-\ell)}$, and the sizes of the other blocks are accordingly.
    Furthermore, assume that there exist matrices $\Theta,\Gamma\in\RR^{n\times n}$ with
    \begin{equation*}
        \Theta = 
        \begin{bmatrix}
            \Theta_{11} & 0             & \Theta_{13}\\
            0           & \Theta_{22}   & \Theta_{23}\\
            0           & 0             & \Theta_{33}
        \end{bmatrix}
        ,\quad \Gamma = 
        \begin{bmatrix}
            \Gamma_{11} & 0             & 0\\
            0           & \Gamma_{22}   & 0\\
            \Gamma_{31} & \Gamma_{32}   & \Gamma_{33}
        \end{bmatrix}
        ,
    \end{equation*}
    where the block sizes are as in \eqref{eq:EAthreeBlocks}, satisfying $\Theta_{11}=\Theta_{11}^\top$, $\Gamma_{11}=\Gamma_{11}^\top\succ 0$, and 
    \begin{align}
        \label{eq:lyapODEIneq_I}
        A\Theta^\top+\Theta A^\top  &\preceq 0,\\
        \label{eq:lyapODEIneq_II}
        A^\top \Gamma+\Gamma^\top A &\preceq 0.
    \end{align}
    Besides, based on the truncation matrix $\trunc$ as defined in \eqref{eq:truncationMat}, we consider the truncated pencil $(\redE,\redA) \vcentcolon= (\trunc E\trunc^\top, \trunc A\trunc^\top)$ and the corresponding truncated quantities $\redTheta \vcentcolon = \trunc\Theta\trunc^\top$ and $\redGamma \vcentcolon = \trunc\Gamma\trunc^\top$.
    If $\spectrum(\Theta_{11}\Gamma_{11})\cap\spectrum(\Theta_{22}\Gamma_{22}^\top)=\emptyset$ holds, then all finite eigenvalues of the reduced pencil $(\redE,\redA)$ are in $\CC_-$.
\end{lemma}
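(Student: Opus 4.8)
The plan is to reduce the claim in two stages to a purely ODE-type statement about a single matrix, and then to close that statement with the standard stability-preservation argument for balanced truncation.

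First I would verify that $(\redE,\redA)$ is regular and impulse-free and identify its finite spectrum. Since $(E,A)$ is semi-explicit and impulse-free, in the coarse $2\times 2$ partition induced by $\rank(E)=r$ the trailing $(n-r)\times(n-r)$ block $A_{33}$ of $A$ is invertible (cf.~\cite[Prop.~2.1]{Ria08}). The reduced pencil equals $(\redE,\redA)=\bigl(\bigl[\begin{smallmatrix}I_\ell&0\\0&0\end{smallmatrix}\bigr],\bigl[\begin{smallmatrix}A_{11}&A_{13}\\A_{31}&A_{33}\end{smallmatrix}\bigr]\bigr)$, so it has the same trailing block $A_{33}$ and is therefore regular and impulse-free, with finite eigenvalues equal to $\spectrum(\redM)$ for the Schur complement $\redM\vcentcolon=A_{11}-A_{13}A_{33}^{-1}A_{31}$. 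It thus suffices to prove that $\redM$ is Hurwitz.

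Next I would push the inequalities \eqref{eq:lyapODEIneq_I}--\eqref{eq:lyapODEIneq_II} down to the reduced pencil. Pre- and postmultiplying by $\trunc$ and $\trunc^\top$ and using that the second block row of $\Theta^\top$ (resp.~the second block row of $\Gamma$) only meets the second block column, which $\trunc^\top$ annihilates, gives $\trunc A\Theta^\top\trunc^\top=\redA\redTheta^\top$ and $\trunc A^\top\Gamma\trunc^\top=\redA^\top\redGamma$, hence $\redA\redTheta^\top+\redTheta\redA^\top\preceq 0$ and $\redA^\top\redGamma+\redGamma^\top\redA\preceq 0$. Then, exactly as in the proof of Lemma~\ref{lem:gen_lyap_ineq}, I would congruence-transform these by the algebraic-block-eliminating matrices $\bigl[\begin{smallmatrix}I_\ell&-A_{13}A_{33}^{-1}\end{smallmatrix}\bigr]$ resp.~$\bigl[\begin{smallmatrix}I_\ell&-A_{31}^\top A_{33}^{-\top}\end{smallmatrix}\bigr]$ and their transposes; since these bring $\redA$ (resp.~$\redA^\top$) into the form $\bigl[\begin{smallmatrix}\redM&0\end{smallmatrix}\bigr]$ (resp.~$\bigl[\begin{smallmatrix}\redM^\top&0\end{smallmatrix}\bigr]$) and $\redTheta,\redGamma$ are block triangular with symmetric leading blocks $\Theta_{11},\Gamma_{11}$, they collapse to the ODE Lyapunov inequalities $\redM\Theta_{11}+\Theta_{11}\redM^\top\preceq 0$ and $\redM^\top\Gamma_{11}+\Gamma_{11}\redM\preceq 0$. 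Performing the same elimination on the \emph{untruncated} inequalities shows that the matrix $M\in\RR^{r\times r}$ governing the finite dynamics of $(E,A)$ — which is Hurwitz by hypothesis and whose leading $\ell\times\ell$ block is exactly $\redM$ — satisfies the analogous (generally nonsymmetric) inequalities with $\mathrm{diag}(\Theta_{11},\Theta_{22})$ and $\mathrm{diag}(\Gamma_{11},\Gamma_{22})$ in place of $\Theta_{11},\Gamma_{11}$, up to transposition of the $(2,2)$ blocks.

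It then remains to prove the ODE statement: if $M$ is Hurwitz with a compatible $\ell/(r-\ell)$ block splitting, carries the two Lyapunov inequalities just described, and $\spectrum(\Theta_{11}\Gamma_{11})\cap\spectrum(\Theta_{22}\Gamma_{22}^\top)=\emptyset$, then the leading block $\redM$ is Hurwitz. From $\Gamma_{11}\succ 0$ and $\redM^\top\Gamma_{11}+\Gamma_{11}\redM\preceq 0$ one immediately gets $\spectrum(\redM)\subseteq\overline{\CC_-}$, so the only possibility left to exclude is an eigenvalue of $\redM$ on $i\RR$ — and this is the main obstacle, and the only place where the disjointness hypothesis is used. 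I expect to argue by contradiction: assuming $\redM v=i\omega v$, a standard argument based on $\Gamma_{11}\succ 0$ first makes the corresponding imaginary eigenspace semisimple with $\redM$ acting $\Gamma_{11}$-skew on it; then the off-diagonal coupling blocks of the \emph{untruncated} inequalities, together with the Hurwitz property of $M$, are used to produce a nonzero vector that is simultaneously an eigenvector of $\Theta_{11}\Gamma_{11}$ and of $\Theta_{22}\Gamma_{22}^\top$ for one and the same eigenvalue, contradicting disjointness. This last step is the standard stability-preservation argument for (generalized, LQG-type) balanced truncation in the spirit of Pernebo--Silverman, here in its nonsymmetric and possibly non-minimal variant, and can be carried out along the lines of the ODE arguments underlying Lemma~\ref{lem:gen_lyap_ineq} and the references cited there. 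Combined with the first step, this yields that all finite eigenvalues of $(\redE,\redA)$ lie in $\CC_-$.
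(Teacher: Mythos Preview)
Your reduction to an ODE problem via Schur complements is correct and is a legitimate alternative to the paper's route, which carries out the entire argument directly at the DAE level without ever eliminating the algebraic block. After your reduction the remaining ``Pernebo--Silverman step'' is, in substance, the same argument the paper gives: convert the two Lyapunov inequalities to equations with right-hand sides $-BB^\top$ and $-C^\top C$; for $V$ spanning $\ker(i\omega I-M_{11})$ deduce successively $C_1V=0$, $(M_{11}^\top+i\omega I)\Gamma_{11}V=0$, $B_1^\top\Gamma_{11}V=0$, and hence $\Theta_{11}\Gamma_{11}V=V\Xi$ with $\spectrum(\Xi)\subseteq\spectrum(\Theta_{11}\Gamma_{11})$; then the off-diagonal $(2,1)$ blocks of the two equations combine to the homogeneous Sylvester equation $\Theta_{22}\Gamma_{22}^\top(M_{21}V)=(M_{21}V)\Xi$, which forces $M_{21}V=0$ by the disjointness hypothesis and so contradicts $M$ being Hurwitz. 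The paper runs exactly this chain at the DAE level, arriving at $[A_{21}\;A_{23}]V=0$ for the final contradiction instead of $M_{21}V=0$. Your route has the conceptual advantage of isolating the finite dynamics early; the paper's route avoids the extra Schur-complement bookkeeping.

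Two caveats on the last paragraph of your proposal. First, ``a nonzero vector that is simultaneously an eigenvector of $\Theta_{11}\Gamma_{11}$ and of $\Theta_{22}\Gamma_{22}^\top$'' cannot be right, since these operators act on spaces of different dimensions; the contradiction is through a common \emph{eigenvalue}, obtained from the Sylvester relation above. Second, the references you invoke via Lemma~\ref{lem:gen_lyap_ineq} are error-bound results, not stability-preservation results, and the classical Pernebo--Silverman theorem assumes symmetric positive-definite Gramians on both sides. The variant you actually need---with $\Theta_{11}$ merely symmetric and $\Theta_{22},\Gamma_{22}$ possibly nonsymmetric---is not available off the shelf and must be written out explicitly; once you do, it is precisely the argument sketched above, so your plan goes through.
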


\begin{proof}
    First, we note that, since $(E,A)$ is regular and impulse-free, the semi-explicit structure of $E$ implies that $A_{33}$ is invertible.
    Thus, since $A_{33}$ is not affected by the truncation step, also the reduced pencil $(\redE,\redA)$ is regular and impulse-free, cf.~\cite[Prop.~2.1]{Ria08}.
    Consequently, the finite eigenvalues of $(\redE,\redA)$ are well-defined and we start by showing that they are all contained in $\overline{\CC_-}$.
    To this end, let $(\lambda,v)$ be an eigenpair of $(\redE,\redA)$ and note that \eqref{eq:lyapODEIneq_I} and \eqref{eq:lyapODEIneq_II} imply the existence of $B,C\in\RR^{n\times n}$ with the partitioning
    \begin{equation*}
        B =
        \begin{bmatrix}
            B_1\\
            B_2\\
            B_3
        \end{bmatrix}
        ,\quad C = 
        \begin{bmatrix}
            C_1 & C_2 & C_3
        \end{bmatrix}
        ,\quad \redB \vcentcolon= \trunc B,\quad \redC = C\trunc^\top,
    \end{equation*}
    $B_1,C_1^\top\in\RR^{\ell\times n}$, $B_2,C_2^\top\in\RR^{(r-\ell)\times n}$, $B_3,C_3^\top\in\RR^{(n-r)\times n}$, satisfying
    \begin{align}
        \label{eq:lyapODEEq_I}
        A\Theta^\top+\Theta A^\top  &= -BB^\top,\\
        \label{eq:lyapODEEq_II}
        A^\top \Gamma+\Gamma^\top A &= -C^\top C.
    \end{align}
    Multiplying \eqref{eq:lyapODEEq_II} from the left by $v^*\trunc$ and from the right by $\trunc^\top v$ yields
    \begin{align*}
        -v^*\redC^\top\redC v &= v^*\redA^\top\redGamma v+v^*\redGamma^\top \redA v = \overline{\lambda}v^*\redE^\top\redGamma v+\lambda v^*\redGamma^\top\redE v\\
        &= 2\Re(\lambda)
        \begin{bmatrix}
            v_1^* & v_2^*
        \end{bmatrix}
        \begin{bmatrix}
            \Gamma_{11} & 0\\
            0           & 0
        \end{bmatrix}
        \underbrace{
        \begin{bmatrix}
            v_1\\
            v_2
        \end{bmatrix}
        }_{\vcentcolon= v} = 2\Re(\lambda)v_1^*\Gamma_{11}v_1.
    \end{align*}
    Since the left-hand side is real and non-positive and since $\Gamma_{11}$ is positive definite, this equation implies $\Re(\lambda)\le 0$ or $v_1=0$.
    If $v_1=0$ held, then $v$ would be in $\Ker(\redE)$ and, thus, due to the identity $\lambda\redE v = \redA v$ also in $\Ker(\redA)$.
    However, since $(\redE,\redA)$ is regular, this is a contradiction and, hence, $\Re(\lambda)\le 0$ must hold.
    
    It remains to show that $(\redE,\redA)$ has no eigenvalues on the imaginary axis, which we prove by contradiction.
    For this purpose, let $i\omega$ with $\omega\in\RR$ be a finite eigenvalue of $(\redE,\redA)$ and let $V\in\CC^{(n+\ell-r)\times k}$ be a matrix with full column rank satisfying $\Ker(i\omega\redE-\redA) = \colspan(V)$.
    Especially, this implies $[A_{31}\;\;A_{33}]V = 0$.
    Furthermore, multiplying \eqref{eq:lyapODEEq_II} from the left by $V^*\trunc$ and from the right by $\trunc^\top V$ yields
    \begin{equation*}
        -V^* \redC^\top \redC V = V^*\redGamma^\top\redA V+V^*\redA^\top\redGamma V = i\omega V^*\redGamma^\top\redE V-i\omega V^*\redE^\top\redGamma V = 0,
    \end{equation*}
    which implies $\redC V = 0$.
    Using this and multiplying \eqref{eq:lyapODEEq_II} from the left by $\trunc$ and from the right by $\trunc^\top V$, we obtain
    \begin{equation*}
        0 = \redGamma^\top\redA V+\redA^\top\redGamma V = i\omega\redGamma^\top\redE V+\redA^\top\redGamma V = (\redA^\top+i\omega\redE^\top)\redGamma V.
    \end{equation*}
    Especially, this identity implies $[A_{13}^\top\;\;A_{33}^\top]\redGamma V = 0$.
    Then, multiplying \eqref{eq:lyapODEEq_I} from the left by $V^*\redGamma^\top\trunc$ and from the right by $\trunc^\top\redGamma V$ yields
    \begin{align*}
        -V^*\redGamma^\top\redB \redB^\top\redGamma V &= V^*\redGamma^\top\redA\redTheta^\top \redGamma V+V^*\redGamma^\top\redTheta\redA^\top\redGamma V\\
        &= i\omega V^*\redGamma^\top\redE\redTheta^\top\redGamma V-i\omega V^*\redGamma^\top\redTheta\redE^\top\redGamma V = 0,
    \end{align*}
    which implies $\redB^\top\redGamma V = 0$.
    Using this and multiplying \eqref{eq:lyapODEEq_I} from the left by $\trunc$ and from the right by $\trunc^\top\redGamma V$, we obtain
    \begin{equation*}
        0 = \redA\redTheta^\top\redGamma V+\redTheta\redA^\top\redGamma V = \redA\redTheta^\top\redGamma V-i\omega\redTheta\redE^\top\redGamma V = (\redA-i\omega\redE)\redTheta^\top\redGamma V.
    \end{equation*}
    Consequently, the columns of $\redTheta^\top\redGamma V$ lie in $\Ker(i\omega\redE-\redA) = \colspan(V)$ and, thus, there exists $\Xi\in\CC^{k\times k}$ satisfying $\redTheta^\top\redGamma V = V\Xi$.
    For the following arguments, we consider the partitioning $V^\top = [V_1^\top\;\;V_2^\top]$ with $V_1\in\RR^{\ell\times k}$ and $V_2\in\RR^{(n-r)\times k}$.
    Especially, we note that the first block row of $\redTheta^\top\redGamma V = V\Xi$ reads $\Theta_{11}\Gamma_{11} V_1 = V_1\Xi$.
    Moreover, $V_1$ must have full column rank since otherwise any non-zero vector $\alpha\in\RR^k$ in $\Ker(V_1)$ would give rise to a corresponding non-zero vector $V\alpha$ lying in $\Ker(\redE)\cap\Ker(\redA)$ which would contradict the regularity of $(\redE,\redA)$.
    Using this fact and $\Theta_{11}\Gamma_{11} V_1 = V_1\Xi$, we infer that $\spectrum(\Xi)\subseteq \spectrum(\Theta_{11}\Gamma_{11})$ holds.
    Furthermore, multiplying \eqref{eq:lyapODEEq_II} from the left by $\Theta_{22}[0\;\; I_{r-\ell}\;\;0]$ and from the right by $\trunc^\top V$ yields
    \begin{align*}
        0 &= \Theta_{22}
        \begin{bmatrix}
            \Gamma_{22}^\top & \Gamma_{32}^\top
        \end{bmatrix}
        \begin{bmatrix}
            A_{21} & A_{23}\\
            A_{31} & A_{33}
        \end{bmatrix}
        V+\Theta_{22}
        \begin{bmatrix}
            A_{12}^\top & A_{32}^\top
        \end{bmatrix}
        \redGamma V\\
        &= \Theta_{22}\Gamma_{22}^\top
        \begin{bmatrix}
            A_{21} & A_{23}
        \end{bmatrix}
        V+\Theta_{22}
        \begin{bmatrix}
            A_{12}^\top & A_{32}^\top
        \end{bmatrix}
        \redGamma V
    \end{align*}
    and, then, multiplying \eqref{eq:lyapODEEq_I} from the left by $[0\;\; I_{r-\ell}\;\;0]$ and from the right by $\trunc^\top \redGamma V$ we obtain
    \begin{align*}
        0 &=
        \begin{bmatrix}
            A_{21} & A_{23}
        \end{bmatrix}
        \redTheta^\top\redGamma V+
        \begin{bmatrix}
            \Theta_{22} & \Theta_{23}
        \end{bmatrix}
        \begin{bmatrix}
            A_{12}^\top & A_{32}^\top\\
            A_{13}^\top & A_{33}^\top
        \end{bmatrix}
        \redGamma V\\
        &= 
        \begin{bmatrix}
            A_{21} & A_{23}
        \end{bmatrix}
        \redTheta^\top\redGamma V+\Theta_{22}
        \begin{bmatrix}
            A_{12}^\top & A_{32}^\top
        \end{bmatrix}
        \redGamma V\\
        &= 
        \begin{bmatrix}
            A_{21} & A_{23}
        \end{bmatrix}
        V\Xi-\Theta_{22}\Gamma_{22}^\top
        \begin{bmatrix}
            A_{21} & A_{23}
        \end{bmatrix}
        V.
    \end{align*}
    Thus, we obtain a homogeneous Sylvester equation for $[A_{21}\;\;A_{23}]V$ and, since $\Xi$ and $\Theta_{22}\Gamma_{22}^\top$ have no common eigenvalues, the solution is unique and given by $[A_{21}\;\;A_{23}]V = 0$.
    Consequently, we have
    \begin{equation*}
        \begin{bmatrix}
            A_{11} & A_{12} & A_{13}\\
            A_{21} & A_{22} & A_{23}\\
            A_{31} & A_{32} & A_{33}
        \end{bmatrix}
        \begin{bmatrix}
            V_1\\
            0\\
            V_2
        \end{bmatrix}
        = i\omega
        \begin{bmatrix}
            I_\ell  & 0             & 0\\
            0       & I_{r-\ell}    & 0\\
            0       & 0             & 0
        \end{bmatrix}
        \begin{bmatrix}
            V_1\\
            0\\
            V_2
        \end{bmatrix}
        ,
    \end{equation*}
    which is a contradiction to the assumption that all finite eigenvalues of $(E,A)$ are in $\CC_-$.
    Thus, in total we obtain that all finite eigenvalues of $(\redE,\redA)$ are in $\CC_-$.
\end{proof}

Let us recall a well-known result from \cite{Liuetal97} which provides a (right) coprime factorization of $G(s)$, i.e., transfer functions $M,N$ such that $G(s)=N(s)M(s)^{-1}.$

\begin{theorem}(\cite{Liuetal97})
    \label{thm:coprimeRealization}
    Consider the system \eqref{eq:pHEQ} with regular pair $(E,A)$, strongly stabilizable triple $(E,A,B)$, and strongly detectable triple $(E,A,C)$, and let $\Pc$ be a stabilizing solution of \eqref{eq:modifiedControlGARE}. 
    Then a normalized right coprime factorization of $G$ is given by
    \begin{align*}
        M(s)&=I_m-B^\top \Pc(sE-\APc)^{-1}B,\\
        N(s)&=C(sE-\APc)^{-1}B,
    \end{align*}
    where $\APc\vcentcolon=A-BB^\top \Pc.$
\end{theorem}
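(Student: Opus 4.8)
The plan is to check the four defining properties of a normalized right coprime factorization: that $M$ and $N$ lie in $\mathcal{H}_\infty$ with $\det M\not\equiv 0$; that $G=NM^{-1}$; that the pair $(M,N)$ is right coprime over $\mathcal{H}_\infty$; and that $M(-s)^\top M(s)+N(-s)^\top N(s)=I_m$. The statement is essentially the one recalled from \cite{Liuetal97}, and the argument below follows that reference, adapted to the modified control GARE \eqref{eq:modifiedControlGARE} and to the fact that here $C=B^\top Q$. Since $\Pc$ is a stabilizing solution of \eqref{eq:modifiedControlGARE}, the pencil $(E,\APc)$ is regular, impulse-free, and has all its finite eigenvalues in $\CC_-$, so $(sE-\APc)^{-1}$ is proper and analytic on $\overline{\CC_+}$; hence $M$ and $N$ belong to $\mathcal{H}_\infty$.

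For the factorization I would apply the Woodbury identity to $M=I_m-B^\top\Pc(sE-\APc)^{-1}B$, which, after substituting $\APc=A-BB^\top\Pc$, yields $M(s)^{-1}=I_m+B^\top\Pc(sE-A)^{-1}B$; in particular $\det M\not\equiv 0$ so $M^{-1}$ is a well-defined rational matrix (and no index assumption on $(E,A)$ is needed, consistently with the fact that $G$ itself may be improper). Multiplying out $NM^{-1}$ and using the resolvent identity $BB^\top\Pc=(sE-\APc)-(sE-A)$ then collapses the product to $C(sE-A)^{-1}B=G(s)$. These are routine manipulations of rational matrices.

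For right coprimeness I would show that $\Psi(s):=\begin{bmatrix}M(s)\\ N(s)\end{bmatrix}$ has full column rank $m$ for every $s\in\overline{\CC_+}\cup\{\infty\}$, which is the standard rank criterion for coprimeness of $\mathcal{H}_\infty$ factors. For finite $s_0\in\overline{\CC_+}$, a kernel vector $v\ne 0$ of $\Psi(s_0)$ gives, via the first block, a vector $w:=(s_0E-\APc)^{-1}Bv$ with $v=B^\top\Pc w$ and hence $s_0Ew=(\APc+BB^\top\Pc)w=Aw$; since $w=0$ would force $v=0$, we have $w\ne 0$, and the second block gives $Cw=N(s_0)v=0$, so $\begin{bmatrix}s_0E-A\\ C\end{bmatrix}w=0$, contradicting strong detectability of $(E,A,C)$. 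Full column rank on $j\RR\cup\{\infty\}$ then follows from the normalization identity below (which forces $\Psi(j\omega)^*\Psi(j\omega)=I_m$ and $\Psi(\infty)^\top\Psi(\infty)=I_m$).

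For the normalization I would rewrite \eqref{eq:modifiedControlGARE} in terms of $\APc$ as $\APc^\top\Pc+\Pc^\top\APc+\Pc^\top BB^\top\Pc+C^\top C=0$ together with $E^\top\Pc=\Pc^\top E$, and then expand $\Psi(-s)^\top\Psi(s)$ using the realization $\bigl(E,\APc,B,\begin{bmatrix}-B^\top\Pc\\ C\end{bmatrix},\begin{bmatrix}I_m\\0\end{bmatrix}\bigr)$. Substituting $\Pc^\top BB^\top\Pc+C^\top C=-\APc^\top\Pc-\Pc^\top\APc$ into the double-resolvent term and using the identity $-\APc^\top\Pc-\Pc^\top\APc=(-sE^\top-\APc^\top)\Pc+\Pc^\top(sE-\APc)$ — which holds precisely because $E^\top\Pc=\Pc^\top E$ — makes that term telescope into $B^\top\Pc(sE-\APc)^{-1}B+B^\top(-sE^\top-\APc^\top)^{-1}\Pc^\top B$, and these two pieces cancel the two linear cross terms, leaving $\Psi(-s)^\top\Psi(s)=I_m$. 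I expect this telescoping step to be the part requiring the most care: it is exactly where the singularity of $E$ could interfere, but it does not, because regularity of $(E,\APc)$ makes $(sE-\APc)^{-1}$ well defined and the symmetry constraint $E^\top\Pc=\Pc^\top E$ supplies the cancellation; moreover, carrying everything out at the level of rational matrices means the behavior at $s=\infty$ needs no separate treatment.
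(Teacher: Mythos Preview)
Your argument is correct: the Woodbury step for $M^{-1}$, the resolvent telescoping for $G=NM^{-1}$, the detectability-based rank argument for coprimeness, and the cancellation yielding $\Psi(-s)^\top\Psi(s)=I_m$ all go through as you describe, and you correctly identify that the symmetry $E^\top\Pc=\Pc^\top E$ is exactly what makes the $sE$-terms cancel in the descriptor case.

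There is nothing to compare against, however: the paper does not prove this theorem at all. It is stated as a recalled result from \cite{Liuetal97} and is used as a black box in the subsequent error-bound analysis. So your proposal is not an alternative to the paper's proof but rather a self-contained verification of a cited fact; in that sense it goes beyond what the paper provides.
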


Based on such a factorization, we can specify a pair of generalized Lyapunov (in)equalities associated with the state space system described by $[M^\top\;\; N^\top]^\top$.

\begin{theorem}\label{thm:nrcf_lyap}
    Consider the system \eqref{eq:pHEQ} with regular pair $(E,A)$, strongly stabilizable triple $(E,A,B)$, and strongly detectable triple $(E,A,C)$, and let $\Pc$ and $\Pf$ be stabilizing solutions of \eqref{eq:modifiedGAREs} such that $I_n+\Pf\Pc^\top$ is invertible. 
    Then the matrices $\mathcal{L} \vcentcolon= (I_n+\Pf\Pc^\top)^{-1}\Pf$  and $\Pc$ satisfy
    \begin{subequations}
        \label{eq:gen_lyap_ineq}
        \begin{align}
            \label{eq:gen_lyap_ineq_1}
            \APc \mathcal{L}^\top + \mathcal{L} \APc^\top + BB^\top &\preceq 0, \quad E\mathcal{L}^\top = \mathcal{L}E^\top, \\
            \label{eq:gen_lyap_ineq_2}
            \APc^\top \Pc + \Pc^\top \APc + 
            \begin{bmatrix} 
                -\Pc^\top B & C^\top
            \end{bmatrix} 
            \begin{bmatrix}  
                -\Pc^\top B & C^\top
            \end{bmatrix}
            ^\top  &= 0, \quad  E^\top \Pc = \Pc^\top E
        \end{align}
    \end{subequations}
    with $\APc$ as defined in Theorem~\ref{thm:coprimeRealization}.
\end{theorem}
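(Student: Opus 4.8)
The plan is to verify the two lines of \eqref{eq:gen_lyap_ineq} separately. The second line is immediate: expanding $\APc = A - BB^\top\Pc$ gives $\APc^\top\Pc + \Pc^\top\APc = A^\top\Pc + \Pc^\top A - 2\Pc^\top BB^\top\Pc$, and since $\begin{bmatrix}-\Pc^\top B & C^\top\end{bmatrix}\begin{bmatrix}-\Pc^\top B & C^\top\end{bmatrix}^\top = \Pc^\top BB^\top\Pc + C^\top C$, the left-hand side of the equation in \eqref{eq:gen_lyap_ineq_2} equals $A^\top\Pc + \Pc^\top A - \Pc^\top BB^\top\Pc + C^\top C$, which vanishes by the control GARE \eqref{eq:modifiedControlGARE}; the symmetry relation $E^\top\Pc = \Pc^\top E$ is precisely the second equation of \eqref{eq:modifiedControlGARE}.

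For the first line, abbreviate $Z \vcentcolon= I_n + \Pf\Pc^\top$, so that $\mathcal{L} = Z^{-1}\Pf$, $Z^\top = I_n + \Pc\Pf^\top$, $Z\mathcal{L} = \Pf$, and $\mathcal{L}^\top Z^\top = \Pf^\top$. I would first establish the constraint $E\mathcal{L}^\top = \mathcal{L}E^\top$ by multiplying it by $Z$ on the left and $Z^\top$ on the right: it then becomes $\Pf E^\top(I_n + \Pc\Pf^\top) = (I_n + \Pf\Pc^\top)\Pf E^\top$, and the only non-diagonal term, $\Pf E^\top\Pc\Pf^\top$, equals $\Pf\Pc^\top E\Pf^\top = \Pf\Pc^\top\Pf E^\top$ upon using $E^\top\Pc = \Pc^\top E$ from \eqref{eq:modifiedControlGARE} and $E\Pf^\top = \Pf E^\top$ from \eqref{eq:modifiedFilterGARE}.

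For the inequality, observe that $X \vcentcolon= \APc\mathcal{L}^\top + \mathcal{L}\APc^\top + BB^\top$ is symmetric, so it suffices to show $ZXZ^\top \preceq 0$. Using $Z\mathcal{L} = \Pf$ and $\mathcal{L}^\top Z^\top = \Pf^\top$ one has $ZXZ^\top = Z\APc\Pf^\top + \Pf\APc^\top Z^\top + ZBB^\top Z^\top$. Expanding all three products with $\APc = A - BB^\top\Pc$ and $\APc^\top = A^\top - \Pc^\top BB^\top$, every term carrying an unmatched factor $BB^\top\Pc$ or $\Pc^\top BB^\top$ cancels, and after collecting the rest one obtains
\[
ZXZ^\top = \big(A\Pf^\top + \Pf A^\top + BB^\top\big) + \Pf\big(\Pc^\top A + A^\top\Pc - \Pc^\top BB^\top\Pc\big)\Pf^\top.
\]
Now $\Pc^\top A + A^\top\Pc - \Pc^\top BB^\top\Pc = -C^\top C$ by \eqref{eq:modifiedControlGARE}, so the right-hand side becomes $A\Pf^\top + \Pf A^\top + BB^\top - \Pf C^\top C\Pf^\top$, which equals $-2R$ by the modified filter GARE \eqref{eq:modifiedFilterGARE}. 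Hence $ZXZ^\top = -2R \preceq 0$ and therefore $X = -2Z^{-1}RZ^{-\top} \preceq 0$, which is the inequality in \eqref{eq:gen_lyap_ineq_1}.

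The argument is entirely computational, and I do not expect a genuine analytic obstacle. The one thing to be careful about is the transpose bookkeeping: in the descriptor setting none of $\Pc$, $\Pf$, $\mathcal{L}$ is symmetric, so one must keep track of which of the two symmetry identities $E^\top\Pc = \Pc^\top E$ and $E\Pf^\top = \Pf E^\top$ is invoked where, and the cancellation of the cross terms hinges on writing $\APc^\top = A^\top - \Pc^\top BB^\top$. The key structural point is that multiplying by $Z$ and $Z^\top$ clears the inverse in $\mathcal{L}$ and collapses the whole identity onto the two GAREs in \eqref{eq:modifiedGAREs}, with the extra $2R$ term in \eqref{eq:modifiedFilterGARE} being exactly what makes $ZXZ^\top = -2R$ negative semidefinite — had one used the unmodified filter equation, one would obtain $ZXZ^\top = 0$, i.e.\ an equality as in \cite{MoeRS11}.
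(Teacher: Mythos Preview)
Your proof is correct and follows essentially the same route as the paper's: both arguments observe that \eqref{eq:gen_lyap_ineq_2} is a rewriting of the control GARE, and both establish \eqref{eq:gen_lyap_ineq_1} by pre- and post-multiplying with $I_n+\Pf\Pc^\top$ and its transpose, expanding, and invoking the two GAREs to collapse the expression to $-2R$. Your bookkeeping and the closing remark about the unmodified case are accurate.
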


\begin{proof}
    First, we note that the existence of stabilizing solutions $\Pc$ and $\Pf$ satisfying $I_n+\Pf\Pc^\top\in\GL{n}$ is guaranteed by Theorem~\ref{thm:solutionsOfNonsymmetricGAREs}.
    Furthermore, since \eqref{eq:gen_lyap_ineq_2} is just another way of writing \eqref{eq:modifiedControlGARE}, it is obviously satisfied and thus we only focus on \eqref{eq:gen_lyap_ineq_1}. 
    Due to the invertibility of $I_n+\Pf\Pc^\top$ we may equivalently show that
    \begin{align*}
        (I_n+\Pf\Pc^\top) \APc \Pf^\top+\Pf \APc^\top  (I_n+\Pc\Pf^\top)+ (I_n+\Pf\Pc^\top) BB^\top (I_n+\Pc\Pf^\top)
    \end{align*}
    is negative semidefinite.
    Indeed, we obtain
    \begin{align*}
        & (I_n+\Pf\Pc^\top) \APc \Pf^\top +\Pf \APc^\top  (I_n+\Pc\Pf^\top) + (I_n+\Pf\Pc^\top) BB^\top (I_n+\Pc\Pf^\top) \\
        &= (I_n+\Pf\Pc^\top) A \Pf^\top + \Pf A^\top (I_n+\Pc\Pf^\top) \\ 
        &\quad - (I_n+\Pf\Pc^\top)BB^\top \Pc \Pf^\top - \Pf \Pc^\top BB^\top (I_n+\Pc\Pf^\top)  \\
        &\quad + BB^\top + \Pf \Pc^\top BB^\top+ BB^\top \Pc \Pf^\top + \Pf \Pc^\top BB^\top \Pc \Pf^\top \\
        &= A{\Pf}^\top + {\Pf} A^\top  + BB^\top  \\
        &\quad - \Pf \Pc^\top BB^\top \Pc \Pf^\top+{\Pf}{\Pc}^\top A {\Pf}^\top+ {\Pf}A^\top {\Pc} {\Pf}^\top \\
        &= -2R + \Pf\left( C^\top C + {\Pc}^\top A  +A^\top  \Pc -  {\Pc}^\top BB^\top  \Pc\right) \Pf^\top \\
        &= -2R \preceq 0 .
    \end{align*}
    Since $E^\top \Pc=\Pc^\top E$ and $E\Pf^\top=\Pf E^\top$ it follows that $(I_n+\Pf\Pc^\top ) E \Pf^\top = \Pf E^\top (I_n+\Pc\Pf^\top)$, which, combined with the invertibility of $I_n+\Pc \Pf^\top $, implies $E\mathcal{L}^\top = \mathcal{L}E^\top.$
\end{proof}

By combining the previous results from this subsection, we arrive at one of our main results.

\begin{theorem}\label{thm:err_bnd}
    Let $(\Etilde,\Atilde,\Btilde,\Ctilde)$ be given with $\Pctilde$, $\Pftilde$, and $\Sigma$ as in Theorem \ref{thm:bal_real}. 
    Furthermore, assume that there exists $\ell\in\lbrace 1,\ldots, k-1\rbrace$ such that $\sigma_\ell>\sigma_{\ell+1}$ and 
    consider the truncated system given by 
    \begin{equation*}
        (\redE,\redA,\redB,\redC)=(\trunc\Etilde \trunc^\top,\trunc\Atilde\trunc^\top,\trunc\Btilde,\Ctilde\trunc^\top).
    \end{equation*}
    with $\trunc$ as defined in \eqref{eq:truncationMat}.
    If $(\redE,\redA)$ is regular, then there holds
    \begin{equation}
        \label{eq:errorBoundForCoprimeFactors}
        \left\lVert  
        \begin{bmatrix}
            M\\
            N
        \end{bmatrix}
        -
        \begin{bmatrix}
            \redM\\
            \redN
        \end{bmatrix}
        \right\rVert_{\mathcal{H}_\infty} \le 2 \sum_{i=\ell+1}^k \frac{\sigma_i}{\sqrt{1+\sigma_i^2}},
    \end{equation}
    where the coprime factors $(M,N)$ and $(\redM,\redN)$ are constructed as in Theorem~\ref{thm:coprimeRealization}: $(M,N)$ based on the realization $(\Etilde,\Atilde,\Btilde,\Ctilde)$ and $\Pctilde$, $(\redM,\redN)$ based on $(\redE,\redA,\redB,\redC)$ and $\redPc \vcentcolon= \trunc\Pctilde\trunc^\top$.
\end{theorem}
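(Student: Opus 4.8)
The plan is to derive \eqref{eq:errorBoundForCoprimeFactors} from a single application of Lemma~\ref{lem:gen_lyap_ineq} to the descriptor system that realizes the stacked coprime factor $\begin{bmatrix}M\\N\end{bmatrix}$. Set $\APctilde\vcentcolon=\Atilde-\Btilde\Btilde^\top\Pctilde$. By Theorem~\ref{thm:coprimeRealization}, up to the constant feedthrough term $\begin{bmatrix}I_m\\0\end{bmatrix}$---which is identical for the FOM and the ROM and hence cancels in the difference---$\begin{bmatrix}M\\N\end{bmatrix}$ is the transfer function of $(\Etilde,\APctilde,\Btilde,C_\star)$ with $C_\star\vcentcolon=\begin{bmatrix}-\Btilde^\top\Pctilde\\\Ctilde\end{bmatrix}$. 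This system is semi-explicit because $\Etilde$ is, and the pencil $(\Etilde,\APctilde)$ is regular, impulse-free, with all finite eigenvalues in $\CC_-$ since $\Pctilde$ is a stabilizing solution of \eqref{eq:modifiedControlGARE}. First I would invoke Theorem~\ref{thm:solutionsOfNonsymmetricGAREs}(iv) to fix a stabilizing solution $\Pftilde$ of \eqref{eq:modifiedFilterGARE} with $I_n+\Pftilde\Pctilde^\top\in\GL n$ (the quantity $\Pftilde\Etilde^\top$ used below being independent of the choice by Theorem~\ref{thm:solutionsOfNonsymmetricGAREs}(ii)); then Theorem~\ref{thm:nrcf_lyap}, applied to $(\Etilde,\Atilde,\Btilde,\Ctilde)$, shows that $\mathcal{L}\vcentcolon=(I_n+\Pftilde\Pctilde^\top)^{-1}\Pftilde$ and $\Pctilde$ satisfy exactly the generalized Lyapunov (in)equalities \eqref{eq:lyap_ineq_errbnd} required in Lemma~\ref{lem:gen_lyap_ineq} for the system $(\Etilde,\APctilde,\Btilde,C_\star)$.

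The heart of the argument is to read off the block structure of $P_1\vcentcolon=\mathcal{L}$ and $P_2\vcentcolon=\Pctilde$ demanded in Lemma~\ref{lem:gen_lyap_ineq} and to identify the products $\gamma_i\theta_i$. Partitioning into an $r$-block and an $(n-r)$-block, the symmetry and positive semidefiniteness of $\Etilde^\top\Pctilde$ together with the semi-explicit form of $\Etilde$ force $\Pctilde=\begin{bmatrix}\Theta&0\\\ast&\ast\end{bmatrix}$ with symmetric positive semidefinite $\Theta\in\RR^{r\times r}$; by Theorem~\ref{thm:bal_real}(i) and invertibility of $\Qtilde$---which, as in the proof of Theorem~\ref{thm:bal_real}(ii), forces the ``$\Sigma_3$''-type and fourth blocks of the simultaneous diagonalization to be of size zero---one obtains $\Theta=\diag(\sigma_1,\dots,\sigma_k,0,\dots,0)$. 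Similarly $\Pftilde=\begin{bmatrix}D&\ast\\0&\ast\end{bmatrix}$ with positive definite diagonal $D=\diag(\sigma_1,\dots,\sigma_k,\ast,\dots,\ast)\in\RR^{r\times r}$. Since $\Pftilde$ and $\Pctilde^\top$ are block upper triangular, so are $I_n+\Pftilde\Pctilde^\top$ and its inverse, and a short computation gives $\mathcal{L}=\begin{bmatrix}\Gamma&\ast\\0&\ast\end{bmatrix}$ with positive definite diagonal $\Gamma=(I_r+D\Theta)^{-1}D=\diag\!\bigl(\tfrac{\sigma_1}{1+\sigma_1^2},\dots,\tfrac{\sigma_k}{1+\sigma_k^2},\ast,\dots,\ast\bigr)$. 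Consequently $\gamma_i\theta_i=\tfrac{\sigma_i^2}{1+\sigma_i^2}$ for $1\le i\le k$ and $\gamma_i\theta_i=0$ for $k<i\le r$, and the monotonicity $\gamma_1\theta_1\ge\dots\ge\gamma_r\theta_r\ge 0$ follows from $\sigma_1\ge\dots\ge\sigma_k$ and the fact that $x\mapsto x^2/(1+x^2)$ is increasing on $[0,\infty)$.

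Next I would check that truncation commutes with the coprime factor construction, so that the truncated system produced by Lemma~\ref{lem:gen_lyap_ineq} is $\begin{bmatrix}\redM\\\redN\end{bmatrix}$ minus the same feedthrough term. The key algebraic identity is $\trunc^\top\trunc\,\Pctilde\,\trunc^\top=\Pctilde\,\trunc^\top$, which holds because, in the $\ell/(r-\ell)/(n-r)$-partition, the $(2,1)$- and $(2,3)$-blocks of $\Pctilde$ vanish (diagonality of $\Theta$ and the block-lower-triangular form of $\Pctilde$). This identity immediately yields $\trunc\APctilde\trunc^\top=\redA-\redB\redB^\top\redPc$ and $C_\star\trunc^\top=\begin{bmatrix}-\redB^\top\redPc\\\redC\end{bmatrix}$ and, upon truncating the equality \eqref{eq:gen_lyap_ineq_2}, that $\redPc$ solves the modified control GARE of $(\redE,\redA,\redB,\redC)$; that $\redPc$ is moreover \emph{stabilizing} for the ROM follows from Lemma~\ref{lem:lyapIneqs} applied to $(\Etilde,\APctilde)$ with $\mathcal{L}$ and $\Pctilde$ in the roles of $\Theta$ and $\Gamma$ there. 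Indeed, the two spectra that must be disjoint are $\{\sigma_i^2/(1+\sigma_i^2):1\le i\le\ell\}$ and $\{\sigma_i^2/(1+\sigma_i^2):\ell<i\le k\}\cup\{0\}$, which are disjoint precisely because of the assumption $\sigma_\ell>\sigma_{\ell+1}$ (using $\sigma_i>0$ and strict monotonicity of $x\mapsto x^2/(1+x^2)$); hence all finite eigenvalues of $(\redE,\redA-\redB\redB^\top\redPc)$ lie in $\CC_-$, and $(\redM,\redN)$, built from the formulas of Theorem~\ref{thm:coprimeRealization}, is a normalized right coprime factorization of $\redG$ (which is well defined by the assumed regularity of $(\redE,\redA)$).

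Finally, applying Lemma~\ref{lem:gen_lyap_ineq} with $(E,A,B,C)=(\Etilde,\APctilde,\Btilde,C_\star)$, $P_1=\mathcal{L}$, $P_2=\Pctilde$, and $\Gamma$, $\Theta$ as above produces $2\sum_{i=\ell+1}^r\sqrt{\gamma_i\theta_i}=2\sum_{i=\ell+1}^k\sigma_i/\sqrt{1+\sigma_i^2}$ as an $\mathcal{H}_\infty$ bound on the difference between the transfer function of $(\Etilde,\APctilde,\Btilde,C_\star)$ and that of its truncation; cancelling the common feedthrough term gives \eqref{eq:errorBoundForCoprimeFactors}. I expect the genuine work to be concentrated in the second paragraph---propagating the block structures through the balancing transformation, verifying that invertibility of $\Qtilde$ collapses the superfluous blocks so that $\Gamma$ comes out positive definite, and pinning down $\gamma_i\theta_i=\sigma_i^2/(1+\sigma_i^2)$. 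The identification of the truncated coprime factors and the eigenvalue placement via Lemma~\ref{lem:lyapIneqs} should then amount to routine bookkeeping.
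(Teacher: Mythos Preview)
Your approach is essentially the paper's: realize $\begin{bmatrix}M\\N\end{bmatrix}$ by $(\Etilde,\APctilde,\Btilde,C_\star)$, feed the Lyapunov (in)equalities from Theorem~\ref{thm:nrcf_lyap} into Lemma~\ref{lem:gen_lyap_ineq}, and use Lemma~\ref{lem:lyapIneqs} to certify that $\redPc$ is stabilizing for the ROM. Your block-structure analysis and the identification $\gamma_i\theta_i=\sigma_i^2/(1+\sigma_i^2)$ are correct and match the paper.

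There is, however, one genuine gap. To invoke Theorem~\ref{thm:coprimeRealization} for the ROM and conclude that $(\redM,\redN)$ is a normalized \emph{right coprime} factorization of $\redG$, you need not only that $\redPc$ is stabilizing (equivalently, strong stabilizability of $(\redE,\redA,\redB)$) but also strong detectability of $(\redE,\redA,\redC)$. You never address the latter. The paper does, via a dual argument: choosing $\Pctilde,\Pftilde$ so that both $I_n+\Pftilde\Pctilde^\top$ and $I_n+\Pftilde^\top\Pctilde$ are invertible, it sets $\calM\vcentcolon=\Pctilde(I_n+\Pftilde^\top\Pctilde)^{-1}$, verifies by a short computation that $\Pftilde$ and $\calM$ satisfy the Lyapunov inequalities
\[
\APftilde\Pftilde^\top+\Pftilde\APftilde^\top\preceq 0,\qquad \APftilde^\top\calM+\calM^\top\APftilde\preceq 0
\]
for $\APftilde\vcentcolon=\Atilde-\Pftilde\Ctilde^\top\Ctilde$, and then applies Lemma~\ref{lem:lyapIneqs} a second time (with the roles of the two inequalities swapped) to place all finite eigenvalues of $(\redE,\redA-\redPf\redC^\top\redC)$ in $\CC_-$, whence strong detectability. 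This is not mere bookkeeping---the second inequality above requires its own calculation analogous to the one in Theorem~\ref{thm:nrcf_lyap}---and without it your assertion that $(\redM,\redN)$ is a coprime factorization is unjustified.
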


\begin{proof}
    First of all, we may assume without loss of generality that $\Pctilde$ and $\Pftilde$ are such that $I_n+\Pftilde\Pctilde^\top$ is invertible as in Theorem~\ref{thm:solutionsOfNonsymmetricGAREs}(iv) since the normalized right coprime factors are unique up to multiplication by an orthogonal matrix from the right, cf.~\cite[Lem.~3.1]{Vid84}.
    Thus, for the error bound it is not relevant which of the stabilizing solutions of the GARE \eqref{eq:modifiedControlGARE} is chosen for constructing the coprime factors.
    By Theorem~\ref{thm:coprimeRealization}, a realization of $[M^\top\;\; N^\top]^\top$ is given by $(\Etilde,\APctilde,\Btilde,\CPctilde,\DPctilde)$ with $\APctilde \vcentcolon= \Atilde-\Btilde\Btilde^\top\Pctilde$,
    \begin{equation*}
        \CPctilde \vcentcolon= 
        \begin{bmatrix}
            -\Btilde^\top \Pctilde\\
            \Ctilde
        \end{bmatrix}
        ,\quad \DPctilde \vcentcolon= 
        \begin{bmatrix}
            I_m\\
            0
        \end{bmatrix}
        \in\RR^{2m\times m}.
    \end{equation*}
    Especially, since $\Pctilde$ is a stabilizing solution of the control GARE \eqref{eq:modifiedControlGARE} associated with $(\Etilde,\Atilde,\Btilde,\Ctilde)$, the pencil $(\Etilde,\APctilde)$ is regular and impulse-free and all its finite eigenvalues lie in $\CC_-$.
    Furthermore, by Theorem~\ref{thm:nrcf_lyap}, $\Ltilde \vcentcolon= (I_n+\Pftilde\Pctilde^\top)^{-1}\Pftilde$ and $\Pctilde$ satisfy the generalized Lyapunov (in-)equalities
    \begin{equation}
        \label{eq:lyapIneqsInProof}
        \begin{aligned}
            \APctilde \Ltilde^\top + \Ltilde \APctilde^\top + \Btilde\Btilde^\top &\preceq 0, \quad \Etilde\Ltilde^\top = \Ltilde\Etilde^\top, \\
            \APctilde^\top \Pctilde + \Pctilde^\top \APctilde + \CPctilde^\top\CPctilde &= 0, \quad  \Etilde^\top \Pctilde = \Pctilde^\top \Etilde.
        \end{aligned}
    \end{equation}
    Besides, since the upper left $r\times r$ block of $\Pftilde$ is unique due to the semi-explicit form of $\Etilde$ and Theorem~\ref{thm:solutionsOfNonsymmetricGAREs}(ii), this block equals $\Qtilde_{11}^{-\top}$ by Theorem~\ref{thm:solutionsOfNonsymmetricGAREs}(iii) and Theorem~\ref{thm:bal_real}(ii).
    Consequently, also the upper left $r\times r$ block of $\Ltilde$ is invertible, which allows us to apply Lemma~\ref{lem:gen_lyap_ineq} to the system $(\Etilde,\APctilde,\Btilde,\CPctilde,\DPctilde)$ yielding the error bound \eqref{eq:errorBoundForCoprimeFactors}.
    Note that the feedthrough term $\DPctilde$ is not affected by the truncation and, thus, just cancels out in \eqref{eq:errorBoundForCoprimeFactors}.
    
    It remains to show that $\redM$ and $\redN$ as specified in the claim form indeed a normalized right coprime factorization of the reduced system.
    To this end, we first note that due to \eqref{eq:lyapIneqsInProof}, the pencil $(\Etilde,\APctilde)$ satisfies the requirements of Lemma~\ref{lem:lyapIneqs} and, thus, the reduced pencil $(\redE,\redA-\redB\redB^\top\redPc)$ is regular, impulse-free, and all its finite eigenvalues are in $\CC_-$.
    This shows that the triple $(\redE,\redA,\redB)$ is strongly stabilizable and that $\redPc$ is a stabilizing solution of the control GARE \eqref{eq:modifiedControlGARE} associated with the reduced system given by $(\redE,\redA,\redB,\redC)$.
    Thus, if additionally the triple $(\redE,\redA,\redC)$ is strongly detectable, it follows from Theorem~\ref{thm:coprimeRealization} that $\redM$ and $\redN$ indeed form a normalized right coprime factorization for the reduced system.
    To show the strong detectability of $(\redE,\redA,\redC)$, we observe that, since $\Pftilde$ is a stabilizing solution of the modified filter GARE \eqref{eq:modifiedFilterGARE} associated with $(\Etilde,\Atilde,\Btilde,\Ctilde)$, the pencil $(\Etilde,\APftilde)$ with $\APftilde \vcentcolon= \Atilde-\Pftilde\Ctilde^\top\Ctilde$ is regular and impulse-free and all its finite eigenvalues are in $\CC_-$.
    Furthermore, we note that the construction in the proof of Theorem~\ref{thm:solutionsOfNonsymmetricGAREs}(iv) allows for choosing $\Pftilde$ and $\Pctilde$ such that $I_n+\Pftilde\Pctilde^\top$ and $I_n+\Pftilde^\top\Pctilde$ are invertible.
    Then, the matrices $\Pftilde$ and $\calM \vcentcolon= \Pctilde(I_n+\Pftilde^\top\Pctilde)^{-1}$ satisfy the Lyapunov inequalities
    \begin{align}
        \label{eq:lyapIneqPf_I}
        \APftilde\Pftilde^\top+\Pftilde\APftilde^\top &\preceq 0,\\
        \label{eq:lyapIneqPf_II}
        \APftilde^\top\calM+\calM^\top\APftilde & \preceq 0,
    \end{align}
    where \eqref{eq:lyapIneqPf_I} follows directly from the modified filter GARE \eqref{eq:modifiedFilterGARE} associated with $(\Etilde,\Atilde,\Btilde,\Ctilde)$.
    Furthermore, \eqref{eq:lyapIneqPf_II} is a consequence of the calculation
    \begin{align*}
        &(I_n+\Pftilde^\top\Pctilde)^\top\APftilde^\top\Pctilde+\Pctilde^\top\APftilde(I_n+\Pftilde^\top\Pctilde)\\
        &= \Atilde^\top\Pctilde+\Pctilde^\top \Atilde-\Ctilde^\top\Ctilde\Pftilde^\top\Pctilde-\Pctilde^\top\Pftilde\Ctilde^\top\Ctilde+\Pctilde^\top\Pftilde \Atilde^\top\Pctilde+\Pctilde^\top \Atilde\Pftilde^\top\Pctilde\\
        &\quad-2\Pctilde^\top\Pftilde\Ctilde^\top\Ctilde\Pftilde^\top\Pctilde\\
        &= \Pctilde^\top\left(\Btilde\Btilde^\top+\Pftilde \Atilde^\top+\Atilde\Pftilde^\top-\Pftilde\Ctilde^\top\Ctilde\Pftilde^\top\right)\Pctilde\\
        &\quad-(I_n+\Pftilde^\top\Pctilde)^\top\Ctilde^\top\Ctilde(I_n+\Pftilde^\top\Pctilde)\\
        &\preceq -2\Pctilde^\top\Rtilde\Pctilde \preceq 0.
    \end{align*}
    Thus, the pencil $(\Etilde,\APftilde)$ satisfies the requirements of Lemma~\ref{lem:lyapIneqs} and, thus, the reduced pencil $(\redE,\redA-\redPf\redC^\top\redC)$ with $\redPf \vcentcolon= \trunc \Pftilde\trunc^\top$ is regular, impulse-free, and all its finite eigenvalues are in $\CC_-$.
    Hence, the triple $(\redE,\redA,\redC)$ is strongly detectable, which concludes the proof.
\end{proof}

Let us emphasize that the requirement in Theorem~\ref{thm:err_bnd} that the reduced pencil $(\redE,\redA)$ is regular is in general not automatically satisfied, as Example~\ref{ex:pHDAEMOR} demonstrates.
In fact, this is also true for the unstructured LQG balanced truncation approach presented in \cite{MoeRS11} as is shown in Example~\ref{ex:MoeRS11}.
However, for the special case that the original pencil $(\Etilde,\Atilde)$ is regular and impulse-free, the requirement that $(\redE,\redA)$ is regular is automatically satisfied, cf.~the beginning of the proof of Lemma~\ref{lem:lyapIneqs}.

\begin{example}
    \label{ex:pHDAEMOR}
    We consider a port-Hamiltonian descriptor system as in \eqref{eq:pHEQ} with
    \begin{align*}
        E &= 
        \begin{bmatrix}
            1 & 0 & 0\\
            0 & 1 & 0\\
            0 & 0 & 0
        \end{bmatrix}
        ,\quad J = 
        \begin{bmatrix}
            0                       & \frac{a^2+b^2}{b^2-a^2}   & 0\\
            \frac{a^2+b^2}{a^2-b^2} & 0                         & 1\\
            0                       & -1                        & 0
        \end{bmatrix}
        ,\quad R = 
        \begin{bmatrix}
            1 & 1 & 0\\
            1 & 2 & 0\\
            0 & 0 & 0
        \end{bmatrix}
        ,\\ 
        Q &= 
        \begin{bmatrix}
            a & 0           & 0\\
            0 & b           & 0\\
            0 & b-\frac1b   & 1
        \end{bmatrix}
        ,\quad A = (J-R)Q = 
        \begin{bmatrix}
            -a                      & \frac{2ba^2}{a^2-b^2}  & 0\\
            \frac{2ab^2}{b^2-a^2}    & -(b+\frac1b)          & 1\\
            0                       & -b                    & 0
        \end{bmatrix}
        ,\\ 
        B &= 
        \begin{bmatrix}
            1 & 0 & 0\\
            0 & 0 & 1
        \end{bmatrix}
        ^\top,\quad C = B^\top Q = 
        \begin{bmatrix}
            a & 0 & 0\\
            0 & b-\frac1b   & 1
        \end{bmatrix}
        ,
    \end{align*}
    $a = \sqrt{1+\sqrt2}$, and $b = \sqrt{2+\sqrt5}$.
    Especially, we note that $(E,A)$ is regular but not impulse-free, $(E,A,B)$ is strongly stabilizable, and $(E,A,C)$ is strongly detectable.
    Furthermore, the realization is balanced with respect to the modified GAREs \eqref{eq:modifiedGAREs}, since stabilizing solutions are given by
    \begin{equation*}
        \Pc = 
        \begin{bmatrix}
            \frac1a & 0         & 0\\
            0       & \frac1b   & 0\\
            0       & 0         & 1
        \end{bmatrix}
        ,\quad
        \Pf = Q^{-\top} = 
        \begin{bmatrix}
            \frac1a & 0         & 0\\
            0       & \frac1b   & \frac1{b^2}-1\\
            0       & 0         & 1
        \end{bmatrix}
        .
    \end{equation*}
    After truncating the state corresponding to the characteristic value $\frac1b$, the pencil $(\redE,\redA)$ of the reduced-order model is given by
    \begin{equation*}
        \left(
        \begin{bmatrix}
            1 & 0\\
            0 & 0
        \end{bmatrix}
        , 
        \begin{bmatrix}
            -a  & 0\\
            0   & 0
        \end{bmatrix}
        \right)
    \end{equation*}
    and, thus, singular, since $\redE$ and $\redA$ have a non-trivial common null space.
\end{example}

\begin{example}
    \label{ex:MoeRS11}
    We consider a descriptor system given by $(E,A,B,C)$ with
    \begin{equation*}
        E = 
        \begin{bmatrix}
            1 & 0 & 0\\
            0 & 1 & 0\\
            0 & 0 & 0
        \end{bmatrix}
        ,\quad A = 
        \begin{bmatrix}
            -1  & 0     & 0\\
            0   & -1    & 1\\
            0   & 1     & 0
        \end{bmatrix}
        ,\quad B = 
        \begin{bmatrix}
            1 & 0                   & 0\\
            0 & \frac{\sqrt{3}}2    & 0\\
            0 & -\frac{4}{\sqrt{3}} & 1
        \end{bmatrix}
        =C^\top
    \end{equation*}
    and note that $(E,A)$ is regular but not impulse-free, $(E,A,B)$ is strongly stabilizable, and $(E,A,C)$ is strongly detectable.
    Furthermore, the realization is balanced with respect to the GAREs \eqref{eq:originalGAREs} considered in \cite{MoeRS11}, since stabilizing solutions are given by
    \begin{equation*}
        \Pc = \Pf = 
        \begin{bmatrix}
            \sqrt{2}-1  & 0         & 0\\
            0           & \frac13   & 0\\
            0           & 0         & 1
        \end{bmatrix}
        .
    \end{equation*}
    After truncating the state corresponding to the LQG characteristic value $\frac13$, the pencil $(\redE,\redA)$ of the reduced-order model is given by
    \begin{equation*}
        \left(
        \begin{bmatrix}
            1 & 0\\
            0 & 0
        \end{bmatrix}
        , 
        \begin{bmatrix}
            -1  & 0\\
            0   & 0
        \end{bmatrix}
        \right)
    \end{equation*}
    and, thus, singular.
\end{example}

\subsection{Improvement of the Error Bound by Replacing $Q$}\label{subsec:optimalQ}

In what follows, we investigate the possibility of exchanging the $Q$ matrix in the pH realization \eqref{eq:pHEQ} with the intention of improving, i.e., reducing $\sigma_i$ in the error bound \eqref{eq:errorBoundForCoprimeFactors}, cf.~\cite[sec.~4.3]{BreMS21} for a similar discussion in the ODE case.
More precisely, let us assume that the KYP LMI \eqref{eq:generalizedKYP_II} has a maximal solution $\Xmax$ which we can use instead of $Q$ for the formulation of a port-Hamiltonian system. Then, we have $E^\top \Xmax\succeq E^\top Q$ and this property is also preserved after transformation to a semi-explicit realization, see \eqref{eq:transformedETQ}.
So without loss of generality we may assume that we already started with a semi-explicit realization.
Then, we have
\begin{align*}
    \sigma_i^2 &= \lambda_i\left(
    \begin{bmatrix}
        I_r & 0
    \end{bmatrix}
    \Pf E^\top \Pc^\top E
    \begin{bmatrix}
        I_r\\
        0
    \end{bmatrix}
    \right) = \lambda_i(\mathcal{P}_{\mathrm{f}11}\mathcal{P}_{\mathrm{c}11}) = \lambda_i(\mathcal{P}_{\mathrm{f}11}\mathcal{P}_{\mathrm{c}11}^{\frac12}\mathcal{P}_{\mathrm{c}11}^{\frac12})\\
    &= \lambda_i(\mathcal{P}_{\mathrm{c}11}^{\frac12}\mathcal{P}_{\mathrm{f}11}\mathcal{P}_{\mathrm{c}11}^{\frac12}) = \lambda_i(\mathcal{P}_{\mathrm{c}11}^{\frac12}Q_{11}^{-\top}\mathcal{P}_{\mathrm{c}11}^{\frac12}) \geq \lambda_i(\mathcal{P}_{\mathrm{c}11}^{\frac12}X_{11}^{-\top}\mathcal{P}_{\mathrm{c}11}^{\frac12})\\
    &= \lambda_i(X_{11}^{-\top}\mathcal{P}_{\mathrm{c}11}) = \lambda_i\left(
    \begin{bmatrix}
        I_r & 0
    \end{bmatrix}
    \Xmax^{-\top}E^\top \Pc^\top E
    \begin{bmatrix}
        I_r\\
        0
    \end{bmatrix}
    \right) =\vcentcolon \widehat{\sigma}_i^2
\end{align*}
for every non-zero eigenvalue $\lambda_i$.
Here, $X_{11}$ denotes the upper left $r\times r$ block of $\Xmax$ and we exploited the semi-explicit form of $E$, the resulting symmetry and positive semidefiniteness of $\mathcal{P}_{\mathrm{c}11}$, Theorem~\ref{thm:solutionsOfNonsymmetricGAREs}(ii) and (iii), and the fact that the non-zero eigenvalues of $AB$ and $BA$ coincide for all matrices $A$ and $B$ whose dimensions allow for the matrix products to be well-defined. 
Since the function $f\colon\RR_{\geq 0}\to\RR$ with $f(x)\vcentcolon=\frac{x}{1+x}$ is strictly monotonously increasing, the property $\sigma_i^2\geq \widehat{\sigma}_i^2$ also implies $\frac{\sigma_i^2}{1+\sigma_i^2}\geq \frac{\widehat{\sigma}_i^2}{1+\widehat{\sigma}_i^2}$ which in turn implies that the error bound \eqref{eq:errorBoundForCoprimeFactors} is minimized when $\Xmax$ is used for defining the Hamiltonian. 

For deriving a procedure to determine $\Xmax$, we distinguish two cases and begin with index-1 or impulse-free systems before turning to the general setting by means of feedback equivalence transformations. 

\subsubsection{Procedure for Index-1 Systems}\label{subsubsec:indexOne}

In the following, we assume that \eqref{eq:pHEQ} is regular and has index 1. 
Thus, there exists a state space transformation which transforms $(E,A)$ to Weierstraß canonical form, i.e., there exist $S,T\in\GL{n}$ with
\begin{equation}
    \label{eq:quasiWCFmatrices_indexOneCase}
    \begin{aligned}
        \Etilde &= SET^{-1} = 
        \begin{bmatrix}
            I_r & 0\\
            0   & 0
        \end{bmatrix}
        ,\quad \Atilde = SAT^{-1} = 
        \begin{bmatrix}
            A_{11}  & 0\\
            0       & I_{n-r}
        \end{bmatrix}
        ,\quad \Btilde = SB =
        \begin{bmatrix}
            B_1\\
            B_2
        \end{bmatrix}
        ,\\
        \Ctilde &= CT^{-1} = 
        \begin{bmatrix}
            C_1 & C_2
        \end{bmatrix}
    \end{aligned}
\end{equation}
with $A_{11}\in\RR^{r\times r}$, $B_1,C_1^\top\in\RR^{r\times m}$, and $B_2,C_2^\top\in\RR^{(n-r)\times m}$, cf.~section~\ref{subsec:matrixPencils}.
Especially, $S$ and $T$ can be chosen such that $A_{11}$ is in Jordan canonical form (JCF), but for the following considerations this is not necessary and, thus, we do not enforce $A_{11}$ to be in JCF.
This freedom in choosing $S$ and $T$ is in particular helpful for the numerical examples in section~\ref{sec:numericalExamples}.

As mentioned in section~\ref{subsec:pHDAEs}, the transformed system is still pH with $\Atilde = (\widetilde{J}-\Rtilde)\Qtilde$.
Furthermore, due to the semi-explicit nature of $\Etilde$ we infer that $\Qtilde$ has the block structure $\Qtilde = \left[\begin{smallmatrix} Q_{11} & 0 \\ Q_{21} & Q_{22} \end{smallmatrix}\right]$ with $Q_{11}=Q_{11}^\top \succeq 0.$
Thus, we have $C_1 = B_1^\top Q_{11}+B_2^\top Q_{21}$, $C_2 = B_2^\top Q_{22}$, and
\begin{equation}
    \label{eq:WCFpH}
    \begin{bmatrix}
        A_{11}  & 0\\
        0       & I_{n-r}
    \end{bmatrix}
    =
    \begin{bmatrix}
        (J_{11}-R_{11})Q_{11}+(J_{12}-R_{12})Q_{21} 					& (J_{12}-R_{12})Q_{22}\\
        (-J_{12}^\top-R_{12}^\top)Q_{11}+(J_{22}-R_{22})Q_{21}  & (J_{22}-R_{22})Q_{22}
    \end{bmatrix}
    .
\end{equation}
In particular, the lower right block implies $J_{22}-R_{22},Q_{22}\in\GL{n-r}$ and $Q_{22} = (J_{22}-R_{22})^{-1}$.
Moreover, the invertibility of $Q_{22}$ leads to $J_{12} = R_{12}$ and $A_{11} = (J_{11}-R_{11})Q_{11}$.
In addition, the relation $Q_{22} = (J_{22}-R_{22})^{-1}$ implies
\begin{align*}
	&x^\top (Q_{22}+Q_{22}^\top)x = x^\top ((J_{22}-R_{22})^{-1}+(J_{22}-R_{22})^{-\top})x\\
	&= \underbrace{x^\top (J_{22}-R_{22})^{-\top}}_{= y^\top} (J_{22}-R_{22})^{\top} \underbrace{(J_{22}-R_{22})^{-1} x}_{=\vcentcolon y}\\ &\quad +\underbrace{x^\top(J_{22}-R_{22})^{-\top}}_{= y^\top}(J_{22}-R_{22})\underbrace{(J_{22}-R_{22})^{-1} x}_{= y}\\
	&= y^\top (J_{22}^\top-R_{22}^\top+J_{22}-R_{22})y = -2y^\top R_{22} y \leq 0.
\end{align*}
Consequently, there exists a skew-symmetric matrix $J_Q$ and a symmetric positive semidefinite matrix $R_Q$ satisfying $Q_{22} = J_Q-R_Q$.

Since the transformed system is still pH, $\Qtilde$ is a solution of the KYP LMI
\begin{equation*}
	\begin{bmatrix}
		-\Atilde^\top \Xtilde-\Xtilde^\top \Atilde & \Ctilde^\top-\Xtilde^\top \Btilde\\
		\Ctilde-\Btilde^\top \Xtilde 						& 0
	\end{bmatrix}
	\succeq 0,\quad \Etilde^\top \Xtilde = \Xtilde^\top \Etilde,
\end{equation*}
cf.~section~\ref{subsec:generalizedKYPLMI}.
Here, $\Etilde^\top \Xtilde = \Xtilde^\top \Etilde$ is satisfied if and only if $X_{11}=X_{11}^\top$ and $X_{12}=0$, where $X_{11}\in\RR^{r\times r}$ and $X_{12}\in\RR^{r\times(n-r)}$ denote the upper left and right block of $\Xtilde$, respectively.
Furthermore, the matrix inequality implies
\begin{equation}
	\label{eq:reducedKYPLMI_index1}
	\begin{aligned}
		&
		\begin{bmatrix}
			I_r & 0\\
			0 	& -B_2\\
			0	& I_m
		\end{bmatrix}
		^\top
		\begin{bmatrix}
			-\Atilde^\top \Xtilde-\Xtilde^\top \Atilde & \Ctilde^\top-\Xtilde^\top \Btilde\\
			\Ctilde-\Btilde^\top \Xtilde 						& 0
		\end{bmatrix}
		\begin{bmatrix}
			I_r & 0\\
			0 	& -B_2\\
			0	& I_m
		\end{bmatrix}
		\\
		&=
		\begin{bmatrix}
			-A_{11}^\top X_{11}-X_{11}A_{11} & C_1^\top-X_{11}B_1\\
			C_1-B_1^\top X_{11} & -C_2B_2-B_2^\top C_2^\top
		\end{bmatrix}
		\succeq 0.
	\end{aligned}
\end{equation}
In particular, this matrix inequality is satisfied by $Q_{11}$.
Furthermore, the following lemma yields sufficient conditions for the existence of a maximal solution of this KYP LMI.

\begin{lemma}
    \label{lem:existenceMaximalSolutionOfReducedKYPLMI_index1}
    Consider the system \eqref{eq:pHEQ} with regular and impulse-free pair $(E,A)$ and strongly anti-stabilizable triple $(E,A,B)$.
    Furthermore, consider the corresponding transformed system $(\Etilde,\Atilde,\Btilde,\Ctilde)$ as in \eqref{eq:quasiWCFmatrices_indexOneCase} as well as the associated reduced KYP LMI \eqref{eq:reducedKYPLMI_index1}.
    Then, there exists a maximizing solution $\Xmax=\Xmax^\top$ of \eqref{eq:reducedKYPLMI_index1} satisfying $\Xmax\succeq X_{11}$ for all $X_{11}=X_{11}^\top$ solving \eqref{eq:reducedKYPLMI_index1}.
\end{lemma}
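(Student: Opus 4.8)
The plan is to recognise the reduced LMI \eqref{eq:reducedKYPLMI_index1} as an instance of the general KYP LMI \eqref{eq:generalizedKYPLMI} for the \emph{ordinary} control system $(I_r, A_{11}, B_1)$ and then to invoke Theorems~\ref{thm:existenceOfStabilizingSolutionsOfTheLureEquation} and \ref{thm:extremalSolutionOfKYPLMI}. Concretely, I would take the weighting matrices $W_{11} = 0$, $W_{12} = C_1^\top$, and $W_{22} = -C_2 B_2 - B_2^\top C_2^\top$. Since the pencil $(I_r, A_{11})$ has invertible first component, the associated system space degenerates to all of $\RR^{r+m}$, so the matrix inequality in \eqref{eq:generalizedKYPLMI} for this data is exactly the matrix inequality in \eqref{eq:reducedKYPLMI_index1}; moreover $E^\top X = X^\top E$ with $E = I_r$ forces precisely $X = X^\top$. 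Hence the solution sets of \eqref{eq:generalizedKYPLMI} and of \eqref{eq:reducedKYPLMI_index1} over symmetric matrices coincide, and the conclusion $E^\top Y \succeq E^\top X$ of Theorem~\ref{thm:extremalSolutionOfKYPLMI} becomes simply $Y \succeq X$, i.e. exactly the assertion of the lemma with $\Xmax \vcentcolon= Y$.

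Next I would verify the hypotheses of those two theorems for the triple $(I_r, A_{11}, B_1)$ with the above weights. Solvability of \eqref{eq:generalizedKYPLMI} is immediate: the discussion preceding the lemma shows that $Q_{11}$ satisfies \eqref{eq:reducedKYPLMI_index1}, and $Q_{11} = Q_{11}^\top$. Regularity of $(I_r, A_{11})$ and impulse controllability of $(I_r, A_{11}, B_1)$ are trivial because the first matrix is the identity. The only substantive point is strong anti-stabilizability of $(I_r, A_{11}, B_1)$, which I would obtain from the standing assumption that $(E,A,B)$ is strongly anti-stabilizable. Exploiting the block structure \eqref{eq:quasiWCFmatrices_indexOneCase}, for $\lambda \in \overline{\CC_-}$ one has
\begin{equation*}
    \begin{bmatrix} \lambda \Etilde - \Atilde & \Btilde \end{bmatrix}
    = \begin{bmatrix} \lambda I_r - A_{11} & 0 & B_1 \\ 0 & -I_{n-r} & B_2 \end{bmatrix},
\end{equation*}
whose last $n-r$ rows always contribute rank $n-r$ through the block $-I_{n-r}$; therefore this matrix has rank $n$ if and only if $\begin{bmatrix} \lambda I_r - A_{11} & B_1 \end{bmatrix}$ has rank $r$. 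Since the state-space transformation \eqref{eq:pHEQ_transformed} multiplies $\begin{bmatrix} \lambda E - A & B \end{bmatrix}$ only by invertible matrices from the left and right, the rank is preserved, so the rank condition for $(E,A,B)$ transfers verbatim to $(I_r, A_{11}, B_1)$.

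Finally, with the hypotheses in place, I would apply Theorem~\ref{thm:existenceOfStabilizingSolutionsOfTheLureEquation} to obtain a stabilizing solution of the Lur'e equation \eqref{eq:Lure} associated with $(I_r, A_{11}, B_1)$ and the weights above, and then Theorem~\ref{thm:extremalSolutionOfKYPLMI} to conclude the existence of a solution $\Xmax = \Xmax^\top$ of \eqref{eq:reducedKYPLMI_index1} with $\Xmax \succeq X_{11}$ for every symmetric $X_{11}$ solving \eqref{eq:reducedKYPLMI_index1}. As a consistency check one can note that $W_{22} = -C_2 B_2 - B_2^\top C_2^\top = 2 B_2^\top R_Q B_2 \succeq 0$, using $C_2 = B_2^\top Q_{22}$ and $Q_{22} = J_Q - R_Q$ from the preceding discussion, matching the interpretation of $W_{22}$ as a feedthrough-type term. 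I expect the only real obstacle to be the clean bookkeeping: correctly identifying the degenerate system space and the symmetry constraint, and transferring strong anti-stabilizability through the Weierstraß transformation — both routine once the block form \eqref{eq:quasiWCFmatrices_indexOneCase} is exploited.
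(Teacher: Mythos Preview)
Your proposal is correct and follows essentially the same route as the paper: the paper states that the proof proceeds along the lines of that of Lemma~\ref{lem:existenceMaximalSolutionOfReducedKYPLMI}, namely by identifying \eqref{eq:reducedKYPLMI_index1} as the KYP LMI \eqref{eq:generalizedKYPLMI} for the ODE triple $(I_r,A_{11},B_1)$, noting that $Q_{11}$ provides solvability, and transferring strong anti-stabilizability from $(E,A,B)$ to $(I_r,A_{11},B_1)$ via the block structure in \eqref{eq:quasiWCFmatrices_indexOneCase} so that Theorems~\ref{thm:existenceOfStabilizingSolutionsOfTheLureEquation} and \ref{thm:extremalSolutionOfKYPLMI} apply. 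Your argument is in fact slightly cleaner here than in the higher-index case, since no intermediate feedback transformation is needed.
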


\begin{proof}
    The proof follows along the lines of the proof of Lemma~\ref{lem:existenceMaximalSolutionOfReducedKYPLMI} and is therefore omitted.
\end{proof}

The following theorem demonstrates how to obtain a port-Hamiltonian realization based on a maximal solution of \eqref{eq:reducedKYPLMI_index1}.

\begin{theorem}
    \label{thm:optimalpHRealizationOfWCF_indexOne}
    Consider the system \eqref{eq:pHEQ} with regular and impulse-free pair $(E,A)$, strongly stabilizable triple $(E,A,B)$, and strongly detectable triple $(E,A,C)$.
    Furthermore, consider the corresponding transformed system given by $(\Etilde,\Atilde,\Btilde,\Ctilde)$ as in \eqref{eq:quasiWCFmatrices_indexOneCase} and let there exist a maximal solution $\Xmax$ to the associated reduced KYP LMI \eqref{eq:reducedKYPLMI_index1} in the sense of Lemma~\ref{lem:existenceMaximalSolutionOfReducedKYPLMI_index1}.
	Besides, let 
	\begin{equation*}
		B_2^\top = 
		\begin{bmatrix}
			U_1 & U_2
		\end{bmatrix}
		\begin{bmatrix}
			\Sigma 	& 0\\
			0 			& 0
		\end{bmatrix}
		\begin{bmatrix}
			V_1^\top\\
			V_2^\top
		\end{bmatrix}
	\end{equation*}
	with $\rank(\Sigma) = \rank(B_2) =\vcentcolon \widetilde{r}$ be a singular value decomposition of $B_2^\top$.
	Moreover, let us define
	\begin{align*}
		X_{21} &\vcentcolon= V_1V_1^\top Q_{21}+(B_2^\top)^+B_1^\top (Q_{11}-\Xmax),\\ X_{22}& \vcentcolon= V_1V_1^\top Q_{22}-V_2V_2^\top Q_{22}^\top V_1V_1^{\top}+V_2KV_2^\top,\\
		 \Jhat_{11} &\vcentcolon= \frac12\left(A_{11}\Xmax^{-1}-\Xmax^{-1}A_{11}^\top\right),\quad \Rhat_{11} \vcentcolon= -\frac12\left(A_{11}\Xmax^{-1}+\Xmax^{-1}A_{11}^\top\right)
	\end{align*}
	with some arbitrary matrix $K\in\RR^{(n-r-\widetilde{r})\times(n-r-\widetilde{r})}$ satisfying $K+K^\top\prec 0$.
	Here, $(B_2^\top)^+ = V_1\Sigma^{-1}U_1^\top$ denotes the Moore--Penrose inverse of $B_2^\top$.
	
	Then, the following assertions hold.
	\begin{enumerate}[(i)]
		\item The matrix $X_{22}$ is invertible.
		\item The matrices $\Atilde$ and $\Ctilde$ can be written as
			\begin{equation*}
				\Atilde = 
				\begin{bmatrix}
        				A_{11}  & 0\\
        				0       & I_{n-r}
    				\end{bmatrix}
    				= \Bigg(\underbrace{
    				\begin{bmatrix}
    					\Jhat_{11} & \Jhat_{12}\\
    					\Jhat_{21} & \Jhat_{22}
    				\end{bmatrix}
    				}_{=\vcentcolon \Jhat} -\underbrace{
    				\begin{bmatrix}
    					\Rhat_{11} & \Rhat_{12}\\
    					\Rhat_{21} & \Rhat_{22}
    				\end{bmatrix}
    				}_{=\vcentcolon \Rhat}\Bigg) \underbrace{
    				\begin{bmatrix}
    					\Xmax & 0\\
    					X_{21} 					& X_{22}
    				\end{bmatrix}
    				}_{=\vcentcolon \Xhat}
			\end{equation*}
			and $\Ctilde = \Btilde^\top \Xhat$, where $\Jhat_{12},\Jhat_{21},\Jhat_{22},\Rhat_{12},\Rhat_{21},\Rhat_{22}$ are defined via
			\begin{equation*}
			  \begin{aligned}
				\Jhat_{12} &= -\Jhat_{21}^\top = \Rhat_{12} = \Rhat_{21}^\top \vcentcolon= \frac12 \Xmax^{-1}X_{21}^\top X_{22}^{-\top},\\ \Jhat_{22}& \vcentcolon= \frac12\left(X_{22}^{-1}-X_{22}^{-\top}\right),\quad \Rhat_{22} \vcentcolon= -\frac12\left(X_{22}^{-1}+X_{22}^{-\top}\right).
			  \end{aligned}
			\end{equation*}
			Furthermore, $\Jhat$ is skew-symmetric and $\Rhat$ is symmetric and positive semidefinite.
	\end{enumerate}
\end{theorem}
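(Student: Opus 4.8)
The plan is to verify the two assertions by unwinding the definitions and using the structure of the reduced KYP LMI \eqref{eq:reducedKYPLMI_index1} together with the block structure of the port-Hamiltonian realization \eqref{eq:WCFpH}. For (i), I would first observe that $V_1 V_1^\top + V_2 V_2^\top = I_{n-r}$ is the orthogonal decomposition coming from the SVD of $B_2^\top$, and that $\colspan(V_2) = \Ker(B_2^\top)$. Writing $X_{22} = V_1 V_1^\top Q_{22} - V_2 V_2^\top Q_{22}^\top V_1 V_1^\top + V_2 K V_2^\top$ and splitting the domain into $\colspan(V_1)$ and $\colspan(V_2)$, one sees that $X_{22}$ acts as $Q_{22}$ on the $V_1$-part modulo a term mapping into $\colspan(V_2)$, and as $K$ (plus a correction) on the $V_2$-part. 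Concretely, in the orthonormal basis $[V_1\ V_2]$ the matrix $X_{22}$ is block lower-triangular with diagonal blocks $V_1^\top Q_{22} V_1$ and $V_2^\top K V_2 = K$; hence its invertibility reduces to invertibility of $V_1^\top Q_{22} V_1$ and of $K$. The latter is guaranteed by $K + K^\top \prec 0$, and for the former I would use that $Q_{22} = J_Q - R_Q$ (established just before the theorem) so that $V_1^\top Q_{22} V_1$ has the dissipative-Hamiltonian form $V_1^\top J_Q V_1 - V_1^\top R_Q V_1$ with skew-symmetric and positive-semidefinite parts respectively; invertibility then follows provided the compressed dissipation does not kill a direction, which must be extracted from the rank condition $C_2 B_2 = B_2^\top Q_{22}^\top V_1 V_1^\top B_2$ appearing in \eqref{eq:reducedKYPLMI_index1} being compatible with maximality of $\Xmax$ — this is the delicate point.

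For (ii), the verification splits into three computations. First, the identity $\Atilde = (\Jhat - \Rhat)\Xhat$: the $(1,1)$-block requires $A_{11} = (\Jhat_{11} - \Rhat_{11})\Xmax$, which is immediate from the definitions $\Jhat_{11} - \Rhat_{11} = A_{11}\Xmax^{-1}$ (adding the two defining formulas); the $(2,2)$-block requires $I_{n-r} = (\Jhat_{22} - \Rhat_{22})X_{22}$, which follows from $\Jhat_{22} - \Rhat_{22} = X_{22}^{-1}$ by the same cancellation; and the $(1,2)$- and $(2,1)$-blocks require $0 = (\Jhat_{11}-\Rhat_{11})\cdot 0 + (\Jhat_{12}-\Rhat_{12})X_{22}$ and $0 = (\Jhat_{21}-\Rhat_{21})\Xmax + (\Jhat_{22}-\Rhat_{22})X_{21}$. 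The first is automatic since $\Jhat_{12} = \Rhat_{12}$ by definition; the second becomes $(\Jhat_{21}-\Rhat_{21})\Xmax = -X_{22}^{-1}X_{21}$, and using $\Jhat_{21} - \Rhat_{21} = -\Rhat_{21} - \Rhat_{21} = -2\Rhat_{21} = -X_{22}^{-1}X_{21}\Xmax^{-1}$ (from the definition of $\Rhat_{21}$ and $\Jhat_{21} = -\Rhat_{21}^\top$... actually $\Jhat_{21}=-\Jhat_{12}^\top$ and $\Rhat_{21}=\Rhat_{12}^\top$, so $\Jhat_{21}-\Rhat_{21} = -\Jhat_{12}^\top - \Rhat_{12}^\top = -2\Rhat_{12}^\top$) one recovers exactly $-X_{22}^{-1}X_{21}$; I would write this out carefully. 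Second, the skew-symmetry of $\Jhat$ is checked block by block: $\Jhat_{11}^\top = -\Jhat_{11}$ and $\Jhat_{22}^\top = -\Jhat_{22}$ are built in, and $\Jhat_{12}^\top = -\Jhat_{21}$ holds by definition. Third, $\Rhat = \Rhat^\top \succeq 0$: symmetry is again built in block-wise, and positive semidefiniteness I would obtain by recognizing $\Rhat$ as (half of) a congruence of a positive-semidefinite matrix — specifically, one expects $\Rhat = \tfrac12 \Xhat^{-\top}(\text{something}\succeq 0)\Xhat^{-1}$ plus a block coming from the LMI residual $-A_{11}^\top\Xmax^{-1} - \Xmax^{-1}A_{11}$... more precisely, since $\Xmax$ solves \eqref{eq:reducedKYPLMI_index1}, the $(1,1)$ block $-A_{11}^\top X_{11} - X_{11}A_{11} \succeq$ (Schur complement stuff), and $\Rhat_{11} = -\tfrac12(A_{11}\Xmax^{-1} + \Xmax^{-1}A_{11}^\top) = \tfrac12\Xmax^{-\top}(-A_{11}^\top\Xmax - \Xmax A_{11})... $ wait, $\Xmax^{-\top}(-A_{11}^\top\Xmax-\Xmax A_{11})\Xmax^{-1} = -\Xmax^{-\top}A_{11}^\top - A_{11}\Xmax^{-1}$, which equals $2\Rhat_{11}$; so $\Rhat_{11} = \tfrac12\Xmax^{-1}(-A_{11}^\top\Xmax - \Xmax A_{11})\Xmax^{-1}$ using $\Xmax = \Xmax^\top$, and the inner matrix is exactly the $(1,1)$ block of the LMI which is $\succeq 0$. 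The remaining blocks $\Rhat_{12},\Rhat_{22}$ assemble, via $\Xhat$, into a congruence of $\left[\begin{smallmatrix} 0 & 0 \\ 0 & R_Q \end{smallmatrix}\right]$-type object — I would compute $\Xhat^\top \Rhat \Xhat$ and check it is block-diagonal positive-semidefinite.

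The key obstacle I anticipate is part (i), the invertibility of $X_{22}$: the definition mixes $Q_{22}$, its transpose, and an arbitrary $K$ through the projectors $V_1V_1^\top, V_2V_2^\top$, and one must genuinely use that $\Xmax$ is a \emph{maximal} solution of \eqref{eq:reducedKYPLMI_index1} (not just any solution) to rule out a rank drop — otherwise $V_1^\top Q_{22} V_1$ could be singular. I would handle this by showing that if $V_1^\top Q_{22} V_1$ were singular there would be a direction along which the dissipation vanishes, contradicting either strong detectability of $(E,A,C)$ (via the lower-right block $-C_2B_2 - B_2^\top C_2^\top$ of \eqref{eq:reducedKYPLMI_index1}) or the maximality/full-rank property established in Lemma~\ref{lem:existenceMaximalSolutionOfReducedKYPLMI_index1}. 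Once (i) is in hand, every formula in (ii) is a finite matrix identity and I would present the three verifications compactly, relegating the purely computational block-multiplications to a remark that they follow by direct calculation.
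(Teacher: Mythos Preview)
Your argument for part (i) has a concrete error. You claim that in the basis $[V_1\ V_2]$ the matrix $X_{22}$ is block lower-triangular, but computing the $(1,2)$ block gives
\[
V_1^\top X_{22} V_2 = V_1^\top\bigl(V_1V_1^\top Q_{22} - V_2V_2^\top Q_{22}^\top V_1V_1^\top + V_2KV_2^\top\bigr)V_2 = V_1^\top Q_{22} V_2,
\]
which is not zero in general. So invertibility does \emph{not} reduce to invertibility of the two diagonal blocks, and your subsequent remarks about needing maximality of $\Xmax$ or strong detectability to force $V_1^\top Q_{22} V_1$ invertible are beside the point. The paper instead takes $\alpha = V_1\beta + V_2\gamma \in \Ker(X_{22})$, writes the resulting $2\times2$ block linear system, and contracts it with $[\beta^\top\ \gamma^\top]$: the off-diagonal blocks $V_1^\top Q_{22}V_2$ and $-V_2^\top Q_{22}^\top V_1$ cancel in the quadratic form, leaving $-\beta^\top V_1^\top R_Q V_1\beta + \tfrac12\gamma^\top(K+K^\top)\gamma = 0$ (using $Q_{22}=J_Q-R_Q$). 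This forces $\gamma=0$ and $R_QV_1\beta=0$; feeding these back into the two linear equations gives $V_1^\top J_QV_1\beta=0$ and $V_2^\top J_QV_1\beta=0$, hence $J_QV_1\beta=0$, hence $Q_{22}V_1\beta=0$, hence $\beta=0$. No maximality of $\Xmax$ is used anywhere in (i).

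In part (ii) your block-by-block verification of $\Atilde=(\Jhat-\Rhat)\Xhat$ is correct in outline and matches the paper. However, you omit the verification of $\Ctilde=\Btilde^\top\Xhat$, and this is not a triviality: the first block reads $B_1^\top\Xmax + B_2^\top X_{21} = C_1$, and expanding $B_2^\top X_{21}$ produces the term $B_2^\top(B_2^\top)^+ B_1^\top(Q_{11}-\Xmax) = U_1U_1^\top B_1^\top(Q_{11}-\Xmax)$, which equals $B_1^\top(Q_{11}-\Xmax)$ only if $U_2U_2^\top B_1^\top(Q_{11}-\Xmax)=0$, i.e.\ $\Ker(B_2)\subseteq\Ker\bigl((Q_{11}-\Xmax)B_1\bigr)$. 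The paper extracts this inclusion from the KYP LMI \eqref{eq:reducedKYPLMI_index1} by testing it on $x_2\in\Ker(B_2)$: the lower-right block vanishes and the cross term $2x_1^\top(Q_{11}-\Xmax)B_1x_2$ must be $\le$ a term independent of $x_2$ for all $x_1$, forcing it to zero. For $\Rhat\succeq 0$ your congruence idea is the right one; the paper writes $2\Rhat = -\diag(\Xmax^{-1},X_{22}^{-1})\,\widehat{K}\,\diag(\Xmax^{-1},X_{22}^{-\top})$ and then shows $\widehat{K}\preceq 0$ by a quadratic-form computation that ultimately lands back on the LMI \eqref{eq:reducedKYPLMI_index1}; your sketch does not get far enough to see that this reduction goes through.
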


\begin{proof}
    First, we emphasize that the transformation to \eqref{eq:quasiWCFmatrices_indexOneCase} is well-defined due to the impulse-freeness of $(E,A)$.
    Furthermore, since $Q_{11}$ is a solution of \eqref{eq:reducedKYPLMI_index1} and positive definite due to the invertibility of $Q$, the invertibility of $\Xmax\succeq Q_{11}$ is guaranteed.
   \begin{enumerate}[(i)]
		\item Let $\alpha\in\RR^{n-r}$ be in the kernel of $X_{22}$.
			Furthermore, define $\beta \vcentcolon= V_1^\top \alpha\in\RR^{\widetilde{r}}$ and $\gamma \vcentcolon= V_2^\top \alpha\in\RR^{n-r-\widetilde{r}}$ yielding $\alpha = V_1\beta+V_2\gamma$.
			Since $\alpha$ is in the kernel of $X_{22}$, we have
			\begin{equation*}
				V_1V_1^\top Q_{22}\alpha-V_2V_2^\top Q_{22}^\top V_1V_1^{\top}\alpha+V_2KV_2^\top\alpha = 0
			\end{equation*}
			which is equivalent to
			\begin{align*}
				0 &= V_1^\top Q_{22}\alpha = V_1^\top Q_{22}\left(V_1\beta+V_2\gamma\right),\\
				0 &= KV_2^\top\alpha-V_2^\top Q_{22}^\top V_1V_1^{\top}\alpha = K\gamma-V_2^\top Q_{22}^\top V_1\beta
			\end{align*}
			or in matrix notation
			\begin{equation}
				\label{eq:linearSystemForProvingInvertibility}
				\begin{bmatrix}
					V_1^\top Q_{22}V_1 				& V_1^\top Q_{22}V_2\\
					-V_2^\top Q_{22}^\top V_1 	& K
				\end{bmatrix}
				\begin{bmatrix}
					\beta\\
					\gamma
				\end{bmatrix}
				= 0.
			\end{equation}
			Multiplying this equation from the left by $[\beta^\top\;\; \gamma^\top]$ yields
			\begin{equation*}
				0 = \beta^\top V_1^\top Q_{22}V_1\beta+\gamma^\top K\gamma = -\beta^\top V_1^\top R_QV_1\beta+\frac12\gamma^\top (K+K^\top)\gamma,
			\end{equation*}
			where we used $Q_{22} = J_Q-R_Q$, see the beginning of section~\ref{subsubsec:indexOne}.
			Since $K+K^\top \prec 0$ and since $R_Q \succeq 0$, this equation implies $\gamma=0$ and $R_QV_1\beta = 0$.
			Using these findings and \eqref{eq:linearSystemForProvingInvertibility} gives us $V^\top J_QV_1\beta = 0$ and, hence, $J_QV_1\beta=0$ and, thus, altogether $Q_{22}V_1\beta = 0$.
			Since $Q_{22}$ is invertible and since $V_1$ has full column rank, it follows $\beta=0$ and, hence, altogether $\alpha = 0$.
			Consequently, since its kernel consists just of the zero vector, $X_{22}$ is invertible.
		\item Straightforward calculations yield
			\begin{align*}
				&	
				\left(
    				\begin{bmatrix}
    					\Jhat_{11} & \Jhat_{12}\\
    					\Jhat_{21} & \Jhat_{22}
    				\end{bmatrix}
    				-
    				\begin{bmatrix}
    					\Rhat_{11} & \Rhat_{12}\\
    					\Rhat_{21} & \Rhat_{22}
    				\end{bmatrix}
    				\right)
    				\begin{bmatrix}
    					\Xmax & 0\\
    					X_{21} 					& X_{22}
    				\end{bmatrix}
    				\\
    				&=
    				\begin{bmatrix}
    					A_{11} & 0\\
    					-X_{22}^{-1}X_{21}\Xmax^{-1} \Xmax+X_{22}^{-1}X_{21} & I_{n-r}
    				\end{bmatrix}
    				= 
    				\begin{bmatrix}
    					A_{11} & 0\\
    					0 & I_{n-r}
    				\end{bmatrix}
    				= \Atilde.
			\end{align*}
			Before we show the relation $\Ctilde = \Btilde^\top \Xhat$, let us once more consider the KYP LMI \eqref{eq:reducedKYPLMI_index1} which in particular implies that for all $x_1\in\RR^r$ and $x_2\in\Ker(B_2)=\colspan(U_2)\subseteq \RR^m$ we have
			\begin{align*}
				0 &\leq 
				\begin{bmatrix}
					x_1^\top & x_2^\top
				\end{bmatrix}
				\begin{bmatrix}
					-A_{11}^\top \Xmax-\Xmax A_{11} & C_1^\top-\Xmax B_1\\
					C_1-B_1^\top \Xmax & -C_2B_2-B_2^\top C_2^\top
				\end{bmatrix}
				\begin{bmatrix}
					x_1\\
					x_2
				\end{bmatrix}
				\\
				&= -2x_1^\top\Xmax A_{11}x_1+2x_1^\top(C_1^\top-\Xmax B_1)x_2-2x_2^\top C_2B_2x_2\\
				&= -2x_1^\top\Xmax A_{11}x_1+2x_1^\top(Q_{11}B_1+Q_{21}^\top B_2-\Xmax B_1)x_2\\
				&= -2x_1^\top\Xmax A_{11}x_1+2x_1^\top(Q_{11}-\Xmax )B_1x_2.
			\end{align*}
			Especially, this can only hold for all $x_1\in\RR^r$ if $(Q_{11}-\Xmax )B_1x_2=0$.
			Thus, we showed $\Ker(B_2)=\colspan(U_2)\subseteq \Ker((Q_{11}-\Xmax )B_1)$.
			Using this, we obtain
			\begin{align*}
				&\Btilde^\top \Xhat =
				\begin{bmatrix}
					B_1^\top & B_2^\top
				\end{bmatrix}
				\begin{bmatrix}
					\Xmax & 0\\
    					X_{21} 					& X_{22}
				\end{bmatrix}
				= 
				\begin{bmatrix}
					B_1^\top \Xmax+B_2^\top X_{21} & B_2^\top X_{22}
				\end{bmatrix}
				\\
				&=
				\begin{bmatrix}
					B_1^\top \Xmax+B_2^\top V_1V_1^\top Q_{21}+B_2^\top(B_2^\top)^+B_1^\top (Q_{11}-\Xmax) & B_2^\top X_{22}
				\end{bmatrix}
				\\
				&= 
				\begin{bmatrix}
					B_1^\top \Xmax+B_2^\top Q_{21}+U_1U_1^\top B_1^\top (Q_{11}-\Xmax) & B_2^\top X_{22}
				\end{bmatrix}
				\\
				&=
				\begin{bmatrix}
					B_1^\top \Xmax+B_2^\top Q_{21}+(I_m-U_2U_2^\top)B_1^\top (Q_{11}-\Xmax) & B_2^\top X_{22}
				\end{bmatrix}
				\\
				&=
				\begin{bmatrix}
					B_1^\top \Xmax+B_2^\top Q_{21}+B_1^\top (Q_{11}-\Xmax) & B_2^\top Q_{22}
				\end{bmatrix}
				\\
				&=
				\begin{bmatrix}
					B_2^\top Q_{21}+B_1^\top Q_{11} & B_2^\top \left(V_1V_1^\top Q_{22}-V_2V_2^\top Q_{22}^\top V_1V_1^{\top}+V_2KV_2^\top\right)
				\end{bmatrix}
				\\
				&=
				\begin{bmatrix}
					C_1 & B_2^\top Q_{22}
				\end{bmatrix}
				=
				\begin{bmatrix}
					C_1 & C_2
				\end{bmatrix}
				= \Ctilde.
			\end{align*}
			It remains to show the properties of $\Jhat$ and $\Rhat$.
			The skew-symmetry of $\Jhat$ and the symmetry of $\Rhat$ follow directly from the definition.
			To show the positive semidefiniteness of $\Rhat$ we first observe the identity
			\begin{align*}
				&2\Rhat = 2
				\begin{bmatrix}
					\Rhat_{11} & \Rhat_{12}\\
    					\Rhat_{21} & \Rhat_{22}
				\end{bmatrix}
				=
				\begin{bmatrix}
					-\left(A_{11}\Xmax^{-1}+\Xmax^{-1}A_{11}^\top\right)    & \Xmax^{-1}X_{21}^\top X_{22}^{-\top}\\
					X_{22}^{-1}X_{21}\Xmax^{-1}                             & \left(X_{22}^{-1}+X_{22}^{-\top}\right)
				\end{bmatrix}
				\\
				&= -
				\begin{bmatrix}
					\Xmax^{-1} 	& 0\\
					0 									& X_{22}^{-1}
				\end{bmatrix}
				\underbrace{
				\begin{bmatrix}
					\Xmax A_{11}+A_{11}^\top \Xmax 	& -X_{21}^\top\\
					-X_{21}																				& X_{22}+X_{22}^\top
				\end{bmatrix}
				}_{=\vcentcolon \widehat{K}}
				\begin{bmatrix}
					\Xmax^{-1} 	& 0\\
					0 									& X_{22}^{-\top}
				\end{bmatrix}
				.
			\end{align*}
			Consequently, $\Rhat\succeq0$ is equivalent to $\widehat{K}\preceq0$.
			To show that $\widehat{K}\preceq0$, let $x_1\in\RR^r$ and $x_2\in\RR^{n-r}$ be arbitrary.
			Furthermore, let $\beta \in\RR^{\widetilde{r}}$ and $\gamma\in\RR^{n-r-\widetilde{r}}$ be such that $x_2 = V_1\beta+V_2\gamma$.
			Then, there holds
			\begin{align*}
				&
				\begin{bmatrix}
					x_1^\top & \beta^\top V_1^\top+\gamma^\top V_2^\top
				\end{bmatrix}
				\begin{bmatrix}
					\Xmax A_{11}+A_{11}^\top \Xmax 	& -X_{21}^\top\\
					-X_{21}																				& X_{22}+X_{22}^\top
				\end{bmatrix}
				\begin{bmatrix}
					x_1\\
					V_1\beta+V_2\gamma
				\end{bmatrix}
				\\
				&= x_1^\top (\Xmax A_{11}+A_{11}^\top \Xmax)x_1-2(\beta^\top V_1^\top+\gamma^\top V_2^\top) X_{21}x_1\\ &\quad +(\beta^\top V_1^\top+\gamma^\top V_2^\top)(X_{22}+X_{22}^\top)(V_1\beta+V_2\gamma)\\
				&= 2x_1^\top\Xmax A_{11}x_1+2(\beta^\top V_1^\top+\gamma^\top V_2^\top)X_{22}(V_1\beta+V_2\gamma)\\
				&\quad-2(\beta^\top V_1^\top+\gamma^\top V_2^\top)\left(V_1V_1^\top Q_{21}+(B_2^\top)^+B_1^\top (Q_{11}-\Xmax)\right)x_1\\
				&= 2x_1^\top\Xmax A_{11}x_1-2\beta^\top V_1^\top\left(Q_{21}+(B_2^\top)^+B_1^\top (Q_{11}-\Xmax) \right)x_1\\
				&\quad+2\left(\beta^\top V_1^\top Q_{22}(V_1\beta+V_2\gamma)-\gamma^\top V_2^\top Q_{22}^\top V_1\beta+\gamma^\top K\gamma\right)\\
				&= 2x_1^\top\Xmax A_{11}x_1-2\beta^\top \Sigma^{-1}U_1^\top B_2^\top\left(Q_{21}+(B_2^\top)^+B_1^\top (Q_{11}-\Xmax) \right)x_1\\
				&\quad+2\left(\beta^\top \Sigma^{-1}U_1^\top B_2^\top Q_{22} B_2U_1\Sigma^{-1}\beta+\gamma^\top K\gamma\right)\\
				&= 2x_1^\top\Xmax A_{11}x_1-2\beta^\top \Sigma^{-1}U_1^\top\left(B_2^\top Q_{21}+B_1^\top (Q_{11}-\Xmax) \right)x_1\\
				&\quad+\beta^\top \Sigma^{-1}U_1^\top B_2^\top (Q_{22}+Q_{22}^\top) B_2U_1\Sigma^{-1}\beta+\gamma^\top (K+K^\top)\gamma\\
				&\leq x_1^\top (\Xmax A_{11}+A_{11}^\top \Xmax)x_1-2\beta^\top \Sigma^{-1}U_1^\top\left(C_1-B_1^\top \Xmax \right)x_1\\
				&\quad+\beta^\top \Sigma^{-1}U_1^\top (C_2B_2+B_2^\top C_2^\top)U_1\Sigma^{-1}\beta\\
				&=
				\begin{bmatrix}
					x_1\\
					U_1\Sigma^{-1} \beta
				\end{bmatrix}
				^\top
				\begin{bmatrix}
					\Xmax A_{11}+A_{11}^\top \Xmax 	& \Xmax B_1-C_1^\top \\
					B_1^\top \Xmax-C_1              & C_2B_2+B_2^\top C_2^\top
				\end{bmatrix}
				\begin{bmatrix}
					x_1\\
					U_1\Sigma^{-1} \beta
				\end{bmatrix}
				\\
				&\leq 0.
			\end{align*}
			Consequently, $\Rhat$ is positive semidefinite which concludes the proof.
	\end{enumerate}
\end{proof}

Since $Q_{11}$ is a solution of \eqref{eq:reducedKYPLMI_index1}, any maximal solution $\Xmax$ of \eqref{eq:reducedKYPLMI_index1} satisfies $\Xmax\succeq Q_{11}$ or, equivalently, 
\begin{equation*}
    \Etilde^\top \Xhat = 
    \begin{bmatrix}
        I_r & 0\\
        0   & 0
    \end{bmatrix}
    \begin{bmatrix}
        \Xmax    & 0\\
        X_{21}              & X_{22}
    \end{bmatrix}
    =
    \begin{bmatrix}
        \Xmax    & 0\\
        0                   & 0
    \end{bmatrix}
    \succeq
    \begin{bmatrix}
        Q_{11}  & 0\\
        0       & 0
    \end{bmatrix}
    =
    \Etilde^\top \Qtilde.
\end{equation*}
Furthermore, the replacement of $\Qtilde$ by $\Xhat$ as presented in Theorem~\ref{thm:optimalpHRealizationOfWCF_indexOne} does not change the realization $(\Etilde,\Atilde,\Btilde,\Ctilde)$ and, thus, we can again use $S$ and $T$ to transform the system back and obtain
\begin{align*}
    A = S^{-1}\Atilde T = S^{-1}\left(\Jhat-\Rhat\right)S^{-\top}S^\top\Xhat T = (\overline{J}-\overline{R})\overline{Q}
\end{align*}
with $\overline{J} \vcentcolon= S^{-1}\Jhat S^{-\top}$, $\overline{R} \vcentcolon= S^{-1}\Rhat S^{-\top}$, and $\overline{Q} \vcentcolon= S^\top\Xhat T$, as well as 
\begin{equation*}
    C = \Ctilde T = \Btilde^\top S^{-\top}S^\top\Xhat T = B^\top \overline{Q}.
\end{equation*}
Especially, we note that 
\begin{equation}
    \label{eq:ETQIneq}
    E^\top \overline{Q} = T^\top \Etilde^\top S^{-\top} S^\top \Xhat T = T^\top \Etilde^\top\Xhat T \succeq T^\top \Etilde^\top\Qtilde T = E^\top Q.
\end{equation}

\subsubsection{Procedure for Systems with Arbitrary Index}

As in section~\ref{subsubsec:indexOne}, we consider \eqref{eq:pHEQ}, but in this subsection we do not assume any restrictions for the index.
In a first step, we apply a feedback $\uF = -y = -Cx = -B^\top Qx$ to the system to obtain
\begin{equation}
    \label{eq:pHWithFeedback}
    \begin{aligned}
        E\dot{x} &= (J-R-BB^\top)Qx+ Bu,\quad x(0)=0,\\ 
        y			&= B^\top Qx.
    \end{aligned}
\end{equation}
The resulting system \eqref{eq:pHWithFeedback} is obviously still port-Hamiltonian with new dissipation matrix $R+BB^\top$.
Moreover, if $(E,A,B,C)$ meets the requirements of Theorem~\ref{thm:solutionsOfNonsymmetricGAREs}, then $(E,(J-R-BB^\top)Q)$ is regular and impulse-free by Theorem~\ref{thm:solutionsOfNonsymmetricGAREs}(iii).
Thus, there exists a state space transformation which transforms \eqref{eq:pHWithFeedback} to Weierstraß canonical form, i.e., there exist $S,T\in\GL{n}$ with
\begin{equation}
    \label{eq:WCFpH}
    \Etilde = SET^{-1} = 
    \begin{bmatrix}
        I_r & 0\\
        0   & 0
    \end{bmatrix}
    ,\quad \Atilde = S(A-BC)T^{-1} = 
    \begin{bmatrix}
        A_{11}  & 0\\
        0       & I_{n-r}
    \end{bmatrix}
\end{equation}
and similarly $\Btilde = SB$ and $\Ctilde = CT^{-1}$ with block partitioning $\Btilde^\top = [B_1^\top\;\;B_2^\top]$ and $\Ctilde = [C_1\;\;C_2]$ in accordance with the partioning in \eqref{eq:WCFpH}.
Again, note that for the following considerations we do not need $A_{11}$ to be in JCF, which allows us some more freedom in choosing $S$ and $T$.

Similarly as in section~\ref{subsubsec:indexOne}, we conclude that $Q_{22}$ is invertible and that there exists a skew-symmetric matrix $J_Q$ and a symmetric positive semidefinite matrix $R_Q$ satisfying $Q_{22} = J_Q-R_Q$.

Due to the special structure of the transformed system, $\Qtilde$ is a solution to the KYP LMI
\begin{equation}
	\label{eq:transformedKYPLMI}
	\begin{bmatrix}
		-\Atilde^\top \Xtilde-\Xtilde^\top \Atilde-2\Ctilde^\top\Ctilde 	& \Ctilde^\top-\Xtilde^\top \Btilde\\
		\Ctilde-\Btilde^\top \Xtilde 															& 0
	\end{bmatrix}
	\succeq 0,\quad \Etilde^\top \Xtilde = \Xtilde^\top \Etilde.
\end{equation}
When substituting $\Qtilde$ for $\Xtilde$, the fact that the equations $\Etilde^\top \Qtilde = \Qtilde^\top\Etilde$ and $\Ctilde^\top-\Qtilde^\top \Btilde=0$ are satisfied follows directly from the fact that the transformed system is port-Hamiltonian.
Furthermore, the upper left block of the LMI is satisfied due to
\begin{equation*}
  \begin{aligned}
	&-\Atilde^\top \Qtilde-\Qtilde^\top \Atilde-2\Ctilde^\top\Ctilde\\
	&\quad = -\Qtilde^\top(\widetilde{J}^\top-\Rtilde^\top-\Btilde\Btilde^\top)\Qtilde-\Qtilde^\top(\widetilde{J}-\Rtilde-\Btilde\Btilde^\top)\Qtilde-2\Qtilde^\top \Btilde\Btilde^\top \Qtilde \\
	&\quad = 2\Qtilde^\top\Rtilde\Qtilde \succeq 0.
  \end{aligned}
\end{equation*}

Again, the matrix equation in \eqref{eq:transformedKYPLMI} is satisfied if and only if $X_{11}$ is symmetric and $X_{12}=0$.
Furthermore, the matrix inequality implies
\begin{equation}
	\label{eq:reducedKYPLMI}
	\begin{aligned}
		&
		\begin{bmatrix}
			I_r & 0\\
			0 	& -B_2\\
			0	& I_m
		\end{bmatrix}
		^\top
		\begin{bmatrix}
			-\Atilde^\top \Xtilde-\Xtilde^\top \Atilde-2\Ctilde^\top\Ctilde 	& \Ctilde^\top-\Xtilde^\top \Btilde\\
			\Ctilde-\Btilde^\top \Xtilde 															& 0
		\end{bmatrix}
		\begin{bmatrix}
			I_r & 0\\
			0 	& -B_2\\
			0	& I_m
		\end{bmatrix}
		\\
		&=
		\underbrace{
		\begin{bmatrix}
			-A_{11}^\top X_{11}-X_{11}A_{11}-2C_1^\top C_1 	& C_1^\top-X_{11}B_1+2C_1^\top C_2B_2\\
			C_1-B_1^\top X_{11}+2B_2^\top C_2^\top C_1 		& -C_2B_2-B_2^\top C_2^\top-2B_2^\top C_2^\top C_2B_2
		\end{bmatrix}
		}_{=\vcentcolon \calW(X_{11})}
		\succeq 0.
	\end{aligned}
\end{equation}
In particular, this matrix inequality is satisfied by $Q_{11}$. 
Furthermore, the following lemma provides conditions for the existence of a maximal solution of this KYP LMI.

\begin{lemma}
    \label{lem:existenceMaximalSolutionOfReducedKYPLMI}
    Consider the system \eqref{eq:pHEQ} with regular pair $(E,A)$, strongly controllable triple $(E,A,B)$, and strongly detectable triple $(E,A,C)$.
    Furthermore, consider the corresponding transformed system $(\Etilde,\Atilde,\Btilde,\Ctilde)$ as in \eqref{eq:WCFpH} as well as the associated reduced KYP LMI \eqref{eq:reducedKYPLMI}.
    Then, there exists a maximizing solution $\Xmax=\Xmax^\top$ of \eqref{eq:reducedKYPLMI} satisfying $\Xmax\succeq X_{11}$ for all $X_{11}=X_{11}^\top$ solving \eqref{eq:reducedKYPLMI}.
\end{lemma}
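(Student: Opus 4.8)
The plan is to recognise \eqref{eq:reducedKYPLMI} as an ordinary (ODE) KYP LMI and to apply the existence results for stabilizing Lur'e solutions and extremal KYP solutions, i.e.\ Theorem~\ref{thm:existenceOfStabilizingSolutionsOfTheLureEquation} and Theorem~\ref{thm:extremalSolutionOfKYPLMI}, to the triple $(I_r,A_{11},B_1)$. Concretely, $\calW(X_{11})\succeq 0$ is precisely the matrix inequality in \eqref{eq:generalizedKYPLMI} with $E=I_r$, $A=A_{11}$, $B=B_1$, and weighting matrices
\begin{align*}
 W_{11} &= -2C_1^\top C_1, & W_{12} &= C_1^\top+2C_1^\top C_2B_2, & W_{22} &= -C_2B_2-B_2^\top C_2^\top-2B_2^\top C_2^\top C_2B_2.
\end{align*}
Since $E=I_r$ is invertible, the associated system space is all of $\RR^{r+m}$ (so $\succeq_{\mathcal{V}_{\mathrm{sys}}}$ reduces to $\succeq$), while the side constraint $E^\top X=X^\top E$ is equivalent to $X=X^\top$. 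Hence a symmetric solution of \eqref{eq:reducedKYPLMI} is the same object as a solution of the KYP LMI \eqref{eq:generalizedKYPLMI} for this data, and a maximal one is exactly what the lemma asks for.

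To invoke the two theorems I would check their hypotheses for $(I_r,A_{11},B_1)$: regularity of $(I_r,A_{11})$ and impulse controllability of $(I_r,A_{11},B_1)$ are automatic because $\Ker(I_r)=\{0\}$, and solvability of \eqref{eq:reducedKYPLMI} holds since $Q_{11}$ solves it, as already noted. The remaining hypothesis is strong anti-stabilizability of $(I_r,A_{11},B_1)$, i.e.\ $\rank\left(\begin{bmatrix}\lambda I_r-A_{11} & B_1\end{bmatrix}\right)=r$ for all $\lambda\in\overline{\CC_-}$, and this I would trace back to the assumed strong controllability of $(E,A,B)$. First, the output feedback $\uF=-y$ leading to \eqref{eq:pHWithFeedback} replaces $A$ by $A-BC$, and because
\begin{align*}
 \begin{bmatrix}\lambda E-(A-BC) & B\end{bmatrix} = \begin{bmatrix}\lambda E-A & B\end{bmatrix}\begin{bmatrix}I_n & 0\\ C & I_m\end{bmatrix}
\end{align*}
with an invertible right factor, all rank conditions in Definition~\ref{def:controllabilityNotionsForDescriptorSystems} are preserved; for impulse controllability one additionally absorbs the extra term $BCS$ by a column operation, where $\colspan(S)=\Ker(E)$. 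The subsequent transformation to \eqref{eq:WCFpH} is a strict equivalence, which likewise preserves these notions, so $(\Etilde,\Atilde,\Btilde)$ is still strongly anti-stabilizable. With $\Etilde=\left[\begin{smallmatrix}I_r & 0\\0 & 0\end{smallmatrix}\right]$ and $\Atilde=\left[\begin{smallmatrix}A_{11} & 0\\0 & I_{n-r}\end{smallmatrix}\right]$ one has
\begin{align*}
 \begin{bmatrix}\lambda\Etilde-\Atilde & \Btilde\end{bmatrix} = \begin{bmatrix}\lambda I_r-A_{11} & 0 & B_1\\ 0 & -I_{n-r} & B_2\end{bmatrix},
\end{align*}
and, using the block $-I_{n-r}$ to clear $B_2$ by column operations, this matrix has full row rank $n$ for $\lambda\in\overline{\CC_-}$ if and only if $\rank\left(\begin{bmatrix}\lambda I_r-A_{11} & B_1\end{bmatrix}\right)=r$ there, which yields strong anti-stabilizability of $(I_r,A_{11},B_1)$.

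With these hypotheses in hand, Theorem~\ref{thm:existenceOfStabilizingSolutionsOfTheLureEquation}, applied with the data $E=I_r$, $A=A_{11}$, $B=B_1$ and the weighting matrices above, yields a stabilizing solution $(Y,K,L)$ of the corresponding Lur'e equation \eqref{eq:Lure}. Then $Y$ solves \eqref{eq:generalizedKYPLMI}, hence \eqref{eq:reducedKYPLMI}, and $E^\top Y=Y^\top E$ with $E=I_r$ forces $Y=Y^\top$. Finally Theorem~\ref{thm:extremalSolutionOfKYPLMI} gives $Y=E^\top Y\succeq E^\top X=X$ for every solution $X$ of \eqref{eq:reducedKYPLMI}, so $\Xmax\vcentcolon=Y$ is the desired maximal symmetric solution. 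The main obstacle is the bookkeeping in the second paragraph: matching the sign conventions so that \eqref{eq:reducedKYPLMI} genuinely fits the template \eqref{eq:generalizedKYPLMI}, and verifying that impulse controllability and strong anti-stabilizability really do survive both the output feedback and the Weierstra\ss{} equivalence transformation. Each of these steps is routine, but a dropped sign or a missed invariance would invalidate the reduction to the standard ODE result.
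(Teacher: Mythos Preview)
Your proposal is correct and follows essentially the same route as the paper's proof: recognise \eqref{eq:reducedKYPLMI} as the KYP LMI \eqref{eq:generalizedKYPLMI} for the ODE data $(I_r,A_{11},B_1)$, note that $Q_{11}$ furnishes a solution, reduce the remaining hypothesis to strong anti-stabilizability of $(I_r,A_{11},B_1)$, trace this back through the output feedback and the equivalence transformation to the assumed strong controllability of $(E,A,B)$, and then invoke Theorems~\ref{thm:existenceOfStabilizingSolutionsOfTheLureEquation} and~\ref{thm:extremalSolutionOfKYPLMI}. The only minor superfluity is your remark on preserving impulse controllability under feedback---since $I_r$ is invertible this is automatic for $(I_r,A_{11},B_1)$ and need not be tracked through the transformations.
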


\begin{proof}
    First, since the requirements of Theorem~\ref{thm:solutionsOfNonsymmetricGAREs} are satisfied, the transformation to $(\Etilde,\Atilde,\Btilde,\Ctilde)$ as in \eqref{eq:WCFpH} is well-defined.
    Furthermore, since $Q_{11}=Q_{11}^\top$ solves the reduced KYP LMI \eqref{eq:reducedKYPLMI}, by Theorems~\ref{thm:existenceOfStabilizingSolutionsOfTheLureEquation} and \ref{thm:extremalSolutionOfKYPLMI}, it is sufficient to show that the triple $(I_r,A_{11},B_1)$ is strongly anti-stabilizable.
    Note that $(I_r,A_{11})$ is trivially regular and $(I_r,A_{11},B_1)$ is trivially impulse controllable, since $I_r$ is invertible.
    
    Since $(E,A,B)$ is strongly controllable and, thus, in particular strongly anti-stabilizable, we infer that also the triple $(E,A-BC,B)$ obtained via the output feedback as in \eqref{eq:pHWithFeedback} is strongly anti-stabilizable.
    This follows from the definition of anti-stabilizability in Definition~\ref{def:controllabilityNotionsForDescriptorSystems} and from the relation
    \begin{equation*}
        \begin{bmatrix}
            \lambda E-A+BC & B
        \end{bmatrix}
        =
        \begin{bmatrix}
            \lambda E-A & B
        \end{bmatrix}
        \begin{bmatrix}
            I_n & 0\\
            C   & I_m
        \end{bmatrix}
        \quad \text{for all }\lambda\in\CC.
    \end{equation*}
    As a consequence, also the triple $(\Etilde,\Atilde,\Btilde)$ is anti-stabilizable which follows from
    \begin{equation*}
        \begin{bmatrix}
            \lambda \Etilde-\Atilde & \Btilde
        \end{bmatrix}
        =
        S
        \begin{bmatrix}
            \lambda E-A+BC & B
        \end{bmatrix}
        \begin{bmatrix}
            T^{-1}  & 0\\
            0       & I_m
        \end{bmatrix}
        \quad \text{for all }\lambda\in\CC.
    \end{equation*}
    Finally, the anti-stabilizability of $(I_r,A_{11},B_1)$ follows from the calculation
    \begin{align*}
        n &= \rank\left(
        \begin{bmatrix}
            \lambda \Etilde-\Atilde & \Btilde
        \end{bmatrix}
        \right) = \rank\left(
        \begin{bmatrix}
            \lambda I_r-A_{11}  & 0         & B_1\\
            0                   & -I_{n-r}  & B_2
        \end{bmatrix}
        \right) \\
        &= \rank\left(
        \begin{bmatrix}
            \lambda I_r-A_{11}  & 0         & B_1\\
            0                   & -I_{n-r}  & 0
        \end{bmatrix}
        \right) \\
        &= n-r+\rank\left(
        \begin{bmatrix}
            \lambda I_r-A_{11}  & B_1
        \end{bmatrix}
        \right)\quad \text{for all }\lambda\in\overline{\CC_-}.
    \end{align*}
\end{proof}

The following theorem shows how to obtain a port-Hamiltonian realization based on a maximal solution of \eqref{eq:reducedKYPLMI}.
The corresponding proof is outlined in appendix~\ref{sec:proof}.

\begin{theorem}
    \label{thm:optimalpHRealizationOfWCF}
    Consider the system \eqref{eq:pHEQ} with regular pair $(E,A)$, strongly stabilizable triple $(E,A,B)$, and strongly detectable triple $(E,A,C)$.
    Furthermore, consider the corresponding transformed system $(\Etilde,\Atilde,\Btilde,\Ctilde)$ as in \eqref{eq:WCFpH} and let there exist a maximal solution $\Xmax$ to the associated reduced KYP LMI \eqref{eq:reducedKYPLMI} in the sense of Lemma~\ref{lem:existenceMaximalSolutionOfReducedKYPLMI}.
	Besides, let 
	\begin{equation*}
		B_2^\top = 
		\begin{bmatrix}
			U_1 & U_2
		\end{bmatrix}
		\begin{bmatrix}
			\Sigma 	& 0\\
			0       & 0
		\end{bmatrix}
		\begin{bmatrix}
			V_1^\top\\
			V_2^\top
		\end{bmatrix}
		= U_1\Sigma V_1^\top
	\end{equation*}
	with $\rank(\Sigma) = \rank(B_2) =\vcentcolon \widetilde{r}$ be a singular value decomposition of $B_2^\top$.
	Moreover, we define
	\begin{align*}
		X_{21} &\vcentcolon= V_1V_1^\top Q_{21}+(B_2^\top)^+B_1^\top (Q_{11}-\Xmax)-2V_2V_2^\top C_2^\top C_1,\\
		X_{22} &\vcentcolon= V_1V_1^\top Q_{22}-V_2V_2^\top Q_{22}^\top V_1V_1^{\top}+V_2KV_2^\top-2V_2V_2^\top C_2^\top C_2V_1V_1^{\top},\\
		 \Jhat_{11} &\vcentcolon= \frac12\left(A_{11}\Xmax^{-1}-\Xmax^{-1}A_{11}^\top\right),\quad \Rhat_{11} \vcentcolon= -\frac12\left(A_{11}\Xmax^{-1}+\Xmax^{-1}A_{11}^\top\right)-B_1B_1^\top
	\end{align*}
	with some arbitrary matrix $K\in\RR^{(n-r-\widetilde{r})\times(n-r-\widetilde{r})}$ satisfying 
	\begin{equation}
		\label{eq:KProperty}
		K+K^\top \prec -2V_2^\top C_2^\top C_2V_2.
	\end{equation}

	Then, the following assertions hold.
	\begin{enumerate}[(i)]
		\item The matrix $X_{22}$ is invertible.
		\item The matrices $\Atilde$ and $\Ctilde$ can be written as
			\begin{equation*}
				\Atilde = 
				\begin{bmatrix}
        				A_{11}  & 0\\
        				0       & I_{n-r}
    				\end{bmatrix}
    				= \Bigg(\underbrace{
    				\begin{bmatrix}
    					\Jhat_{11} & \Jhat_{12}\\
    					\Jhat_{21} & \Jhat_{22}
    				\end{bmatrix}
    				}_{=\vcentcolon \Jhat} -\underbrace{
    				\begin{bmatrix}
    					\Rhat_{11} & \Rhat_{12}\\
    					\Rhat_{21} & \Rhat_{22}
    				\end{bmatrix}
    				}_{=\vcentcolon \Rhat}-\Btilde\Btilde^\top\Bigg) \underbrace{
    				\begin{bmatrix}
    					\Xmax & 0\\
    					X_{21} 					& X_{22}
    				\end{bmatrix}
    				}_{=\vcentcolon \Xhat}
			\end{equation*}
			and $\Ctilde = \Btilde^\top \Xhat$, where $\Jhat_{12},\Jhat_{21},\Jhat_{22},\Rhat_{12},\Rhat_{21},\Rhat_{22}$ are defined via
			\begin{align*}
				\Jhat_{12} = -\Jhat_{21}^\top = \frac12 \Xmax^{-1}X_{21}^\top X_{22}^{-\top},\quad 
				\Rhat_{12} = \Rhat_{21}^\top \vcentcolon= \Jhat_{12}-B_1B_2^\top,\\  
				\Jhat_{22} \vcentcolon= \frac12\left(X_{22}^{-1}-X_{22}^{-\top}\right),\quad \Rhat_{22} \vcentcolon= -\frac12\left(X_{22}^{-1}+X_{22}^{-\top}\right)-B_2B_2^\top.
			\end{align*}
			Furthermore, $\Jhat$ is skew-symmetric and $\Rhat$ is symmetric and positive semidefinite.
	\end{enumerate}
\end{theorem}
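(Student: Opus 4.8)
The plan is to follow the proof of Theorem~\ref{thm:optimalpHRealizationOfWCF_indexOne} almost verbatim, the only new feature being the bookkeeping of the feedthrough-type terms produced by the output feedback $\uF=-y$. First I would record the structural facts available after the transformation to \eqref{eq:WCFpH}: this transformation is well defined because, by Theorem~\ref{thm:solutionsOfNonsymmetricGAREs}(iii) together with the invertibility of $Q$, the feedback system \eqref{eq:pHWithFeedback} is regular and impulse-free; the semi-explicit shape of $\Etilde$ forces $\Qtilde$ to be block lower triangular with $Q_{11}=Q_{11}^\top$ and $Q_{22}\in\GL{n-r}$, together with $C_1=B_1^\top Q_{11}+B_2^\top Q_{21}$, $C_2=B_2^\top Q_{22}$, and, exactly as in section~\ref{subsubsec:indexOne}, $Q_{22}=J_Q-R_Q$ with $J_Q=-J_Q^\top$ and $R_Q=R_Q^\top\succeq 0$; finally $\Xmax\succeq Q_{11}\succ 0$, so $\Xmax$ is invertible, which is needed for every quantity in the statement to be meaningful.

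For assertion (i) I would reuse the kernel argument from the proof of Theorem~\ref{thm:optimalpHRealizationOfWCF_indexOne}(i): let $\alpha\in\Ker(X_{22})$ and write $\alpha=V_1\beta+V_2\gamma$ via the singular value decomposition of $B_2^\top$. Left-multiplying $X_{22}\alpha=0$ by $V_1^\top$ and by $V_2^\top$ and using $V_1^\top V_2=0$ yields a coupled linear system in $(\beta,\gamma)$; symmetrising it and testing against $[\beta^\top,\gamma^\top]^\top$ gives a scalar identity built from $-\beta^\top V_1^\top R_QV_1\beta$, a cross term bilinear in $C_2V_1\beta$ and $C_2V_2\gamma$, and $\tfrac12\gamma^\top(K+K^\top)\gamma$. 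Strengthening the requirement on $K$ to $K+K^\top\prec-2V_2^\top C_2^\top C_2V_2$ in \eqref{eq:KProperty} is precisely what absorbs that cross term: completing the square in $C_2(V_1\beta+V_2\gamma)$, the identity forces $\gamma=0$ and $R_QV_1\beta=0$, hence $J_QV_1\beta=0$ and $Q_{22}V_1\beta=0$, and therefore $\beta=0$ since $Q_{22}$ is invertible and $V_1$ has full column rank; thus $\alpha=0$ and $X_{22}$ is invertible.

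For assertion (ii) I would first verify the factorization by a block computation. One has $(\Jhat_{11}-\Rhat_{11}-B_1B_1^\top)\Xmax=A_{11}$ since the two $B_1B_1^\top$ terms cancel and the remainder reproduces $A_{11}\Xmax^{-1}\Xmax$; the $(2,2)$-block of $(\Jhat-\Rhat-\Btilde\Btilde^\top)\Xhat$ equals $(\Jhat_{22}-\Rhat_{22}-B_2B_2^\top)X_{22}=X_{22}^{-1}X_{22}=I_{n-r}$; the $(1,2)$-block vanishes because $\Jhat_{12}-\Rhat_{12}-B_1B_2^\top=0$; and the $(2,1)$-block vanishes because $\Jhat_{21}-\Rhat_{21}-B_2B_1^\top=-X_{22}^{-1}X_{21}\Xmax^{-1}$ and $-X_{22}^{-1}X_{21}\Xmax^{-1}\Xmax+X_{22}^{-1}X_{21}=0$, so that $-\Btilde\Btilde^\top$ exactly compensates the feedback built into $\Atilde$ in \eqref{eq:WCFpH}. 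For $\Ctilde=\Btilde^\top\Xhat$ I would re-derive from \eqref{eq:reducedKYPLMI} the inclusion $\colspan(U_2)=\Ker(B_2)\subseteq\Ker((Q_{11}-\Xmax)B_1)$: the extra $C_2^\top C_2$ and $C_2^\top C_1$ entries of $\calW(X_{11})$ all carry a factor $B_2$ and hence vanish on $\Ker(B_2)$, so the index-$1$ argument applies unchanged, after which the expansion of $\Btilde^\top\Xhat$ using $B_2^\top(B_2^\top)^+=U_1U_1^\top=I_m-U_2U_2^\top$, $C_1=B_1^\top Q_{11}+B_2^\top Q_{21}$ and $C_2=B_2^\top Q_{22}$ is the same telescoping calculation as in the index-$1$ proof.

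The skew-symmetry of $\Jhat$ and the symmetry of $\Rhat$ are immediate from the definitions, so the main obstacle is $\Rhat\succeq 0$. The plan is to write $2\Rhat=-\diag(\Xmax^{-1},X_{22}^{-1})\,\widehat K\,\diag(\Xmax^{-1},X_{22}^{-\top})$ for a suitable symmetric matrix $\widehat K$ that now also carries the $B_i$- and $C_j$-contributions, so that $\Rhat\succeq 0$ is equivalent to $\widehat K\preceq 0$; then, testing $\widehat K$ against an arbitrary pair $(x_1,x_2)$ with $x_2=V_1\beta+V_2\gamma$ and substituting the definitions of $X_{21}$, $X_{22}$ together with $Q_{22}+Q_{22}^\top=-2R_Q\preceq 0$, $C_1=B_1^\top Q_{11}+B_2^\top Q_{21}$, $C_2=B_2^\top Q_{22}$ and the bound \eqref{eq:KProperty}, one collapses the long expression to
\[
\begin{bmatrix} x_1\\ U_1\Sigma^{-1}\beta\end{bmatrix}^\top
\begin{bmatrix} \Xmax A_{11}+A_{11}^\top\Xmax+2C_1^\top C_1 & \Xmax B_1-C_1^\top-2C_1^\top C_2B_2\\ B_1^\top\Xmax-C_1-2B_2^\top C_2^\top C_1 & C_2B_2+B_2^\top C_2^\top+2B_2^\top C_2^\top C_2B_2\end{bmatrix}
\begin{bmatrix} x_1\\ U_1\Sigma^{-1}\beta\end{bmatrix}
= -\begin{bmatrix} x_1\\ U_1\Sigma^{-1}\beta\end{bmatrix}^\top\calW(\Xmax)\begin{bmatrix} x_1\\ U_1\Sigma^{-1}\beta\end{bmatrix}\le 0,
\]
the inequality holding since $\Xmax$ solves \eqref{eq:reducedKYPLMI}, i.e.\ $\calW(\Xmax)\succeq 0$. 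Keeping all these feedthrough cross-terms balanced so that the reduction to $\calW(\Xmax)$ is exact is the only genuinely delicate part of the argument; the remainder is routine and parallels the index-$1$ case.
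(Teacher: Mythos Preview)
Your plan for (i), the factorization $\Atilde=(\Jhat-\Rhat-\Btilde\Btilde^\top)\Xhat$, and the identity $\Ctilde=\Btilde^\top\Xhat$ is correct and matches the paper's proof essentially line by line, including the observation that the extra $C_2$-terms in $\calW$ all carry a factor $B_2$ and therefore vanish on $\Ker(B_2)$.

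The one place where you diverge from the paper is the argument for $\Rhat\succeq 0$. You propose to recycle the index-$1$ congruence $2\Rhat=-\diag(\Xmax^{-1},X_{22}^{-1})\,\widehat K\,\diag(\Xmax^{-1},X_{22}^{-\top})$. The paper instead conjugates by $\Xhat$ itself and works with $2\Xhat^\top\Rhat\Xhat$. This choice is not cosmetic: since you have just shown $\Ctilde=\Btilde^\top\Xhat$, one has $\Xhat^\top\Btilde\Btilde^\top\Xhat=\Ctilde^\top\Ctilde$, so the feedthrough block $-\Btilde\Btilde^\top$ contributes exactly $-2\Ctilde^\top\Ctilde$ after the $\Xhat$-congruence. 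This is what produces a matrix whose blocks involve only $\Xmax A_{11}+A_{11}^\top\Xmax$, $X_{21}$, $X_{22}+X_{22}^\top$, $C_1$, and $C_2$, and is why the long quadratic form collapses cleanly to $-[\,x_1;\,-\eta\,]^\top\calW(\Xmax)[\,x_1;\,-\eta\,]$ with $\eta=U_1\Sigma^{-1}\beta$.

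With your $\diag$-congruence the $\Btilde\Btilde^\top$ part instead becomes $2[\Xmax B_1;\,X_{22}B_2][\Xmax B_1;\,X_{22}B_2]^\top$, a \emph{positive} semidefinite contribution that cannot simply be dropped, and $X_{22}B_2$ does not reduce to $C_2^\top$. Your approach is still equivalent (the two congruences differ by the unipotent factor $\begin{bsmallmatrix}I&0\\X_{22}^{-1}X_{21}&I\end{bsmallmatrix}$), so in principle you can reach the same endpoint after the change of variables $x_2\mapsto x_2-X_{22}^{-1}X_{21}x_1$; but the ``collapsing'' you announce is not at all direct with the $\diag$-congruence. The paper's refinement---conjugating by $\Xhat$ rather than by its block-diagonal part---is precisely what makes the reduction to $\calW(\Xmax)$ transparent.
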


Similarly as outlined in section~\ref{subsubsec:indexOne}, we can use the factorization presented in Theorem~\ref{thm:optimalpHRealizationOfWCF} to replace the $\Qtilde$ matrix by $\Xhat$.
After transforming back and undoing the original feedback via the reverse feedback $\uF=y=Cx$, we obtain an alternative port-Hamiltonian representation of $(E,A,B,C)$ via $(E,(\overline{J}-\overline{R})\overline{Q},B,B^\top\overline{Q})$ with $E^\top \overline{Q} \succeq E^\top Q$.

The relation between the KYP LMI \eqref{eq:generalizedKYP_II} associated to the original pH system and the reduced KYP LMI in \eqref{eq:reducedKYPLMI} has been investigated in a more general setting in \cite{Voi15}.
The following statement is a special case of Proposition~3.2.2~b) in \cite{Voi15} for the port-Hamiltonian setting considered here.
Especially, it implies that a maximal solution of the reduced KYP LMI in \eqref{eq:reducedKYPLMI} corresponds to a maximal solution of the original KYP LMI \eqref{eq:generalizedKYP_II}.

\begin{theorem}
    Consider the system \eqref{eq:pHEQ} with regular pair $(E,A)$, strongly stabilizable triple $(E,A,B)$, and strongly detectable triple $(E,A,C)$, as well as the associated KYP LMI \eqref{eq:generalizedKYP_II}.
    Furthermore, consider the corresponding transformed system $(\Etilde,\Atilde,\Btilde,\Ctilde)$ as in \eqref{eq:WCFpH} as well as the associated reduced KYP LMI \eqref{eq:reducedKYPLMI}.
    Then, for any solution $X\in\RR^{n\times n}$ of \eqref{eq:generalizedKYP_II} the matrix
    \begin{equation}
        \label{eq:XOneOne}
        X_{11} = 
        \begin{bmatrix}
            I_r & 0
        \end{bmatrix}
        S^{-\top} XT^{-1}
        \begin{bmatrix}
            I_r\\
            0
        \end{bmatrix}
    \end{equation}
    is symmetric and a solution of \eqref{eq:reducedKYPLMI}.
    Conversely, let $X_{11}\in\RR^{r\times r}$ be a symmetric solution of \eqref{eq:reducedKYPLMI}.
    Then, for any $X_{21}\in\RR^{(n-r)\times r}$ and $X_{22}\in\RR^{(n-r)\times(n-r)}$, the matrix
    \begin{equation}
        \label{eq:X}
        X = 
        S^\top
        \begin{bmatrix}
            X_{11} & 0\\
            X_{21} & X_{22}
        \end{bmatrix}
        T
    \end{equation}
    is a solution of \eqref{eq:generalizedKYP_II}.
\end{theorem}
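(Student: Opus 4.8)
The plan is to reduce the statement to Proposition~3.2.2~b) of \cite{Voi15}, routing it through the intermediate KYP LMI \eqref{eq:transformedKYPLMI}, which was introduced for exactly this purpose in the discussion preceding the theorem; here and below \eqref{eq:transformedKYPLMI} is read with the matrix inequality required on $\Vsystilde$ (for $X=\Qtilde$ it holds on all of $\RR^{n+m}$ as in the text, but for a general $X$ only on $\Vsystilde$). Note first that, under the present hypotheses, the transformation \eqref{eq:WCFpH} is well-defined because $(E,A-BC)=(E,A-Q^{-\top}C^\top C)$ is regular and impulse-free by Theorem~\ref{thm:solutionsOfNonsymmetricGAREs}(iii). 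The first step is to show that $X\mapsto\Xtilde\vcentcolon=S^{-\top}XT^{-1}$ is a bijection from the solution set of \eqref{eq:generalizedKYP_II} onto that of \eqref{eq:transformedKYPLMI}. For the constraint this is clear, since $E^\top X=X^\top E$ is equivalent to $\Etilde^\top\Xtilde=T^{-\top}E^\top XT^{-1}=T^{-\top}X^\top ET^{-1}=\Xtilde^\top\Etilde$. For the inequality, the output feedback $\uF=-y$ from \eqref{eq:pHWithFeedback} acts on the quadratic form of \eqref{eq:generalizedKYP_II} via the congruence $(x,u)\mapsto(x,u+Cx)$, which carries $\mathcal{V}_{\mathrm{sys}}$ of $(E,A,B,C)$ bijectively onto the system space of $(E,A-BC,B,C)$ and turns the weight $W_{11}=0$ into $-2C^\top C$; the subsequent congruence by $S,T$ then produces \eqref{eq:transformedKYPLMI}. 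This is precisely the calculation already carried out in the text for $X=\Qtilde$, now read for an arbitrary $X$.

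For the first assertion, let $X$ solve \eqref{eq:generalizedKYP_II}, so $\Xtilde=S^{-\top}XT^{-1}$ solves \eqref{eq:transformedKYPLMI}. By the semi-explicit form of $\Etilde$ in \eqref{eq:WCFpH}, the constraint $\Etilde^\top\Xtilde=\Xtilde^\top\Etilde$ forces the upper left $r\times r$ block of $\Xtilde$ to be symmetric and its upper right $r\times(n-r)$ block to vanish; that block is exactly $X_{11}$ from \eqref{eq:XOneOne}, which is therefore symmetric. Since $(\Etilde,\Atilde)$ is in Weierstra\ss{} form, one computes $\Vsystilde=\colspan\left[\begin{smallmatrix} I_r & 0\\ 0 & -B_2\\ 0 & I_m\end{smallmatrix}\right]$, so restricting the inequality of \eqref{eq:transformedKYPLMI} to $\Vsystilde$ amounts to congruence-transforming it by the matrix occurring in \eqref{eq:reducedKYPLMI}; by the very computation that derives \eqref{eq:reducedKYPLMI} from \eqref{eq:transformedKYPLMI} (where $\Xtilde$ was likewise constrained only by $X_{12}=0$), this restriction equals $\calW(X_{11})$ and, in particular, depends on $\Xtilde$ solely through $X_{11}$, the contributions of the lower blocks cancelling. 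Hence $X_{11}$ solves \eqref{eq:reducedKYPLMI}.

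For the converse, let $X_{11}=X_{11}^\top$ solve \eqref{eq:reducedKYPLMI} and let $X_{21},X_{22}$ be arbitrary; set $\Xtilde\vcentcolon=\left[\begin{smallmatrix}X_{11}&0\\ X_{21}&X_{22}\end{smallmatrix}\right]$. Then $\Etilde^\top\Xtilde=\left[\begin{smallmatrix}X_{11}&0\\0&0\end{smallmatrix}\right]=\Xtilde^\top\Etilde$ since $X_{11}$ is symmetric, and, by the dependence-only-on-$X_{11}$ property noted above, the restriction of the inequality of \eqref{eq:transformedKYPLMI} to $\Vsystilde$ again equals $\calW(X_{11})\succeq0$ and therefore holds. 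Thus $\Xtilde$ solves \eqref{eq:transformedKYPLMI}, and by the correspondence established in the first step $X=S^\top\Xtilde T$, i.e.\ the matrix in \eqref{eq:X}, solves \eqref{eq:generalizedKYP_II}. The only genuinely delicate point is the first step: making the equivalence between \eqref{eq:generalizedKYP_II} and \eqref{eq:transformedKYPLMI} precise for a general $X$ (rather than just for $\Qtilde$), which requires carefully tracking how the feedback $\uF=-y$ and the coordinate change by $S,T$ interact. Since that congruence is explicit and already appears in the paper, and since the abstract reduction of a DAE KYP LMI to its Weierstra\ss{}-reduced counterpart is exactly Proposition~3.2.2~b) in \cite{Voi15}, this amounts to careful bookkeeping rather than a new obstacle.
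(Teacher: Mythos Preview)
Your proposal is correct and follows essentially the same route as the paper: both reduce the claim to Proposition~3.2.2~b) of \cite{Voi15} by observing that the feedback-plus-coordinate change carries \eqref{eq:generalizedKYP_II} to the intermediate LMI \eqref{eq:transformedKYPLMI} on $\Vsystilde$, then identify $\Vsystilde$ explicitly as $\colspan\left[\begin{smallmatrix} I_r & 0\\ 0 & -B_2\\ 0 & I_m\end{smallmatrix}\right]$ so that restricting to it yields precisely $\calW(X_{11})$. Your write-up is somewhat more explicit about the bookkeeping (block structure of $\Xtilde$, why only $X_{11}$ survives), but there is no substantive difference in strategy.
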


\begin{proof}
    First, since the requirements of Theorem~\ref{thm:solutionsOfNonsymmetricGAREs} are satisfied, the transformation to $(\Etilde,\Atilde,\Btilde,\Ctilde)$ as in \eqref{eq:WCFpH} is well-defined.
    The fact that $X$ solves \eqref{eq:generalizedKYP_II} if and only if $\Xtilde = S^{-\top} XT^{-1}$ solves 
    \begin{equation}
	    \label{eq:transformedKYPLMI_Vsys}
	    \begin{bmatrix}
		    -\Atilde^\top \Xtilde-\Xtilde^\top \Atilde-2\Ctilde^\top\Ctilde 	& \Ctilde^\top-\Xtilde^\top \Btilde\\
		    \Ctilde-\Btilde^\top \Xtilde 															& 0
	    \end{bmatrix}
	    \succeq_{\Vsystilde} 0,\quad \Etilde^\top \Xtilde = \Xtilde^\top \Etilde
    \end{equation}
    is a direct consequence of Proposition~3.2.2~b) from \cite{Voi15}.
    Furthermore, since the system space of $(\Etilde,\Atilde,\Btilde,\Ctilde)$ is given by
    \begin{equation*}
        \Vsystilde = 
        \colspan\left(
        \begin{bmatrix}
			I_r & 0\\
			0 	& -B_2\\
			0	& I_m
		\end{bmatrix}
		\right),
    \end{equation*}
    cf.~\cite[eq.~(3.2)]{ReiRV15}, $S^{-\top} XT^{-1}$ solves \eqref{eq:transformedKYPLMI_Vsys} if and only if $X_{11}$ as defined in \eqref{eq:XOneOne} is symmetric and solves \eqref{eq:reducedKYPLMI}, which concludes the proof.
\end{proof}

\section{Numerical Examples}\label{sec:numericalExamples}
 
 In this section, we validate our theoretical findings by means of numerical results for two port-Hamiltonian descriptor systems. Since the main purpose is to demonstrate the general potential of our approach, we do not push to the limits in terms of the system dimensions of the original models. As we detail below, to a certain extent this is due to numerical ill-conditioning that requires a more in depth analysis that is out of the scope of this article.
 
All simulations were generated on an AMD Ryzen 7 PRO 4750U @ 1.7 GHz x 16, 32 GB RAM with \matlab \;version R2020b. 

In the following we list some details regarding the implementation used for the numerical experiments presented in subsections~\ref{subsec:transportNetwork} and \ref{subsec:msd}.

\begin{itemize}
    \item The errors between the FOM and the ROMs are measured by means of the $\mathcal{H}_\infty$ norm of the coprime factors, cf.~\eqref{eq:errorBoundForCoprimeFactors}.
        For the numerical computation of the $\mathcal{H}_\infty$ norm, we use the \texttt{Control System Toolbox} of \matlab.
    \item Determining an optimal Hamiltonian as outlined in section~\ref{subsec:optimalQ} involves finding a maximal solution of a KYP LMI of the form \eqref{eq:reducedKYPLMI_index1} or \eqref{eq:reducedKYPLMI}.     This is done by considering the KYP LMIs dual to \eqref{eq:reducedKYPLMI_index1} or \eqref{eq:reducedKYPLMI} which are solved for the minimal solutions. 
        We then obtain the maximal solutions of \eqref{eq:reducedKYPLMI_index1} or \eqref{eq:reducedKYPLMI} by inversion.
        Both considered examples lead to a singular lower right block of the block matrix on the left-hand side of \eqref{eq:reducedKYPLMI_index1} or \eqref{eq:reducedKYPLMI}. Following ideas for the ODE case, see \cite[Theorem 2]{Wil72a}, we introduce an artificial feedthrough term $D+D^\top=10^{-12}I_m$ in order to replace the occurring KYP LMIs by algebraic Riccati equations associated with the Schur complements of \eqref{eq:reducedKYPLMI_index1} or \eqref{eq:reducedKYPLMI}. 
    \item All standard Riccati equations are solved with the \texttt{icare} routine from the \texttt{Control System Toolbox} of \matlab.
    \item For solving generalized Riccati equations as in \eqref{eq:originalGAREs} and \eqref{eq:modifiedGAREs}, we use the code from \cite{MoeRS11} which was kindly made available to us by the authors.
\end{itemize}

Let us emphasize that the numerical procedure described above is challenging already for systems of medium size since the maximal solutions to \eqref{eq:reducedKYPLMI_index1} or \eqref{eq:reducedKYPLMI} are typically ill-conditioned, cf.~\cite[Cor.~13]{Opm13}.

\subsection{Transport Network}\label{subsec:transportNetwork}

\begin{figure}[tb]
    \centering 
%
%
\definecolor{mycolor1}{rgb}{0.00000,0.44700,0.74100}%
\definecolor{mycolor2}{rgb}{0.85000,0.32500,0.09800}%

\begin{tikzpicture}

\begin{axis}[%
width=4in,
height=2.8in,
scale only axis,
separate axis lines,
every outer x axis line/.append style={white!15!black},
every x tick label/.append style={font=\color{white!15!black}},
xmin=2,
xmax=40,
xlabel style={font=\color{white!15!black}},
xlabel={Reduced system dimension $r$},
every outer y axis line/.append style={white!15!black},
every y tick label/.append style={font=\color{white!15!black}},
ymode=log,
ymin=1e-07,
ymax=25,
yminorticks=true,
ylabel style={font=\color{white!15!black}},
ylabel={$\left\| \begin{bsmallmatrix} M \\ N \end{bsmallmatrix}  - \begin{bsmallmatrix} M_r \\ N_r \end{bsmallmatrix}\right\|_{\mathcal{H}_\infty}$},
legend style={draw=white!15!black,fill=white,legend cell align=left},
legend pos = south west
]
\addplot [color=mycolor1, dotted, line width=1.5pt, mark size=5pt, mark=o, mark options={solid, mycolor1}]
  table[row sep=crcr]{2	1.40164588423571\\
4	0.472542631695268\\
6	1.09686481121316\\
8	0.407010074875338\\
10	0.788605292374918\\
12	0.335637189123697\\
14	0.511492351528952\\
16	0.274548398522545\\
18	0.308557670669933\\
20	0.223157870930515\\
22	0.198801253823111\\
24	0.182698974605792\\
26	0.16175261420651\\
28	0.143755957070821\\
30	0.13564101006684\\
32	0.107935428062489\\
34	0.112639757044664\\
36	0.0876469380804663\\
38	0.0929105312783282\\
40	0.0714446122706555\\
42	0.0749048435726467\\
44	0.0585869451093049\\
46	0.0590175350905708\\
48	0.048390630779972\\
50	0.0469463138011533\\
};
\addlegendentry{Error using $\mathcal{P}_{\mathrm{f}}=Q^{-\top}$}
\addplot [color=mycolor1, line width=1.5pt]
  table[row sep=crcr]{2	25.5482433133267\\
4	23.1064023738023\\
6	21.1182367432972\\
8	19.2948286927036\\
10	17.6228595951377\\
12	16.0950079285191\\
14	14.6804344372887\\
16	13.3877074582387\\
18	12.191074641207\\
20	11.0978792813753\\
22	10.0935405800901\\
24	9.17591911148147\\
26	8.33813733962292\\
28	7.57241116061939\\
30	6.87614006389564\\
32	6.23993307216105\\
34	5.66274878678209\\
36	5.13595227779548\\
38	4.65849809870655\\
40	4.2234786737858\\
42	3.82927177875931\\
44	3.47078891283302\\
46	3.14585515832109\\
48	2.85088392443669\\
50	2.58340366507758\\
};
\addlegendentry{Error bound using $\mathcal{P}_{\mathrm{f}}=Q^{-\top}$}
\addplot [color=mycolor2, dotted, line width=1.5pt, mark size=5pt, mark=o, mark options={solid, mycolor2}]
  table[row sep=crcr]{2	1.39633575361393\\
4	0.687468811405881\\
6	0.178703409127719\\
8	0.1233292065835\\
10	0.1312805710984\\
12	0.0566548339080046\\
14	0.0139787221679339\\
16	0.022902345910044\\
18	0.00758052872474863\\
20	0.00167720835663337\\
22	0.00320173256197667\\
24	0.000959590286582213\\
26	0.000202593236830341\\
28	0.00041404790811015\\
30	0.000134522526744864\\
32	3.31567707853529e-05\\
34	2.92024880349469e-05\\
36	1.4115692694241e-05\\
38	1.18347978413182e-05\\
40	1.22100601779493e-05\\
42	1.07110536216994e-05\\
44	1.08260154072669e-05\\
46	1.24555659959627e-05\\
48	1.24412794728414e-05\\
50	1.2390069195975e-05\\
};
\addlegendentry{Error using $\mathcal{P}_{\mathrm{f}}=X_{\mathrm{max}}^{-\top}$}
\addplot [color=mycolor2, line width=1.5pt]
  table[row sep=crcr]{2	4.98076315793742\\
4	3.03804384690141\\
6	1.7594518015065\\
8	0.97666162170226\\
10	0.51090074416899\\
12	0.279186794387371\\
14	0.131309069595807\\
16	0.0748242722160547\\
18	0.0369778126387607\\
20	0.0179810920199969\\
22	0.0111800556872519\\
24	0.00563906970237761\\
26	0.00330251058796208\\
28	0.00248364667649319\\
30	0.00169131577939393\\
32	0.00139939856351056\\
34	0.00125254109911104\\
36	0.00115233480055021\\
38	0.00111183647729845\\
40	0.001072613680099\\
42	0.0010339840201113\\
44	0.000996951354177152\\
46	0.000960122617378956\\
48	0.000923483163696074\\
50	0.000886950137009619\\
};
\addlegendentry{Error bound using $\mathcal{P}_{\mathrm{f}}=X_{\mathrm{max}}^{-\top}$}
\addplot [color=black, dashdotted, line width=1.5pt, mark size=5pt, mark=asterisk, mark options={solid, black}]
  table[row sep=crcr]{2	1.37190625461159\\
4	0.570795061332507\\
6	0.283233967589945\\
8	0.0741733362766564\\
10	0.0740995553817004\\
12	0.038421216508487\\
14	0.0123561364511339\\
16	0.00952660109693222\\
18	0.00508766086122097\\
20	0.00182622803557019\\
22	0.00116045716971943\\
24	0.000641390013319729\\
26	0.000245257535282233\\
28	0.00013965841632317\\
30	9.62705069035599e-05\\
32	4.26486614898326e-05\\
34	1.79005174880735e-05\\
36	1.07380559704633e-05\\
38	1.07375859353021e-05\\
40	1.07392502658123e-05\\
42	1.14748797382556e-05\\
44	1.05652526718398e-05\\
46	1.05302778942549e-05\\
48	1.07312173245891e-05\\
50	1.06848025363358e-05\\
};
\addlegendentry{LQG-BT}
\addplot [color=black, line width=1.5pt]
  table[row sep=crcr]{2	3.16889105629602\\
4	1.37828518922793\\
6	0.819422953531396\\
8	0.405289997551507\\
10	0.240459346575972\\
12	0.109670910856572\\
14	0.0586682628825346\\
16	0.0319012159604616\\
18	0.01478696947651\\
20	0.00846064603474153\\
22	0.00455317320919861\\
24	0.00249260878900159\\
26	0.00170482847317604\\
28	0.00115494684861701\\
30	0.000906790902320656\\
32	0.000779311158458995\\
34	0.000700450256728866\\
36	0.000668467526160875\\
38	0.000645801232499801\\
40	0.000623136199963633\\
42	0.000600740722104112\\
44	0.000578385858173215\\
46	0.000556524333955843\\
48	0.000534691399366108\\
50	0.000513384406586934\\
};
\addlegendentry{Error bound for LQG-BT}
\end{axis}
\end{tikzpicture}%
        \caption{Comparison of the pH-structure-preserving LQG-BT method with the classical unstructured variant for a transport network. The $\mathcal{H}_{\infty}$-error is shown for different reduced system dimensions and different choices of the system Hamiltonian.}
    \label{fig:network}
\end{figure}
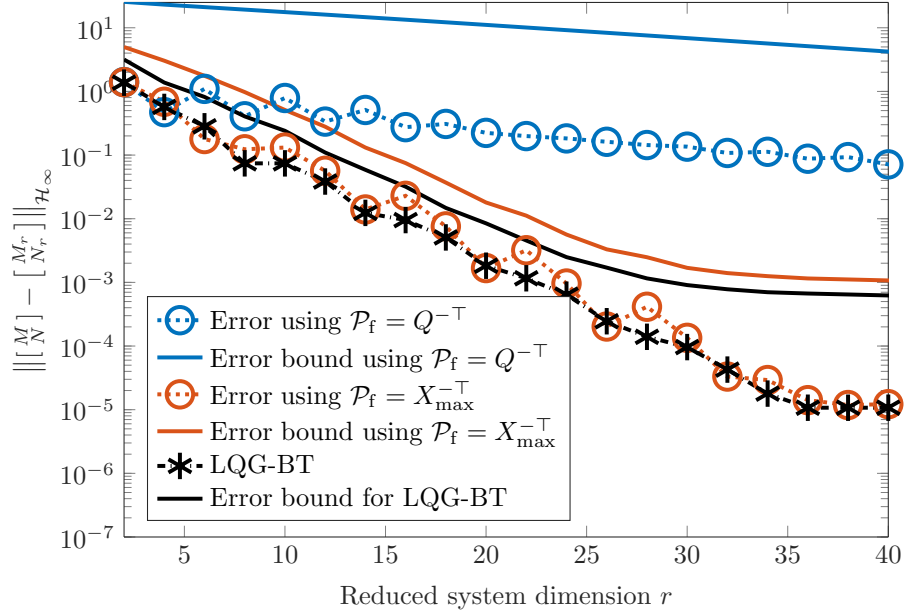

We consider the transport network model from \cite[sec.~7.4]{EggKLMM18} given by
\begin{align*}
	\begin{bmatrix}
		M_1 	& 0 		& 0\\
		0 		& M_2 	& 0\\
		0		& 0		& 0
	\end{bmatrix}
	\begin{bmatrix}
		\dot{x}_1\\
		\dot{x}_2\\
		\dot{x}_3
	\end{bmatrix}
	&=
	\begin{bmatrix}
		0 			& -G & 0\\
		G^\top 	& -D & N^\top\\
		0			& -N & 0
	\end{bmatrix}
	\begin{bmatrix}
		x_1\\
		x_2\\
		x_3
	\end{bmatrix}
	+
	\begin{bmatrix}
		0\\
		B_2\\
		0
	\end{bmatrix}
	u\\
	y &= 
	\begin{bmatrix}
		0 & B_2^\top & 0
	\end{bmatrix}
	\begin{bmatrix}
		x_1\\
		x_2\\
		x_3
	\end{bmatrix}
	,
\end{align*}
where $M_1,M_2,D$ are symmetric and positive definite, and $N$ has full row rank.
This is an index-2 DAE, see \cite[Rem.~2.2]{EggKLMM18}, and in particular it is not impulse-controllable.
In order to make the system impulse-controllable we perform an index reduction as outlined in \cite[sec.~3.3]{KunM06} and arrive at the equivalent index-1 pHDAE
\begin{align*}
	\begin{bmatrix}
		M_1 	& 0 					& 0\\
		0 		& V^\top M_2 V & 0\\
		0		& 0					& 0
	\end{bmatrix}
	\begin{bmatrix}
		\dot{x}_1\\
		\dot{\widetilde{x}}_2\\
		\dot{\widetilde{x}}_3
	\end{bmatrix}
	&=
	\begin{bmatrix}
		0 			& -GV & 0\\
		V^\top G^\top 	& -V^\top DV & 0\\
		0			& 0 & -I
	\end{bmatrix}
	\begin{bmatrix}
		x_1\\
		\widetilde{x}_2\\
		\widetilde{x}_3
	\end{bmatrix}
	+
	\begin{bmatrix}
		0\\
		V^\top B_2\\
		0
		\end{bmatrix}
	u\\
	y &= 
	\begin{bmatrix}
		0 & B_2^\top V & 0
	\end{bmatrix}
	\begin{bmatrix}
		x_1\\
		\widetilde{x}_2\\
		\widetilde{x}_3
	\end{bmatrix}
	.
\end{align*}
Here, the columns of the full column rank matrix $V$ span the kernel of $N$.
Note that the resulting system is impulse-controllable, but it is also essentially an ODE since the algebraic constraint does not contribute to the input-output map and could, thus, be removed.

The resulting index-1 system can be brought into semi-explicit form via a state space transformation leading to
\begin{align*}
	\begin{bmatrix}
		I 	& 0 					& 0\\
		0 		& I & 0\\
		0		& 0					& 0
	\end{bmatrix}
	\begin{bmatrix}
		\dot{\widetilde{x}}_1\\
		\dot{\widehat{x}}_2\\
		\dot{\widetilde{x}}_3
	\end{bmatrix}
	&=
	\begin{bmatrix}
		0 			                        & -L_1^{-\top}GVL_2^{-1}        & 0\\
		L_2^{-\top}V^\top G^\top L_1^{-1} 	& -L_2^{-\top}V^\top DVL_2^{-1} & 0\\
		0			                        & 0                             & -I
	\end{bmatrix}
	\begin{bmatrix}
		\widetilde{x}_1\\
		\widehat{x}_2\\
		\widetilde{x}_3
	\end{bmatrix}
	\\
	&\quad+
	\begin{bmatrix}
		0\\
		L_2^{-\top}V^\top B_2\\
		0
		\end{bmatrix}
	u\\
	y &= 
	\begin{bmatrix}
		0 & B_2^\top VL_2^{-1} & 0
	\end{bmatrix}
	\begin{bmatrix}
		\widetilde{x}_1\\
		\widehat{x}_2\\
		\widetilde{x}_3
	\end{bmatrix}
	,
\end{align*}
where $M_1=L_1^\top L_1$ and $M_2 = L_2^\top L_2$ are Cholesky factorizations of $M_1$ and $M_2$, respectively.
For the numerical simulations, we use the parameters from \cite{EggKLMM18} except for the damping parameter $d_0$ (globally modelling different friction coefficients of the pipes) which we set to $d_0=25.$ For the damped waved equations on the individual pipes, we perform finite element discretizations with 50 inner nodes, leading to a descriptor system of dimension $n=721$, including $4$ algebraic constraints. Since the computation of a maximal solution to \eqref{eq:reducedKYPLMI_index1} via inversion of a minimal solution to the dual KYP LMI is numerically ill-conditioned, we first construct a reduced system of dimension $n=252$ by our approach using the canonical Hamiltonian. 
Let us however emphasize that for the computation of the errors, we use the original realization.

In Figure \ref{fig:network}, we show the results for reduced models obtained by Algorithm~\ref{alg:MOR} and the classical LQG-BT method from \cite{MoeRS11}. For our approach, we distinguish between the canonical port-Hamiltonian representation associated with a finite element discretization of the model equations and an improved representation constructed along the findings from subsection \ref{subsec:optimalQ}. We remark the following two observations that confirm our theoretical discussion: on the one hand, the error as well as the error bound for an optimal choice of the Hamiltonian decays significantly faster than in the canonical case. Moreover, in terms of the $\mathcal{H}_\infty$-error of the coprime factors, our approach yields reduced systems that are comparable to those obtained by classical (unstructured) LQG balanced truncation.

\subsection{Mass-Spring-Damper System}\label{subsec:msd}

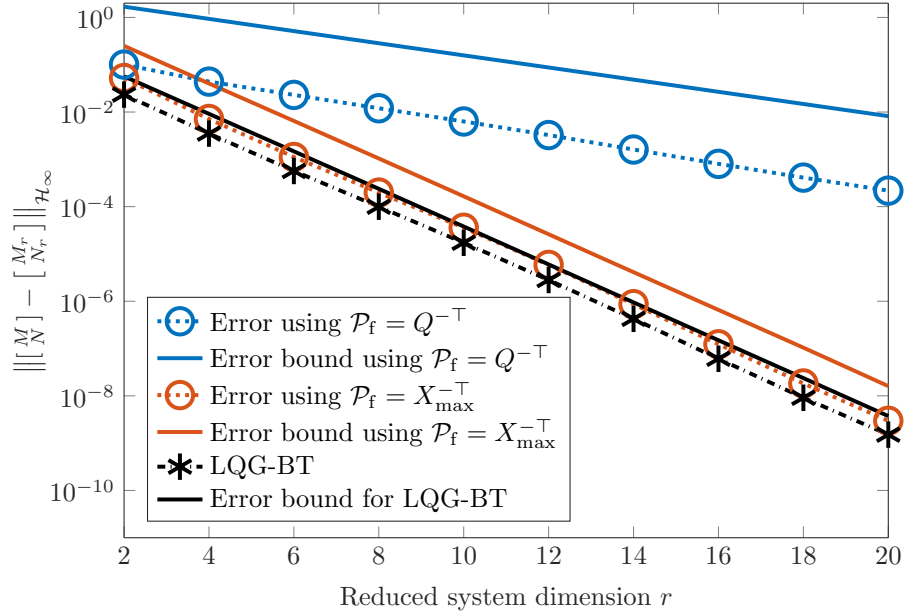
\begin{figure}[tb]
    \centering 
%
%
\definecolor{mycolor1}{rgb}{0.00000,0.44700,0.74100}%
\definecolor{mycolor2}{rgb}{0.85000,0.32500,0.09800}%

\begin{tikzpicture}

\begin{axis}[%
width=4in,
height=2.8in,
scale only axis,
separate axis lines,
every outer x axis line/.append style={white!15!black},
every x tick label/.append style={font=\color{white!15!black}},
xmin=2,
xmax=20,
xlabel style={font=\color{white!15!black}},
xlabel={Reduced system dimension $r$},
every outer y axis line/.append style={white!15!black},
every y tick label/.append style={font=\color{white!15!black}},
ymode=log,
ymin=1e-11,
ymax=2,
yminorticks=true,
ylabel style={font=\color{white!15!black}},
ylabel={$\left\| \begin{bsmallmatrix} M \\ N \end{bsmallmatrix}  - \begin{bsmallmatrix} M_r \\ N_r \end{bsmallmatrix}\right\|_{\mathcal{H}_\infty}$},
legend style={draw=white!15!black,fill=white,legend cell align=left},
legend pos = south west
]
\addplot [color=mycolor1, dotted, line width=1.5pt, mark size=5pt, mark=o, mark options={solid, mycolor1}]
  table[row sep=crcr]{2	0.100707220128138\\
4	0.0439576609642183\\
6	0.0230151086131421\\
8	0.012013881487135\\
10	0.00631268735907714\\
12	0.00325940715561739\\
14	0.00161307705219993\\
16	0.000802481271269503\\
18	0.00041045510219389\\
20	0.000217760828404706\\
};
\addlegendentry{Error using $\mathcal{P}_{\mathrm{f}}=Q^{-\top}$}
\addplot [color=mycolor1, line width=1.5pt]
  table[row sep=crcr]{2	1.68556674220908\\
4	0.931731932772731\\
6	0.514210935884942\\
8	0.284364115031456\\
10	0.157636773949248\\
12	0.0873836642572985\\
14	0.0483707986115207\\
16	0.0267458942818505\\
18	0.0147922239461641\\
20	0.00818448258807375\\
};
\addlegendentry{Error bound using $\mathcal{P}_{\mathrm{f}}=Q^{-\top}$}
\addplot [color=mycolor2, dotted, line width=1.5pt, mark size=5pt, mark=o, mark options={solid, mycolor2}]
  table[row sep=crcr]{2	0.0522763687255148\\
4	0.00713075522638676\\
6	0.00112039005961285\\
8	0.000198190416326255\\
10	3.57303090241412e-05\\
12	5.86460190802462e-06\\
14	8.66629746795991e-07\\
16	1.2304058341673e-07\\
18	1.8254605432046e-08\\
20	2.970897182126e-09\\
};
\addlegendentry{Error using $\mathcal{P}_{\mathrm{f}}=X_{\mathrm{max}}^{-\top}$}
\addplot [color=mycolor2, line width=1.5pt]
  table[row sep=crcr]{2	0.250040971861711\\
4	0.0401324918376263\\
6	0.00648278811757063\\
8	0.00104197422012573\\
10	0.000165478747151126\\
12	2.60671456993652e-05\\
14	4.11657015492169e-06\\
16	6.55673548394756e-07\\
18	1.04377891967755e-07\\
20	1.60471361117867e-08\\
};
\addlegendentry{Error bound using $\mathcal{P}_{\mathrm{f}}=X_{\mathrm{max}}^{-\top}$}
\addplot [color=black, dashdotted, line width=1.5pt, mark size=5pt, mark=asterisk, mark options={solid, black}]
  table[row sep=crcr]{2	0.0233559428156736\\
4	0.00350418646559725\\
6	0.000567074983719261\\
8	9.95040295988022e-05\\
10	1.72240945813195e-05\\
12	2.81069394130988e-06\\
14	4.24623619341843e-07\\
16	6.07699757554104e-08\\
18	9.10707671886401e-09\\
20	1.51210158742562e-09\\
};
\addlegendentry{LQG-BT}
\addplot [color=black, line width=1.5pt]
  table[row sep=crcr]{2	0.0567990400194596\\
4	0.00915663920619488\\
6	0.0014739084803186\\
8	0.000237854250559682\\
10	3.79991236756862e-05\\
12	6.00181785245798e-06\\
14	9.45086037351444e-07\\
16	1.49618368609563e-07\\
18	2.37274845490688e-08\\
20	3.73550181286961e-09\\
};
\addlegendentry{Error bound for LQG-BT}
\end{axis}
\end{tikzpicture}%
        \caption{Comparison of the pH-structure-preserving LQG-BT method with the classical unstructured variant for a constrained mass-spring-damper system. The $\mathcal{H}_{\infty}$-error is shown for different reduced system dimensions and different choices of the system Hamiltonian.}
    \label{fig:msd}
\end{figure}

We consider a constrained mass-spring-damper system, similar as in \cite[sec.~3.4]{StyM05}, given by
\begin{align*}
    M\ddot{x}+D\dot{x}+Kx-N^\top\lambda &= 0,\\
    N\dot{x}                            &= u,\\
    y                                   &= \lambda.
\end{align*}
Here, $M=cI$ with $c\in\RR_{>0}$, $K$, and, $D$ are symmetric and positive definite.
In contrast to \cite{StyM05}, we modified the algebraic constraint, since not the relative position between the first and the last mass in the system is controlled, but instead their relative velocity.
Otherwise, the resulting first-order system would have index 3 and, thus, could not be port-Hamiltonian, cf.~\cite[Thm.~4.3]{MehMW18}.
Furthermore, we modified the output equation such that the Lagrange multiplier $\lambda$ is measured which corresponds to the force which needs to be applied to the system in order to enforce the algebraic constraint.
A first-order formulation reads
\begin{align*}
    \begin{bmatrix}
		K 	& 0 		& 0\\
		0 		& M 	& 0\\
		0		& 0		& 0
	\end{bmatrix}
	\begin{bmatrix}
		\dot{x}_1\\
		\dot{x}_2\\
		\dot{x}_3
	\end{bmatrix}
	&=
	\begin{bmatrix}
		0 	& K & 0\\
		-K 	& -D & N^\top\\
		0	& -N & 0
	\end{bmatrix}
	\begin{bmatrix}
		x_1\\
		x_2\\
		x_3
	\end{bmatrix}
	+
	\begin{bmatrix}
		0\\
		0\\
		1
	\end{bmatrix}
	u,\\
	y &= 
	\begin{bmatrix}
		0 & 0 & 1
	\end{bmatrix}
	\begin{bmatrix}
		x_1\\
		x_2\\
		x_3
	\end{bmatrix}
	,
\end{align*}
which is pH with $Q=I$.
A corresponding semi-explicit form is given by
\begin{align*}
    \begin{bmatrix}
		I 	& 0 		& 0\\
		0 		& I 	& 0\\
		0		& 0		& 0
	\end{bmatrix}
	\begin{bmatrix}
		\dot{\widetilde{x}}_1\\
		\dot{\widetilde{x}}_2\\
		\dot{x}_3
	\end{bmatrix}
	&=
	\begin{bmatrix}
		0 	& \frac1{\sqrt{c}}L^\top & 0\\
		-\frac1{\sqrt{c}}L 	& -\frac1c D & -\frac1{\sqrt{c}}N^\top\\
		0	& \frac1{\sqrt{c}}N & 0
	\end{bmatrix}
	\begin{bmatrix}
		\widetilde{x}_1\\
		\widetilde{x}_2\\
		x_3
	\end{bmatrix}
	+
	\begin{bmatrix}
		0\\
		0\\
		1
	\end{bmatrix}
	u,\\
	y &= 
	\begin{bmatrix}
		0 & 0 & 1
	\end{bmatrix}
	\begin{bmatrix}
		\widetilde{x}_1\\
		\widetilde{x}_2\\
		x_3
	\end{bmatrix}
	,
\end{align*}
where $K=LL^\top$ is a Cholesky decomposition of $K$.
This system has index 2 and, thus, in order to improve the error bound by replacing $Q$, we need to apply a regularizing output feedback leading to
\begin{align*}
    \begin{bmatrix}
		I 	& 0 		& 0\\
		0 		& I 	& 0\\
		0		& 0		& 0
	\end{bmatrix}
	\begin{bmatrix}
		\dot{\widetilde{x}}_1\\
		\dot{\widetilde{x}}_2\\
		\dot{x}_3
	\end{bmatrix}
	&=
	\begin{bmatrix}
		0 	& \frac1{\sqrt{c}}L^\top & 0\\
		-\frac1{\sqrt{c}}L 	& -\frac1c D & -\frac1{\sqrt{c}}N^\top\\
		0	& \frac1{\sqrt{c}}N & -1
	\end{bmatrix}
	\begin{bmatrix}
		\widetilde{x}_1\\
		\widetilde{x}_2\\
		x_3
	\end{bmatrix}
	+
	\begin{bmatrix}
		0\\
		0\\
		1
	\end{bmatrix}
	u\\
	y &= 
	\begin{bmatrix}
		0 & 0 & 1
	\end{bmatrix}
	\begin{bmatrix}
		\widetilde{x}_1\\
		\widetilde{x}_2\\
		x_3
	\end{bmatrix}
	.
\end{align*}
By performing a state space transformation, we obtain the equivalent realization
\begin{equation}
    \label{eq:msd_pHWCF}
    \begin{aligned}
        \begin{bmatrix}
		    I 	& 0 		& 0\\
		    0 		& I 	& 0\\
		    0		& 0		& 0
	    \end{bmatrix}
	    \begin{bmatrix}
		    \dot{\widetilde{x}}_1\\
		    \dot{\widetilde{x}}_2\\
		    \dot{\widetilde{x}}_3
	    \end{bmatrix}
	    &=
	    \begin{bmatrix}
		    0 	& \frac1{\sqrt{c}}L^\top & 0\\
		    -\frac1{\sqrt{c}}L 	& -\frac1c (D+N^\top N) & 0\\
		    0	& 0 & 1
	    \end{bmatrix}
	    \begin{bmatrix}
		    \widetilde{x}_1\\
		    \widetilde{x}_2\\
		    \widetilde{x}_3
	    \end{bmatrix}
	    +
	    \begin{bmatrix}
	    	0\\
    		-\frac1{\sqrt{c}}N^\top\\
    		-1
    	\end{bmatrix}
    	u,\\
    	y &= 
    	\begin{bmatrix}
    		0 & \frac1{\sqrt{c}}N & 1
    	\end{bmatrix}
    	\begin{bmatrix}
    		\widetilde{x}_1\\
    		\widetilde{x}_2\\
    		\widetilde{x}_3
    	\end{bmatrix}
    	,
	\end{aligned}
\end{equation}
where differential and algebraic equations are decoupled.
Note that one could also perform a state space transformation where additionally the $A_{11}$ block is in Jordan canonical form and, thus, the whole system would be in Weierstraß canonical form, but this is not necessary for our approach, cf.~section~\ref{subsec:optimalQ}.
The new $Q$ matrix resulting from the state space transformation leading to \eqref{eq:msd_pHWCF} reads
\begin{equation*}
    Q = 
    \begin{bmatrix}
        I & 0 & 0\\
        0 & I & 0\\
        0 & -\frac2{\sqrt{c}}N & -1
    \end{bmatrix}
    .
\end{equation*}
For our experiments, we choose the same parameter values as in \cite{StyM05}, but we set the bulk damping parameters to $2$ and the boundary damping to $4$.
Furthermore, we consider a system of $500$ masses, leading to a system of dimension $n=1001$ with a single algebraic constraint. 
Since the resulting system is uncontrollable, we further use the \matlab\;routine \texttt{ctrbf} to obtain a minimal realization of dimension $n=501$, including $1$ algebraic constraint.

As shown in Figure \ref{fig:msd}, we obtain similar results as in the case of the transport network. In particular, changing the Hamiltonian in the system representation drastically reduces the error as well as the error bound of our approach by several orders of magnitude. Moreover, the error corresponding to an optimal choice is on a level that is almost identical to the one of the unstructured approach from \cite{MoeRS11}.

\section{Conclusion}\label{sec:conclusion}

In this paper we propose a new structure-preserving model reduction method for linear port-Hamiltonian systems of differential-algebraic equations.
The approach is based on balancing the solutions of two generalized algebraic Riccati equations and subsequent truncation.
The structure preservation is ensured by a proper choice of the weighting matrices of the Riccati equations which yields as a side product a passive LQG-like controller which ensures that the closed-loop system is regular, impulse-free, and asymptotically stable.
Similarly as in classical LQG balanced truncation, the approximation error of the reduced-order model obtained by the new method can be estimated a priori by an error bound in the gap metric.
Moreover, we show how exploiting the freedom in choosing the Hamiltonian may lead to a faster decay of this error bound.
This step involves determining an extremal solution of a Kalman--Yakubovich--Popov linear matrix inequality.
The theoretical findings are illustrated by means of two numerical examples: a transport network and a constrained mass-spring-damper system.

Interesting future research directions include the extension to nonlinear and to infinite-dimensional port-Hamiltonian systems.
Furthermore, the focus of this paper is on the theory, whereas developing an efficient and robust numerical implementation is subject to future investigations.

\section*{Acknowledgements}

We would like to thank Tatjana Stykel for making the code from \cite{MoeRS11} available to us. 
We thank the Deutsche Forschungsgemeinschaft (DFG, German
 Research Foundation) for their support
within the project B03 in the TRR 154 ``Mathematical Modelling, Simulation and Optimization using the Example of Gas Networks'' (Project-ID 239904186). 

\appendix

\section{Theoretical Rationale behind Algorithm~\ref{alg:MOR}}\label{sec:algorithm}

In this section, we demonstrate that Algorithm~\ref{alg:MOR} indeed computes a reduced-order model as in Theorem~\ref{thm:rom_is_ph}.
To this end, we first observe that $S_\ell^\top$ and $T_\ell$ may be obtained by truncating the transformation matrices
\begin{equation}
    \label{eq:STinv}
    S = 
    \begin{bmatrix}
        \widetilde{\Sigma}_1^{-\frac12}\widetilde{U}_1^\top \LPc^\top E^+\\
        \widetilde{V}_2^\top \LPf^\top (E^+)^\top E^\top Q E^+\\
        U_2^\top
    \end{bmatrix}
    ,\quad T^{-1} = 
    \begin{bmatrix}
        E^+\LPf \widetilde{V}_1\widetilde{\Sigma}_1^{-\frac12} & E^+\LPf \widetilde{V}_2 & V_2
    \end{bmatrix}
    .
\end{equation}
In the following, we show that these transformation matrices applied to the port-Hamiltonian full-order model \eqref{eq:pHEQ} lead indeed to a balanced realization as described in Theorem~\ref{thm:bal_real}.
First, we show that the transformed system is semi-explicit by considering
\begin{align*}
    SET^{-1} &= 
    \begin{bmatrix}
        \Etilde_{11} & \Etilde_{12} & 0\\
        \Etilde_{21} & \Etilde_{22} & 0\\
        0 & 0 & 0
    \end{bmatrix}
    ,
\end{align*}
where the single blocks are given by
\begin{align*}
    \Etilde_{11} &= \widetilde{\Sigma}_1^{-\frac12}\widetilde{U}_1^\top \LPc^\top E^+EE^+\LPf \widetilde{V}_1\widetilde{\Sigma}_1^{-\frac12} = \widetilde{\Sigma}_1^{-\frac12}\widetilde{U}_1^\top \LPc^\top E^+\LPf \widetilde{V}_1\widetilde{\Sigma}_1^{-\frac12}\\
    &= \widetilde{\Sigma}_1^{-\frac12}\widetilde{U}_1^\top \widetilde{U}_1\widetilde{\Sigma}_1\widetilde{V}_1^\top \widetilde{V}_1\widetilde{\Sigma}_1^{-\frac12} = I_r,\\
    \Etilde_{12} &= \widetilde{\Sigma}_1^{-\frac12}\widetilde{U}_1^\top \LPc^\top E^+\LPf \widetilde{V}_2 = 0,\\
    \Etilde_{21} &= \widetilde{V}_2^\top \LPf^\top (E^+)^\top E^\top Q E^+\LPf \widetilde{V}_1\widetilde{\Sigma}_1^{-\frac12}\\
    &= \widetilde{V}_2^\top \LPf^\top (E^+)^\top E^\top Q E^+\LPf \widetilde{V}_1\widetilde{\Sigma}_1\widetilde{U}_1^\top\widetilde{U}_1\widetilde{\Sigma}_1^{-\frac32}\\
    &= \widetilde{V}_2^\top \LPf^\top (E^+)^\top E^\top Q E^+\LPf \LPf^\top (E^+)^\top \LPc\widetilde{U}_1\widetilde{\Sigma}_1^{-\frac32}\\
    &= \widetilde{V}_2^\top \LPf^\top (E^+)^\top Q^\top E E^+E\Pf^\top (E^+)^\top \LPc\widetilde{U}_1\widetilde{\Sigma}_1^{-\frac32}\\
    &= \widetilde{V}_2^\top \LPf^\top (E^+)^\top Q^\top \Pf E^\top (E^+)^\top \LPc\widetilde{U}_1\widetilde{\Sigma}_1^{-\frac32} = \widetilde{V}_2^\top \LPf^\top (E^+)^\top \LPc\widetilde{U}_1\widetilde{\Sigma}_1^{-\frac32}\\
    &= 0,\\
    \Etilde_{22} &= \widetilde{V}_2^\top \LPf^\top (E^+)^\top E^\top Q E^+EE^+\LPf \widetilde{V}_2\\
    &= \widetilde{V}_2^\top \LPf^\top (E^+)^\top E^\top Q V_1\Sigma_1^{-\frac12}\Sigma_1^{-\frac12}U_1^\top EE^+\LPf \widetilde{V}_2.
\end{align*}
For $E_{22}$ we first observe that $\LPf$ is an $n\times r$ matrix, since $\Pf = Q^{-\top}$ is invertible and since $\rank(E) = r$.
Thus, the matrix $\LPf^\top (E^+)^\top E^\top Q V_1\Sigma_1^{-\frac12}$ is square and its inverse is given by $\Sigma_1^{-\frac12}U_1^\top EE^+\LPf$, which follows from the calculation
\begin{equation}
    \label{eq:complicatedInverse}
    \begin{aligned}
        \Sigma_1^{-\frac12}U_1^\top EE^+\LPf\LPf^\top (E^+)^\top E^\top Q V_1\Sigma_1^{-\frac12} &= \Sigma_1^{-\frac12}U_1^\top E\Pf^\top (E^+)^\top E^\top Q V_1\Sigma_1^{-\frac12}\\
        &= \Sigma_1^{-\frac12}U_1^\top \Pf E^\top Q V_1\Sigma_1^{-\frac12}\\
        &= \Sigma_1^{-\frac12}U_1^\top E V_1\Sigma_1^{-\frac12} = I_r.
    \end{aligned}
\end{equation}
Consequently, we also obtain $\Etilde_{22} = I$ and, thus, $SET^{-1}$ is in semi-explicit form.
For verifying that the transformed system is also balanced, we first observe that $S^{-\top}$ and $T$ are given by
\begin{equation}
    \label{eq:SinvtopT}
    S^{-\top} = 
    \begin{bmatrix}
        \widetilde{\Sigma}_1^{-\frac12} \widetilde{V}_1^\top \LPf^\top (E^+)^\top E^\top\\
        \widetilde{V}_2^\top \LPf^\top (E^+)^\top E^\top\\
        U_2^\top
    \end{bmatrix}
    ,\quad
    T = 
    \begin{bmatrix}
        \widetilde{\Sigma}_1^{\frac12}\widetilde{V}_1^\top\LPf^\top(E^+)^\top E^\top Q\\
        \widetilde{V}_2^\top\LPf^\top(E^+)^\top E^\top Q\\
        V_2^\top
    \end{bmatrix}
\end{equation}
which follows from the computations
\begin{align*}
    &S^{-\top}S^\top \\ 
    &= 
    \begin{bmatrix}
        \widetilde{\Sigma}_1^{-\frac12} \widetilde{V}_1^\top \LPf^\top (E^+)^\top E^\top\\
        \widetilde{V}_2^\top \LPf^\top (E^+)^\top E^\top\\
        U_2^\top
    \end{bmatrix}
    \begin{bmatrix}
        (E^+)^\top \LPc\widetilde{U}_1\widetilde{\Sigma}_1^{-\frac12} & (E^+)^\top Q^\top E E^+\LPf\widetilde{V}_2 & U_2
    \end{bmatrix}
    \\
    &= 
    \begin{bmatrix}
        I & \widetilde{\Sigma}_1^{-\frac12} \widetilde{V}_1^\top \LPf^\top (E^+)^\top Q^\top E E^+\LPf\widetilde{V}_2 & 0\\
        \widetilde{V}_2^\top \LPf^\top (E^+)^\top \LPc\widetilde{U}_1\widetilde{\Sigma}_1^{-\frac12} & \widetilde{V}_2^\top \LPf^\top (E^+)^\top Q^\top E E^+\LPf\widetilde{V}_2 & 0\\
        0 & 0 & I_{n-r}
    \end{bmatrix}
    \\
    &\stackrel{\eqref{eq:complicatedInverse}}{=}
    \begin{bmatrix}
        I & \widetilde{\Sigma}_1^{-\frac12} \widetilde{V}_1^\top \widetilde{V}_2 & 0\\
        0 & \widetilde{V}_2^\top \widetilde{V}_2 & 0\\
        0 & 0 & I_{n-r}
    \end{bmatrix}
    = I_n
    ,\\[0.2cm]
    &T^{-1}T = 
    \begin{bmatrix}
        E^+\LPf \widetilde{V}_1\widetilde{\Sigma}_1^{-\frac12} & E^+\LPf \widetilde{V}_2 & V_2
    \end{bmatrix}
    \begin{bmatrix}
        \widetilde{\Sigma}_1^{\frac12}\widetilde{V}_1^\top\LPf^\top(E^+)^\top E^\top Q\\
        \widetilde{V}_2^\top\LPf^\top(E^+)^\top E^\top Q\\
        V_2^\top
    \end{bmatrix}
    \\
    &=
    E^+\LPf \widetilde{V}_1\widetilde{V}_1^\top\LPf^\top(E^+)^\top E^\top Q+E^+\LPf \widetilde{V}_2\widetilde{V}_2^\top\LPf^\top(E^+)^\top E^\top Q+V_2V_2^\top\\ 
    &= E^+\LPf\LPf^\top(E^+)^\top E^\top Q+V_2V_2^\top = E^+E\Pf^\top (E^+)^\top E^\top Q+V_2V_2^\top\\
    &= E^+\Pf E^\top Q+V_2V_2^\top = E^+E+V_2V_2^\top = V_1V_1^\top+V_2V_2^\top = I_n.
\end{align*}
Using $S$, $T^{-1}$, $S^{-\top}$, and $T$ as specified in \eqref{eq:STinv} and \eqref{eq:SinvtopT}, the fact that the transformed system is balanced follows from
\begin{align*}
    &S^{-\top} \Pc T^{-1} = 
    \begin{bmatrix}
        \widetilde{\Sigma}_1^{-\frac12} \widetilde{V}_1^\top \LPf^\top (E^+)^\top E^\top\\
        \widetilde{V}_2^\top \LPf^\top (E^+)^\top E^\top\\
        U_2^\top
    \end{bmatrix}
    \Pc
    \begin{bmatrix}
        E^+\LPf \widetilde{V}_1\widetilde{\Sigma}_1^{-\frac12} & E^+\LPf \widetilde{V}_2 & V_2
    \end{bmatrix}
    \\
    &= 
    \begin{bmatrix}
        \widetilde{\Sigma}_1^{-\frac12} \widetilde{\Sigma}_1^2\widetilde{\Sigma}_1^{-\frac12} & \widetilde{\Sigma}_1^{-\frac12} \widetilde{\Sigma}_1^2\widetilde{V}_1^\top\widetilde{V}_2 & 0\\
        \widetilde{V}_2^\top \widetilde{V}_1\widetilde{\Sigma}_1^2\widetilde{\Sigma}_1^{-\frac12} & \widetilde{V}_2^\top \widetilde{V}_1\widetilde{\Sigma}_1^2\widetilde{V}_1^\top \widetilde{V}_2 & 0\\
        * & * & *
    \end{bmatrix}
    = 
    \begin{bmatrix}
        \widetilde{\Sigma}_1 & 0 & 0\\
        0 & 0 & 0\\
        * & * & *
    \end{bmatrix}
    ,\\[0.2cm]
    &S\Pf T^\top\\
    &= 
    \begin{bmatrix}
        \widetilde{\Sigma}_1^{-\frac12}\widetilde{U}_1^\top \LPc^\top E^+\\
        \widetilde{V}_2^\top \LPf^\top (E^+)^\top E^\top Q E^+\\
        U_2^\top
    \end{bmatrix}
    \Pf
    \begin{bmatrix}
        Q^\top EE^+\LPf\widetilde{V}_1\widetilde{\Sigma}_1^{\frac12} & Q^\top EE^+\LPf\widetilde{V}_2 & V_2
    \end{bmatrix}
    \\
    &=
    \begin{bmatrix}
        \widetilde{\Sigma}_1^{-\frac12}\widetilde{U}_1^\top \LPc^\top E^+\LPf\widetilde{V}_1\widetilde{\Sigma}_1^{\frac12} & \widetilde{\Sigma}_1^{-\frac12}\widetilde{U}_1^\top \LPc^\top E^+ \LPf\widetilde{V}_2 & *\\
        \widetilde{V}_2^\top \LPf^\top (E^+)^\top E^\top Q E^+\LPf\widetilde{V}_1\widetilde{\Sigma}_1^{\frac12} & \widetilde{V}_2^\top \LPf^\top (E^+)^\top E^\top Q E^+\LPf\widetilde{V}_2 & *\\
        0 & 0 & *
    \end{bmatrix}
    \\
    &\stackrel{\eqref{eq:complicatedInverse}}{=}
    \begin{bmatrix}
        \widetilde{\Sigma}_1 & 0 & *\\
        \widetilde{V}_2^\top\widetilde{V}_1\widetilde{\Sigma}_1^{\frac12} & \widetilde{V}_2^\top \widetilde{V}_2 & *\\
        0 & 0 & *
    \end{bmatrix}
    =
    \begin{bmatrix}
        \widetilde{\Sigma}_1 & 0 & *\\
        0 & I & *\\
        0 & 0 & *
    \end{bmatrix}
    ,
\end{align*}
where the $*$ entries are not relevant for the balancing.
Finally, the $W_\ell^\top$ matrix specified in Algorithm~\ref{alg:MOR} is obtained by truncation of $S^{-\top}$.

\section{Proof of Theorem~\ref{thm:optimalpHRealizationOfWCF}}\label{sec:proof}

\begin{proof}
    First, since the requirements of Theorem~\ref{thm:solutionsOfNonsymmetricGAREs} are fulfilled, the transformation to $(\Etilde,\Atilde,\Btilde,\Ctilde)$ as in \eqref{eq:WCFpH} is well-defined.
    Furthermore, the invertibility of $\Xmax$ is guaranteed since $Q_{11}$ is symmetric positive definite and a solution of \eqref{eq:reducedKYPLMI}.
	\begin{enumerate}[(i)]
		\item Let $\alpha\in\RR^{n-r}$ be in the kernel of $X_{22}$.
			Furthermore, define $\beta \vcentcolon= V_1^\top \alpha\in\RR^{\widetilde{r}}$ and $\gamma \vcentcolon= V_2^\top \alpha\in\RR^{n-r-\widetilde{r}}$ yielding $\alpha = V_1\beta+V_2\gamma$.
			Since $\alpha$ is in the kernel of $X_{22}$, we have
			\begin{equation*}
				V_1V_1^\top Q_{22}\alpha-V_2V_2^\top Q_{22}^\top V_1V_1^{\top}\alpha+V_2KV_2^\top\alpha-2V_2V_2^\top C_2^\top C_2V_1V_1^{\top}\alpha = 0
			\end{equation*}
			which is equivalent to
			\begin{align}
				\label{eq:V1Qalpha}
				0 &= V_1^\top Q_{22}\alpha = V_1^\top Q_{22}\left(V_1\beta+V_2\gamma\right),\\
				\label{eq:KV2alpha}
				0 &= KV_2^\top\alpha-V_2^\top Q_{22}^\top V_1V_1^{\top}\alpha-2V_2^\top C_2^\top C_2V_1V_1^{\top}\alpha \notag \\& = K\gamma-V_2^\top Q_{22}^\top V_1\beta-2V_2^\top C_2^\top C_2V_1\beta 
			\end{align}
			or in matrix notation
			\begin{equation*}
			    \begin{bmatrix}
			        V_1^\top Q_{22}V_1                                  & V_1^\top Q_{22}V_2\\
			        -V_2^\top Q_{22}^\top V_1-2V_2^\top C_2^\top C_2V_1 & K
			    \end{bmatrix}
			    \begin{bmatrix}
			        \beta\\
			        \gamma
			    \end{bmatrix}
			    =0.
			\end{equation*}
			Multiplying this equation from the left by $[\beta^\top\;\; \gamma^\top]$ yields
			\begin{equation}
				\label{eq:quadraticEquationForbetaAndgamma}
				\begin{aligned}
					0 	&= \beta^\top V_1^\top Q_{22}V_1\beta-2\gamma^\top V_2^\top C_2^\top C_2V_1\beta+\gamma^\top K\gamma\\
					&\le \frac12\beta^\top V_1^\top (Q_{22}+Q_{22}^\top)V_1\beta-2\gamma^\top V_2^\top C_2^\top C_2V_1\beta\\ &\quad+\frac12\gamma^\top (K+K^\top)\gamma+(V_1\beta+V_2\gamma)^\top C_2^\top C_2(V_1\beta+V_2\gamma) \\
					&= \beta^\top V_1^\top \left(\frac12(Q_{22}+Q_{22}^\top)+C_2^\top C_2\right)V_1\beta\\ 
					&\quad+\gamma^\top \left(\frac12(K+K^\top)+V_2^\top C_2^\top C_2V_2\right)\gamma	\\
					&= \underbrace{\beta^\top \Sigma^{-1}U_1^\top}_{=\eta^\top} U_1\Sigma V_1^\top \left(\frac12(Q_{22}+Q_{22}^\top)+C_2^\top C_2\right)V_1\Sigma U_1^\top \underbrace{U_1\Sigma^{-1}\beta}_{=\vcentcolon \eta}\\ &\quad+\gamma^\top \left(\frac12(K+K^\top)+V_2^\top C_2^\top C_2V_2\right)\gamma\\
					&= \eta^\top B_2^\top \left(\frac12(Q_{22}+Q_{22}^\top)+C_2^\top C_2\right)B_2\eta\\
					&\quad+\gamma^\top \left(\frac12(K+K^\top)+V_2^\top C_2^\top C_2V_2\right)\gamma\\
					&= \frac12\eta^\top \left(C_2B_2+B_2^\top C_2^\top+2B_2^\top C_2^\top C_2B_2\right)\eta\\
					&\quad+\frac12\gamma^\top \left(K+K^\top+2V_2^\top C_2^\top C_2V_2\right)\gamma
					\end{aligned}
			\end{equation}
			By \eqref{eq:reducedKYPLMI} and \eqref{eq:KProperty}, both terms on the right-hand side are non-positive and, thus, both have to be zero.
			Since $K+K^\top+2V_2^\top C_2^\top C_2V_2$ is negative definite by \eqref{eq:KProperty}, this implies $\gamma = 0$.
			By the first equality in \eqref{eq:quadraticEquationForbetaAndgamma}, this leads to $\beta^\top V_1^\top Q_{22}V_1\beta=0$ and, thus, to $R_QV_1\beta=0$.
			Hence, by \eqref{eq:V1Qalpha} we obtain $V_1^\top J_QV_1\beta = 0$.
			Moreover, using \eqref{eq:KV2alpha} yields
			\begin{align*}
				0 	&= -V_2^\top Q_{22}^\top V_1\beta-2V_2^\top C_2^\top C_2V_1\beta = V_2^\top J_Q V_1\beta-2V_2^\top C_2^\top B_2^\top Q_{22}V_1\beta\\
					&= V_2^\top J_Q V_1\beta-2V_2^\top C_2^\top U_1\Sigma V_1^\top Q_{22}V_1\beta = V_2^\top J_Q V_1\beta
			\end{align*}
			and, thus, altogether $V^\top J_QV_1\beta=0$.
			This is equivalent to $J_QV_1\beta=0$ which yields $Q_{22}V_1\beta=0$.
			Since $Q_{22}$ is invertible and since $V_1$ has full column rank, we have $\beta=0$ and, hence, altogether $\alpha = 0$.
			Consequently, since the kernel consists just of the zero vector, $X_{22}$ is invertible.
		\item Simple calculations yield
			\begin{align*}
				&	
				\left(
    				\begin{bmatrix}
    					\Jhat_{11} & \Jhat_{12}\\
    					\Jhat_{21} & \Jhat_{22}
    				\end{bmatrix}
    				-
    				\begin{bmatrix}
    					\Rhat_{11} & \Rhat_{12}\\
    					\Rhat_{21} & \Rhat_{22}
    				\end{bmatrix}
    				-\Btilde\Btilde^\top
    				\right)
    				\begin{bmatrix}
    					\Xmax & 0\\
    					X_{21} 					& X_{22}
    				\end{bmatrix}
    				\\
    				&=
    				\begin{bmatrix}
    					A_{11} & 0\\
    					-X_{22}^{-1}X_{21}\Xmax^{-1} \Xmax+X_{22}^{-1}X_{21} & I_{n-r}
    				\end{bmatrix}
    				= 
    				\begin{bmatrix}
    					A_{11} & 0\\
    					0 & I_{n-r}
    				\end{bmatrix}
    				= \Atilde.
			\end{align*}
			Before we show the relation $\Ctilde = \Btilde^\top \Xhat$, we need to have another look at the KYP LMI \eqref{eq:reducedKYPLMI}.
			It implies in particular that for all $x_1\in\RR^r$ and $x_2\in\Ker(B_2)=\colspan(U_2)\subseteq \RR^m$ we have
			\begin{align*}
				0 &\leq 
				\begin{bmatrix}
					x_1^\top & x_2^\top
				\end{bmatrix}
				\calW(\Xmax)
				\begin{bmatrix}
					x_1\\
					x_2
				\end{bmatrix}
				\\
				&= -x_1^\top(A_{11}^\top \Xmax+\Xmax A_{11}+2C_1^\top C_1)x_1+2x_1^\top(C_1^\top-\Xmax B_1)x_2\\
				&= -x_1^\top(A_{11}^\top \Xmax+\Xmax A_{11}+2C_1^\top C_1)x_1\\
				&\quad+2x_1^\top(Q_{11}B_1+Q_{21}^\top B_2-\Xmax B_1)x_2\\
				&= -x_1^\top(A_{11}^\top \Xmax+\Xmax A_{11}+2C_1^\top C_1)x_1+2x_1^\top(Q_{11}-\Xmax )B_1x_2.
			\end{align*}
			Especially, this can only hold for all $x_1\in\RR^r$ if $(Q_{11}-\Xmax )B_1x_2=0$.
			Thus, we showed $\Ker(B_2)=\colspan(U_2)\subseteq \Ker((Q_{11}-\Xmax )B_1)$.
			Using this, we obtain
			\begin{align*}
				&\Btilde^\top \Xhat =
				\begin{bmatrix}
					B_1^\top & B_2^\top
				\end{bmatrix}
				\begin{bmatrix}
					\Xmax & 0\\
    					X_{21} 					& X_{22}
				\end{bmatrix}
				= 
				\begin{bmatrix}
					B_1^\top \Xmax+B_2^\top X_{21} & B_2^\top X_{22}
				\end{bmatrix}
				\\
				&=
				\begin{bmatrix}
					B_1^\top \Xmax+B_2^\top V_1V_1^\top Q_{21}+B_2^\top(B_2^\top)^+B_1^\top (Q_{11}-\Xmax) & B_2^\top X_{22}
				\end{bmatrix}
				\\
				&= 
				\begin{bmatrix}
					B_1^\top \Xmax+B_2^\top Q_{21}+U_1U_1^\top B_1^\top (Q_{11}-\Xmax) & B_2^\top X_{22}
				\end{bmatrix}
				\\
				&=
				\begin{bmatrix}
					B_1^\top \Xmax+B_2^\top Q_{21}+(I_m-U_2U_2^\top)B_1^\top (Q_{11}-\Xmax) & B_2^\top X_{22}
				\end{bmatrix}
				\\
				&=
				\begin{bmatrix}
					B_1^\top \Xmax+B_2^\top Q_{21}+B_1^\top (Q_{11}-\Xmax) & B_2^\top X_{22}
				\end{bmatrix}
				\\
				&=
				\begin{bmatrix}
					B_2^\top Q_{21}+B_1^\top Q_{11} & B_2^\top V_1V_1^\top Q_{22}
				\end{bmatrix}
				\\
				&=
				\begin{bmatrix}
					C_1 & B_2^\top Q_{22}
				\end{bmatrix}
				=
				\begin{bmatrix}
					C_1 & C_2
				\end{bmatrix}
				= \Ctilde.
			\end{align*}
			It remains to show the properties of $\Jhat$ and $\Rhat$.
			The skew-symmetry of $\Jhat$ and the symmetry of $\Rhat$ follow directly from the definition.
			Finally, we show that $\Rhat\succeq 0$ which is equivalent to $2\Xhat^\top \Rhat \Xhat\succeq0$ since $\Xhat$ is invertible by (i) and by the positive definiteness of $\Xmax$.
			The matrix $2\Xhat^\top \Rhat \Xhat$ can be written as
			\begin{align*}
				2\Xhat^\top \Rhat \Xhat &= 2
				\begin{bmatrix}
    					\Xmax 	& X_{21}^\top\\
    					0							& X_{22}^\top
    				\end{bmatrix}
				\begin{bmatrix}
					\Rhat_{11} & \Rhat_{12}\\
    					\Rhat_{21} & \Rhat_{22}
				\end{bmatrix}
				\begin{bmatrix}
    				\Xmax & 0\\
    				X_{21} 					& X_{22}
    			\end{bmatrix}
    			\\
    			&= 2
    			\begin{bmatrix}
    				\Xmax 	& X_{21}^\top\\
    				0							& X_{22}^\top
    			\end{bmatrix}
    			\begin{bmatrix}
    				\Rhat_{11}\Xmax+\Rhat_{12}X_{21} & \Rhat_{12}X_{22}\\
    				\Rhat_{21}\Xmax+\Rhat_{22}X_{21} & \Rhat_{22}X_{22}
    			\end{bmatrix}
			\end{align*}	
			For ease of exposition, we continue the calculations block by block.
			The upper left block of $2\Xhat^\top \Rhat \Xhat$ reads
			\begin{align*}
				&2\Xmax(\Rhat_{11}\Xmax+\Rhat_{12}X_{21})+2X_{21}^\top(\Rhat_{21}\Xmax+\Rhat_{22}X_{21})\\
				&= 2\Xmax\Rhat_{11}\Xmax+2\Xmax\Rhat_{12}X_{21}+2X_{21}^\top\Rhat_{21}\Xmax+2X_{21}^\top\Rhat_{22}X_{21}\\
				&= -\Xmax\left(A_{11}\Xmax^{-1}+\Xmax^{-1}A_{11}^\top+2B_1B_1^\top\right)\Xmax\\
				&\quad+\Xmax\left(\Xmax^{-1}X_{21}^\top X_{22}^{-\top}-2B_1B_2^\top\right)X_{21}\\
				&\quad +X_{21}^\top \left(X_{22}^{-1}X_{21}\Xmax^{-1} -2B_2B_1^\top\right)\Xmax\\
				&\quad-X_{21}^\top\left(X_{22}^{-1}+X_{22}^{-\top}+2B_2B_2^\top\right)X_{21}\\
				&= -\Xmax A_{11}-A_{11}^\top \Xmax-2\Xmax B_1B_1^\top \Xmax+X_{21}^\top X_{22}^{-\top}X_{21}\\
				&\quad-2\Xmax B_1B_2^\top X_{21}+X_{21}^\top X_{22}^{-1}X_{21}-2X_{21}^\top B_2B_1^\top \Xmax-X_{21}^\top X_{22}^{-1}X_{21}\\
				&\quad-X_{21}^\top X_{22}^{-\top}X_{21}-2X_{21}^\top B_2B_2^\top X_{21}\\
				&= -\Xmax A_{11}-A_{11}^\top \Xmax-2\left(B_1^\top \Xmax+B_2^\top X_{21}\right)^\top\left(B_1^\top \Xmax+B_2^\top X_{21}\right)\\
				&= -\Xmax A_{11}-A_{11}^\top \Xmax-2C_1^\top C_1
			\end{align*}
			For the upper right block we obtain
			\begin{align*}
				&2\Xmax\Rhat_{12}X_{22}+ 2X_{21}^\top\Rhat_{22}X_{22}\\
				&= \Xmax\left(\Xmax^{-1}X_{21}^\top X_{22}^{-\top}-2B_1B_2^\top\right)X_{22}\\
				&\quad-X_{21}^\top\left(X_{22}^{-1}+X_{22}^{-\top}+2B_2B_2^\top\right)X_{22}\\
				&= X_{21}^\top X_{22}^{-\top}X_{22}-2\Xmax B_1B_2^\top X_{22}-X_{21}^\top-X_{21}^\top X_{22}^{-\top}X_{22}\\
				&\quad-2X_{21}^\top B_2B_2^\top X_{22}\\
				&= -2\left(\Xmax B_1+X_{21}^\top B_2\right)B_2^\top X_{22}-X_{21}^\top = -2C_1^\top C_2-X_{21}^\top.
			\end{align*}
			Furthermore, the bottom left block is the transpose of the upper right block and the bottom right block is given by
			\begin{align*}
				2X_{22}^\top\Rhat_{22}X_{22} &= -X_{22}^\top\left(X_{22}^{-1}+X_{22}^{-\top}+2B_2B_2^\top\right)X_{22}\\
				&= -X_{22}^\top-X_{22}-2X_{22}^\top B_2B_2^\top X_{22} = -X_{22}^\top-X_{22}-2C_2^\top C_2.
			\end{align*}
			Thus, in total we obtain
			\begin{equation*}
				2\Xhat^\top \Rhat \Xhat = -
				\begin{bmatrix}
					\Xmax A_{11}+A_{11}^\top \Xmax+2C_1^\top C_1 	& 2C_1^\top C_2+X_{21}^\top\\[0.1cm]
					2C_2^\top C_1+X_{21}				            & X_{22}^\top+X_{22}+2C_2^\top C_2
				\end{bmatrix}
				.
			\end{equation*}
			In the following we conclude the proof by showing that $-2\Xhat^\top \Rhat \Xhat$ is negative semidefinite.
			To this end, let $x_1\in\RR^r$ and $x_2\in\RR^{n-r}$ be arbitrary.
			Furthermore, let $\beta \in\RR^{\widetilde{r}}$ and $\gamma\in\RR^{n-r-\widetilde{r}}$ be such that $x_2 = V_1\beta+V_2\gamma$.
			Then, there holds 
			\begin{align*}
				&
				\begin{bmatrix}
					x_1^\top & x_2^\top
				\end{bmatrix}
				\begin{bmatrix}
					\Xmax A_{11}+A_{11}^\top \Xmax+2C_1^\top C_1 	& 2C_1^\top C_2+X_{21}^\top\\
					2C_2^\top C_1+X_{21} 															& X_{22}^\top+X_{22}+2C_2^\top C_2
				\end{bmatrix}
				\begin{bmatrix}
					x_1\\
					x_2
				\end{bmatrix}
				\\
				&= x_1^\top\left(\Xmax A_{11}+A_{11}^\top \Xmax+2C_1^\top C_1\right)x_1+2x_2^\top\left(2C_2^\top C_1+X_{21}\right)x_1\\
				&\quad+x_2^\top\left(X_{22}^\top+X_{22}+2C_2^\top C_2\right)x_2\\
				&= x_1^\top\left(\Xmax A_{11}+A_{11}^\top \Xmax+2C_1^\top C_1\right)x_1\\
				&\quad +2(V_1\beta+V_2\gamma)^\top\left(2C_2^\top C_1+X_{21}\right)x_1\\
				&\quad +(V_1\beta+V_2\gamma)^\top\left(2X_{22}+2X_{22}^\top B_2B_2^\top X_{22}\right)(V_1\beta+V_2\gamma)\\
				&= x_1^\top\left(\Xmax A_{11}+A_{11}^\top \Xmax+2C_1^\top C_1\right)x_1\\
				&\quad +2\beta^\top V_1^\top\left(2C_2^\top C_1+Q_{21}+(B_2^\top)^+B_1^\top (Q_{11}-\Xmax)\right)x_1\\
				&\quad +2(V_1\beta+V_2\gamma)^\top X_{22}(V_1\beta+V_2\gamma)+2\lVert B_2^\top X_{22}(V_1\beta+V_2\gamma)\rVert^2\\
				&= x_1^\top\left(\Xmax A_{11}+A_{11}^\top \Xmax+2C_1^\top C_1\right)x_1\\
				&\quad +2\beta^\top V_1^\top\left(2C_2^\top C_1+Q_{21}+(B_2^\top)^+B_1^\top (Q_{11}-\Xmax)\right)x_1\\
				&\quad +2\left(\beta^\top V_1^\top Q_{22}V_1\beta-2\gamma^\top V_2^\top C_2^\top C_2V_1\beta+\gamma^\top K\gamma\right)\\
				&\quad+2\lVert B_2^\top V_1V_1^\top Q_{22}(V_1\beta+V_2\gamma)\rVert^2\\
				&= x_1^\top\left(\Xmax A_{11}+A_{11}^\top \Xmax+2C_1^\top C_1\right)x_1\\
				&\quad +2\beta^\top V_1^\top\left(2C_2^\top C_1+Q_{21}+(B_2^\top)^+B_1^\top (Q_{11}-\Xmax)\right)x_1\\
				&\quad +2\left(\beta^\top V_1^\top Q_{22}V_1\beta-2\gamma^\top V_2^\top C_2^\top C_2V_1\beta+\gamma^\top K\gamma\right)\\
				&\quad+2\lVert B_2^\top Q_{22}V_1\beta+B_2^\top Q_{22}V_2\gamma\rVert^2\\
				&= x_1^\top\left(\Xmax A_{11}+A_{11}^\top \Xmax+2C_1^\top C_1\right)x_1\\
				&\quad +2\beta^\top V_1^\top\left(2C_2^\top C_1+Q_{21}+(B_2^\top)^+B_1^\top (Q_{11}-\Xmax)\right)x_1\\
				&\quad +2\left(\beta^\top V_1^\top Q_{22}V_1\beta-2\gamma^\top V_2^\top C_2^\top C_2V_1\beta+\gamma^\top K\gamma\right)\\
				&\quad+2\lVert C_2V_1\beta+C_2V_2\gamma\rVert^2\\
				&= x_1^\top\left(\Xmax A_{11}+A_{11}^\top \Xmax+2C_1^\top C_1\right)x_1\\
				&\quad +2\beta^\top V_1^\top\left(2C_2^\top C_1+Q_{21}+(B_2^\top)^+B_1^\top (Q_{11}-\Xmax)\right)x_1\\
				&\quad +2\left(\beta^\top V_1^\top Q_{22}V_1\beta+\gamma^\top K\gamma+\beta^\top V_1^\top C_2^\top C_2V_1\beta+\gamma^\top V_2^\top C_2^\top C_2V_2\gamma\right)\\
				&= x_1^\top\left(\Xmax A_{11}+A_{11}^\top \Xmax+2C_1^\top C_1\right)x_1\\
				&\quad +2\beta^\top V_1^\top\left(2C_2^\top C_1+Q_{21}+(B_2^\top)^+B_1^\top (Q_{11}-\Xmax)\right)x_1\\
				&\quad +\beta^\top V_1^\top\left(Q_{22}+Q_{22}^\top+2C_2^\top C_2\right)V_1\beta+\gamma^\top (K+K^\top+2V_2^\top C_2^\top C_2V_2)\gamma\\
				&\le x_1^\top\left(\Xmax A_{11}+A_{11}^\top \Xmax+2C_1^\top C_1\right)x_1\\
				&\quad +2\beta^\top V_1^\top\left(2C_2^\top C_1+Q_{21}+(B_2^\top)^+B_1^\top (Q_{11}-\Xmax)\right)x_1\\
				&\quad+\beta^\top V_1^\top\left(Q_{22}+Q_{22}^\top+2C_2^\top C_2\right)V_1\beta\\
				&= x_1^\top\left(\Xmax A_{11}+A_{11}^\top \Xmax+2C_1^\top C_1\right)x_1\\
				&\quad +2\beta^\top \Sigma^{-1}U_1^\top U_1\Sigma V_1^\top\left(2C_2^\top C_1+Q_{21}+(B_2^\top)^+B_1^\top (Q_{11}-\Xmax)\right)x_1\\
				&\quad+\underbrace{\beta^\top\Sigma^{-1}U_1^\top}_{=\eta^\top} U_1\Sigma V_1^\top \left(Q_{22}+Q_{22}^\top+2C_2^\top C_2\right)V_1\Sigma U_1^\top \underbrace{U_1\Sigma^{-1}\beta}_{=\vcentcolon \eta}\\
				&= x_1^\top\left(\Xmax A_{11}+A_{11}^\top \Xmax+2C_1^\top C_1\right)x_1\\
				&\quad+2\eta^\top B_2^\top\left(2C_2^\top C_1+Q_{21}+(B_2^\top)^+B_1^\top (Q_{11}-\Xmax)\right)x_1\\
				&\quad+\eta^\top B_2^\top\left(Q_{22}+Q_{22}^\top+2C_2^\top C_2\right)B_2\eta\\
				&= x_1^\top\left(\Xmax A_{11}+A_{11}^\top \Xmax+2C_1^\top C_1\right)x_1\\
				&\quad+2\eta^\top\left(2B_2^\top C_2^\top C_1+B_2^\top Q_{21}+B_1^\top (Q_{11}-\Xmax)\right)x_1\\
				&\quad+\eta^\top \left(C_2B_2+B_2^\top C_2^\top+2B_2^\top C_2^\top C_2B_2\right)\eta\\
				&= x_1^\top\left(\Xmax A_{11}+A_{11}^\top \Xmax+2C_1^\top C_1\right)x_1\\
				&\quad+2\eta^\top\left(2B_2^\top C_2^\top C_1+C_1-B_1^\top \Xmax\right)x_1\\
				&\quad+\eta^\top \left(C_2B_2+B_2^\top C_2^\top+2B_2^\top C_2^\top C_2B_2\right)\eta\\
				&=
				\begin{bmatrix}
					x_1^\top & -\eta^\top
				\end{bmatrix}	
				\calW(\Xmax)
				\begin{bmatrix}
					x_1\\
					-\eta
				\end{bmatrix}
				\le 0.		
			\end{align*}
			Consequently, $\Rhat$ is positive semidefinite which concludes the proof.
	\end{enumerate}
\end{proof}

\bibliographystyle{siam}
\bibliography{references}

\end{document}